%Krzysztof 28-29.08
%po poprawkach Livio; poprawki CIRM 20.08
%poprawki 6.05.2016
%12.02.2016
\NeedsTeXFormat{LaTeX2e} 
\documentclass[a4paper,11pt]{amsart}

\usepackage[T1]{fontenc}
\usepackage{ifxetex}

\ifxetex \relax
\else
\usepackage[utf8]{inputenc}
\fi
\usepackage{lmodern}
\usepackage{enumerate}
\usepackage{url}
\usepackage{comment}
\usepackage{%
amsfonts, amsthm, amsbsy, amstext, amssymb, amsopn, amsfonts, latexsym,stmaryrd} 
\usepackage{mathtools}
\mathtoolsset{mathic=true}
\usepackage[widespace]{fourier}
\usepackage{hyperref}

\RequirePackage[dvipsnames]{xcolor} % [dvipsnames]
\definecolor{halfgray}{gray}{0.55}%chapter numbers will be semi
                                %transparent .5 .55 .6 .0
\definecolor{webgreen}{rgb}{0,0.5,0}
\definecolor{webbrown}{rgb}{.6,0,0} 
\definecolor{red}{rgb}{1,0,0}

\hypersetup{%
   bookmarks=true,        % show bookmarks bar?
   unicode=false,         % non-Latin characters in Acrobat’s bookmarks
   pdftoolbar=true,       % show Acrobat’s toolbar?
   pdfmenubar=true,       % show Acrobat’s menu?
   pdffitwindow=false,    % window fit to page when opened
   pdfstartview={FitH},   % fits the width of the page to the window
   pdfnewwindow=true,     % links in new PDF window
   colorlinks=true,       % false: boxed links; true: colored links
   linkcolor=webbrown,    % color of internal links (change box color with linkbordercolor)
   citecolor=webgreen,    % color of links to bibliography
   filecolor=magenta,     % color of file links
   urlcolor=cyan,         % color of external links
   pdfcreator={pdfLaTeX}, % creator of the document
   pdftitle={Approximate orthogonality of powers for ergodic affine
unipotent diffeomorphisms on nilmanifolds},%
   pdfauthor={L. Flaminio, K. Fraczek, J. Kulaga-Przymus, M. Lemanczyk},%
   pdfsubject={2000 Mathematical Subject Classification: Primary:
    37-XX, 37C15, 37C40},%
   pdfkeywords={Cohomological Equations, Homogeneous flows},%
   pdfcreator={pdfLaTeX},%
   pdfproducer={LaTeX with hyperref}%
}
%\usepackage{mathptmx}

% \ifxetex
% \usepackage[protrusion=true]{microtype}
% \microtypecontext{spacing=nonfrench}
% \else
% \usepackage[protrusion=true,expansion, tracking,spacing,kerning=true]{microtype}
% \microtypecontext{spacing=nonfrench}
% \fi

\usepackage{mathrsfs,bbm}
\usepackage{tikz}
\usepackage{tikz-cd}
\usetikzlibrary{matrix,arrows,decorations.pathmorphing}
\providecommand{\noopsort}[1]{}

%%%%%\hbadness 10000

% \def\qed{\unskip\quad \hbox{\vrule\vbox
% to 6pt {\hrule width 4pt\vfill\hrule}\vrule} }

\newtheorem{Th}{Theorem}[section]
\newtheorem{Prop}[Th]{Proposition}
\newtheorem{Lemma}[Th]{Lemma}

\newcounter{ABC}

\newtheorem{ThA}[ABC]{Theorem}
\newtheorem*{Th*}{Theorem}
\newtheorem{CorA}[ABC]{Corollary}
\newtheorem{PropA}[ABC]{Proposition}
\newtheorem{Cor}[Th]{Corollary}

\theoremstyle{definition}
\newtheorem{Remark}[Th]{Remark}
\newtheorem{Def}{Definition}[section]
\newtheorem{Example}{Example}[section]

\def\scalar(#1,#2){(#1\mid#2)} %\newcommand{\Bar}{\overline}

\newcommand{\raz}{\mathbbm{1}}

\def\g{\mathfrak g}
\newcommand{\crr}{{\mathcal R}}
\newcommand{\cs}{{\mathcal S}}
\newcommand{\ca}{{\mathcal A}}
\newcommand{\cb}{{\mathcal B}}
\newcommand{\cc}{{\mathcal C}}
\newcommand{\cd}{{\mathcal D}}

\newcommand{\cf}{{\mathcal F}}

\newcommand{\ch}{{\mathcal H}}
\newcommand{\cm}{{\mathcal M}}

\newcommand{\xbm}{(X,{\mathcal B},\mu)}

\newcommand{\zdk}{(Z,{\mathcal D},\kappa)}
\newcommand{\ycn}{(Y,{\mathcal C},\nu)}
\newcommand{\ot}{\otimes}
\newcommand{\ov}{\overline}
\newcommand{\la}{\lambda}

\newcommand{\bs}{\mathbb{S}}

\newcommand{\Q}{\mathbb{Q}}

\newcommand{\R}{{\mathbb{R}}}
\newcommand{\T}{{\mathbb{T}}}
\def\C{{\mathbb{C}}}
\newcommand{\Z}{{\mathbb{Z}}}
\newcommand{\N}{{\mathbb{N}}}
\newcommand{\E}{{\mathbb{E}}} %\newcommand{\N}{\mbox{\bf N}}
\newcommand{\va}{\varphi}

\newcommand{\mob}{\boldsymbol{\mu}}

\newcommand{\tfs}{T_{\va,\cs}}
\newcommand{\tf}{T_{\va}} %\newcommand{\mob}{\boldsymbol{\mu}}
\newcommand{\bfu}{\boldsymbol{u}}

%Livio
%SYMBOLS & DIMENSIONS
%

%\newcommand{\dpart}[2]{\frac{\partial #1}{\partial #2}}
\newcommand{\<}{{\langle}}
\renewcommand{\>}{{\rangle}} %
\newcommand{\defin}{:=}

\def\SL{\mathrm S\mathrm L}

\def\D{{\mathrm d}}
\def\<{\langle}
\def\>{\rangle}
\def\Ad{%
\operatorname{Ad}
}

\def\Span{%
\operatorname{span}
}

\def\NilMan{\text{\textbf{NilMan}}}
\def\Uaff{\text{\textbf{MpUniAff}}}
\def\Nilover{\text{\textbf{FiberedMpNilfl}}}
\def\Ab{\operatorname{\textit{\textbf{Ab}}}}
\def\Susp{\operatorname{\textit{\textbf{Susp}}}}

%Krzysztof

\numberwithin{equation}{section}

\newcommand{\supp}{%
\operatorname{supp}
}

\setlength{\marginparwidth}{1.2in}
\let\oldmarginpar
\marginpar
\renewcommand
\marginpar[1]{\-\oldmarginpar[
\raggedleft
\footnotesize #1]%
{%
\raggedright
\footnotesize #1}}

\begin{document}

\title[AOP for ergodic affine unipotent diffeomorphisms on
nilmanifolds]{Approximate orthogonality of powers for ergodic affine
  unipotent diffeomorphisms on nilmanifolds} \author{L.\ Flaminio}
\address[L. Flaminio]{Unit\'e Mixte de Recherche CNRS 8524 \\
  Unit\'e de Formation et Re\-cher\-che de Math\'ematiques\\
  Universit\'e de Lille\\
  F59655 Villeneuve d'Asq CEDEX\\
  FRANCE} \email{livio.flaminio@math.univ-lille1.fr}
\urladdr{math.univ-lille1.fr/$\sim$flaminio} % Delete if not wanted.
\thanks{L. Flaminio supported in part by the Labex~CEMPI (ANR-11-LABX-07)}

\author{K.\ Fr\k{a}czek} \address{Faculty of Mathematics and Computer
  Science, Nicolaus Copernicus University, ul. Chopina 12/18, 87-100
  Toruń, Poland} \email{fraczek@mat.umk.pl}
% \urladdr{www.math.sc.edu/$\sim$howard} % Delete if not wanted.

\author{J.\ Ku\l aga-Przymus}
% \address{Faculty of Mathematics and Computer Science, Nicolaus
% Copernicus University, ul. Chopina 12/18, 87-100 Toruń, Poland}
\email{joanna.kulaga@gmail.com}
% \urladdr{www.math.sc.edu/$\sim$howard} % Delete if not wanted.

\author{M.\ Lema\'nczyk}
% \address{Faculty of Mathematics and Computer Science, Nicolaus
% Copernicus University, ul. Chopina 12/18, 87-100 Toruń, Poland}
\email{mlem@mat.umk.pl}
% \urladdr{www.math.sc.edu/$\sim$howard} % Delete if not wanted.
\thanks{M.\ Lema\'nczyk supported by Narodowe Centrum Nauki grant
  2014/15/B/ST1/03736}
\begin{abstract}
  Let~$ G $ be a connected, simply connected  nilpotent Lie
  group and $ \Gamma < G $ a lattice.  We prove that each ergodic
  diffeomorphism~$ \phi(x\Gamma)=uA(x)\Gamma $ on the
  nilmanifold~$ G/\Gamma $, where~$ u\in G $ and~$ A:G\to G $ is a
  unipotent automorphism satisfying~$ A(\Gamma)=\Gamma $, enjoys the
  property of asymptotically orthogonal powers (AOP).  Two
  consequences follow:

  (i) Sarnak's conjecture on Möbius orthogonality holds in every
  uniquely ergodic model of an ergodic affine unipotent
  diffeomorphism;

  (ii) For ergodic affine unipotent diffeomorphisms themselves, the
  Möbius orthogonality holds on so called typical short interval:
  \[
  \frac1M\sum_{M\leq m<2M}\left|\frac1H\sum_{m\leq n<m+H}
    f(\phi^n(x\Gamma))\mob (n)\right|\to 0
  \]
  as~$ H\to\infty $ and~$ H/M\to0 $ for each~$ x\Gamma\in G/\Gamma $
  and each~$ f\in C(G/\Gamma) $.

  \begin{sloppypar}
    In particular, the results in (i) and (ii) hold for ergodic
    nil-translations. Moreover, we prove that each nilsequence is
    orthogonal to the Möbius function $\mob$ on a typical short
    interval.  \end{sloppypar}

  We also study the problem of lifting of the AOP property to induced
  actions and derive some applications on uniform distribution.
\end{abstract}

\keywords {Sarnak's Conjecture, Möbius orthogonality.}
\subjclass[2010] {37A05, 37A17, 37A45, 37D40, 11N37}

\maketitle

\tableofcontents

%%%%%%%%%%%%%%%%%%%%%%%%%%%%%%%%%%%%%%%%%%%%%%%%%%%%%%%%%%%%%%%%%%%%%%%%%%%%%%%%%%%%%%%%%%%%%%%%%%%%%%%%%%%%%%%%%%%%%%%%
\section{Introduction} Following \cite{Sa}, we say that a
homeomorphism $T$ of a compact metric space $X$ is {\em Möbius
  orthogonal} if for each~$ f\in C(X) $ and~$ x\in X $, we have
\[
\frac1N\sum_{n\leq N}f(T^nx)\, \mob(n)\to 0 ,\]
as~$ N\to\infty $.  Here~$ \mob:\N\to\C $ stands for the Möbius
function defined by: $ \mob(1)=1 $, $ \mob(n)=(-1)^k $,
if~$ n=p_1p_2\cdots p_k $ for distinct prime
numbers~$ p_1,\ldots,p_k $, and~$ \mob(n)=0 $ otherwise.  In 2010,
P.~Sarnak formulated the following conjecture, referred to as
\emph{Sarnak's conjecture on Möbius orthogonality}:
\begin{quote}
  {Every zero entropy homeomorphism of a compact metric
    space is Möbius orthogonal.}
\end{quote}

\begin{sloppypar}
  Since then, the Möbius orthogonality has been proved in many
  particular cases of zero entropy homeomorphisms: \cite{Ab-Ka-le},
  \cite{Ab-Le-Ru0}, \cite{Bo}, \cite{Bo-Sa-Zi}, \cite{De-Dr-Mu},
  \cite{Do-Ka}, \cite{Fe-Ku-Le-Ma}, \cite{Fe-Ma}, \cite{Gr-Ta},
  \cite{Kar}, \cite{Mu}, \cite{Pe}.
\end{sloppypar}

Sarnak's conjecture is mainly motivated by some open problems in
number theory, for example, the famous Chowla conjecture on
auto-correlations of the Möbius function implies Sarnak's conjecture
(see \cite{Ab-Ku-Le-Ru}, \cite{Sa}, \cite{Ta} for more details).

As stated above, Sarnak's conjecture deals with topological dynamical
systems.  However, we may consider it by looking at all invariant
measures for the homeomorphim~$ T $. (By the variational principle,
all such measures have measure-theore\-tic entropy zero.)  Then we
will deal with an ergodic theory problem considered in the class of
measure-theoretic dynamical systems with zero measure-theoretic
entropy.

\begin{sloppypar}
  Reversing the problem, we may start with an ergodic, zero entropy
  automorphism~$ R $ of a probability standard Borel
  space~$ (Z,\cd,\nu) $ and suppose that~$ T $ is a \emph{uniquely
    ergodic model of~$ R $}, that is, $ T $ is a homeomorphism of a
  compact metric space~$ X $ with a unique~$ T $-invariant probability
  Borel measure~$ \mu $ such that the measure-theoretic
  systems~$ (Z,\cd,\nu,R) $ and~$ (X,\cb (X),\mu,T) $ are
  measure-theoretically isomorphic.  We recall that, by the
  Jewett-Krieger theorem, each ergodic system~$ R $ has many uniquely
  ergodic models, some of which may even be topologically mixing (see
  also \cite{Leh}).  In such a setting it is natural to ask if there
  are some ergodic properties of~$ R $ which are sufficient to
  guarantee that {\bf every} uniquely ergodic model~$ T $ of~$ R $
  will be Möbius orthogonal.
\end{sloppypar}

\begin{sloppypar}
  For example, irrational rotations are well-known to be Möbius
  orthogonal \cite{Da} but the fact that the Möbius orthogonality
  holds in all uniquely ergodic topological models of irrational
  rotations has been proved only recently in \cite{Ab-Le-Ru}.  In
  \cite{Ab-Le-Ru}, such a result has been achieved by introducing a
  new invariant of measure-theoretic isomorphism, namely, the AOP
  property (which, in particular, holds for all irrational
  rotations). The Möbius orthogonality holds true in every uniquely
  ergodic model of a measure-theoretic automorphism~$ R $ satisfying
  the AOP property.
\end{sloppypar}

Let us recall briefly the definition of the AOP property (see
Section~%
\ref{joiAOP}, Definition~%
\ref{defAOP} and Remark~%
\ref{r:weaktop} for more details).  An ergodic automorphism~$ R $ of a
probability standard Borel space~$ (Z,\cd,\nu) $ is said to have {\em
  asymptotically orthogonal powers} (AOP), or to satisfy the \emph{AOP
  property}, if the Hausdorff distance of the set~$ J(R^p,R^q) $ of
joinings between~$ R^p $ and~$ R^q $ from the
singleton~$ \{\nu\ot\nu\} $ goes to zero when~$ p $ and~$ q $ are
different prime numbers tending to infinity.

The AOP property for the automorphism~$ R $ implies that all non-zero
powers of it are ergodic, i.e.~$ R $ is {\em totally ergodic}.

Denoting by~$ \mathscr{P} $ the set of prime numbers, a consequence of
the AOP property is that in each uniquely ergodic model~$ (X,T) $ of
$ (Z,\cd,\nu,R) $, we have
\begin{equation}
  \label{intr1} \limsup_{\substack{p, q\to\infty\\
      p,q\in\mathscr{P}\\
      p\neq q}}\,\, \limsup_{K\to\infty}\left|\frac1{b_{K+1}} \sum_{k\leq
      K}\sum_{b_k\leq n<b_{k+1}}f(T^{pn}x_k)\ov{f(T^{qn}x_k)}\right|=0
\end{equation}
for each increasing sequence of integers~$ (b_k) $ with
$ b_{k+1}-b_k\to\infty $, for each sequence of
points~$ (x_k)\subset X $ and a zero mean function~$ f\in C(X) $.  By
the K\'atai-Bourgain-Sarnak-Ziegler criterion%
\footnote{This criterion (in the form used in \cite{Ab-Le-Ru}) says
  that if~$ (a_n)\subset\C $ is bounded
  and~$ \limsup_{p\neq q\to\infty, p,q\in\mathscr{P}}\limsup_{N\to
    \infty}\left|\frac1N\sum_ {n\leq N}a_{pn}\ov{a}_{q_n}\right|=0 $
  then~$ \frac1N\sum_{n\leq N}a_n\bfu (n)\to 0 $ for each
  multiplicative $ \bfu:\N\to\C $, $ |\bfu|\leq1 $.

  We use this criterion by considering~$ (a_n) $ given by
  $ (f(x_1),\ldots,f (T^{b_1-1}x_1),f(T^{b_1}x_2),\ldots) $.}
(\cite{Bo-Sa-Zi}, \cite{Ka}), we obtain
\begin{equation}
  \label{intr2} \frac1{b_{K+1}}\sum_{k\leq K}\left(\sum_{b_k\leq n<b_{k+1}}f
    (T^nx_k)\bfu(n)\right)\to0,\text{ when }K\to\infty,
\end{equation}
for every multiplicative function~$ \bfu:\N\to\C $ bounded by~$ 1 $
and all~$ (b_k) $, $ (x_k) $ and~$ f $ as above. If we set~$ x_k=x $
for all $ k\geq1 $ and $\bfu=\mob$, then~\eqref{intr2} means that the
Möbius orthogonality holds for $T$.  However, in many concrete
situations, from~\eqref {intr2}, we can deduce the stronger conclusion
$$
\lim_{K\to \infty} \frac1{b_{K+1}}\sum_{k\leq K}\left|\sum_{b_k\leq
    n<b_{k+1}}f(T^nx)\bfu(n)\right|=0,
$$
or, equivalently (see \cite{Ab-Le-Ru}),
$$
\frac1M\sum_{M\leq m<2M}\left|\frac1H\sum_{m\leq
    n<m+H}f(T^nx)\bfu(n)\right|,\text { when }H\to\infty,H/M\to0,
$$
for each zero mean function~$ f\in C(X) $, $ x\in X $ and~$ \bfu $ as
above.

The AOP property has been proved in \cite{Ab-Le-Ru} for so-called
quasi-discrete spectrum automorphisms \cite{Ab}.  This implies the Möbius
orthogonality of all uni\-que\-ly ergodic models of quasi-discrete
spectrum automorphisms. Some uni\-que\-ly ergodic models of such
automorphisms are given by affine transformations on compact abelian
(metric) groups.  For {\bf these} particular models the Möbius
orthogonality has been proved earlier \cite{Gr-Ta}, \cite{Li-Sa}.
Recall that the affine transformations are examples of
distal homeomor\-phisms.%
\footnote{A homeomorphism~$ T $ is distal if the
  orbit~$ (T^nx,T^ny) $, $ n\in\N $, is bounded away from the
  diago\-nal in~$ X\times X $ for each~$ x\neq y $.} Another natural
class of distal (uniquely ergodic) homeomorphisms is given by
nil-tran\-sla\-tions and, more generally, affine unipotent diffeomorphisms
of nilmanifolds.  Recall that if~$ G $ is a connected, simply
connected, nilpotent Lie group, $ \Gamma < G $ is a lattice
and~$ u\in G $, then each rotation $ l_u(x\Gamma):=ux\Gamma $ acting
on~$ G/\Gamma $ is called a {\em nil-translation by~$ u $}.  More
generally, we may consider \emph{affine unipotent} diffeomorphisms
on~$ G/\Gamma $, that is maps of~$ G/\Gamma $ of the
form~$ \phi(x\Gamma):=uA (x)\Gamma $, where~$ A $ is a unipotent
automorphism of~$ G $ such that~$ A(\Gamma)=\Gamma $ and~$ u\in G $.
For such maps, the Möbius orthogonality has been proved in
\cite{Gr-Ta}, where even some quantitative version of it has been
proved.  Liu and Sarnak in \cite{Li-Sa} suggest that perhaps instead
of proving Sarnak's conjecture, it will be easier first to prove the
Möbius orthogonality in the distal case (see \cite{Ku-Le},
\cite{Li-Sa} and \cite{Wang} for some results in this direction).
Instead, we can ask if, assuming total ergodicity, we have the AOP
property.

In the present paper, we confirm that the AOP approach may bring some
fruits by proving the following.
\begin{ThA}
  \label{ThA} Let~$ G $ be a connected, simply connected, nilpotent
  Lie group and~$ \Gamma < G $ a lattice.  Then, every ergodic affine
  unipotent diffeomorphism~$ \phi:G/\Gamma\to G/\Gamma $ has the AOP
  property.
\end{ThA}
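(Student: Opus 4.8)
The plan is to reduce the AOP property to a statement about joinings of the affine unipotent diffeomorphism with its own powers, and then to exploit the algebraic structure of nilmanifolds — in particular the way unipotent automorphisms interact with the lower central series — to control those joinings. First I would recall that $G/\Gamma$ is a tower of abelian group extensions: setting $G=G_1\supset G_2\supset\cdots\supset G_{s+1}=\{e\}$ to be the lower central series, each quotient $G/(G_jG_{j+1}\cdots)$ fibers over the previous one with compact abelian fibers isomorphic to some torus $\T^{d_j}$. Because $A$ is a unipotent automorphism preserving $\Gamma$, it preserves each $G_j$, and on each successive torus fiber it acts (together with the rotation by $u$) as an affine unipotent map of $\T^{d_j}$. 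This exhibits $\phi$ as an inverse limit of affine unipotent toral extensions, which is precisely the inductive setup for which one expects AOP to propagate through the tower.

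The core step is the base case and the inductive step for a single toral extension. Suppose $R$ is an automorphism of $\ycn$ with AOP and $\widetilde R$ is a unipotent affine extension of $R$ by a torus $\T^d$, i.e. $\widetilde R(y,t) = (Ry,\, t + \psi(y))$ where the cocycle $\psi$ takes values in $\T^d$ and the ``unipotent'' hypothesis forces $\psi$ to have a specific polynomial-in-the-base form (in the simplest Heisenberg-type case $\psi$ is essentially linear, and the Weyl-type equidistribution of $(n\alpha, \binom{n}{2}\beta)$ appears). Given distinct large primes $p,q$ and a joining $\lambda \in J(\widetilde R^p, \widetilde R^q)$, I would first push it down to a joining $\bar\lambda \in J(R^p,R^q)$, which by the AOP hypothesis on $R$ is close to $\nu\ot\nu$. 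The task is then to show $\lambda$ is close to $(\nu\ot\lambda_{\T^d})\ot(\nu\ot\lambda_{\T^d})$. Testing $\lambda$ against product characters $\chi_1\ot\chi_2$ of the two torus fibers, the relevant correlation becomes an exponential sum of the shape $\int e\big(p\,\chi_1(\psi^{(p)}(y)) - q\,\chi_2(\psi^{(q)}(y))\big)\,d(\cdots)$, where $\psi^{(p)}$ is the $p$-th Birkhoff-cocycle of $\psi$; the unipotence makes these polynomial phases with coefficients scaled by $p$ and $q$, and coprimality of $p,q$ forces the phases to be genuinely nonconstant, so Weyl's inequality / van der Corput gives decay as $\min(p,q)\to\infty$. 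When one of $\chi_1,\chi_2$ is trivial the integral reduces to a character integral against $\bar\lambda$ and one invokes AOP at the previous level.

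The main obstacle, I expect, is making the inductive step \emph{uniform} along the tower: the cocycles $\psi_j$ on level $j$ are not arbitrary but are built from the group multiplication and can depend on \emph{all} lower coordinates in a way that mixes polynomial degrees, so one must set up the induction on a carefully chosen invariant (e.g. on the pair consisting of the maximal equicontinuous/Kronecker factor together with a filtration by ``degree''), and show both that the extension at each stage is relatively weakly mixing over its predecessor \emph{in a way compatible with taking $p$-th and $q$-th powers}, and that the ``distal'' nature of the system does not obstruct the quantitative orthogonality. A clean way to organize this is: (1) prove AOP is stable under taking inverse limits of an increasing sequence of factors (a soft joining-compactness argument); (2) prove AOP lifts across a single unipotent affine toral extension (the Weyl-sum computation above); (3) handle the base case of a unipotent affine map of a single torus directly (quasi-discrete spectrum, already covered by the cited result of \cite{Ab-Le-Ru}, or re-proved by the same exponential sum estimate); then (4) assemble (1)--(3) along the lower central series. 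I would also need, as a preliminary, the reduction that every ergodic affine unipotent $\phi$ on $G/\Gamma$ is totally ergodic and that its powers $\phi^p$ are again affine unipotent diffeomorphisms of the same nilmanifold, so that $J(\phi^p,\phi^q)$ is a space of joinings between systems of the same algebraic type — this is where connectedness and simple connectedness of $G$ enter, guaranteeing $A$ has a well-behaved logarithm and the relevant lattices stay commensurable.
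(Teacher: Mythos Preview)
Your high-level architecture --- induction along the lower-central-series tower, reducing to a lifting statement through abelian (toral) extensions --- matches the paper's organization. But the \emph{mechanism} you propose for the lift is not correct, and this is a real gap, not just a detail.

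You write that for a joining $\lambda\in J^e(\widetilde R^{\,p},\widetilde R^{\,q})$, testing against characters ``becomes an exponential sum'' to which Weyl/van der Corput applies. This conflates two different objects. The quantity $\int(\chi_1\otimes\bar\chi_2)\,d\lambda$ is a single number attached to the \emph{measure} $\lambda$; there is no sum over $n$ in it. One can of course write this number as a Birkhoff average along a $\lambda$-generic orbit, but then the phase you get depends on the basepoint $(y_1,y_2)$, and you would need to know that for $\lambda$-a.e.\ basepoint the resulting polynomial phase is ``irrational'' in the Weyl sense. That is precisely a structural statement about the joining $\lambda$, and you have no tool to produce it: the inductive hypothesis only says the base joining $\bar\lambda$ is \emph{close to} product measure for large $p,q$, not equal to it, so you cannot assume $(y_1,y_2)$ is a $\nu\otimes\nu$-generic pair. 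In short, without a description of the support of~$\lambda$ you cannot turn the joining integral into a controllable exponential sum.

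What the paper actually uses is the Lesigne--Starkov--Ratner theorem: every ergodic joining $\rho$ of $(M,l_u^r)$ and $(M,l_u^s)$ is the Haar measure on a closed orbit of its stabilizer $H<G\times G$. One then shows purely algebraically (Lemmas~\ref{lem:rsh}--\ref{lem:aop_nil:1}) that $H$ must contain the diagonal subgroup $\{(\exp r^kZ,\exp s^kZ):Z\in\g^{(k)}\}$ of the central torus $\T^{(k)}\times\T^{(k)}$. Invariance of $\rho$ under this subgroup forces $\rho(f_1\otimes\bar f_2)=0$ whenever $f_i\in H_{\chi_i}$ and $\chi_1^{r^k}\neq\chi_2^{s^k}$, which for fixed nontrivial $(\chi_1,\chi_2)$ excludes at most one coprime pair $(r,s)$. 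So the correlations are not merely ``small as $\min(p,q)\to\infty$'' --- they are \emph{exactly zero} for all but one pair, which is what feeds into the AOP criterion (Theorem~\ref{thm:prop}). The induction peels off the \emph{center} $G^{(k)}$ at each stage, and the genuinely affine (non-translation) case is handled by first passing to the suspension nilflow so that Ratner's theorem applies to a flow rather than a map. Your proposal misses both the Ratner input and the exact-vanishing mechanism; a quantitative Weyl estimate is neither needed nor, as you have set it up, available.
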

By the previous discussion, the following result is immediate:
\begin{CorA}
  \label{CorB}The Möbius orthogonality holds in every uniquely ergodic
  model of any affine unipotent diffeomorphism of a compact connected
  nilmanifold.
\end{CorA}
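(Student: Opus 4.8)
The plan is to obtain Corollary~\ref{CorB} as a purely formal consequence of Theorem~\ref{ThA} and the transfer mechanism of \cite{Ab-Le-Ru} recalled in the Introduction; no new dynamical input is required, and what is gained over the direct estimates of \cite{Gr-Ta} is precisely that the conclusion now holds for \emph{every} uniquely ergodic model. First I would observe that, since a uniquely ergodic model of a measure-theoretic automorphism is itself ergodic, the affine unipotent diffeomorphism $\phi$ in question is necessarily ergodic with respect to the Haar measure $\mu$ of the compact connected nilmanifold; presenting the latter as $G/\Gamma$ with $G$ connected, simply connected and nilpotent and $\Gamma$ a lattice, Theorem~\ref{ThA} applies and the system $R:=(G/\Gamma,\mathcal B,\mu,\phi)$ enjoys the AOP property.

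Next I would run the chain already indicated in the Introduction. Let $(X,T)$, with unique invariant measure $\nu$, be a uniquely ergodic model of $R$, fix $f\in C(X)$ with $\int f\,d\nu=0$, and fix $x\in X$. The AOP property of $R$ together with the unique ergodicity of $T$ gives \eqref{intr1}; feeding the associated bounded sequence into the K\'atai--Bourgain--Sarnak--Ziegler criterion as in the footnote yields \eqref{intr2} and hence
\[
\frac1N\sum_{n\le N}f(T^nx)\,\bfu(n)\to 0
\]
for every multiplicative $\bfu\colon\N\to\C$ with $|\bfu|\le 1$, in particular for $\bfu=\mob$. For general $f\in C(X)$ I would write $f=\bigl(f-\int f\,d\nu\bigr)+\int f\,d\nu$: the zero-mean summand is covered by the previous line, while for the constant summand the statement reduces to $\frac1N\sum_{n\le N}\mob(n)\to 0$, i.e.\ to the Prime Number Theorem. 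Adding the two contributions gives $\frac1N\sum_{n\le N}f(T^nx)\mob(n)\to 0$, i.e.\ the Möbius orthogonality of $T$; since $(X,T)$ was an arbitrary uniquely ergodic model of $\phi$, the corollary follows.

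I do not expect any genuine obstacle in this deduction: the whole weight of the paper is carried by Theorem~\ref{ThA}, and the argument above uses only the soft transfer results of \cite{Ab-Le-Ru}, the KBSZ criterion and the Prime Number Theorem. The one place that calls for a word of justification — rather than for any work — is the first-paragraph remark that the existence of a uniquely ergodic model forces ergodicity of $\phi$, so that Theorem~\ref{ThA} is indeed applicable.
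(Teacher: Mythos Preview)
Your proposal is correct and follows exactly the route the paper takes: the paper itself declares Corollary~\ref{CorB} ``immediate'' from Theorem~\ref{ThA} via the transfer mechanism of \cite{Ab-Le-Ru} recalled in the Introduction, and you have simply spelled out that mechanism (including the ergodicity remark and the use of the Prime Number Theorem for the constant part, both of which are standard and implicit in the paper's discussion).
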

Moreover, the algebraic structure of the underlying space~$ G/\Gamma $
will let us prove the following.
\begin{CorA}
  \label{CorC} Let~$ G $ be a connected, simply connected, nilpotent
  Lie group and $ \Gamma < G $ a lattice.
  Let~$ \phi:G/\Gamma\to G/\Gamma $ be an ergodic affine unipotent
  diffeomorphism.  Then, for every~$ x\in G$, for every zero mean
  function~$ f\in C(G/\Gamma) $ and for every multiplicative
  function~$ \bfu:\N\to\C $ bounded by~$ 1 $, we have
  \begin{equation}
    \label{eq:zbkr} \frac1M\sum_{M\leq m<2M}\left|\frac1H \sum_{m\leq
        h<m+H}f(\phi^h(x\Gamma))\,\bfu(h)\right|\to 0
  \end{equation}
  as~$ H\to\infty $ and~$ H/M\to0 $.  For~$ \bfu=\mob $ the result
  holds for arbitrary~$ f\in C(G/\Gamma) $.
\end{CorA}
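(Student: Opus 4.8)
The plan is to deduce Corollary~\ref{CorC} from Theorem~\ref{ThA} together with two auxiliary facts. The first is classical: an ergodic affine unipotent diffeomorphism of a compact connected nilmanifold $G/\Gamma$ is \emph{uniquely ergodic}, so that $(G/\Gamma,\phi)$ is itself a uniquely ergodic model of the measure-preserving system $(G/\Gamma,\nu,\phi)$, where $\nu$ is the Haar measure. The second is a transfer principle: in \emph{any} uniquely ergodic model of an ergodic automorphism with the AOP property, the averages in~\eqref{eq:zbkr} tend to $0$ for every point, every zero mean continuous function and every $1$-bounded multiplicative $\bfu$, and for every continuous function when $\bfu=\mob$. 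By Theorem~\ref{ThA} the system $(G/\Gamma,\nu,\phi)$ has AOP, so Corollary~\ref{CorC} follows once the transfer principle is established; unique ergodicity is precisely what makes the conclusion hold for \emph{every} $x\in G$.

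For the transfer principle I would first handle long intervals, following the Introduction: AOP yields~\eqref{intr1}, and the K\'atai--Bourgain--Sarnak--Ziegler criterion (\cite{Bo-Sa-Zi}, \cite{Ka}) upgrades this to~\eqref{intr2}; taking $x_k\equiv x\Gamma$ and $(b_k)$ an arithmetic progression gives $\tfrac1N\sum_{n\le N}f(\phi^n(x\Gamma))\,\bfu(n)\to0$. It is worth observing that this holds \emph{uniformly in $x$}: since $\phi^p$ and $\phi^q$ are uniquely ergodic, every weak-$*$ limit of the empirical measures $\tfrac1N\sum_{n<N}\delta_{(\phi^{pn}(x\Gamma),\phi^{qn}(x\Gamma))}$ is a joining of $\phi^p$ with $\phi^q$, whence
\[
\limsup_{N\to\infty}\Bigl|\tfrac1N\sum_{n<N}f(\phi^{pn}(x\Gamma))\,\ov{f(\phi^{qn}(x\Gamma))}\Bigr|\le\sup_{\la\in J(\phi^p,\phi^q)}\Bigl|\int f\ot\ov f\,d\la\Bigr|,
\]
and the right-hand side does not depend on $x$ and is small for distinct large primes $p,q$ by AOP, since $\int f\ot\ov f\,d(\nu\ot\nu)=0$ for zero mean $f$; a version of the K\'atai criterion that is uniform in the input sequence then gives the uniform long-interval bound.

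The substantive part is the passage to short intervals. Fix $H\le M$, set $c_m=\tfrac1H\sum_{m\le h<m+H}f(\phi^h(x\Gamma))\,\bfu(h)$ and write $c_m=\varepsilon_m|c_m|$ with $|\varepsilon_m|=1$. Summing $|c_m|=\ov{\varepsilon_m}c_m$ over $M\le m<2M$ and reversing the order of summation,
\[
\frac1M\sum_{M\le m<2M}|c_m|=\frac1M\sum_{n}f(\phi^n(x\Gamma))\,\bfu(n)\,v_n,\qquad v_n:=\frac1H\sum_{\substack{M\le m<2M\\ n-H<m\le n}}\ov{\varepsilon_m},
\]
where $(v_n)$ is bounded by $1$ and $\tfrac2H$-Lipschitz. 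Partitioning $[M,2M)$ into consecutive blocks $J_\ell$ with $|J_\ell|\to\infty$ and $|J_\ell|=o(H)$ and freezing $v$ on each $J_\ell$ (an error $O(|J_\ell|/H)$), one bounds the left-hand side by
\[
\frac1M\sum_\ell\Bigl|\sum_{n\in J_\ell}f(\phi^n(x\Gamma))\,\bfu(n)\Bigr|+o(1),
\]
i.e.\ by a constant multiple of the ``block form with absolute values'' of~\eqref{intr2} discussed in the Introduction, which by \cite{Ab-Le-Ru} is equivalent to~\eqref{eq:zbkr}. To estimate this block form I would combine the uniform correlation bounds of the previous paragraph with the short-interval regularity of multiplicative functions (the Matom\"aki--Radziwi\l\l{} theorem), which controls the block sums $\sum_{n\in J_\ell}f(\phi^n(x\Gamma))\,\bfu(n)$ on average over $\ell$ once $|J_\ell|$ is large. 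For $\bfu=\mob$ the zero mean hypothesis on $f$ is then superfluous: writing $f=\bigl(f-\int f\,d\nu\bigr)+\int f\,d\nu$, the constant part contributes $\int f\,d\nu\cdot\tfrac1H\sum_{m\le h<m+H}\mob(h)$, whose average over $M\le m<2M$ tends to $0$ by the Matom\"aki--Radziwi\l\l{} theorem for $\mob$.

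The step I expect to be the main obstacle is making the AOP input \emph{uniform} --- the ``topological''/weak-topological form of AOP alluded to in Remark~\ref{r:weaktop} --- and meshing it with the short-interval behaviour of $\bfu$ so as to control the block form above; this is where the homogeneous structure of $G/\Gamma$ is essential, through the unique ergodicity not only of $\phi$ but of the off-diagonal orbit closures of $\phi^p\times\phi^q$ on $G/\Gamma\times G/\Gamma$, which are again uniquely ergodic homogeneous systems. The remaining ingredients --- the classical unique ergodicity of ergodic affine unipotent diffeomorphisms, the choice of the partition $\{J_\ell\}$, and the double-limit bookkeeping ($H\to\infty$, $H/M\to0$) --- are routine.
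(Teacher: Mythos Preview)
Your proposal correctly identifies the starting points (unique ergodicity of $\phi$, Theorem~\ref{ThA}, and~\eqref{intr2}) but the passage to short intervals contains a genuine gap.  Your ``unfolding'' reduces
\[
\frac1M\sum_{M\le m<2M}\Bigl|\frac1H\sum_{m\le h<m+H}f(\phi^h(x\Gamma))\,\bfu(h)\Bigr|
\]
to a quantity of the shape $\frac1M\sum_\ell\bigl|\sum_{n\in J_\ell}f(\phi^n(x\Gamma))\,\bfu(n)\bigr|$, which---as you yourself note, citing \cite{Ab-Le-Ru}---is \emph{equivalent} to~\eqref{eq:zbkr}.  So no progress has been made: you have transformed the problem into itself with a different choice of block lengths.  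The sentence ``I would combine the uniform correlation bounds \dots\ with the Matom\"aki--Radziwi\l\l\ theorem'' does not close the gap: KBSZ together with the uniform bound $\limsup_N\bigl|\tfrac1N\sum_{n<N}f(\phi^{pn}x)\ov{f(\phi^{qn}x)}\bigr|\le\sup_{\la\in J(\phi^p,\phi^q)}\bigl|\int f\ot\ov f\,d\la\bigr|$ only yields~\eqref{intr2} \emph{without} absolute values, and Matom\"aki--Radziwi\l\l\ controls short sums of $\bfu$, not of $f(\phi^n x)\,\bfu(n)$ for a general zero-mean $f$.  Your closing speculation about ``unique ergodicity of the off-diagonal orbit closures of $\phi^p\times\phi^q$'' is again input to KBSZ, not a device for inserting absolute values.

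The paper's route is different and supplies precisely the missing idea.  It proves (Theorem~\ref{thm:mainaff}, building on Theorem~\ref{thm:mainrot}) that there is a linearly dense set $\mathscr C\subset C(G/\Gamma)\cap L^2_0$ such that for every $f\in\mathscr C$ and every $\omega\in\bs^1$ there exists $g\in G$ with $f(\phi^n(l_g x))=\omega\, f(\phi^n x)$ for all $n$ and $x$; concretely, $f$ lies in some $H_\chi$ for a nontrivial character $\chi$ of the central torus $\T^{(k)}$, and one takes $g\in\widetilde G^{(k)}$ with $\chi(g)=\omega$, using that $l_g$ commutes with $\phi$.  This symmetry converts each block sum into its own absolute value: choosing $\omega_k$ so that $\bigl|\sum_{b_k\le n<b_{k+1}}f(\phi^n x)\,\bfu(n)\bigr|=\omega_k\sum_{b_k\le n<b_{k+1}}f(\phi^n x)\,\bfu(n)$ and setting $x_k=l_{g_k}x$ with $\chi(g_k)=\omega_k$, one gets $\bigl|\sum_{b_k\le n<b_{k+1}}f(\phi^n x)\,\bfu(n)\bigr|=\sum_{b_k\le n<b_{k+1}}f(\phi^n x_k)\,\bfu(n)$, and now~\eqref{intr2} applies directly (this is the content of Theorem~\ref{thm:prop}(ii)).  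An $L^1$-approximation, using unique ergodicity, extends the conclusion from $\mathscr C$ to all zero-mean $f\in C(G/\Gamma)$; the case $\bfu=\mob$ with arbitrary $f$ then follows from the Matom\"aki--Radziwi\l\l--Tao input for the constant part (Remark~\ref{rem:mrt}), which is the only place where that analytic number theory is used.  In short, the homogeneous structure enters not through orbit closures of $\phi^p\times\phi^q$, but through the central torus action that rotates the phases of the test functions.
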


The property expressed by~\eqref{eq:zbkr} will be referred to as the
{\em Möbius orthogonality on typical short interval}.  One more
consequence of Theorem~%
\ref{ThA} is the following sample of a result when this property takes
place.

\begin{PropA}
  \label{PropD} Assume that~$ P\in\R[x] $ is a non-zero degree
  polynomial with irrational leading coefficient.  Then for all
  $ \gamma\in\R\setminus\{0\} $ and $\varrho\in\R$, we have
  \begin{equation}
    \label{eq:zast1} \frac1M\sum_{M\leq m<2M}\left|\frac1H\sum_{m\leq
        h<m+H} e^{2\pi iP([\gamma h+\varrho])}\mob(h)\right|\longrightarrow 0
  \end{equation}
  as~$ H\to\infty $ and~$ H/M\to0 $.
\end{PropA}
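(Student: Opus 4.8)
The plan is to recognize $h\mapsto e^{2\pi iP([\gamma h+\varrho])}$ as a bounded, Riemann-integrable function evaluated along an orbit of an \emph{ergodic} affine unipotent diffeomorphism of a nilmanifold, and then to deduce~\eqref{eq:zast1} from Corollary~\ref{CorC} by sandwiching that function between continuous ones.

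First I would use $[\gamma h+\varrho]=\gamma h+\varrho-\{\gamma h+\varrho\}$ and expand $P$: this exhibits $h\mapsto P([\gamma h+\varrho])$ as a bounded generalized (bracket) polynomial in $h$, built from the linear polynomial $\gamma h+\varrho$ by finitely many additions, multiplications and applications of the integer-part map. By the structure theorem for bounded generalized polynomials (Bergelson--Leibman), there are a compact nilmanifold $G'/\Gamma'$, an affine unipotent diffeomorphism $\phi$ of it, a point $x_0$ and a bounded $F\colon G'/\Gamma'\to\C$ with Haar-null discontinuity set such that $e^{2\pi iP([\gamma h+\varrho])}=F(\phi^h x_0)$ for all $h$. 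When $\gamma\in\Z$ this is elementary with $F$ continuous, since then $[\gamma h+\varrho]=\gamma h+[\varrho]$ and $P([\gamma h+\varrho])$ is a genuine real polynomial in $h$ with irrational leading coefficient $\alpha\gamma^{\deg P}$, so $e^{2\pi iP([\gamma h+\varrho])}$ is a polynomial nilsequence; for $\gamma\in\Q$ the same holds along each residue class of the denominator of $\gamma$, and the genuinely bracket (and, in higher degree, non-abelian) case is $\gamma$ irrational, where $F$ is discontinuous along the fibre over $0$ in the torus coordinate recording $\{\gamma h+\varrho\}$. Next I would replace $G'/\Gamma'$ by the orbit closure $Y:=\overline{\{\phi^h x_0:h\ge0\}}$; for an affine unipotent diffeomorphism $Y$ is a finite union of sub-nilmanifolds cyclically permuted by $\phi$, on which $\phi$ acts minimally, hence uniquely ergodically, and $(\phi^h x_0)_h$ is well-distributed in $Y$ for its Haar measure $m_Y$. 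Since (for irrational $\gamma$) the coordinate $\{\gamma h+\varrho\}$ is dense in $\R/\Z$ along the orbit, $Y$ is not contained in the discontinuity locus of $F$, so $F|_Y$ is Riemann-integrable for $m_Y$.

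Now fix $\eps>0$ and choose continuous $F^-\le F\le F^+$ on $Y$ with $\int_Y(F^+-F^-)\,dm_Y<\eps$. For continuous $g$ the sequence $h\mapsto g(\phi^h x_0)$ is a nilsequence, so Corollary~\ref{CorC}, applied to the ergodic affine unipotent diffeomorphism carried by $Y$ with $\bfu=\mob$ (which permits arbitrary continuous $g$, not merely zero-mean ones, after the usual splitting into arithmetic progressions if $Y$ is disconnected), gives
\[
\frac1M\sum_{M\le m<2M}\Bigl|\frac1H\sum_{m\le h<m+H}g(\phi^h x_0)\,\mob(h)\Bigr|\longrightarrow0
\]
as $H\to\infty$, $H/M\to0$; I apply it to $g=F^{\pm}$. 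On the other hand $0\le(F^+-F)(\phi^h x_0)\le(F^+-F^-)(\phi^h x_0)$, and unique ergodicity of $\phi$ on $Y$ gives
\[
\sup_{m\ge0}\ \frac1H\sum_{m\le h<m+H}(F^+-F^-)(\phi^h x_0)\ \longrightarrow\ \int_Y(F^+-F^-)\,dm_Y<\eps
\]
as $H\to\infty$. Combining the last two displays via the triangle inequality and $|\mob|\le1$ bounds the $\limsup$ of the left side of~\eqref{eq:zast1} (over $H\to\infty$, $H/M\to0$) by $\eps$; letting $\eps\to0$ finishes the proof.

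The main obstacle is the realization step---the generalized-polynomial representation with a controlled (Haar-null) discontinuity set---together with the bookkeeping needed whenever periodic behaviour occurs (rational $\gamma$, rational relations among the coefficients, or a disconnected orbit closure): one then splits into arithmetic progressions and must invoke Möbius orthogonality on typical short intervals for polynomial nilsequences along those progressions, which ultimately rests on the short-interval behaviour of $\mob$ along arithmetic progressions. One also has to check that passing to the minimal sub-nilmanifold $Y$ preserves the Riemann-integrability of $F$. By contrast the sandwich step is routine, its only delicate point being that $\eps$ must be chosen independently of the joint limit $H\to\infty$, $H/M\to0$.
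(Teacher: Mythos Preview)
Your approach is workable in spirit but takes a substantially more circuitous route than the paper, and it has one technical slip: the function $F=e^{2\pi iP([\,\cdot\,])}$ is circle-valued, so the sandwich $F^-\le F\le F^+$ makes no sense as written; you would have to treat real and imaginary parts separately, or replace the sandwich by the $L^1$-approximation argument of Remark~\ref{rem:riemint}.

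The paper's proof is much shorter because it \emph{decouples the polynomial from the bracket}. One represents $P(n)\bmod 1$ itself (not $P([\gamma h+\varrho])$) as $f(\phi^n x)$ for an explicit ergodic affine unipotent map on the torus $\T^d$, namely the standard skew product $\phi(x_1,\dots,x_d)=(x_1+\alpha,\,x_1+x_2,\dots,\,x_{d-1}+x_d)$ with $\alpha=d!\,a_d$, and $f$ the last-coordinate projection. Then $e^{2\pi iP([\gamma h+\varrho])}$ is a \emph{continuous} function of $\phi^{[\gamma h+\varrho]}x$, and one invokes Theorem~\ref{thm:gammaaffine}, which is designed precisely for expressions of the form $f(\phi^{[\gamma n+\varrho]}x)$. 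All the ingredients you propose to assemble by hand---the suspension, the Riemann-integrable lift, Ratner/orbit-closure, the disconnected case, the $D(M)$ approximation---are already packaged inside the proof of Theorem~\ref{thm:gammaaffine}; the Bergelson--Leibman structure theorem is never needed. Your route would ultimately succeed, but at the cost of re-deriving that machinery and of handling the disconnected-$Y$ case separately (Corollary~\ref{CorC} alone does not cover it), whereas the paper reduces the whole proposition to a two-line application of Theorem~\ref{thm:gammaaffine}.
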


\bigskip

Recall that a sequence $(a_n)\subset\C$ is called a {\em nilsequence}
if it is a uniform limit of sequences of the form
$(f(l_u^n(x\Gamma)))$, where $G/\Gamma$ is a compact
nilmanifold.\footnote{$G/\Gamma$ need not be connected and $l_u$ need
  not be ergodic.} Green and Tao \cite{Gr-Ta} proved that all
nilsequences are orthogonal to $\mob$ (their result is
quantitative). We will prove the Möbius orthogonality on typical short
interval:

\begin{ThA}\label{TheoremE} For each nilsequence $(a_n)\subset\C$, we
  have
$$
\frac1M\sum_{M\leq m<2M}\left|\frac1H\sum_{m\leq
    h<m+H}a_h\mob(h)\right|\to 0$$ when $H\to\infty$, $H/M\to0$.
\end{ThA}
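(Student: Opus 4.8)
The plan is to reduce the statement about a general nilsequence to the case of an \emph{ergodic} nil-translation, to which Corollary~\ref{CorC} applies directly, and then to handle the passage from a uniform limit of such sequences to the sequence itself. First I would observe that it suffices to prove the claim for sequences of the form $(f(l_u^n(x\Gamma)))$ with $f\in C(G/\Gamma)$, $G$ connected and simply connected, $\Gamma<G$ a lattice and $u\in G$: indeed, if $(a_n)$ is a uniform limit of such sequences, say $\sup_n|a_n-a_n^{(j)}|<\eps_j\to0$, then for every $M,H$ one has
\[
\frac1M\sum_{M\le m<2M}\Bigl|\frac1H\sum_{m\le h<m+H}a_h\mob(h)\Bigr|
\le \eps_j+\frac1M\sum_{M\le m<2M}\Bigl|\frac1H\sum_{m\le h<m+H}a_h^{(j)}\mob(h)\Bigr|,
\]
using $|\mob|\le1$; so if the inner double average tends to $0$ for each fixed $j$, letting first $H\to\infty$, $H/M\to0$ and then $j\to\infty$ gives the result. (That a nilsequence on a possibly disconnected nilmanifold can be realized, after passing to a finite-index subgroup / connected component, on a connected nilmanifold of the type in Theorem~\ref{ThA} is standard; alternatively one partitions $\N$ along a fixed modulus $d$ where $d$ is chosen so that $l_u^d$ preserves the connected component of the identity coset, and treats each residue class by a translation on a connected nilmanifold, absorbing the arithmetic progression $h\mapsto dh+r$ into the dynamics, as in the reduction used for Proposition~\ref{PropD}.)

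Next I would reduce to an \emph{ergodic} translation. Given $u$ and the compact connected nilmanifold $G/\Gamma$, the orbit closure $Y:=\overline{\{l_u^n(x\Gamma):n\ge0\}}$ is itself a sub-nilmanifold $H/\Lambda$ of $G/\Gamma$ (by the Leibman/Lesigne structure theory for orbit closures of nil-translations), and $l_u$ restricted to $Y$ is an ergodic—indeed uniquely ergodic—nil-translation on $H/\Lambda$; restricting $f$ to $Y$ we are exactly in the situation of Corollary~\ref{CorC} with $\phi=l_u|_Y$ (an ergodic affine unipotent diffeomorphism, in fact an ergodic nil-translation) and the point $x\Gamma\in Y$. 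Corollary~\ref{CorC} with $\bfu=\mob$ and arbitrary $f\in C(Y)$ then yields
\[
\frac1M\sum_{M\le m<2M}\Bigl|\frac1H\sum_{m\le h<m+H}f(l_u^h(x\Gamma))\,\mob(h)\Bigr|\to0
\]
as $H\to\infty$, $H/M\to0$, which is the required estimate for the generating sequence $a_n^{(j)}=f(l_u^n(x\Gamma))$. Combining with the first paragraph completes the proof.

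The main obstacle—and the only genuinely nontrivial point—is the \emph{reduction to the connected, ergodic, simply-connected setting} in which Theorem~\ref{ThA}/Corollary~\ref{CorC} is stated. One must carefully handle: (a) disconnectedness of $G/\Gamma$ in the definition of nilsequence, which I propose to resolve by decomposing along a residue class modulo a suitable $d$ so that the relevant power of $l_u$ stays in the identity component and simultaneously folding the affine change of index $h\mapsto dh+r$ into an affine unipotent map on the connected piece (note $h\mapsto u^d\cdot(\text{translation by }u^r)$ is again a nil-translation, and $\mob$ along $dh+r$ is a combination of multiplicative functions only when $d=1$, so here one should instead use Corollary~\ref{CorC}'s statement for the special multiplicative-like behavior, or, more simply, observe that the short-interval average over $h$ with $h\equiv r\ (\mathrm{mod}\ d)$ is a sub-average of the full one and reindex); and (b) the orbit-closure argument, which requires the cited structure theorem that orbit closures of nil-translations are sub-nilmanifolds and that the induced translation is uniquely ergodic. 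Once these standard structural facts are in place, the analytic content is entirely supplied by Corollary~\ref{CorC}, and the triangle-inequality argument of the first paragraph does the rest.
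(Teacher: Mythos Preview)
Your reduction to the connected, ergodic case has a genuine gap. The orbit-closure step does not by itself land you in the connected setting: even on a connected nilmanifold, the orbit closure of a non-ergodic nil-translation may be disconnected (think of rotation by $1/2$ on $\T$), so you cannot then invoke Corollary~\ref{CorC}. And your proposed handling of disconnectedness---decomposing along residue classes $h\equiv r\pmod d$ so that $l_u^d$ preserves a connected component---runs into exactly the obstacle you yourself flag: you would then need short-interval cancellation of $n\mapsto\mob(dn+r)$ against an orbit on the connected piece, but $\mob(dn+r)$ is not multiplicative, so Corollary~\ref{CorC} does not apply. Your fallback suggestion that the restricted average is ``a sub-average of the full one'' does not help, since we are not bounding a positive quantity by a larger one; each residue-class piece must itself go to zero. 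The analogy with Proposition~\ref{PropD} is misleading: there the arithmetic progression sits inside the dynamical system, not inside the argument of $\mob$.

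The paper avoids this by first proving the short-interval estimate directly for arbitrary (possibly disconnected, possibly non-ergodic) nil-translations: this is Theorem~\ref{thm:nonconnect}, from which Theorem~\ref{TheoremE} follows immediately by your uniform-limit argument. The ingredient you are missing is that the proof of Theorem~\ref{thm:nonconnect} splits $L^2(M)=H^+\oplus H^-$, where $H^+$ consists of functions constant on connected components. On $H^-$ one establishes a relative AOP property (Proposition~\ref{prop:nonconnect}) and proceeds as in Theorem~\ref{thm:prop}. On $H^+$ the sequence $(f(l_u^nx))$ is \emph{periodic}, and here the essential external input is the Matom\"aki--Radziwi\l\l--Tao short-interval result (see Remark~\ref{rem:mrt}), which gives the required cancellation for periodic sequences against $\mob$. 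This MRT ingredient is precisely what your argument lacks; once it is in place, the ergodic disconnected case is settled, and then the non-ergodic case reduces to it via the orbit-closure theorem (the orbit closure being a possibly disconnected sub-nilmanifold, now covered).
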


As Leibman in \cite{MR2643713} proved that all polynomial
multi-correlation se\-quen\-ces are nilsequences in the Weyl pseudo-metric
(see Section~\ref{pmc}), we obtain the following result.

% Let~$ T $ be an automorphism of~$ (X,{\mathcal B},\mu) $, $
% g_i\in %L^\infty
% (X,\mu) $,
% $ p_i\in\Z[x] $, $ i=1,\ldots,k $.  Then there exists a
% nilsequence~$ (d_n) $ such that
%$$
%\limsup_{N-M\to\infty} \frac1{N-M}\sum_{n=M}^{N-1} \left|d_n-\int_X
%  g_1\circ T^{p_1(n)}\cdot\ldots\cdot g_k\circ T^{p_k(n)}\,d\mu\right
% |=0.
%$$
% \end{Th*}

\begin{CorA}\label{CorollaryF}
  \begin{sloppypar}
    For every automorphism~$ T $ of a probability standard Borel space
    $\xbm$, the polynomial multi-correlations functions are orthogonal
    to the Möbius function on a typical short interval, that is, for
    every~$ g_i\in L^\infty(X,\mu) $, $ p_i\in\Z[x] $,
    $ i=1,\ldots,k $ ($k\geq1$), we have
  \end{sloppypar}
  $$
  \frac1M\sum_{M\leq m<2M}\left| \frac1H\sum_{m\leq h<m+H}
    \mob(h)\int_X g_1\circ T^{p_1(h)}\cdot\ldots\cdot g_k\circ
    T^{p_k(h)}\,d\mu\right|\longrightarrow 0
  $$
  when~$ H\to\infty $ and~$ H/M\to0 $.
\end{CorA}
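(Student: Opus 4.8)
The plan is to deduce Corollary~\ref{CorollaryF} from Theorem~\ref{TheoremE} together with Leibman's theorem \cite{MR2643713} that polynomial multi-correlation sequences are nilsequences in the Weyl pseudo-metric. Fix an automorphism~$T$ of~$\xbm$, functions~$g_1,\dots,g_k\in L^\infty(X,\mu)$ and polynomials~$p_1,\dots,p_k\in\Z[x]$, and set
\[
c_h:=\int_X g_1\circ T^{p_1(h)}\cdot\ldots\cdot g_k\circ T^{p_k(h)}\,d\mu .
\]
First I would invoke Leibman's result to obtain, for every~$\eps>0$, a nilsequence~$(a_h)$ such that~$(c_h-a_h)$ is small in the Weyl pseudo-metric; concretely,
\[
\limsup_{L\to\infty}\ \sup_{m\in\N}\ \frac1L\sum_{m\leq h<m+L}\bigl|c_h-a_h\bigr|<\eps .
\]
The sequence~$(c_h)$ is bounded (by~$\prod_i\|g_i\|_\infty$), and one may assume~$(a_h)$ bounded as well.

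Next I would split the inner average: for any~$m,H$,
\[
\frac1H\sum_{m\leq h<m+H}c_h\mob(h)
=\frac1H\sum_{m\leq h<m+H}a_h\mob(h)
+\frac1H\sum_{m\leq h<m+H}(c_h-a_h)\mob(h).
\]
Taking absolute values, averaging over~$M\leq m<2M$, and using~$|\mob|\leq1$, the first term is controlled by Theorem~\ref{TheoremE} applied to the nilsequence~$(a_h)$: it tends to~$0$ as~$H\to\infty$, $H/M\to0$. For the second term, the triangle inequality gives
\[
\frac1M\sum_{M\leq m<2M}\left|\frac1H\sum_{m\leq h<m+H}(c_h-a_h)\mob(h)\right|
\leq \frac1M\sum_{M\leq m<2M}\frac1H\sum_{m\leq h<m+H}\bigl|c_h-a_h\bigr|,
\]
and the right-hand side is, up to a harmless boundary error of size~$O(H/M)$, an average of block means of the non-negative sequence~$(|c_h-a_h|)$; by the Weyl-smallness of~$(c_h-a_h)$ this is~$<\eps$ for all~$H$ large and~$H/M$ small. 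Hence
\[
\limsup_{\substack{H\to\infty\\ H/M\to0}}\ \frac1M\sum_{M\leq m<2M}\left|\frac1H\sum_{m\leq h<m+H}c_h\mob(h)\right|\leq\eps,
\]
and letting~$\eps\to0$ finishes the proof.

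The only genuinely nontrivial input is Leibman's approximation theorem; granting it, the argument is the routine observation that ``Möbius orthogonality on typical short intervals'' passes to the closure of nilsequences in the Weyl pseudo-metric, since that pseudo-metric is exactly tailored to make the short-interval averages continuous. The mild technical point to be careful about is the interchange between the ``$\sup_{m}$'' in the definition of the Weyl pseudo-metric and the finite average over~$M\leq m<2M$ with window length~$H$: one uses that each window~$[m,m+H)$ with~$m<2M$ and~$H$ large is covered by a bounded number of windows on which the Weyl bound is effective, so the bound~$\eps$ is uniform over the relevant range once~$H$ is large enough; the boundary contributions are~$O(H/M)\to0$. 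I expect this bookkeeping to be the main (and still elementary) obstacle.
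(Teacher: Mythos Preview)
Your proposal is correct and follows essentially the same route as the paper: invoke Leibman's theorem, apply Theorem~\ref{TheoremE} to the approximating nilsequence, and control the remainder by the triangle inequality. The only difference is that the paper uses Leibman's theorem in the form stated just before the proof (a \emph{single} nilsequence $(d_n)$ with $\limsup_{N-M\to\infty}\frac1{N-M}\sum_{n=M}^{N-1}|d_n-c_n|=0$), which makes your $\eps$-approximation and the final $\eps\to0$ step unnecessary: the error term goes to~$0$ directly, uniformly in~$m$, as $H\to\infty$, and the bookkeeping you flag disappears.
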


In some classical cases (e.g.\ when~$ G $ is $ H_3(\R) $), it is
well-known that nil-trans\-la\-tions are time automorphisms of the
suspension flows over affine automorphisms of the torus.  Since affine
automorphisms on tori enjoy the AOP property \cite{Ab-Le-Ru}, a
natural question arises whether the AOP property of an automorphism
implies the AOP property of the suspension flow. This, in fact,
motivates a more general question.  The AOP property can be studied
for actions of (general) abelian groups, see
Section~\ref{liftingAOPia}.  Passing from automorphisms (i.e.\
$ \Z $-actions) to their suspensions ($ \R $-actions) is a particular
case of inducing \cite{Zi}.  Our aim will be to prove the following
result about the ``relative'' AOP property for induced actions.

\begin{PropA}\label{PropositionG}
  Let $ H $ be a closed cocompact subgroup of a locally compact second
  countable abelian group~$ G $. Assume that $H$ has no non-trivial
  compact subgroups. Assume, moreover, that an~$ H $-action~$ \cs $ on
  a probability standard Borel space $\ycn$ has the AOP property.
  Then, for each~$ E,F\in L^2(Y\times G/H)\ominus L^2(G/H) $, we have
$$\lim_{p\neq q, p,q\in\mathscr{P},p,q\to\infty}
\sup_{\kappa\in
  J^e(\widetilde\cs^{(p)},\widetilde\cs^{(q)})}\left|\int_{Y\times
    G/H\times Y\times G/H}E\ot F\,d\kappa\right|=0$$
for the induced~$ G $-action~$ \widetilde{\cs}, $ i.e.\ the induced
$G$-action has the ``relative'' AOP property.
\end{PropA}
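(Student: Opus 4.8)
The plan is to reduce the ``relative'' AOP property of the induced $G$-action~$\widetilde\cs$ to the (ordinary) AOP property of the base $H$-action~$\cs$, by disintegrating ergodic joinings of the powers of~$\widetilde\cs$ over their projections onto the compact ``base'' $G/H\times G/H$. Recall the concrete model of the induced action: fix a Borel section $\sigma\colon G/H\to G$ of the quotient map $\pi\colon G\to G/H$ and let $c\colon G\times G/H\to H$, $c(g,x)=g+\sigma(x)-\sigma(g+x)$ (writing $G$ additively and letting $g$ act on $G/H$ through~$\pi$), be the associated cocycle, so that $\widetilde S_g(y,x)=(S_{c(g,x)}y,\,g+x)$ on $(Y\times G/H,\,\nu\ot m)$, where $m$ is the Haar measure on $G/H$. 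The first point is that for $p\neq q$ and \emph{every} point $(x,x')$ of the base $G/H\times G/H$, the stabiliser of $(x,x')$ under the diagonal action $g\mapsto\widetilde S^{(p)}_g\times\widetilde S^{(q)}_g$ (which on the base is $(x,x')\mapsto(x+p\pi(g),\,x'+q\pi(g))$) is exactly~$H$: if $p\pi(g)=0$ and $q\pi(g)=0$ in $G/H$ and $ap+bq=1$ with $a,b\in\Z$, then $\pi(g)=ap\pi(g)+bq\pi(g)=0$, i.e.\ $g\in H$, and the converse is clear. Moreover, for $h\in H$ one has $\pi(ph)=0$, hence $c(ph,x)=ph$, so $\widetilde S^{(p)}_h(y,x)=(S_{ph}y,x)=(\cs^{(p)}_hy,x)$, and likewise for~$q$; thus the restriction to~$H$ of the diagonal action of the powers of~$\widetilde\cs$ fixes every base point and acts on the fibre $(Y\times\{x\})\times(Y\times\{x'\})\cong Y\times Y$ precisely as $\cs^{(p)}\times\cs^{(q)}$.

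Next, let $\kappa\in J^e(\widetilde\cs^{(p)},\widetilde\cs^{(q)})$ and let $\bar\kappa$ be its projection onto $G/H\times G/H$; being a joining of the $G$-actions on $(G/H,m)$ given by rotation by $p\cdot$ and by $q\cdot$, each of the two marginals of~$\bar\kappa$ equals~$m$. Disintegrate $\kappa=\int_{G/H\times G/H}\kappa_{(x,x')}\,d\bar\kappa(x,x')$ along the projection onto the base, where $\kappa_{(x,x')}$ is a probability measure on $Y\times Y$. Since the $H$-part of the diagonal action covers the identity of the base and acts on fibres as $\cs^{(p)}\times\cs^{(q)}$, uniqueness of disintegration gives that $\kappa_{(x,x')}$ is $\cs^{(p)}\times\cs^{(q)}$-invariant for $\bar\kappa$-a.e.\ $(x,x')$. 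Its first marginal is a $\cs^{(p)}$-invariant probability measure, and averaging over~$\bar\kappa$ yields~$\nu$ (the first $Y$-marginal of~$\kappa$, as a joining, is $\nu$); as $\cs^{(p)}$ is ergodic (by the basic properties of AOP actions recorded in Section~\ref{liftingAOPia}, which is where the hypothesis that $H$ has no non-trivial compact subgroups enters), $\nu$ is an extreme point of the simplex of $\cs^{(p)}$-invariant measures, so the first marginal of $\kappa_{(x,x')}$ equals~$\nu$ for $\bar\kappa$-a.e.\ $(x,x')$, and similarly for the second. Hence $\kappa_{(x,x')}\in J(\cs^{(p)},\cs^{(q)})$ for $\bar\kappa$-a.e.\ $(x,x')$.

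Finally, given $E,F\in L^2(Y\times G/H)\ominus L^2(G/H)=L^2_0(Y,\nu)\ot L^2(G/H,m)$ and $\eps>0$, approximate in $L^2$ by finite sums $E_n=\sum_{i\le n}c_i\ot d_i$, $F_n=\sum_{j\le n}c'_j\ot d'_j$ with $c_i,c'_j\in L^2_0(Y,\nu)$ and $d_i,d'_j\in L^2(G/H,m)$, so that $\|E-E_n\|_2,\|F-F_n\|_2<\eps$. Since the marginal of~$\kappa$ on each copy of $Y\times G/H$ is $\nu\ot m$, writing $E\ot F-E_n\ot F_n=(E-E_n)\ot F+E_n\ot(F-F_n)$ and applying the Cauchy--Schwarz inequality twice bounds $|\int E\ot F\,d\kappa-\int E_n\ot F_n\,d\kappa|$ by $\eps(\|E\|_2+\|F\|_2+\eps)$, uniformly in~$\kappa$. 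For the remaining term, disintegrating over~$\bar\kappa$ gives
\[
\int E_n\ot F_n\,d\kappa=\sum_{i,j\le n}\int_{G/H\times G/H}d_i(x)d'_j(x')\Big(\int_{Y\times Y}c_i\ot c'_j\,d\kappa_{(x,x')}\Big)\,d\bar\kappa(x,x'),
\]
and since $\kappa_{(x,x')}\in J(\cs^{(p)},\cs^{(q)})$ for $\bar\kappa$-a.e.\ $(x,x')$, the AOP property of~$\cs$ yields a bound $\big|\int c_i\ot c'_j\,d\kappa_{(x,x')}\big|\le\delta_{i,j}(p,q)$ with $\delta_{i,j}(p,q)\to0$ as $p\neq q$, $p,q\in\mathscr P$, $p,q\to\infty$. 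As the marginals of~$\bar\kappa$ are~$m$, pulling $\delta_{i,j}(p,q)$ out and using Cauchy--Schwarz on the base gives $\big|\int E_n\ot F_n\,d\kappa\big|\le\sum_{i,j\le n}\delta_{i,j}(p,q)\,\|d_i\|_{L^2(m)}\|d'_j\|_{L^2(m)}$, a finite sum tending to~$0$ uniformly in~$\kappa$. Letting first $p\neq q\to\infty$ and then $\eps\to0$ proves the claim.

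The main obstacle is precisely the mismatch, in the last step, between the joining estimate furnished by AOP of~$\cs$, which applies only to \emph{fixed} pairs of zero-mean functions on~$Y$, and the fact that the natural ``test functions'' on the fibres, $y\mapsto E(y,x)$ and $y'\mapsto F(y',x')$, vary with the base point $(x,x')$. This is what forces the bilinear $L^2$-approximation of $E$ and~$F$ by finitely many fixed functions on~$Y$ tensored with functions on~$G/H$, together with the observation — valid \emph{whatever} the joining~$\kappa$ is — that its base projection $\bar\kappa$ always has Haar marginals on~$G/H$, which is what keeps the resulting double sum under control uniformly in~$\kappa$; the rest is the (routine, but essential) verification that the fibre measures $\kappa_{(x,x')}$ are genuine joinings of the base powers, for which the stabiliser computation and the ergodicity of $\cs^{(p)}$ are the substantive inputs.
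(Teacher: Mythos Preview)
Your argument is correct and takes a genuinely different, more elementary route than the paper's.

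The paper's proof is structural: it parametrises the ergodic components of $\tau^{(p)}\times\tau^{(q)}$ on the base (Lemma~\ref{l:in2}), passes via Lemma~\ref{l:in3} to the cocycle $\theta^{(p,q)}$, and then shows in Lemma~\ref{l:in4} that this cocycle is \emph{regular} with essential range $\cf^{(p,q)}=\{(h^p,h^q):h\in H\}$. The key step there applies the automorphism $J=\left[\begin{smallmatrix}q&-p\\a&b\end{smallmatrix}\right]$ to $\theta^{(p,q)}$; one component then takes values in a relatively compact set, hence is a coboundary by the Moore--Schmidt theorem --- and this is exactly where the no-compact-subgroups hypothesis~\eqref{in1} enters. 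A theorem from \cite{Le-Me-Na} (Lemma~\ref{l:in5}) then identifies every ergodic joining, after an explicit fibred isomorphism, as $m_{G/H}\ot\kappa$ with a \emph{single} $\kappa\in J^e(\cs^{(p)},\cs^{(q)})$, and the estimate~\eqref{in23} follows. Your approach sidesteps the cocycle analysis entirely: since the $H$-subaction of $\widetilde\cs^{(p)}\times\widetilde\cs^{(q)}$ fixes every base point and acts on each fibre as $\cs^{(p)}\times\cs^{(q)}$, disintegration plus the extremality of $\nu$ among $\cs^{(p)}$-invariant measures already forces the fibre measures into $J(\cs^{(p)},\cs^{(q)})$, and Remark~\ref{r:in00}(iii) lets you apply AOP there. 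What you lose is the structural description (your fibre joinings may vary with $(x,x')$ and need not be ergodic), but for the relative AOP estimate this is irrelevant, and your tensor-approximation endgame handles the uniformity in $\kappa$ cleanly. One small correction: your parenthetical attributing the ergodicity of $\cs^{(p)}$ to the no-compact-subgroups hypothesis is not accurate --- connectedness of $\widehat H$ does not imply torsion-freeness (take $H=\Z$); the ergodicity of $\cs^{(p)}$ is rather part of the total-ergodicity package implicit in AOP, cf.\ Remark~\ref{r:in00}(i). In fact your argument appears not to use~\eqref{in1} at all.
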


An application of this result for~$ H=k\Z $ and~$ G=\Z $ yields the
following:

\begin{CorA}\label{CorollaryH}
  \label{c:km} Assume that~$ T $ is a uniquely ergodic homeomorphism
  of~$ X $, with the unique invariant measure~$ \mu $.  Assume
  moreover, that~$ (X,\mu,T) $ has the AOP property.  Then, for each
  multiplicative function~$ \bfu:\N\to\C $, $ |\bfu|\leq1 $, for
  each~$ k\geq 1 $ and~$ 0\leq j<k $, we have
  \begin{equation}
    \label{in24} \frac1N\sum_{n\leq N}f(T^nx)\bfu(kn+j)\to0
  \end{equation}
  for each~$ f\in C(X) $ of zero mean and each~$ x\in X $.  In
  particular,
  \begin{equation}
    \label{in25} \frac1N\sum_{n\leq N}f(T^nx)\mob(kn+j)\to0.
  \end{equation}
\end{CorA}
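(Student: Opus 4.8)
\emph{Proof plan.} The strategy is to view the $\Z$-action of $T$ as a relabelled $k\Z$-action, apply Proposition~\ref{PropositionG} with $G=\Z$ and $H=k\Z$, and extract~\eqref{in24} from the resulting ``relative'' AOP of the induced $\Z$-action via the Kátai--Bourgain--Sarnak--Ziegler criterion, following the scheme of~\eqref{intr1}--\eqref{intr2}.

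First I would identify $k\Z$ with $\Z$ via $mk\leftrightarrow m$ and let $\cs$ be the $k\Z$-action on $(X,\mu)$ whose generator $k$ acts by $T$. Under this identification the powers $\cs^{(p)},\cs^{(q)}$ become $(X,\mu,T^p),(X,\mu,T^q)$, so $J(\cs^{(p)},\cs^{(q)})=J(T^p,T^q)$ and $\cs$ inherits the AOP property from $(X,\mu,T)$. The remaining hypotheses of Proposition~\ref{PropositionG} hold: $H=k\Z$ is closed and cocompact in $G=\Z$ (indeed $G/H=\Z/k\Z$ is finite) and, being infinite cyclic, has no non-trivial compact subgroups. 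Concretely, the induced $\Z$-action $\widetilde\cs$ is isomorphic to the homeomorphism
\[
\widetilde S\colon \widetilde X:=X\times\Z/k\Z\longrightarrow\widetilde X,\qquad
\widetilde S(y,i)=\begin{cases}(y,i+1)&0\le i<k-1,\\(Ty,0)&i=k-1,\end{cases}
\]
so that $\widetilde S^{\,n}(y,0)=\big(T^{\lfloor n/k\rfloor}y,\ n\bmod k\big)$, and $G/H=\Z/k\Z$ is realised as the factor $(y,i)\mapsto i$. Note that $\widetilde\cs$ is uniquely ergodic, with invariant measure $\mu\otimes(\text{Haar on }\Z/k\Z)$: any invariant measure projects to the unique measure on the single periodic orbit $\Z/k\Z$, and its conditional measures on the fibres are all equal (via the first $k-1$ of the maps above) to a common $T$-invariant measure, hence to $\mu$ by unique ergodicity of $T$.

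By Proposition~\ref{PropositionG} the induced action $\widetilde\cs$ has the relative AOP property over $\Z/k\Z$. Given $f\in C(X)$ with $\int_X f\,d\mu=0$, put $F(y,i):=f(y)\,\raz_{\{j\}}(i)\in C(\widetilde X)$; then $\E(F|\Z/k\Z)\equiv0$, i.e.\ $F\in L^2(\widetilde X)\ominus L^2(\Z/k\Z)$. Combining the relative AOP of $\widetilde\cs$ with its unique ergodicity and with the Kátai--Bourgain--Sarnak--Ziegler criterion, exactly as~\eqref{intr1}--\eqref{intr2} are obtained in the Introduction but now applied to $F$ (with $E=\overline F$, so that only the component of the joinings orthogonal to $L^2(\Z/k\Z)$ enters), one gets
\[
\frac1N\sum_{n\le N}F(\widetilde S^{\,n}\tilde x)\,\bfu(n)\longrightarrow0
\]
for every $\tilde x\in\widetilde X$ and every multiplicative $\bfu:\N\to\C$ with $|\bfu|\le1$. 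Taking $\tilde x=(x,0)$ and using the formula for $\widetilde S^{\,n}(x,0)$, the left-hand side equals $\frac1N\sum_{0\le m\le(N-j)/k}f(T^m x)\,\bfu(km+j)$; rescaling $N\sim kN'$ yields~\eqref{in24}. For $\bfu=\mob$ and arbitrary $f\in C(X)$, the splitting $f=(f-\int_X f\,d\mu)+\int_X f\,d\mu$ together with the classical fact $\frac1N\sum_{n\le N}\mob(kn+j)\to0$ (the prime number theorem in arithmetic progressions) gives~\eqref{in25}.

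The step I expect to be the real point is the correct choice of the $k\Z$-action: one must induce the relabelled action $mk\mapsto T^m$, \emph{not} the restriction $mk\mapsto T^{mk}$ of $T$ to $k\Z$; only the former produces an induced action whose first-return map to a fibre is $T$ itself (hence uniquely ergodic, and reading off $T^m$, not $T^{km}$, along orbits). Everything else is the bookkeeping of the relative version of the Introduction's argument --- checking that the empirical measures of $\big(\widetilde S^{\,pn}\tilde x,\widetilde S^{\,qn}\tilde x\big)$ have ergodic components that are joinings to which the uniform bound of Proposition~\ref{PropositionG} applies, and that $F$ lies in $L^2(\widetilde X)\ominus L^2(\Z/k\Z)$ --- which is routine given the machinery already set up.
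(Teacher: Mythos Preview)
Your proof is correct and follows essentially the same route as the paper: pass to the $k$-discrete suspension $\widetilde S$ on $X\times\Z/k\Z$, take $F=f\cdot\raz_{\{j\}}\in L^2(\widetilde X)\ominus L^2(\Z/k\Z)$, invoke Proposition~\ref{PropositionG} for the relative AOP, and read off~\eqref{in24} from the resulting Ces\`aro convergence along the orbit of $(x,0)$. Your additional remarks (the explicit unique ergodicity of $\widetilde S$ and the careful choice of the $k\Z$-action) merely spell out points the paper leaves implicit; the extra paragraph on~\eqref{in25} for arbitrary $f$ is not needed, since~\eqref{in25} as stated is just~\eqref{in24} specialised to $\bfu=\mob$.
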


\begin{CorA}\label{CorollaryI}
  Let $(a_n)$ be a sequence $\big(f(\phi^n(x\Gamma))\big)$,
  $\big(e^{2\pi iP([\gamma n+\varrho])}\big)$, an arbitrary
  nil-se\-quen\-ce or
  $\big(\int_Xg_1\circ T^{P_1(n)}\cdot\ldots\cdot g_r\circ
  T^{p_r(n)}\,d\mu\big)$
  as in Corollary~\ref{CorC}, Proposition~\ref{PropD},
  Theorem~\ref{TheoremE} and Corollary~\ref{CorollaryF}, respectively.
  Then, for each $k,j\in\N$, we have
  \begin{equation}\label{eq:corI}
    \frac1M\sum_{M\leq m<2M}\left|\sum_{m\leq n<m+H} a_n\mob(kn+j)\right|\to 0
  \end{equation}
  when $H\to\infty$, $H/M\to0$.
\end{CorA}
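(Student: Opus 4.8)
The plan is to reduce Corollary~\ref{CorollaryI} to the short-interval statements already obtained --- Corollary~\ref{CorC}, Proposition~\ref{PropD}, Theorem~\ref{TheoremE} and Corollary~\ref{CorollaryF} --- by means of the change of variable $\ell=kn+j$. We may assume $k\ge1$ and, after replacing $(a_n)$ by a shift of it (which is again a sequence of the same type) and moving the outer interval by a bounded amount, that $0\le j<k$. Introduce the sequence $(b_\ell)_{\ell\in\N}$ supported on the progression $\ell\equiv j\pmod k$ and given there by $b_{km'+j}=a_{m'}$; thus $b_\ell=a_{(\ell-j)/k}\,\raz_{k\mid(\ell-j)}$. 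Re-indexing the inner sum, with $m'=km+j$ and $H'=kH$, one gets $\frac1H\sum_{m\le n<m+H}a_n\,\mob(kn+j)=k\cdot\frac1{H'}\sum_{m'\le\ell<m'+H'}b_\ell\,\mob(\ell)$; since the outer average is taken of absolute values --- hence of non-negative quantities bounded by $\|a\|_\infty$ --- and since $m\mapsto km+j$ maps $[M,2M)\cap\Z$ injectively into $[kM,2kM)$, dropping the progression constraint and enlarging the range of $m'$ to all of $[kM,2kM)$ costs only a factor $k^2$ and an $O(1/M)$ boundary term. Hence, with $M'=kM\to\infty$, $H'=kH\to\infty$ and $H'/M'=H/M\to0$,
\[
\frac1M\sum_{M\le m<2M}\Bigl|\frac1H\sum_{m\le n<m+H}a_n\,\mob(kn+j)\Bigr|\ \le\ k^2\cdot\frac1{M'}\sum_{M'\le m'<2M'}\Bigl|\frac1{H'}\sum_{m'\le\ell<m'+H'}b_\ell\,\mob(\ell)\Bigr|+o(1),
\]
so it is enough to establish the Möbius orthogonality on a typical short interval for the single sequence $(b_\ell)$.

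The step I expect to be the main obstacle is to recognise $(b_\ell)$ as a sequence of the same nature as $(a_n)$, so that the appropriate earlier statement applies to it. The device making this work is that in a connected, simply connected nilpotent Lie group every element $w$ has a unique $k$-th root $w^{1/k}=\exp(\frac1k\log w)$: this lets one extend $m'\mapsto a_{m'}$ to a sequence $(\til a_\ell)_{\ell\in\N}$ of the same type with $\til a_{km'+j}=a_{m'}$, whereupon $b_\ell=\til a_\ell\cdot\raz_{k\mid(\ell-j)}$ is the product of a sequence of that type with a periodic, hence nil, sequence, and is therefore again of that type. For a basic nilsequence $a_n=f(l_w^n\,z)$ on a connected, simply connected nilmanifold one takes $\til a_\ell=f\bigl(l_{w^{1/k}}^{\ell}\,l_{w^{1/k}}^{-j}z\bigr)$; since every nilsequence is a uniform limit of basic nilsequences, which --- passing to a finite-sheeted cover if necessary, and recalling that periodic sequences are carried by tori --- may be taken on connected, simply connected nilmanifolds, this disposes of the arbitrary nilsequence. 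The orbit sequence $a_n=f(\phi^n(x\Gamma))$ of an ergodic affine unipotent diffeomorphism is treated the same way once $\phi$ is realised as a nil-translation on the connected, simply connected nilmanifold attached to the nilpotent group $G\rtimes_A\R$ --- with $\R$ acting by $t\mapsto A^t=\exp(t\log A)$, which is nilpotent precisely because $A$ is unipotent --- and to the lattice $\Gamma\rtimes_A\Z$. The bracket-polynomial sequence of Proposition~\ref{PropD} is a Riemann-integrable observable evaluated along the orbit of an (ergodic) affine unipotent map, so the construction above makes $(b_\ell)$ of the same shape, and one only adds the remark that the short-interval Möbius orthogonality passes, on the uniquely ergodic nilsystems arising here, from continuous to Riemann-integrable observables by sandwiching and unique ergodicity. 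Finally the polynomial multi-correlation sequence of Corollary~\ref{CorollaryF} is, by Leibman's theorem \cite{MR2643713}, a limit of basic nilsequences in the Weyl pseudo-metric; applying the $k$-th-root construction to the approximants and multiplying by $\raz_{k\mid(\ell-j)}$ exhibits $(b_\ell)$ as a Weyl-pseudo-metric limit of nilsequences, and the short-interval orthogonality transfers from Theorem~\ref{TheoremE} exactly as in the proof of Corollary~\ref{CorollaryF}. The $k$-th root itself is harmless; what has to be certified with care is the list of passages to the connected, simply connected setting: that an arbitrary nilsequence may indeed be assumed carried by such a nilmanifold (otherwise one argues on a finite cover), that an affine unipotent map is correctly suspended over $G\rtimes_A\R$ so that its $k$-th root becomes available, that the short-interval orthogonality really does pass from continuous to Riemann-integrable observables on the uniquely ergodic nilsystems in play, and that the Weyl-pseudo-metric transfer of Corollary~\ref{CorollaryF} is untouched by the re-indexing.

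Granting this, the conclusion is immediate: applying Theorem~\ref{TheoremE} --- or its variant for Riemann-integrable observables, or the Weyl-pseudo-metric argument behind Corollary~\ref{CorollaryF} --- to the sequence $(b_\ell)$ sends the right-hand side of the displayed inequality to $0$ as $H\to\infty$ and $H/M\to0$, which is the assertion of Corollary~\ref{CorollaryI}. The arithmetic-progression bookkeeping of the first step, although it has to be written out with some care, is routine.
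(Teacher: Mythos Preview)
Your re-indexing step $\ell=kn+j$ and the resulting bound by $k^2$ times the $(M',H')$-average is exactly the computation the paper carries out (in the unnamed lemma following Lemma~\ref{l:podciag}). From that point on, however, the two arguments diverge. The paper does \emph{not} attempt to exhibit $(b_\ell)$ as a nilsequence; instead it observes that every basic nilsequence is of the form $F(\widetilde{T}^n\widetilde{x})$ with $\widetilde{T}$ a discrete suspension of an ergodic affine unipotent $T$ on a \emph{connected} nilmanifold (this is the content of Section~4.4), and then takes a further $k$-discrete suspension $\widetilde{\widetilde{T}}$ of $\widetilde{T}$, which by \cite[Prop.~2.4]{Zi} is again a discrete suspension of $T$. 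The short-interval orthogonality for $\widetilde{\widetilde{T}}$ then follows from Lemma~\ref{l:podciag}, whose proof rests on the relative AOP property for induced actions (Proposition~\ref{PropositionG}). Thus the paper's route makes essential use of the Section~5 machinery.

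Your route --- take a $k$-th root of the translation element, multiply by the periodic indicator $\raz_{k\mid(\ell-j)}$, and invoke Theorem~\ref{TheoremE} --- is a legitimate alternative that bypasses Section~5 entirely, trading the induced-action framework for direct structural manipulation of nilsequences. The one point that needs tightening is the reduction to connected, simply connected $G$: the phrase ``passing to a finite-sheeted cover'' does not address the case where $G$ itself is disconnected (no cover of $G$ will be connected). The standard fix is first to replace $G$ by the subgroup generated by $G_0$ and $u$, then --- when $G/G_0$ is nontrivial --- to realise the system as in Section~4.4 as a discrete suspension of an affine unipotent map on the connected component, and only then suspend to a connected simply connected group where $k$-th roots exist. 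Once this is done carefully your observable on the orbit closure is in fact continuous (the discontinuities of the suspended $\tilde f$ lie off the relevant fibers), so Theorem~\ref{TheoremE} applies cleanly. In short: your argument is correct and more elementary, but the passage to the connected simply connected setting deserves a precise reference or a line of explanation rather than the cover remark.
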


Finally, we will also give new examples of AOP flows with partly
continuous singular spectra.

The second part of the paper has a form of an appendix in which we
provide some more information on Lie groups, but also we elucidate the
approach to prove the AOP property for nil-translations through Lie
group apparatus as special properties of measure-theoretical distal
automorphisms.

\section{On the AOP property and the Möbius orthogonality}

\subsection{Joinings.  The AOP property}%
\label{joiAOP} Assume that~$ T $ and~$ S $ are ergodic automorphisms
of probability standard Borel spaces $ \xbm $ and~$ \ycn $,
respectively.  A~$ T\times S $-invariant probability measure~$ \rho $
on~$ (X\times Y,\cb\ot\cc) $ is called a {\em joining} of~$ T $
and~$ S $ if the
marginals % $(\pi_X)_\ast(\rho)$, $(\pi_Y)_\ast(\rho)$
of~$ \rho $ on~$ X $ and~$ Y $ are equal to~$ \mu $ and~$ \nu $,
respectively.  The product measure~$ \mu\ot\nu $ is a joining of~$ T $
and~$ S $, called the \emph{product joining}.  In particular, the set
$ J(T,S) $ of joinings of~$ T $ and~$ S $ is not empty.
If~$ \rho\in J(T,S) $ is ergodic for~$ T\times S $, then~$ \rho $ is
called an {\em ergodic joining} and we write~$ \rho\in J^e(T,S) $.
The set~$ J(T,S) $, endowed with the vague topology, is a closed
simplex for the natural affine structure on the space of probability
Borel measures on~$ (X\times Y,\cb\ot\cc) $. Then~$ J^e(T,S) $ is the
set of extremal points of~$ J (T,S) $.

The automorphisms~$ T $ and~$ S $ are called {\em disjoint} if their
only joining is the product joining $ \mu\ot\nu $, (see \cite{Fu}); in
this case we write~$ T\perp S $.

All above notions have obvious generalizations to measure preserving
actions of groups and semi-groups.

The following definition can be introduced for ergodic actions of more
general groups (see Section~\ref{liftingAOPia}); later on, we shall
consider the AOP property for flows.

Let~$ L^2_0\xbm $ stand for the subspace of zero mean function
in~$ L^2\xbm $ and recall that $ \mathscr{P} $ stands for the set of
prime natural numbers.

\begin{Def}[\cite{Ab-Le-Ru}]
  \label{defAOP} A totally ergodic automorphism~$ T $ of a probability
  standard Borel space~$ \xbm $ has {\em asymptotically orthogonal
    powers} (AOP) if for each $ f,g\in L^2_0\xbm $, we have
  \begin{equation}
    \label{cond:aop} \limsup_{\substack{p,q\to\infty\\
        p, q \in \mathscr{P}\\
        p\neq q }}\sup_{\rho\in J^e(T^p,T^q)}\left|\int_{X\times X}f\ot g\,d\rho\right|=0.
  \end{equation}
  In this case, we also say that~$ T $ has the AOP property.
\end{Def}

Clearly, if the prime powers of~$ T $ are pairwise disjoint,
then~$ T $ enjoys the AOP property.  There are, however, other natural
examples, see \cite{Ab-Le-Ru}, \cite{Ku-Le}; in fact, it may even
happen that all non-zero powers of automorphism having AOP are
isomorphic.

\begin{Remark}
  \label{r:weaktop} By definition, $ \rho_n\to\rho $ for the vague
  topology if and only if we have
  $ \int_{X\times Y}f\ot g\,d\rho_n\to \int_{X\times Y}f\ot g\,d\rho $
  for all $ f,g\in L^2_0\xbm $. Then, since~$ L^2_0\xbm $ is
  separable, the vague topology on~$ J(T,S) $ is metrizable.  Then the
  AOP property for an automorphism~$ T $ states that, when~$ p $
  and~$ q $ are distinct large primes, all ergodic joinings (and hence
  all joinings) of~$ T^p $ and~$ T^q $ are uniformly close to product
  measure.

  Moreover, in order to show the AOP property for~$ T $, we only need
  to check the property~\eqref{defAOP} for~$ f,g $ belonging to a linearly dense
  subset in~$ L^2_0\xbm $.
\end{Remark}

Assume that automorphisms~$ T $ and~$ S $ have a {\em common factor},
i.e.\ there exists an ergodic automorphism~$ R $ defined on a
probability space~$ \zdk $ with equivariant factor
maps~$ \pi_{X,Z}:\xbm\to\zdk $ and~$ \pi_{Y,Z}:\ycn\to\zdk $.  Let
$$
\mu=\int_{Z}\mu_z\,d\kappa(z),\;\nu=\int_Z\nu_z\,d\kappa(z)
$$
be the disintegrations of the measures~$ \mu $ and~$ \nu $.  Then
every joining~$ \rho\in J(R,R) $ has a natural extension to a joining
$ \widehat{\rho}\in J(T,S) $ determined by%
\footnote{Recall that, up to an abuse of notation,
  $ \int_Xf\,d\mu_{z_1}=\E (f|\pi_{X,Z}^{-1}(\cd))(z_1) $ and
  similarly~$ \int_Y g\,d\nu_{z_2}=\E (g|\pi^{-1}_{Y,Z}(\cd))(z_2) $.}
\begin{equation}
  \label{rie} \int_{X\times Y}f\ot g\,d\widehat{\rho}=\int_{Z\times Z}\left
    (\int_Xf\,d\mu_{z_1}\int_Y g\,d\nu_{z_2}\right)\,d\rho(z_1,z_2)
\end{equation}
for all~$ f,g\in L^\infty\xbm $.  The joining $ \widehat\rho $ is
called the {\em relatively independent extension} of $ \rho $.  The
joining~$ \widehat{\rho} $ need not be ergodic, even if~$ \rho $ is;
however, the image via $ \pi_{X,Z}\times\pi_{Y,Z} $ of almost every
ergodic component of~$ \widehat{\rho} $ is equal to~$ \rho $.

% When~$S=T$, then~$J(T,T)$ is the set of {\em self-joinings} of~$T$.
% It contains the graph measures~$\mu_R$ for which:
%$$
%\int_{X\times X} f\ot g\,d\mu_{R}=\int_X f\cdot g\circ R\,d\mu$$
% for each~$f,g\in L^\infty\xbm$, where~$R$ is a member of the {\em
% centralizer}~$C(T)$ of~$T$, i.e.\~$R$ is another automorphism of
% $\xbm$ and~$RT=TR$. Since~$T$ is ergodic, $\mu_R\in J^e(T,T)$.  By
% some abuse of notation, $C(T)\subset J^e(T,T)$. It is not hard to
% see that~$C(T)$ is closed in the weak topology on~$J(T,T)$. In fact,
% $C(T)\ni R_n\to R\in C(T)$ if and only if~$f\circ R_n\to f\circ R$
% in~$L^2\xbm$ for each~$f\in L^2\xbm$.

\subsection{A criterion for AOP and Möbius orthogonality on typical
  short intervals}

Theorem~%
\ref{ThA} and Corollaries~%
\ref{CorB} and~%
\ref{CorC} will be proved by showing that affine uni\-potent
diffeomorphisms of compact nilmanifolds satisfy the hypotheses of the
following theorem.

\begin{Th}
  \label{thm:prop} Let~$ T:X\to X $ be a homeomorphism of a compact
  metric space.  Assume that~$ T $ is totally and uniquely ergodic for
  the~$ T $-invariant probability Borel measure~$ \mu $.
  Let~$ \mathscr{C}\subset C(X)\cap L^2_0(X,\mu) $ be a set whose
  linear span is dense in $ L^2_0 (X,\mu) $.
  \begin{itemize}
  \item[(i)] Assume that, for all~$ f_1,f_2\in\mathscr{C} $ and for
    all but a finite number of pairs of distinct prime
    numbers~$ (r,s) $, we have~$ \rho(f_1\otimes \bar{f}_2)=0 $ for
    all ergodic joinings~$ \rho $ of~$ T^r $ and~$ T^s $.

    Then~$ T $ satisfies the AOP property and the Möbius orthogonality
    holds in every uniquely ergodic model of~$ (X,\mu,T) $.

  \item[(ii)] Assume further that for all~$ f\in\mathscr{C} $ and
    all~$ \omega\in \C $ with~$ |\omega|=1 $ there exists a
    homeomorphism $ S:X\to X $ such
    that~$ f(T^n(Sx))=\omega f(T^n x) $ for every $ x\in X $
    and~$ n\in\Z $.

    Then, for every~$ x\in X $, for every every zero
    mean~$ f\in C(X) $ and every multiplicative~$ \bfu:\N\to\C $
    bounded by~$ 1 $, we have
    \begin{equation}
      \label{eq:zbkr1} \frac 1 M\sum_{M\leq m<2M}\left|\frac1H\sum_{m\leq
          n<m+H}f(T^nx)\bfu(n)\right| \longrightarrow 0
    \end{equation}
    when~$ H\to\infty $ and~$ H/M\to 0 $.  If~$ \bfu=\mob $ the result
    holds for arbitrary~$ f\in C(X) $.
  \end{itemize}
\end{Th}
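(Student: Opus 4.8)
\textbf{Part (i).} The plan is to verify the AOP property directly from Definition~\ref{defAOP} and then invoke the Jewett--Krieger machinery as described in the Introduction. By Remark~\ref{r:weaktop}, it suffices to check condition~\eqref{cond:aop} for $f,g$ in the linearly dense set $\mathscr{C}$. So fix $f_1,f_2\in\mathscr{C}$. The hypothesis says that for all but finitely many pairs of distinct primes $(r,s)$, every \emph{ergodic} joining $\rho\in J^e(T^r,T^s)$ satisfies $\int f_1\ot\bar f_2\,d\rho=0$. Since every joining is a barycentre of ergodic joinings (the set $J(T^r,T^s)$ is a simplex with extreme points $J^e(T^r,T^s)$, as recalled in Section~\ref{joiAOP}), the same vanishing holds for \emph{all} $\rho\in J(T^r,T^s)$; in particular the supremum over $J^e(T^p,T^q)$ in~\eqref{cond:aop} is zero for all large $p\neq q$, giving $\limsup=0$. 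Total ergodicity is assumed outright, so $T$ has the AOP property. The Möbius orthogonality in every uniquely ergodic model then follows from the chain of implications recalled in the Introduction: the AOP property yields~\eqref{intr1}, the K\'atai--Bourgain--Sarnak--Ziegler criterion upgrades this to~\eqref{intr2}, and specializing $x_k\equiv x$, $\bfu=\mob$ gives Möbius orthogonality for any homeomorphism $T'$ measure-theoretically isomorphic (via the uniquely ergodic model) to $(X,\mu,T)$.

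\textbf{Part (ii).} Here the extra hypothesis furnishes, for each $f\in\mathscr{C}$ and each unimodular $\omega$, a homeomorphism $S=S_{f,\omega}$ with $f(T^n Sx)=\omega f(T^n x)$ for all $x,n$. The plan is to feed this into the ``lifting'' argument behind Corollary~\ref{CorollaryH}-type statements: one wants to control the averages $\frac1N\sum_{n\le N} f(T^n x)\bfu(kn+j)$ for all $k\ge 1$, $0\le j<k$, since by the reduction in \cite{Ab-Le-Ru} (also recalled around~\eqref{intr2}) control of all such ``arithmetic-progression'' averages is equivalent to the short-interval bound~\eqref{eq:zbkr1}. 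Working one $f\in\mathscr{C}$ at a time, consider the product system $X\times\Z/k\Z$ with the skew map $\widetilde T(x,i)=(Tx,i+1)$; the function $f\ot\chi$, where $\chi$ runs over characters of $\Z/k\Z$, has the property that its $\widetilde T^p$, $\widetilde T^q$ correlations are governed by those of $T^p,T^q$ twisted by the roots of unity produced by $S$. Concretely, the intertwining relation $f\circ T^n\circ S=\omega\, f\circ T^n$ shows that any ergodic joining of $\widetilde T^p$ and $\widetilde T^q$ projects, after disintegration over the common $\Z/\gcd(p,q)k\,\Z$-factor, to a joining of $T^p,T^q$, and the AOP-type vanishing from part~(i) forces the relevant integral to $0$ in the limit over large $p\ne q$. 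Thus $\widetilde T$ inherits the hypothesis of part~(i) on the enlarged algebra generated by $\{f\ot\chi\}$, hence the KBSZ criterion applies to $\widetilde T$ and yields $\frac1N\sum_{n\le N}(f\ot\chi)(\widetilde T^n(x,0))\,\bfu(n)\to0$; expanding $\chi$ and using finite Fourier inversion on $\Z/k\Z$ converts this into~\eqref{in24}, i.e. $\frac1N\sum_{n\le N}f(T^n x)\bfu(kn+j)\to0$ for every $j$. Summing~\eqref{in24} over the residues and reorganizing (exactly as in the passage from~\eqref{intr2} to the short-interval formulation in \cite{Ab-Le-Ru}) gives~\eqref{eq:zbkr1}. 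For $\bfu=\mob$ one drops the zero-mean hypothesis because the constant function contributes $\frac1H\sum_{m\le n<m+H}\mob(n)$, whose short-interval average tends to $0$ by the classical (Matomäki--Radziwiłł-type, or even elementary prime-number-theorem-on-average) estimate, so an arbitrary $f\in C(X)$ decomposes as zero-mean part plus constant and both pieces are handled.

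\textbf{Main obstacle.} The routine portion is the simplex/barycentre argument in part~(i) and the Fourier-inversion bookkeeping over $\Z/k\Z$. The genuine difficulty is the disintegration step in part~(ii): one must argue carefully that an \emph{ergodic} joining of the powers of the skew product $\widetilde T$, once restricted to the coordinate-algebra generated by $f\ot\chi$, \emph{is} (the relevant evaluation of) an ergodic joining of $T^r$ and $T^s$ for suitable $r,s$ dividing multiples of $p,q$ — the point being that the $\Z/k\Z$ fibre coordinate and the root-of-unity action of $S$ do not create new joinings beyond those already present between powers of $T$. This is exactly the ``relative AOP survives inducing'' phenomenon formalized in Proposition~\ref{PropositionG}; here we only need the elementary cyclic-group instance, but it still requires a clean statement that the intertwining homeomorphism $S$ implements a unitary on the $f$-cyclic subspace commuting with the $T$-action up to the scalar $\omega$, so that correlations of $\widetilde T^p,\widetilde T^q$ reduce to twisted correlations of $T^p,T^q$ to which part~(i) applies.
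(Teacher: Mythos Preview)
Your Part~(i) is correct and matches the paper: the simplex/barycentre reduction and the appeal to Remark~\ref{r:weaktop} and \cite{Ab-Le-Ru} is exactly what is needed.

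Your Part~(ii), however, has a genuine gap and misses the actual mechanism. The claimed reduction---that control of $\frac1N\sum_{n\le N}f(T^nx)\bfu(kn+j)$ for all fixed $k,j$ is ``equivalent'' to the short-interval bound~\eqref{eq:zbkr1}---is simply false; no such equivalence is in \cite{Ab-Le-Ru} or in the passage around~\eqref{intr2}. Control along each fixed arithmetic progression gives no uniformity in the modulus $k$, and the short-interval statement is strictly stronger. The whole skew-product/disintegration machinery you outline is the content of Proposition~\ref{PropositionG} and Corollary~\ref{CorollaryH}, which are \emph{applications} of Theorem~\ref{thm:prop}, not tools for proving it.

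The paper's argument is far more direct and uses hypothesis~(ii) in a way you have not identified. Given $f\in\mathscr{C}$, $x\in X$, and any sequence $(b_k)$ with $b_{k+1}-b_k\to\infty$, choose for each $k$ a unimodular $\omega_k$ so that
\[
\Big|\sum_{b_k\le n<b_{k+1}}f(T^nx)\bfu(n)\Big|=\omega_k\sum_{b_k\le n<b_{k+1}}f(T^nx)\bfu(n).
\]
Hypothesis~(ii) supplies a homeomorphism $S_k$ with $f(T^nS_kx)=\omega_kf(T^nx)$; setting $x_k=S_kx$ turns the absolute value into a plain sum $\sum_{b_k\le n<b_{k+1}}f(T^nx_k)\bfu(n)$. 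Now the AOP property from Part~(i) together with Theorem~3 of \cite{Ab-Le-Ru} (which allows an \emph{arbitrary} sequence of base points $x_k$) kills the Ces\`aro average of these block sums, hence of their absolute values. Extend from $\mathscr{C}$ to $\operatorname{span}\mathscr{C}$ by linearity, then to all zero-mean $f\in C(X)$ by $L^1$-approximation and unique ergodicity, and finally pass from the $(b_k)$-formulation with absolute values to~\eqref{eq:zbkr1} via the equivalence recorded in \cite{Ab-Le-Ru}. The point of hypothesis~(ii) is precisely this phase-rotation trick that removes the modulus; it has nothing to do with inducing or joinings of skew products.
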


In view of Remark~%
\ref{r:weaktop}, since the set~$ \mathscr{C} $ is linearly dense
in~$ L^2_0(X,\mu) $, Part (i) of the above theorem was proved in
\cite{Ab-Le-Ru} (Theorem 2).  So we only need to prove Part (ii).

\begin{Remark}\label{rem:mrt}  If~$ f $ is constant or more general $f(T^nx)=\exp(ina)$ for some $a\in\R$,
  then \eqref{eq:zbkr1} holds true, when~$ \bfu $ is the Möbius
  function, by Theorem~1.7 in \cite{Ma-Ra-Ta} (see also the discussion
  preceding this theorem).\end{Remark}

\begin{proof}[Proof of Theorem~\ref{thm:prop}]

  Suppose that the hypotheses of (i) and (ii) are satisfied.  Let
  $ (b_k)_ {k\geq 1} $ be an increasing sequence of natural numbers
  such that~$ b_{k+1}-b_k\to+\infty $.

  Fix~$ x\in X $,~$ f\in \mathscr{C}\subset C(X)\cap L^2_0(X,\mu) $
  and a multiplicative function~$ \bfu:\N\to\C $ bounded by~$ 1 $. For
  every~$ k\geq 1 $, let~$ \omega_k\in\C $ be the number of
  modulus~$ 1 $ such that
  \[
  \Big|\sum_{b_k\leq n< b_{k+1}}f( T^nx)\bfu(n)\Big|=
  \omega_k\sum_{b_k\leq n< b_{k+1}}f( T^nx)\bfu(n).
  \]
  By hypothesis, there exists a homeomorphism~$ S_k:X\to X $ such that
  $ f(T^n S_k x)=\omega_kf(T^nx) $ for every~$ n\in\Z $.
  Let~$ x_k:=S_k x $.  Then
  \begin{equation}
    \label{eq:convb0}
    \Big|\sum_{b_k\leq n< b_{k+1}}f( T^nx)\bfu(n)\Big|= \sum_{b_k\leq
      n< b_{k+1}}f( T^nx_k)\bfu(n).
  \end{equation}
  By Theorem~3 in \cite{Ab-Le-Ru}, the AOP property implies that for
  all zero mean~$ f\in C(X) $, for all sequences~$ (x_k)_{k\geq 1} $
  in~$ X $ and for all multiplicative functions~$ \bfu:\N\to\C $
  bounded by~$ 1 $, we have
  \[ \frac1{b_{K+1}}\sum_{k\leq K}\Big(\sum_{b_k\leq n<
    b_{k+1}}f(T^nx_k)\bfu(n)\Big) \to 0 \text{ when }K\to\infty.
  \]
  In view of \eqref{eq:convb0}, it follows that
  \begin{equation}
    \label{eq:convb1} \frac1{b_{K+1}}\sum_{k\leq K}\Big|\sum_{b_k\leq
      n< b_{k+1}}f( T^nx)\bfu(n)\Big|\to0\text{ when }K\to\infty.
  \end{equation}
  As~$ f \in \mathscr{C} $ was arbitrary, it follows that
  \eqref{eq:convb1} holds also for every~$ f\in \Span\mathscr{C} $.

  Let~$ f $ be an arbitrary continuous function on~$ X $ with zero
  mean.  Since the space~$ \Span\mathscr{C} $ is dense in~$ L^2_0(X,\mu) $, for
  every~$ \varepsilon>0 $ there
  exists~$ f_\varepsilon\in \Span\mathscr{C} $ such
  that~$ \int_{X}|f-f_\varepsilon|\,d\mu<\varepsilon $.  Then
  \begin{align*}
    \frac1{b_{K+1}}\sum_{k\leq K} \Big|\sum_{b_k\leq n< b_{k+1}}f(T^nx)\bfu
    (n)\Big| &\leq \frac1{b_{K+1}}\sum_{k\leq K} \Big|\sum_{b_k\leq n<
               b_{k+1}}f_\varepsilon( T^nx)\bfu(n)\Big|\\
             &\quad+\frac1{b_{K+1}}\sum_{ n\leq b_{K+1}}\big|f(T^nx)-f_\varepsilon(T^nx)\big|.
  \end{align*}
  Since~$ T $ is uniquely ergodic and~$ |f-f_\varepsilon| $ is
  continuous, we have
  \[
  \frac1{b_{K+1}}\sum_{ n\leq b_{K+1}}\big|f( T^nx)-f_\varepsilon(
  T^nx)\big|\to \int_{X}|f-f_\varepsilon|\,d\mu<\varepsilon\text{ as
  }K\to\infty.
  \]
  It follows that
  \[
  \limsup_{K\to\infty}\frac1{b_{K+1}}\sum_{k\leq K}\Big|\sum_{b_k\leq
    n< b_{k+1}}f( T^nx)\bfu(n)\Big|\leq \varepsilon
  \]
  for each~$ \varepsilon>0 $ which proves \eqref{eq:convb1} for all
  continuous functions~$ f $ with zero mean.  According to
  \cite{Ab-Le-Ru} (see the proof of Theorem~5) this implies
  \eqref{eq:zbkr} for every continuous function~$ f $ with zero mean.
\end{proof}

\begin{Remark}
  \label{rem:riemint} Let~$ T $ be a uniquely ergodic homeomorphism of
  a compact metric space~$ X $ and let~$ \mu $ be the
  unique~$ T $-invariant probability measure.  Let~$ \bfu $ be a
  multiplicative function bounded by~$ 1 $.  Suppose that the
  conclusion of Theorem~%
  \ref{thm:prop}~(ii) holds true: for each continuous
  functions~$ f:X\to\C $ with zero mean and~$ x\in X $, we have
  \begin{equation}
    \label{eq:sumhm} A(f,M,H):=\frac1M\sum_{M\leq m<2M}\left|\frac1H\sum_
      {m\leq n<m+H}f(T^nx)\bfu(n)\right| \longrightarrow 0
  \end{equation}
  as~$ H\to\infty $ and~$ H/M\to 0 $.  Then the approximation argument
  used in the proof of Theorem~%
  \ref{thm:prop}~(ii) yields the validity of~\eqref{eq:sumhm} for a
  more general class of functions.

  Let~$ D(X) $ be the space of bounded measurable
  functions~$ f:X\to\C $ such that the closure of the set of
  discontinuity point of~$ f $ has zero measure.  First note that
  every~$ f\in D(X) $ satisfies the following equidistribution
  condition
  \begin{equation}
    \label{eq:eqid} \frac{1}{N}\sum_{n\leq N}f(T^nx)\to\int_X f\,d\mu\text
    { for all }x\in X.
  \end{equation}
  Indeed, since the closure~$ F $ of the set of discontinuities
  of~$ f $ has zero measure, for every~$ \varepsilon>0 $ there exists
  an open set~$ F\subset U $ such
  that~$ \mu(\overline{U})<\varepsilon $.  By Tietze's extension
  theorem and the continuity of~$ f $ restricted to~$ X\setminus U $,
  there exists a continuous function~$ f_\varepsilon:X\to\C $ such
  that~$ f(x)=f_\varepsilon(x) $ for~$ x\in X\setminus U $
  and~$ \|f_\varepsilon\|_{\sup}\leq \|f\|_{\sup} $.  Then
  \begin{align*}
    \Big|\frac{1}{N}\sum_{n\leq N}f(T^nx)-\int_X f\,d\mu\Big|&\leq
                                                               \Big|\frac{1}{N}\sum_{n\leq N}f_\varepsilon(T^nx)-\int_X f_\varepsilon\,d\mu\Big|\\
                                                             &\quad+ 2\|f\|_{\sup}\Big(\frac{1}{N}\sum_{n\leq N}\chi_U(T^nx)+\mu
                                                               (U)\Big).
  \end{align*}
  By unique ergodicity,
  $ \frac{1}{N}\sum_{n\leq N}\delta_{T^nx}\to\mu $ weakly.  Therefore,
  \[
  \frac{1}{N}\sum_{n\leq N}f_\varepsilon(T^nx)\to\int_X
  f_\varepsilon\,d\mu
  \]
  and, by the regularity of~$ \mu $,
  \[
  \limsup_{N\to\infty}\frac{1}{N}\sum_{n\leq N}\chi_U(T^nx)\leq
  \limsup_{N\to\infty}\frac{1}{N}\sum_{n\leq N}\delta_{T^nx}(\overline
  {U})\leq \mu(\overline{U})<\varepsilon.
  \]
  It follows that
  \[
  \limsup_{N\to\infty}\Big|\frac{1}{N}\sum_{n\leq N}f(T^nx)-\int_X
  f\,d\mu\Big|\leq 4\varepsilon\|f\|_{\sup}
  \]
  for each~$ \varepsilon>0 $ which proves \eqref{eq:eqid}.

  Finally, for each~$ f\in D(X) $ with zero mean
  and~$ \varepsilon>0 $, we can find a continuous
  function~$ f_\varepsilon $ with zero mean such that
  $ \int_{X}|f-f_\varepsilon|\,d\mu<\varepsilon $.
  Since~$ |f-f_\varepsilon|\in D(X) $, by \eqref{eq:eqid}, we have
  \[
  \limsup_{N\to\infty}\frac{1}{N}\sum_{n\leq
    N}|f(T^nx)-f_\varepsilon(T^nx)| <\varepsilon.
  \]
  \begin{sloppypar}
    The quantity above bounds the asymptotical difference between
    $ A(f,M,H) $ and $ A(f_\varepsilon,M,H)$.  Therefore
    \eqref{eq:sumhm} holds for each~$ f\in D(X) $ with zero mean and
    for arbitrary~$ x\in X $.
  \end{sloppypar}
\end{Remark}

\section{AOP property for nil-translations}

% \begin{sloppypar}
In this section we shall prove that ergodic nil-translations on
compact connected nilmanifolds satisfy the hypotheses (i) and (ii) of
the criterion provided by Theorem~%
\ref{thm:prop}, thereby proving Theorem~%
\ref{ThA} and Corollaries~%
\ref{CorB} and~%
\ref{CorC} for~$ \phi $ a nil-translation on a compact connected
nilmanifold.  % \end{sloppypar}

\subsection{Background on nilpotent Lie groups}%
\label{sec:background}

Let~$ G $ be a connected simply connected~$ k $-step nilpotent Lie
group with Lie algebra~$ \mathfrak g $, and let~$ \Gamma $ be a
lattice in~$ G $.  The quotient~$ M=G/\Gamma $ is then a compact
nilmanifold on which~$ G $ acts on the left by translations. Denote
by~$ \lambda =\lambda_M $ the~$ G $-invariant probability measure on~$
M $ (locally given by a Haar measure of~$ G $).  Let
\[
  \mathfrak g=\mathfrak g^{(1)}\supset\mathfrak g^{(2)}\supset\dots
  \supset \mathfrak g^{(k)}\supset \mathfrak g^{(k+1)}=\{0\},\quad
  \mathfrak g^{(i+1)}=[\mathfrak g,\mathfrak g^{(i)}], \ i=1,\dots, k,
\]
be the descending central series of~$ \mathfrak g $ (with~$ \mathfrak
g^{(k)}\neq\{0\} $) and let
\[
  G=G^{(1)}\supset G^{(2)}\supset\dots \supset G^{(k)}\supset G^{(k+1)}=\{e_G\},
  \ G^{(i+1)}=[G,G^{(i)}], \ i=1,\dots, k,
\]
be the corresponding series for~$ G $.  In this setting, there exists
a strong Malcev basis through the filtration~$ (\mathfrak g^{(i)})_ {i=1}^k
$ strongly based at the lattice~$ \Gamma $, that is a basis~$ X_1,
\dots , X_{\dim \mathfrak g} $ of~$ \mathfrak g $ such that for an
increasing sequence of integers~$ 0=\ell_0< \ell_1 <\dots < \ell_k=\dim
\mathfrak g $ we have:
\begin{enumerate}
  \item
    The elements~$ X_{\ell_{i-1}+1}, X_{\ell_{i-1}+2}, \dots , X_{\dim
    \mathfrak g} $ form a basis of~$ \mathfrak g^{(i)} $;
  \item
    For each~$ i\in \{1, \dots, \dim \mathfrak g\} $ the elements~$ X_
    {i}, X_{i+1}, \dots , X_{\dim \mathfrak g} $ span an ideal of~$
    \mathfrak g $;
  \item
    The lattice~$ \Gamma $ is given by
    \[
      \Gamma=\big\{ \exp (n_1 X_1)\cdots \exp(n_{\dim \mathfrak g} X_{\dim
      \mathfrak g})\mid n_i\in \Z, i=1,\dots, \dim \mathfrak g\big\}.
    \]
\end{enumerate}
It will be convenient to set~$ d_i=\ell_{i}-\ell_{i-1} $ for $ i=1,\ldots,k
$, so that~$ d_1 $ denotes the dimension of the abelianized group~$ G/
[G,G] $ and~$ d_k $ is the dimension of the~$ k $ derived group~$ G^{(k)}
$, which, we recall, is included in the center~$ Z(G) $ of~$ G $.

Since, for all~$ i=1,\dots , k-1 $, the group~$ G^{(i+1)} $ is a
closed normal subgroup of~$ G $, we have natural epimorphisms~$ \pi^ {%
(i)} \colon G\to G/G^{(i+1)} $.  The group $ G^{(i+1)}\cap \Gamma $ is
a lattice in~$ G^{(i+1)} $ (Theorem~2.3 in
\cite[Corollary~1]{Rag}); equivalently, $ G^{(i+1)}\Gamma $ is a
closed subgroup of~$ G $. Moreover, $\pi^ {(i)}(\Gamma)$ is a lattice in  $G/G^{(i+1)}$  and $ M^{(i)}:=G/G^{(i+1)}\Gamma \approx (G/G^{(i+1)})/\pi^ {(i)}(\Gamma)$ is a nilmanifold.  It
follows that the short exact sequences
\[
  0 \hookrightarrow G^{(i+1)} \hookrightarrow G \twoheadrightarrow G/G^
  {(i+1)}\twoheadrightarrow 0, \qquad i=1,\dots,k-1,
\]
induce $ G $-equivariant fiber bundles of nilmanifolds
% \footnote{Observe
% that~$(G/G^{(i+1)})/\pi^{(i)}(\Gamma)$ is homeomorphic
% to~$G/G^{(i+1)}\Gamma$ via a~$G$-equivariant map
% and~$\Gamma G^{(i+1)}/\Gamma$ is homeomorphic
% to~$G^{(i+1)}/(G^{(i+1)}\cap \Gamma)$ via a~$G^{(i+1)}$-equivariant
% map. Both~$G/G^{(i+1)}\Gamma$ and~$\Gamma G^{(i+1)}/\Gamma$ are
% nilmanifolds.}
\begin{equation}
  \label{eq:fibration} \pi^{(i)} \colon M=G/\Gamma \twoheadrightarrow
  M^{(i)}=G/G^{(i+1)}\Gamma, \qquad i=1,\dots,k-1,
\end{equation}
whose fibers are the orbits of~$ G^{(i+1)} $ on~$ G/\Gamma $, hence
homeomorphic to the nil\-ma\-ni\-folds $ G^{(i+1)} / (G^{(i+1)} \cap \Gamma)
$.

Two cases are of most interest for us.  When~$ i=1 $ the base
nilmanifold~$ M^{(1)}=G/[G,G]\Gamma $ is a torus of dimension~$ d_1 $.
As~$ G/[G,G] $ is an abelian group, it is identified by the
exponential map with (the additive group)~$ \mathfrak g/\mathfrak g^{%
(2)}\approx \R^{d_1} $.  Then the vectors
\[
  \bar X_i = X_i \mod \mathfrak g^{(2)}, \qquad i=1,\dots, d_1,
\]
form a set of generators of the lattice~$ \pi^{(2)}(\Gamma)\approx\Gamma/
(G^{(2)}\cap\Gamma) $.  We shall use an additive notation for the
abelian group~$ G/[G,G] $; thus we identify~$ M^{(1)} $ with~$ \R^{d_1}/\Z^
{d_1} $ by means of the basis $ \bar X_i $, ($ i=1,\dots ,d_1 $).

At the opposite extreme, when~$ i=k-1 $, the group~$ G^{(k)} $ is
abelian, thus isomorphic to~$ \R^{d_k} $.
% Since~$ G^{(k)}\cap \Gamma$ is
% a lattice in~$ G^{(k)}$, the fibers of the
% fibration~\eqref{eq:fibration} are tori isomorphic to
% the~$ d_k$-dimensional torus group
% $ \mathbb T^{(k)}:=G^{(k)}/ (G^{(k)}\cap \Gamma)$.
Let us consider the action of~$ G^{(k)} $ by left translations on $
M=G/\Gamma $.  As the stability group of any point in~$ M $ is the
subgroup~$ G^{(k)}\cap \Gamma $, the action of~$ G^{(k)} $ on $ M $
induces a free action of the torus group~$ \mathbb T^{(k)}=G^{(k)}/ (G^
{(k)}\cap \Gamma) $ on~$ M $.  Ssnce the group $ \mathbb T^{(k)} $ acts
transitively on the fibers of the fibration $ \pi^{(k-1)}\colon M\to M^
{(k-1)} $, the map $ \pi^{(k-1)} $ is a principal~$ \mathbb T^{(k)} $-bundle.

It follows from the above that the Hilbert space~$ L^2(M,\lambda) $
decomposes into a sum of~$ G $-invariant orthogonal Hilbert subspaces
\[
  L^2(M,\lambda)=\bigoplus_{\chi \in \widehat{ {\mathbb T}^{(k)}}} H_\chi,
\]
where we have set, for each character~$ \chi $ of the torus~$ \mathbb
T^{(k)} $,
\begin{equation}
  \label{l2} H_\chi:=\{f\in L^2(M) \mid f(zx ) =\chi(z)f(x ), \,\forall
  x \in M,\, \forall z\in G^{(k)}\}.
\end{equation}
Let~$ \lambda^{(k)} $ be the probability Haar measure on~$ \mathbb T^{%
(k)} $.  Denote by~$ C_\chi $ the subspace of continuous functions in~$
H_\chi $.  The linear operator~$ \mathcal F_\chi:  L^2 (M,\lambda)\to
H_\chi $ given by
\[
  \mathcal F_\chi(f)(x)=\int_{\mathbb T^{(k)}}\chi(z)f(z^{-1}x)\,d\lambda^
  {(k)}(z)
\]
is the orthogonal projector on $ H_\chi $.  It follows that
\begin{Lemma}\label{lem:densc}
  The space $C_\chi$ is dense in $H_\chi$.
\end{Lemma}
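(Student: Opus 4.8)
The plan is to show that $C_\chi$ is dense in $H_\chi$ by exploiting the fact that $\mathcal F_\chi$ is a bounded operator that leaves $C(M)$ invariant and acts as the identity on $H_\chi$. First I would recall that $C(M)$ is dense in $L^2(M,\lambda)$; this is standard since $\lambda$ is a regular Borel measure on the compact metric space $M$. Next, I would observe that $\mathcal F_\chi$ maps $C(M)$ into $C_\chi$: indeed, for $f\in C(M)$ the function
\[
\mathcal F_\chi(f)(x)=\int_{\mathbb T^{(k)}}\chi(z)f(z^{-1}x)\,d\lambda^{(k)}(z)
\]
is continuous in $x$ because $f$ is uniformly continuous on the compact manifold $M$ and the action of $\mathbb T^{(k)}$ is continuous (one passes the $\sup$-norm estimate under the integral sign, using that $\lambda^{(k)}$ is a probability measure); and it lies in $H_\chi$ because $\mathcal F_\chi$ is the orthogonal projector onto $H_\chi$. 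Hence $\mathcal F_\chi(C(M))\subset C_\chi$.

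Now take an arbitrary $f\in H_\chi$ and $\varepsilon>0$. By density of $C(M)$ in $L^2(M,\lambda)$, choose $g\in C(M)$ with $\|f-g\|_{L^2}<\varepsilon$. Since $f\in H_\chi$ we have $\mathcal F_\chi(f)=f$, and since $\mathcal F_\chi$ is an orthogonal projector it has operator norm $1$, so
\[
\|f-\mathcal F_\chi(g)\|_{L^2}=\|\mathcal F_\chi(f)-\mathcal F_\chi(g)\|_{L^2}\leq\|f-g\|_{L^2}<\varepsilon.
\]
As $\mathcal F_\chi(g)\in C_\chi$ by the previous paragraph, this exhibits an element of $C_\chi$ within $\varepsilon$ of $f$ in $L^2$, proving the density.

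The only point requiring genuine (though routine) care is the claim that $\mathcal F_\chi(f)$ is continuous when $f$ is: one must check that $x\mapsto\int_{\mathbb T^{(k)}}\chi(z)f(z^{-1}x)\,d\lambda^{(k)}(z)$ is continuous. This follows from the estimate $|\mathcal F_\chi(f)(x)-\mathcal F_\chi(f)(x')|\leq\sup_{z\in\mathbb T^{(k)}}|f(z^{-1}x)-f(z^{-1}x')|$ together with the uniform continuity of $f$ on the compact metrizable space $M$ and the joint continuity of the $\mathbb T^{(k)}$-action; this is the mild obstacle, but it is entirely standard. Everything else is formal manipulation with the orthogonal projector $\mathcal F_\chi$.
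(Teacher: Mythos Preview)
Your proof is correct and is precisely the argument the paper has in mind: the lemma is stated immediately after introducing the orthogonal projector $\mathcal F_\chi$ with the phrase ``It follows that'', so the intended proof is exactly the one you wrote out---use that $\mathcal F_\chi$ is a norm-one projector onto $H_\chi$ which sends $C(M)$ into $C_\chi$, and that $C(M)$ is dense in $L^2(M,\lambda)$.
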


\subsection{Dynamics and joinings for nil-translations}%
\label{sec:dynamics-joinings} For~$ u\in G $, let~$ l_u\colon M\to M $
be the left translation by~$ u $.  Then~$ l_u $ is a
measure preserving automorphism of~$ (M,\lambda) $, which for
simplicity we call a \emph{nil-translation}.

By the normality of~$ G^{(i+1)} $ in~$ G $, the projection~$ \pi^{(i)}
:M\to M^{(i)} $ intertwines the map $ l_u \colon M\to M $ with the
nil-translation~$ l_{\pi^{(i)}(u)} \colon M^{(i)}\to M^{(i)} $.

The celebrated theorem by Auslander, Green and Hahn (see
\cite{Au-Gr-Ha}), states that the nil-translation~$ l_u $ is ergodic
(minimal and uniquely ergodic) if and only if the
translation~$ l_{\pi^ {(1)}(u)} $ by~$ \pi^{(1)}(u) $ on the
torus~$ M^{(1)} $ is ergodic (minimal and uniquely ergodic).  The
latter condition is equivalent to saying that the rotation
vector~$ \alpha=(\alpha_1, \dots, \alpha_{d_1})\in \R^{d_1}/\Z^ {d_1}
$ defined by
\[
  \pi^{(1)}(u) = \exp (\alpha_1 \bar X_1+ \dots+ \alpha_{d_1}\bar X_{d_1})
\]
is irrational with respect to the lattice~$ \Z^{d_1} $;
equivalently, that the real numbers $ 1, \alpha_1', \dots, \alpha_{d_1}'
$ are linearly independent over~$ \Q $, for any lift~$ (\alpha_1',
\dots, \alpha_{d_1}') $ of~$ \alpha $ to~$ \R^{d_1} $.  Using an
additive notation we denote the translation~$ l_{\pi^{(1)} (u)}
$ of the torus~$ M^{(1)} $ by~$ R_{\alpha} $, as it is uniquely
determined by the rotation vector~$ \alpha $.  With a slight abuse of
language, and coherently with the choice of using an additive notation
for the torus group~$ M^{(1)} $, we shall then say that the
projection~$ \pi^{(1)} $ intertwines the translation~$ l_u $ with the
rotation~$ R_\alpha $.

Let~$ l_u \colon M\to M $ be  the nil-translation (not necessary
ergodic with respect to~$ \lambda $) by $u$ and let~$ \mu $ be an~$ l_u $-invariant
ergodic probability measure on~$ M $.  Let us consider the stabilizer
of the measure~$ \mu $
\[
  \Lambda(\mu)=\{g\in G:(l_g)_*\mu=\mu\}.
\]
Then~$ \Lambda(\mu)<G $ is a closed subgroup of~$ G $.  The following
celebrated theorem of Ratner
\cite[Theorem 1]{Rat} (first proved independently, in the context of
nilflows, by Starkov in
\cite{St} and Lesigne in
\cite{Les}) describes all such ergodic measures.

\begin{Th}[\cite{St}, \cite{Les},\cite{Rat}]
  \label{thm:ratner} If~$ \mu $ is a~$ l_u $-invariant ergodic
  measure on~$ M $ then there exists~$ x\in M $ such that the orbit~$
  \Lambda(\mu) x\subset M $ is closed and it is the topological
  support of~$ \mu $.

  Let~$ U\in \g $ and let~$ \mu $ be a probability measure on~$ M $
  which is invariant and ergodic for the nilflow~$ (l_{u^t})_{t\in \R}
  $ ($ u^t=\exp(tU) $).  Then the stabilizer~$ \Lambda(\mu) $ is a
  connected group and the topological support of~$ \mu $ is an orbit~$
  \Lambda(\mu) x\subset M $.
\end{Th}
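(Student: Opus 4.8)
The plan is to establish this much-studied measure classification --- the nilmanifold case treated by Lesigne and Starkov, a forerunner of Ratner's measure rigidity --- by the classical inductive argument, on the nilpotency step $k$ of $G$ and, for fixed step, on $\dim G$; the paper itself only needs the statement and quotes it. The base case $k=1$ is standard: $G$ is abelian, $M=G/\Gamma$ is a torus, $l_u$ is the rotation by a vector $\alpha$, and the ergodic $l_u$-invariant probability measures are exactly the Haar measures of the cosets of the closed subgroup $\overline{\langle\alpha\rangle}\le M$ --- each homogeneous, with $\supp\mu$ a coset and $\Lambda(\mu)=\overline{\langle\alpha\rangle}$.

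For the inductive step I would use the central fibration $\pi:=\pi^{(k-1)}\colon M\to M^{(k-1)}=G/G^{(k)}\Gamma$ described above, a principal $\mathbb{T}^{(k)}$-bundle whose fibre group $G^{(k)}$ is central in $G$, together with the orthogonal decomposition $L^2(M)=\bigoplus_{\chi\in\widehat{\mathbb{T}^{(k)}}}H_\chi$ and the density of $C_\chi$ in $H_\chi$ (Lemma~\ref{lem:densc}). Given an ergodic $l_u$-invariant $\mu$, I distinguish two cases. If $\mu$ is invariant under the whole central torus $\mathbb{T}^{(k)}$, then $\mu$ is the fibrewise-Haar lift of its push-forward $\pi_*\mu$, which is an ergodic $l_{\pi^{(k-1)}(u)}$-invariant measure on the nilmanifold $M^{(k-1)}$ of dimension $\dim M-d_k<\dim M$; by the inductive hypothesis $\supp(\pi_*\mu)=\bar\Lambda\bar x$ is a closed homogeneous orbit, and, since $G^{(k)}\cap\Gamma$ is a lattice in $G^{(k)}$ (Raghunathan, already invoked in the text), the preimage $\pi^{-1}(\bar\Lambda\bar x)$ is again a closed sub-nilmanifold, whose Haar measure must then be $\mu$ --- so $\mu$ is homogeneous with $\Lambda(\mu)$ the $\pi$-preimage of $\bar\Lambda$. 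The substantial case is that $\mu$ is not $\mathbb{T}^{(k)}$-invariant: then there are a non-trivial character $\chi$ and $f\in C_\chi$ with $\int f\,d\mu\neq0$ (otherwise $\mu$, being equal to its $\mathbb{T}^{(k)}$-average, would be $\mathbb{T}^{(k)}$-invariant), and by Birkhoff's theorem $\frac1N\sum_{n<N}f(l_u^nx)\to\int f\,d\mu\neq0$ for $\mu$-a.e.\ $x=g\Gamma$. Lifting to $G$ in a strong Malcev basis, the coordinates of $u^ng$ are polynomial in $n$, those along $G^{(k)}$ of degree at most $k$; since $f\in H_\chi$ this yields $f(l_u^nx)=e^{2\pi i Q_x(n)}f_0\big(\pi(l_u^nx)\big)$, with $Q_x$ a real polynomial of degree $\le k$ whose coefficients depend polynomially on $g$ and $f_0$ a bounded function on $M^{(k-1)}$. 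Weyl's theorem on equidistribution of polynomial sequences, applied in a nested fashion over the $(k-1)$-step quotient (in the spirit of the equidistribution theorems of Leibman and of Green--Tao), then forces all positive-degree coefficients of $Q_x$ to be rational for $\mu$-a.e.\ $x$; this is a non-trivial rational condition on $g$, so $\supp\mu$ is contained in a proper closed sub-nilmanifold $G'/\Gamma'$ of $M$ (a minimal such exists) of strictly smaller dimension or step. By minimality of the orbit closure this sub-nilmanifold is $l_u$-invariant, $l_u$ restricts to a nil-translation of it and $\mu$ to an ergodic invariant measure, and the inductive hypothesis applied to $G'$ closes the case.

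The flow statement follows from the same induction on $(k,\dim G)$, now pushing the whole nilflow $(l_{\exp tU})_{t\in\R}$ down the central fibration and using the analogue of the Auslander--Green--Hahn criterion for nilflows (a nilflow is ergodic, equivalently uniquely ergodic, iff its projection to the maximal torus $M^{(1)}$ is an irrational linear flow); this reduces matters to $\mu$ being the Haar measure of a closed orbit $Fx$ of a closed subgroup $F$ containing the flow. For connectedness, observe that by ergodicity of the flow $\supp\mu$ is the closure of the single flow orbit $t\mapsto l_{\exp tU}x$, the continuous image of a connected set, hence connected; since the flow lies in the identity component $F^0$, which is open in $F$ (so the $F^0$-orbit of $x$ is already closed), that $F^0$-orbit must coincide with this connected support, and so the homogeneity subgroup is connected. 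The step I expect to be the main obstacle is the ``non-invariant'' case of the inductive step --- the quantitative polynomial-equidistribution argument turning $\int f\,d\mu\neq0$ into a rational degeneracy that confines $\supp\mu$ --- which is the genuine heart of the nilpotent Ratner theorem; a secondary technical point is, in the flow case, pinning down the homogeneity subgroup sharply enough to see it is connected. Both are carried out in full in the works of Starkov, Lesigne and Ratner cited in the statement.
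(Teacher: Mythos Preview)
The paper does not prove this theorem: it is quoted from the literature (Starkov, Lesigne, Ratner) and used as a black box, so there is no ``paper's proof'' to compare against. Your outline of the classical inductive scheme --- induction on the nilpotency step via the central $\mathbb T^{(k)}$-fibration, splitting into the $\mathbb T^{(k)}$-invariant and non-invariant cases --- is indeed the skeleton of the Lesigne--Starkov argument, and you correctly locate the real content in the non-invariant case.

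There is, however, a genuine risk of circularity in your handling of that case. You write that ``Weyl's theorem \dots\ applied in a nested fashion \dots\ in the spirit of the equidistribution theorems of Leibman and of Green--Tao'' forces the positive-degree coefficients of $Q_x$ to be rational. But the Leibman and Green--Tao equidistribution theorems for polynomial orbits on nilmanifolds are proved \emph{using} the nilpotent Ratner theorem (or an equivalent) as input; invoking them here would be circular. What Lesigne and Starkov actually do in this step is a van~der~Corput / Weyl differencing argument: differencing $e^{2\pi i Q_x(n)}f_0(l_{\bar u}^n\bar x)$ lowers the degree of $Q_x$ by one and replaces the base orbit by a joining of $l_{\bar u}$ with itself, to which the inductive hypothesis (Ratner on step $\le k-1$) can be applied. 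Your sketch does not indicate this reduction, and without it the argument does not close. A second, smaller point: the leading coefficient of $Q_x$ depends only on $u$ and $\chi$, not on $x$, so the dichotomy is really ``leading coefficient irrational $\Rightarrow$ average is zero'' versus ``leading coefficient rational $\Rightarrow$ reduce to a lower-degree polynomial'', rather than a rationality constraint on $g$.

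Your connectedness argument for the flow case also has a small gap. From the connectedness of $\supp\mu$ and the fact that $F^0x$ is clopen in $Fx$ you correctly deduce $F^0x=Fx=\supp\mu$; but this only says that $\supp\mu$ is a closed orbit of the connected group $F^0$, not that the full stabilizer $\Lambda(\mu)$ is connected. One still has to rule out elements $g\in\Lambda(\mu)\setminus F^0$; in Ratner's formulation this comes from the additional assertion that $\mu$ is \emph{ergodic} for the $H$-action on $Hx$, which forces $\Lambda(\mu)=H$.
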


From now on, we fix an ergodic translation~$ l_u $ of~$ M $ (with
respect to~$ \lambda $) projecting to the corresponding ergodic
rotation~$ R_\alpha $ of the torus~$ M^{(1)} $.

We now consider the group~$ G\times G $ with the Lie algebra $
\mathfrak g \oplus \mathfrak g $, the lattice~$ \Gamma\times\Gamma $
and the corresponding nilmanifold~$ M\times M= (G\times G)/ (\Gamma\times
\Gamma) $ on which the group~$ G\times G $ acts by left translations.
We have natural projections~$ p_1 $ and~$ p_2 $ of~$ M\times M $ onto~$
M $, by selecting the first or the second coordinate of a point $ (x_1,x_2)\in
M\times M $, respectively.

Then we have~$ (G\times G)^{(i)} = G^{(i)}\times G^{(i)} $, $ (\mathfrak
{g}\oplus\g)^{(i)}=\mathfrak g^{(i)}\oplus\mathfrak g^{(i)} $ for all~$
i=1,\ldots, k $.  Clearly $ M^{(i)} \times M^{(i)}= (G\times G)/(G\times
G)^{(i)}(\Gamma\times\Gamma) $ for all~$ i=1,\ldots, k-1 $.

Let~$ r,s\in \N $ be relatively prime numbers.  Let~$ \rho $ be an
ergodic joining of the measure theoretical ergodic systems~$ (M,l_ {u}^r,
\lambda) $ and~$ (M,l_{u}^s, \lambda) $.  We recall that, by
definition, $ \rho $ is a measure on~$ M\times M $, with marginals $ (p_i)_*\rho=\lambda
$, ($ i=1,2 $), which is invariant and ergodic for the product
transformation~$ l_{u}^r \times l_{u}^s $.  The transformation~$ l_u^r\times
l_u^s $ on~$ M\times M $ is the left translation on~$ M\times M $ by~$
(u^r,u^s) $:
\[
  l_u^r\times l_u^s (x_1,x_2)=(u^rx_1,u^sx_2), \quad \forall (x_1,x_2)\in
  M\times M.
\]
Then the projection map~$ \pi^{(1)}\times\pi^{(1)} $ of~$ M\times M $
onto the~$ 2d_1 $-dimensional torus $ M^{(1)}\times M^{(1)}$
intertwines the map~$ l_u^r\times l_u^s $ with the rotation~$ R_ {\alpha}^r\times
R_{\alpha}^s $ of~$ { M}^{(1)} \times{ M}^{(1)} $. Moreover, the image
measure $ \rho^{(1)}:=(\pi^{(1)}\times\pi^{(1)})_*\rho $ is~$ R_{\alpha}^r\times
R_{\alpha}^s $-invariant and ergodic.

\begin{Lemma}
  \label{lem:torus} Let~$R_\alpha$ a minimal rotation of the torus $d$-dimensional
  $\mathbb T^d$ and rotation vector $\alpha$.  Let
  $r,s$ relatively prime integers and let $ \eta $ be an~$ R_{\alpha}^r\times R_{\alpha}^s~
  $-invariant and ergodic probability measure on~$ \mathbb T^d \times
  \mathbb T^d $.  Then  the stabilizer~$ \Lambda(\eta)
  $ of $\eta$ is the group
   \[
    {\mathbb T}_{r,s}=\{(t_1,t_2)\in   {\mathbb T}^d\times {\mathbb T}^d \mid st_1=rt_2\},
  \]
  with Lie algebra
  $ \{(r v, s v)\mid v \in \R^d \} $.
\end{Lemma}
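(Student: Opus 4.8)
The plan is to determine the stabilizer $\Lambda(\eta)$ of an $R_\alpha^r\times R_\alpha^s$-ergodic measure $\eta$ on $\mathbb T^d\times\mathbb T^d$ by first pinning down a candidate and then showing nothing larger can occur. First I would observe the obvious inclusion: since $\gcd(r,s)=1$, there exist integers $a,b$ with $ar+bs=1$, and the vector $(r\alpha,s\alpha)$ — which is the displacement of the transformation $R_\alpha^r\times R_\alpha^s$ itself — lies in $\mathbb T_{r,s}$; more to the point, the one-parameter subgroup $\{(rv,sv):v\in\R^d\}$ is contained in $\mathbb T_{r,s}$, and its image is a subtorus of $\mathbb T^d\times\mathbb T^d$. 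The key point is that $\mathbb T_{r,s}=\{(t_1,t_2):st_1=rt_2\}$ is a closed subgroup whose identity component is exactly the image of $\{(rv,sv):v\in\R^d\}$ (because $\gcd(r,s)=1$ forces the quotient $\mathbb T_{r,s}/(\text{identity component})$ to be finite, in fact trivial once one checks the lattice condition carefully). So I would first verify that the Lie algebra of $\mathbb T_{r,s}$ is $\{(rv,sv):v\in\R^d\}$ and that $\mathbb T_{r,s}$ is connected.

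Next, for the lower bound $\Lambda(\eta)\supseteq\mathbb T_{r,s}$: I would use that any ergodic joining of a minimal rotation with itself (or of its powers) must be invariant under the ``diagonal-type'' subgroup. Concretely, $R_\alpha$ being minimal means $\{n\alpha:n\in\Z\}$ is dense in $\mathbb T^d$; hence $\{(rn\alpha,sn\alpha):n\in\Z\}$ is dense in the subtorus $\overline{\{(rv,sv):v\in\R^d\}}=\mathbb T_{r,s}$ (using connectedness of $\mathbb T_{r,s}$ from the previous step). Since $\eta$ is $(R_\alpha^r\times R_\alpha^s)$-invariant, it is invariant under translation by every $(rn\alpha,sn\alpha)$, and by density and weak-$*$ continuity of the translation action it is invariant under all of $\mathbb T_{r,s}$. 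This gives $\mathbb T_{r,s}\subseteq\Lambda(\eta)$.

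For the upper bound $\Lambda(\eta)\subseteq\mathbb T_{r,s}$: here I would use ergodicity essentially. If $\Lambda(\eta)$ were strictly larger than $\mathbb T_{r,s}$, then since $\Lambda(\eta)$ is a closed subgroup of $\mathbb T^d\times\mathbb T^d$ containing $\mathbb T_{r,s}$, its identity component would be a subtorus strictly containing $\mathbb T_{r,s}$, or it would contain a new torsion element; in either case $\eta$, being $\Lambda(\eta)$-invariant and supported on a single $\Lambda(\eta)$-orbit (Haar measure on a translate of $\Lambda(\eta)$, by the structure of invariant ergodic measures under a group action à la Theorem~\ref{thm:ratner}, or simply: an $R_\alpha^r\times R_\alpha^s$-ergodic measure on a torus is translation-Haar on a coset of a closed subgroup), would have support of dimension $>\dim\mathbb T_{r,s}=d$. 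But the two marginals of $\eta$ are both the full Haar measure $\lambda$ on $\mathbb T^d$, which forces constraints; combined with ergodicity of $R_\alpha^r\times R_\alpha^s$ restricted to that coset, and the fact that $R_\alpha^r\times R_\alpha^s$ acts on $(\mathbb T^d\times\mathbb T^d)/\mathbb T_{r,s}\cong\mathbb T^d$ (via $(t_1,t_2)\mapsto st_1-rt_2$) as the minimal rotation $R_{(s-r)\alpha}$ — wait, more precisely as the rotation by $s(r\alpha)-r(s\alpha)=0$, so the induced action on the quotient is trivial — one sees the ergodic measure cannot be spread over more than one fiber. Hmm: the cleaner route is to note $R_\alpha^r\times R_\alpha^s$ descends to the identity on $(\mathbb T^d\times\mathbb T^d)/\mathbb T_{r,s}$, so ergodicity forces $\eta$ to be concentrated on a single coset of $\mathbb T_{r,s}$; being invariant and ergodic there, it is the Haar measure of that coset, whence $\Lambda(\eta)$ is exactly $\mathbb T_{r,s}$.

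I expect the main obstacle to be the connectedness claim for $\mathbb T_{r,s}$ and the clean identification of the quotient dynamics: one must check carefully, using $\gcd(r,s)=1$, that $\{(t_1,t_2)\in(\R^d/\Z^d)^2:st_1=rt_2\}$ has no extra components, i.e.\ that every solution mod $\Z^d$ lifts to the line $\{(rv,sv)\}$; this is where a Bézout argument ($ar+bs=1$) enters, showing $t_1=r(at_1+bt_2)$ and $t_2=s(at_1+bt_2)$ modulo the relation, so $v=at_1+bt_2$ parametrizes everything. Once that is in hand, the invariance arguments above are routine applications of minimality and ergodicity.
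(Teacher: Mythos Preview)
Your proof is correct and follows essentially the same approach as the paper: both use the B\'ezout identity $ar+bs=1$ to identify $\mathbb T_{r,s}$ with the image of $v\mapsto(rv,sv)$ (the paper via the explicit inverse $I_c(t_1,t_2+c)=at_1+bt_2$, you via the same formula to verify connectedness), and both conclude by showing the cosets of $\mathbb T_{r,s}$ are invariant with the restricted dynamics conjugate to the minimal rotation $R_\alpha$. The only difference is organizational: the paper builds the conjugacy $I_c$ directly and reads off unique ergodicity on each coset, whereas you first obtain $\mathbb T_{r,s}\subseteq\Lambda(\eta)$ by density of the orbit and then use triviality of the quotient dynamics to confine $\eta$ to a single coset---but these are two packagings of the same computation.
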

\begin{proof}
  As the map~$ R_{\alpha}^r\times R_{\alpha}^s $ is the rotation~$
  R_{(r\alpha, s\alpha)} $ with rotation vector $ (r\alpha,s\alpha)
  $ of the $2d$ dimensional  torus  $\mathbb T^d \times \mathbb T^d$,
  it preserves the orbits of the closed subgroup  ${\mathbb T}_{r,s}$
  that is, the cosets
  \[
    {\mathbb T}_{r,s,c}=\{(t_1,t_2+c)\in \T^d \times \T^d \mid st_1=rt_2\},
    \quad c\in {\mathbb T}^d.
  \]
  We claim that the action of~$ R_{(r\alpha, s\alpha)} $ on each
  coset~$ {\mathbb T}_ {r,s,c} $ is minimal and uniquely ergodic.  Indeed,
  let~$ a , b \in \Z $ be such that~$ ar+bs=1 $. The
  map~$ I_c\colon {\mathbb T}_{r,s,c}\to {\mathbb T}^d $, given
  by~$ I_c(t_1,t_2+c)=at_1+bt_2 $, is well defined.  Its inverse is
  given by the formula
  $ t\in \T^d\mapsto (rt,st+c)\in {\mathbb T}_{r,s,c} $.  It follows
  that the map~$ I_c $ is a homeomorphism intertwining the action
  of~$ R_{(r\alpha, s\alpha)} $ on~$ {\mathbb T}_{r,s,c} $ with the action
  of~$ R_\alpha $ on~$ {\mathbb T}^d$; it also intertwines also the action
  of $ {\mathbb T}_{r,s}$ on $ {\mathbb T}_{r,s,c} $ with the action of
  ${\mathbb T}^d$ on itself.  Since~$ R_\alpha $ is minimal and uniquely
  ergodic, so is the action of $ R_{(r\alpha, s\alpha)}$ on
  $ {\mathbb T}_{r,s,c} $. Since~$ \eta $ is an ergodic measure for
  $ R_{(r\alpha, s\alpha)}$, there exists a~$c\in {\mathbb T}$ such
  that~$ \supp(\eta)={\mathbb T}_{r,s,c}={\mathbb T}_ {r,s}(0,c) $.  It follows
  that~$\Lambda (\eta)= {\mathbb T}_{r,s} $,
% As the orbits of
%   the groups~$ M^ {(1)}_{r,s} $ and~$ \Lambda(\eta) $ are closed and
%   coincide, their Lie algebras are the same.  Since
%   \[
%     {\mathbb T}_{r,s}=I^{-1}_0{\mathbb T}=\{(rt,st)\in { M}^{(1)} \times{M}^{(1)}\mid
%     t\in {\mathbb T}\},
%   \]
%   the Lie algebra of~$ {\mathbb T}_{r,s} $ is~$ \mathfrak h_{r,s} $,
  which completes the proof.
\end{proof}

Applying the above lemma to our setting we have:
\begin{Cor}
  \label{lem:torus1} Let~$ \eta $ be an~$ R_{\alpha}^r\times R_{\alpha}^s~
  $-invariant and ergodic probability measure on $ M^{(1)}\times M^{%
  (1)} $.  Then the Lie algebra of the stabilizer~$ \Lambda(\eta) $ is t
  \[
    \mathfrak h_{r,s}:=\{(r \bar X, s \bar X) \mid \bar X \in
    \mathfrak g/\mathfrak g^{(2)}\}\subset (\g\oplus \g)/(\g^{(2)}\oplus
    \g^{(2)}).
  \]
\end{Cor}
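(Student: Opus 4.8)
The plan is to deduce this corollary directly from Lemma~\ref{lem:torus}, applied to the $d_1$-dimensional torus $M^{(1)}$. First I would observe that the hypotheses of that lemma are met: we have fixed $l_u$ ergodic with respect to $\lambda$, so by the Auslander--Green--Hahn theorem \cite{Au-Gr-Ha} the induced rotation $R_\alpha$ of $M^{(1)}$ is ergodic, hence minimal (an ergodic rotation of a torus is minimal). Moreover $r,s$ are relatively prime integers and $\eta$ is, by assumption, an $R_\alpha^r\times R_\alpha^s$-invariant ergodic probability measure on $M^{(1)}\times M^{(1)}$. Thus Lemma~\ref{lem:torus} applies with $d=d_1$.

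Lemma~\ref{lem:torus} then yields $\Lambda(\eta)=\mathbb T_{r,s}=\{(t_1,t_2)\mid st_1=rt_2\}$, whose Lie algebra is $\{(rv,sv)\mid v\in\R^{d_1}\}$. It remains only to rewrite this in intrinsic terms. Recall from Section~\ref{sec:background} that $M^{(1)}=G/[G,G]\Gamma$ was identified with $\R^{d_1}/\Z^{d_1}$ by means of the basis $\bar X_1,\dots,\bar X_{d_1}$ of $\mathfrak g/\mathfrak g^{(2)}\cong\R^{d_1}$, and correspondingly $(\mathfrak g\oplus\mathfrak g)/(\mathfrak g^{(2)}\oplus\mathfrak g^{(2)})\cong\R^{d_1}\oplus\R^{d_1}$. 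Under this identification a vector $v\in\R^{d_1}$ corresponds to an element $\bar X\in\mathfrak g/\mathfrak g^{(2)}$, so that $\{(rv,sv)\mid v\in\R^{d_1}\}$ becomes precisely $\mathfrak h_{r,s}=\{(r\bar X,s\bar X)\mid\bar X\in\mathfrak g/\mathfrak g^{(2)}\}$, as claimed.

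Since the statement is in substance a change of notation on top of Lemma~\ref{lem:torus}, I do not expect any genuine obstacle; the only points worth spelling out are the verification that the minimality hypothesis of Lemma~\ref{lem:torus} is in force (this is exactly where ergodicity of $l_u$, via Auslander--Green--Hahn, is used) and keeping the linear identification $\R^{d_1}\cong\mathfrak g/\mathfrak g^{(2)}$ consistent with the conventions fixed in Section~\ref{sec:background}.
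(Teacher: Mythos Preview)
Your proposal is correct and matches the paper's approach: the corollary is stated immediately after Lemma~\ref{lem:torus} with the preface ``Applying the above lemma to our setting we have,'' and no further proof is given. Your write-up simply makes explicit the verification of the minimality hypothesis and the translation of $\R^{d_1}$ into $\mathfrak g/\mathfrak g^{(2)}$, which is exactly what the paper leaves implicit.
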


Denote by~$ H<G\times G $ the stabilizer of the joining~$ \rho $ and
let $ \mathfrak h\subset\g\oplus\g $ be its Lie algebra.

\begin{Lemma}
  \label{lem:rsh} There exist elements~$ X_1' , X_1'' , \dots, X_ {d_1}'
  , X_{d_1}'' $ in~$ \mathfrak g $ satisfying
  \[
    X'_i\equiv X_i''\equiv X_i \mod \mathfrak g^{(2)}, \text{ for all
    } i=1,\dots, d_1
  \]
  and such that
  \[
    \overline{Y}_i=(r X'_i, sX''_i)\in \mathfrak h.
  \]
\end{Lemma}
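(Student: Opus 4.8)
The plan is to deduce Lemma~\ref{lem:rsh} by combining Ratner's theorem (Theorem~\ref{thm:ratner}) with Corollary~\ref{lem:torus1} through a dimension count along the projection $\Pi:=\pi^{(1)}\times\pi^{(1)}\colon M\times M\to M^{(1)}\times M^{(1)}$. First I would invoke Theorem~\ref{thm:ratner}: since $\rho$ is an ergodic invariant measure for the nil-translation $l^r_u\times l^s_u=l_{(u^r,u^s)}$ of the nilmanifold $M\times M=(G\times G)/(\Gamma\times\Gamma)$, there is a point $z\in M\times M$ such that the orbit $Hz$ is closed and $\rho$ is the $H$-invariant (Haar) probability measure carried by it; equivalently $H\cap z(\Gamma\times\Gamma)z^{-1}$ is a lattice in $H$ and $Hz$ is a sub-nilmanifold of $M\times M$.

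Next I would push everything down by $\Pi$. As $\Pi$ is the self-product of the nilmanifold fibration~\eqref{eq:fibration} (case $i=1$) and $G^{(2)}\cap\Gamma$ is a lattice in $G^{(2)}$, the standard rationality properties of subgroups of simply connected nilpotent Lie groups show that $\Pi(H)<(G/G^{(2)})\times(G/G^{(2)})$ is again rational, so $\Pi(Hz)=\Pi(H)\,\Pi(z)$ is a closed orbit and $\Pi_*\rho=\rho^{(1)}$ is the Haar measure on it; in particular $\supp\rho^{(1)}=\Pi(Hz)$. On the other hand, $\rho^{(1)}$ is $R^r_\alpha\times R^s_\alpha$-invariant and ergodic on the torus $M^{(1)}\times M^{(1)}$, so by Lemma~\ref{lem:torus} and Corollary~\ref{lem:torus1} its support is a single coset of ${\mathbb T}_{r,s}$, a submanifold of dimension $d_1$. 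Comparing the two descriptions, $\Pi(Hz)$ is a coset of ${\mathbb T}_{r,s}$ and $\dim\Pi(Hz)=d_1$.

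I would then run the dimension count. The group $\Pi(H)$ lies inside $(G/G^{(2)})\times(G/G^{(2)})\cong\R^{2d_1}$ and acts on the torus $M^{(1)}\times M^{(1)}$ by translations, hence with discrete point-stabilisers; therefore $\dim\Pi(H)=\dim\Pi(Hz)=d_1$, i.e.\ the linear map $d\Pi\colon\mathfrak g\oplus\mathfrak g\to(\mathfrak g/\mathfrak g^{(2)})\oplus(\mathfrak g/\mathfrak g^{(2)})$ satisfies $\dim d\Pi(\mathfrak h)=d_1$. Since $H$ stabilises $\rho$, the group $\Pi(H)$ stabilises $\rho^{(1)}$, so Corollary~\ref{lem:torus1} gives $d\Pi(\mathfrak h)\subseteq\mathfrak h_{r,s}$; as $\dim\mathfrak h_{r,s}=d_1$ as well, this forces $d\Pi(\mathfrak h)=\mathfrak h_{r,s}$. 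Finally, for each $i=1,\dots,d_1$ the vector $(r\bar X_i,s\bar X_i)\in\mathfrak h_{r,s}=d\Pi(\mathfrak h)$ is the $d\Pi$-image of some $(W'_i,W''_i)\in\mathfrak h$ with $W'_i\equiv rX_i$ and $W''_i\equiv sX_i\pmod{\mathfrak g^{(2)}}$, and setting $X'_i:=\tfrac1r W'_i$, $X''_i:=\tfrac1s W''_i$ yields $X'_i\equiv X''_i\equiv X_i\pmod{\mathfrak g^{(2)}}$ and $\overline{Y}_i=(rX'_i,sX''_i)=(W'_i,W''_i)\in\mathfrak h$, which is the assertion of the lemma.

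The step I expect to be the main obstacle is the one used in the second paragraph: the assertion that the image of the closed orbit $Hz$ under $\pi^{(1)}\times\pi^{(1)}$ is again closed. This is exactly what upgrades ``$\supp\rho^{(1)}$ equals the closure of $\Pi(Hz)$'' to ``$\rho^{(1)}$ is the Haar measure on the honest submanifold $\Pi(Hz)$'', and hence is what makes the dimension count legitimate (without it one could only conclude that $\Pi(H)$ is dense in, not equal to, a group with Lie algebra $\mathfrak h_{r,s}$). It rests on the rationality of the relevant subgroups of $G\times G$, which in turn uses the fact recorded after \eqref{eq:fibration} that $G^{(i+1)}\cap\Gamma$ is a lattice in $G^{(i+1)}$; once that is in place, the remaining manipulations with Lie algebras are routine.
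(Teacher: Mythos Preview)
Your argument is correct and follows essentially the same route as the paper's own proof: push the Ratner orbit $Hz$ down by $\Pi=\pi^{(1)}\times\pi^{(1)}$, identify the image with $\supp\rho^{(1)}$, invoke Corollary~\ref{lem:torus1}, and then lift the generators of $\mathfrak h_{r,s}$ back to $\mathfrak h$. Your final division by $r$ and $s$ to pass from $(W_i',W_i'')\in\mathfrak h$ to $(X_i',X_i'')$ is in fact more carefully written than the paper's version.

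The one point worth correcting is your identification of the ``main obstacle''. Closedness of $\Pi(Hz)$ requires no rationality input whatsoever: $M\times M$ is compact, $Hz\subset M\times M$ is closed hence compact, and $\Pi$ is continuous, so $\Pi(Hz)$ is compact and therefore closed in $M^{(1)}\times M^{(1)}$. That is all that is needed to get $\supp\rho^{(1)}=\Pi(Hz)$ exactly (not merely up to closure), and the dimension count then proceeds. The paper uses this implicitly and then concludes directly: the two closed orbits $\Pi(H)\bar t$ and $\Lambda(\rho^{(1)})\bar t$ coincide, hence their tangent spaces (i.e.\ the Lie algebras of the acting groups) coincide, giving $d\Pi(\mathfrak h)=\mathfrak h_{r,s}$ in one stroke rather than via the containment-plus-dimension argument you give. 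Both formulations are equivalent.
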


\begin{proof}
  Let us consider the image measure~$ \rho^{(1)}:=(\pi^{(1)}\times\pi^
  {(1)})_*\rho $ which is~$ R_{\alpha}^r\times R_{\alpha}^s $-invariant
  and ergodic.  By Theorem~%
  \ref{thm:ratner}, the topological support of the measure~$ \rho $
  is a closed coset~$ H\bar x $ for some~$ \bar x\in M\times M $. Then
  the topological support of~$ \rho^{(1)} $ is~$ (\pi^{(1)}\times\pi^
  {(1)})(H) \bar t $ for some~$ \bar t\in M^{(1)}\times M^{(1)} $.

  Since~$ \rho^{(1)} $ is~$ R_{\alpha}^r\times R_{\alpha}^s $-invariant
  and ergodic, by Corollary~%
  \ref{lem:torus1}, the topological support of~$ \rho^{(1)} $ is~$
  \Lambda(\rho^{(1)}) \bar t $ and the Lie algebra of~$ \Lambda(\rho^
  {(1)}) $ is~$ \mathfrak h_{r,s} $.  As the orbits~$ (\pi^{(1)}\times\pi^
  {(1)})(H) \bar t $ and~$ \Lambda(\rho^{(1)}) \bar t $ are closed and
  equal, it follows that the Lie algebras of~$ (\pi^{(1)}\times\pi^ {%
  (1)})(H) $ and~$ \Lambda(\rho^{(1)}) $ are the same and equal to~$
  \mathfrak h_{r,s} $.  Hence the Lie algebra~$ \mathfrak h $ of the
  Lie group~$ H $ projects under the tangent map~$ \mathrm d( \pi^{(1)}\times\pi^
  {(1)}) $ onto the algebra~$ \mathfrak h_{r,s}= \{ (r \bar X, s \bar
  X) \mid \bar X \in \mathfrak g/\mathfrak g^{(2)}\} $.  It follows
  that for every~$ i=1,\ldots, d_1 $ there exists~$ (X_i',X_i'')\in
  \mathfrak h $ such that
  \[
    d( \pi^{(1)}\times\pi^{(1)})(X_i',X_i'')=(r\,d\pi^{(1)}(X_i),s\,d\pi^
    {(1)}(X_i)),
  \]
  which yields the required elements~$ X_1' , X_1'' , \dots, X_ {d_1}'
  , X_{d_1}'' \in \mathfrak g $.
\end{proof}

% The set~$S=\{ \overline Y_i \mid i=1, \dots d_1 \}~$ is a subset of
% the set of generators for the Lie algebra~$\mathfrak h$ since it
% projects onto the
% basis~$\{(r \bar X_i, s \bar X_i) \mid i=1, \dots d_1 \} \subset
% \mathfrak h/[\mathfrak h,\mathfrak h]\cap ( \mathfrak g/\mathfrak
% g^{(2)} \oplus \mathfrak g/\mathfrak g^{(2)})~$
% which is a subset of~$\mathfrak h/[\mathfrak h,\mathfrak h]$.
\begin{Lemma}
  \label{lem:aop_nil:1} The Lie algebra~$ \mathfrak h\subset\g\oplus\g
  $ satisfies
  \[
    \{ (r^k Z, s^k Z )\mid Z\in \mathfrak g^{(k)}\} \subset \mathfrak
    h \cap (\mathfrak g^{(k)}\oplus\mathfrak g^{(k)} ).
  \]
  It follows that the group~$ H $ contains the subgroup
  \[
    L\defin \{ \exp(r^k Z, s^k Z)\mid Z\in\mathfrak g^{(k)}\} <G^{(k)}\times
    G^{(k)}.
  \]
\end{Lemma}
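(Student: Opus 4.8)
The plan is to use that $\mathfrak h$, being the Lie algebra of the closed subgroup $H<G\times G$, is a Lie subalgebra of $\mathfrak g\oplus\mathfrak g$, hence stable under iterated brackets, and to feed into it the elements $\overline{Y}_i=(rX'_i,sX''_i)\in\mathfrak h$ produced by Lemma~\ref{lem:rsh}. Writing $[A_1,[A_2,\dots,A_k]]$ for the fully left‑nested bracket $[A_1,[A_2,[\dots,[A_{k-1},A_k]\dots]]]$ and using that the bracket in $\mathfrak g\oplus\mathfrak g$ is taken componentwise, for any indices $i_1,\dots,i_k\in\{1,\dots,d_1\}$ one has
\[
  [\overline{Y}_{i_1},[\overline{Y}_{i_2},\dots,\overline{Y}_{i_k}]]
  =\bigl(\,r^{k}\,[X'_{i_1},[X'_{i_2},\dots,X'_{i_k}]]\,,\;
          s^{k}\,[X''_{i_1},[X''_{i_2},\dots,X''_{i_k}]]\,\bigr)\in\mathfrak h,
\]
the factors $r^k$ and $s^k$ coming out because each of the $k$ slots contributes one factor $r$ (resp.\ $s$).

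The next step is to replace the primed and double‑primed vectors by $X_{i_j}$ itself. Since $X'_{i_j}\equiv X_{i_j}\equiv X''_{i_j}\pmod{\mathfrak g^{(2)}}$, expanding each iterated bracket by multilinearity gives the ``pure'' term $[X_{i_1},[X_{i_2},\dots,X_{i_k}]]=:W_{i_1,\dots,i_k}\in\mathfrak g^{(k)}$ plus terms in which at least one slot carries an error vector lying in $\mathfrak g^{(2)}$; any such term is a bracket word of weight at least $(k-1)\cdot 1+2=k+1$, hence lies in $\mathfrak g^{(k+1)}=\{0\}$. Thus $[X'_{i_1},[X'_{i_2},\dots,X'_{i_k}]]=[X''_{i_1},[X''_{i_2},\dots,X''_{i_k}]]=W_{i_1,\dots,i_k}$, and we conclude that $(r^{k}W_{i_1,\dots,i_k},\,s^{k}W_{i_1,\dots,i_k})\in\mathfrak h$ for every choice of indices.

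It remains to note that the vectors $W_{i_1,\dots,i_k}$ span $\mathfrak g^{(k)}$. This is the standard structure fact: $\mathfrak g^{(k)}=[\mathfrak g,\mathfrak g^{(k-1)}]$ is spanned by left‑nested $k$‑fold brackets $[Z_1,[Z_2,\dots,Z_k]]$ with $Z_j\in\mathfrak g$, and writing each $Z_j$ as a linear combination of $X_1,\dots,X_{d_1}$ modulo $\mathfrak g^{(2)}$ and expanding, the same weight argument annihilates every term containing a $\mathfrak g^{(2)}$‑factor, so each such bracket already lies in $\Span\{W_{i_1,\dots,i_k}\}$. By linearity of $\mathfrak h$ this yields $\{(r^kZ,s^kZ)\mid Z\in\mathfrak g^{(k)}\}\subset\mathfrak h$, and since $r^kZ,s^kZ\in\mathfrak g^{(k)}$ it is contained in $\mathfrak h\cap(\mathfrak g^{(k)}\oplus\mathfrak g^{(k)})$. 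Finally, the subalgebra $\{(r^kZ,s^kZ)\mid Z\in\mathfrak g^{(k)}\}$ is abelian because $\mathfrak g^{(k)}\subset Z(\mathfrak g)$, and it exponentiates, inside the simply connected abelian group $G^{(k)}\times G^{(k)}$, onto $L=\{\exp(r^kZ,s^kZ)\mid Z\in\mathfrak g^{(k)}\}$; as $H$ is a closed subgroup whose Lie algebra contains this subalgebra, $H\supset L$.

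The only real bookkeeping is the weight argument appearing in the second and third steps — that modifying generators by elements of $\mathfrak g^{(2)}$ leaves $k$‑fold brackets unchanged, and that iterated brackets of lifts of a basis of $\mathfrak g/\mathfrak g^{(2)}$ already span $\mathfrak g^{(k)}$ — but both are immediate consequences of $\mathfrak g^{(k+1)}=\{0\}$, so no genuine obstacle is expected.
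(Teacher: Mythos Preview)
Your proof is correct and follows essentially the same route as the paper's: compute the $k$-fold brackets of the $\overline{Y}_i$ componentwise, use that perturbations by $\mathfrak g^{(2)}$ disappear in $k$-fold brackets (your weight argument is exactly the content of the paper's Lemma~\ref{lem:aop_nil:4}), and use that such brackets span $\mathfrak g^{(k)}$ (your inline argument is the content of Lemmata~\ref{lem:aop_nil:2} and~\ref{lem:aop_nil:3}). The only cosmetic difference is that the paper isolates these facts as appendix lemmata rather than arguing them in place.
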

\begin{proof}
  Let~$ X_i',X_i'',\overline{Y}_i $ for~$ i=1,\ldots, d_1 $ be given
  by Lemma~%
  \ref{lem:rsh}.  Consider the~$ k $-fold Lie products of elements of
  the sets~$ \overline S=\{ \overline Y_i \mid i=1, \dots ,d_1 \} $
  and~$ S=\{ X_i \mid i=1, \dots ,d_1 \} $; that is, for every~$ (i_1,\dots,i_k)\in
  \{1,\dots,d_1\}^k $, we set
  \[
    \overline S_{i_1,i_2,\dots,i_k}:=[\overline Y_{i_1},[\overline Y_{i_2},
    \dots, [\overline Y_{i_{k-1}},\overline Y_{i_k}]\dots]]\in \g^{(k)}\oplus\g^
    {(k)}
  \]
  and
  \[
    S_{i_1,i_2,\dots,i_k}:=[X_{i_1},[X_{i_2}, \dots, [X_{i_{k-1}},X_{i_k}]\dots]]\in
    \g^{(k)}.
  \]
  Then, by definition and Lemma~%
  \ref{lem:aop_nil:4}, we have
  \begin{align*}
    \overline S&_{i_1,i_2,\dots,i_k} =[(rX'_{i_1},sX_{i_1}'') ,[(rX'_{i_2},sX''_
    {i_2}) , \dots, [(rX'_{i_{k-1}},sX''_{i_{k-1}}),(rX'_{i_{k}},sX''_
    {i_{k}})]\dots]] \\
    &=\big( [rX'_{i_1},[ rX'_{i_2}, \dots, [rX'_{i_{k-1}},rX'_{i_{k}}]\dots]],
    [sX''_{i_1},[ sX''_{i_2}, \dots, [sX''_{i_{k-1}},sX''_{i_{k}}]\dots]]
    \big)\\
    &=\big( r^k \,[X_{i_1},[ X_{i_2}, \dots, [X_{i_{k-1}},X_{i_{k}}]\dots]]
    , s^k\,[X_{i_1},[ X_{i_2}, \dots, [X_{i_{k-1}},X_{i_{k}}]\dots]]
    \big)\\
    &=\big( r^k S_{i_1,i_2,\dots,i_k} , s^k S_{i_1,i_2,\dots,i_k} \big).
  \end{align*}
  \begin{sloppypar}Since~$ X_1+\g^{(2)},\ldots,X_{d_1}+\g^{(2)} $ is a basis of~$ \g/\g^
  {(2)} $, by Lemma~%
  \ref{lem:aop_nil:2}, the set $ \{X_1,\ldots,X_{d_1}\} $ generates
  the Lie algebra~$ \g $.  Therefore, in view of Lemma~%
  \ref{lem:aop_nil:3}, the family of~$ k $-fold products~$ S_{i_1,i_2,\dots,i_k}~
  $ spans~$ \mathfrak g^{(k)} $.

By Lemma~%
  \ref{lem:rsh}, $ \overline Y_{i}\in\mathfrak h $ for every~$ i=1,\ldots,
  d_1 $.  It follows that
  \[
    \big( r^k S_{i_1,i_2,\dots,i_k} , s^k S_{i_1,i_2,\dots,i_k} \big)=\overline
    S_{i_1,i_2,\dots,i_k}\in\mathfrak h\cap (\mathfrak g^{(k)}\oplus\mathfrak
    g^{(k)} )
  \]
  for every~$ (i_1,\dots,i_k)\in \{1,\dots,d_1\}^k $.  Consequently,
  for all~$ Z\in \mathfrak g^{(k)} $ we have $ (r^kZ,s^kZ)\in\mathfrak
  h\cap (\mathfrak g^{(k)}\oplus\mathfrak g^{(k)} ) $, which completes
  the proof.
\end{sloppypar}
\end{proof}

\begin{Prop}
  \label{prop:aop_nil:1} Let~$ \chi_1 $, $ \chi_2 $ be characters of
  the torus~$ \mathbb T^{(k)} $ so that at least one is non-trivial.
  Then, if~$ \chi_1^{r^k}\not= \chi_2^{s^k} $, for any~$ f_i\in H_{\chi_i}
  $, $ i=1,2 $, and any ergodic joining~$ \rho $ of~$ (M,l_{u}^r,
  \lambda) $ and~$ (M,l_{u}^s, \lambda) $, we have
  \[
    \rho(f_1\otimes \bar f_2)=0.
  \]
\end{Prop}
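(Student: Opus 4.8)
The plan is to exploit the extra invariance of the joining furnished by Lemma~\ref{lem:aop_nil:1}. That lemma tells us that the stabilizer $H$ of $\rho$ contains the subgroup $L=\{\exp(r^kZ,s^kZ)\mid Z\in\mathfrak g^{(k)}\}$, so $\rho$ is invariant under the left translations $(x_1,x_2)\mapsto(\exp(r^kZ)\,x_1,\exp(s^kZ)\,x_2)$ of $M\times M$. On the other side, by \eqref{l2} a function in $H_{\chi_i}$ is an eigenfunction for the $G^{(k)}$-action with eigenvalue prescribed by $\chi_i$. Feeding these two facts against each other will show that $\rho(f_1\otimes\bar f_2)$ equals itself times a scalar of modulus $1$, and that this scalar differs from $1$ unless $\chi_1^{r^k}=\chi_2^{s^k}$.

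In detail, I would fix $f_i\in H_{\chi_i}$; since the two marginals of $\rho$ are $\lambda$, the functions $f_1\circ p_1$ and $f_2\circ p_2$ lie in $L^2(M\times M,\rho)$, so $f_1\otimes\bar f_2\in L^1(M\times M,\rho)$ and $\rho(f_1\otimes\bar f_2)$ is well defined. Pick an arbitrary $Z\in\mathfrak g^{(k)}$ and set $g_1=\exp(r^kZ)$, $g_2=\exp(s^kZ)\in G^{(k)}$, so that $(g_1,g_2)\in L\subset H$. By \eqref{l2} (applied with $z=g_i\in G^{(k)}$) we have $f_i(g_ix)=\chi_i(g_i)f_i(x)$ for $\lambda$-a.e.\ $x\in M$, where $\chi_i(g_i)$ is shorthand for $\chi_i$ evaluated at the image of $g_i$ in $\mathbb{T}^{(k)}=G^{(k)}/(G^{(k)}\cap\Gamma)$; pulling these identities back through $p_1$ and $p_2$ gives, for $\rho$-a.e.\ $(x_1,x_2)$,
\[
f_1(g_1x_1)\,\overline{f_2(g_2x_2)}=\chi_1(g_1)\,\overline{\chi_2(g_2)}\,f_1(x_1)\,\overline{f_2(x_2)}.
\]
Integrating against $\rho$ and using the $L$-invariance of $\rho$ (which lets us drop the translation by $(g_1,g_2)$) produces the scalar identity
\[
\rho(f_1\otimes\bar f_2)=\chi_1(g_1)\,\overline{\chi_2(g_2)}\,\rho(f_1\otimes\bar f_2).
\]
Hence either $\rho(f_1\otimes\bar f_2)=0$ --- which is the assertion --- or $\chi_1(g_1)=\chi_2(g_2)$, and in the latter case this holds for every $Z\in\mathfrak g^{(k)}$.

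To close the argument I would show that the surviving alternative forces $\chi_1^{r^k}=\chi_2^{s^k}$, contrary to the hypothesis. Since $\mathfrak g^{(k)}$ is abelian, $\exp\colon\mathfrak g^{(k)}\to G^{(k)}$ is a group homomorphism, so $\exp(r^kZ)$ projects in $\mathbb{T}^{(k)}$ to the $r^k$-th power of the projection $\bar t$ of $\exp(Z)$; thus $\chi_1(g_1)=\chi_1^{r^k}(\bar t)$ and, likewise, $\chi_2(g_2)=\chi_2^{s^k}(\bar t)$. As $Z$ ranges over $\mathfrak g^{(k)}$ the element $\bar t$ ranges over all of $\mathbb{T}^{(k)}$, so ``$\chi_1(g_1)=\chi_2(g_2)$ for every $Z$'' is exactly ``$\chi_1^{r^k}=\chi_2^{s^k}$''. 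This being excluded by assumption, we conclude $\rho(f_1\otimes\bar f_2)=0$.

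I do not expect a real obstacle: the substantive work is already done in Lemma~\ref{lem:aop_nil:1} and the Lie-algebraic lemmas behind it, which hand us the decisive $L$-invariance; what remains is the short invariance computation above. The only points that deserve a little care are the measure-theoretic bookkeeping --- the integrability of $f_1\otimes\bar f_2$ against $\rho$ and the passage from $\lambda$-a.e.\ to $\rho$-a.e.\ identities through the marginals --- and the elementary matching of the exponent $r^k$ in $\exp(r^kZ)$ with the character power $\chi_1^{r^k}$, which uses only that $\mathfrak g^{(k)}$ is abelian. Note that the hypothesis that at least one of $\chi_1,\chi_2$ be non-trivial is not needed in this argument; it is in any event implied by $\chi_1^{r^k}\neq\chi_2^{s^k}$, since powers of the trivial character are trivial.
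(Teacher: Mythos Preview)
Your proof is correct and follows essentially the same route as the paper: use Lemma~\ref{lem:aop_nil:1} to get $L$-invariance of $\rho$, combine it with the eigenfunction property \eqref{l2} to obtain $\rho(f_1\otimes\bar f_2)=(\chi_1^{r^k}\chi_2^{-s^k})(z)\,\rho(f_1\otimes\bar f_2)$ for all $z\in\mathbb T^{(k)}$, and conclude. Your added remarks on integrability and on the redundancy of the ``at least one non-trivial'' hypothesis are accurate side observations.
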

\begin{proof}
  Recall there exists a closed subgroup~$ H< G\times G $ such that the
  measure~$ \rho $ is~$ H $-invariant and~$ H $ contains the subgroup
  \[
    L= \{ \exp(r^k Z, s^k Z)\mid Z\in\mathfrak g^{(k)}\} < G^{(k)}\times
    G^{(k)}.
  \]
  Since~$ f_1\in H_{\chi_1} $ and~$ f_2\in H_{\chi_2} $, for every~$
  (\exp(r^k Z), \exp(s^k Z))\in L $ and all~$ ( x_1, x_2)\in
  M \times M $ we have
  \begin{align*}
    f_1\otimes \bar f_2&((\exp(r^k Z), \exp(s^k Z))(x_1, x_2))=f_1(\exp
    (r^k Z) x_1)\cdot \bar f_2(\exp(s^k Z) x_2)\\
    & = \chi_1(\exp(r^k Z)) \overline{\chi_2(\exp(s^k Z))}\cdot f_1\otimes
    \bar f_2(x_1, x_2)\\
    & = (\chi_1^{r^k}\cdot\chi_2^{-s^k})(\exp(Z))\cdot f_1\otimes \bar
    f_2(x_1, x_2).
  \end{align*}
  As the probability measure~$ \rho $ is invariant under the action of
  the group~$ L<H $, it follows that
  \[
    \rho(f_1\otimes \bar f_2)=(\chi_1^{r^k}\cdot\chi_2^{-s^k})(z)\rho(f_1\otimes
    \bar f_2)
  \]
  for every~$ z\in G^{(k)}/(G^{(k)}\cap \Gamma)=\mathbb T^{(k)} $. By
  assumption, the character~$ \chi_1^{r^k}\cdot\chi_2^{-s^k} $ is
  non-trivial, so we can find~$ z\in \mathbb T^{(k)} $ with~$ (\chi_1^
  {r^k}\cdot\chi_2^{-s^k})(z)\neq 1 $.  This yields~$ \rho(f_1\otimes
  \bar f_2)=0 $, which completes the proof.
\end{proof}

The following theorem shows that Theorem~%
\ref{thm:prop} applies to nil-translations.  Consequently, Theorem~\ref{ThA}
and Corollaries~\ref{CorB}~and~\ref{CorC} are true for nil-translations.

\begin{Th}
  \label{thm:mainrot} Let~$ l_u $ be an ergodic nil-translation of a
  compact connected nil\-ma\-ni\-fold~$ M=G/\Gamma $.  Then there exists a
  set~$ \mathscr{C}\subset C(M)\cap L^2_0(M,\lambda) $ whose span is
  dense in~$ L^2_0(M,\lambda) $ such that:
  \begin{itemize}
    \item[(i)]
      for any pair~$ f_1,f_2\in\mathscr{C} $, for all but at most one
      pair of relatively prime natural numbers~$ (r,s) $ we have~$
      \rho(f_1\otimes \bar{f}_2)=0 $ for all ergodic joinings~$ \rho $
      of~$ l_u^r $ and~$ l_u^s $;

    \item[(ii)]
      for every~$ f\in\mathscr{C} $ and every~$ \omega\in \C $ with~$ |\omega|=1
      $ there exists an element~$ g\in G $ such that~$ f(l_u^n (l_gx))=\omega f(l_u^n
      x) $ for every~$ x\in M $ and every~$ n\in\Z $.
  \end{itemize}
\end{Th}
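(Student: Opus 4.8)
The plan is to produce the set $\mathscr{C}$ using the Fourier decomposition along the central torus $\mathbb T^{(k)}$ recalled in Section~\ref{sec:background}. The natural first attempt is to take $\mathscr{C}=\bigcup_{\chi\neq 1}C_\chi$ — but this cannot be right as stated, because the span of $\bigoplus_{\chi\neq 1}H_\chi$ is only dense in the orthocomplement of $H_1=\pi^{(k-1)*}L^2(M^{(k-1)})$, not in all of $L^2_0(M,\lambda)$. So the construction must be \emph{inductive on the nilpotency step}~$k$. First I would set up the induction: the base case $k=1$ is a torus rotation $R_\alpha$ on $\mathbb T^{d_1}$, where one takes $\mathscr{C}=\{e^{2\pi i\langle m,\cdot\rangle}: m\in\Z^{d_1}\setminus\{0\}\}$; part (i) is classical (two distinct nonzero frequencies $r$-power and $s$-power can agree for at most one coprime pair by a linear-algebra/divisibility argument), and for part (ii) the shift $S$ is itself a rotation by a suitable vector so that $e^{2\pi i\langle m, R_\alpha^n(x+v)\rangle}=e^{2\pi i\langle m,v\rangle}e^{2\pi i\langle m,R_\alpha^n x\rangle}$, with $v$ chosen so $e^{2\pi i\langle m,v\rangle}=\omega$ (possible since $m\neq0$).

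For the inductive step, I would use the principal $\mathbb T^{(k)}$-bundle $\pi^{(k-1)}:M\to M^{(k-1)}$ and the decomposition $L^2(M,\lambda)=\bigoplus_\chi H_\chi$. The space $H_1$ is canonically $L^2(M^{(k-1)},\lambda^{(k-1)})$ pulled back via $\pi^{(k-1)}$, and $l_u$ on $M$ projects to the ergodic nil-translation $l_{\pi^{(k-1)}(u)}$ on the $(k-1)$-step nilmanifold $M^{(k-1)}$; so by the induction hypothesis there is a set $\mathscr{C}'\subset C(M^{(k-1)})\cap L^2_0(M^{(k-1)})$ doing the job downstairs, and I put $\pi^{(k-1)*}\mathscr{C}'$ into $\mathscr{C}$. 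For each nontrivial character $\chi$ of $\mathbb T^{(k)}$ I add $C_\chi$ (continuous vectors in $H_\chi$, dense in $H_\chi$ by Lemma~\ref{lem:densc}); then $\mathrm{span}\,\mathscr{C}$ is dense in $L^2_0(M,\lambda)$ because $\overline{\mathrm{span}}\,\pi^{(k-1)*}\mathscr{C}'=H_1\ominus\C$ and $\overline{\mathrm{span}}\bigcup_{\chi\neq1}C_\chi=\bigoplus_{\chi\neq1}H_\chi$. Part (i) is then immediate by cases: if $f_1,f_2\in\pi^{(k-1)*}\mathscr{C}'$ it follows from the induction hypothesis applied to $l_{\pi^{(k-1)}(u)}$, since the pushforward of any ergodic joining of $l_u^r,l_u^s$ under $\pi^{(k-1)}\times\pi^{(k-1)}$ is an ergodic joining of the projected powers and $f_1\otimes\bar f_2$ only depends on that; if $f_1\in C_{\chi_1}, f_2\in C_{\chi_2}$ with some $\chi_i\neq1$, then Proposition~\ref{prop:aop_nil:1} gives $\rho(f_1\otimes\bar f_2)=0$ unless $\chi_1^{r^k}=\chi_2^{s^k}$, and for fixed $\chi_1,\chi_2$ this equality of characters pins down the ratio $r^k/s^k$, hence (for coprime $r,s$) the pair $(r,s)$, to at most one value — just as in the torus case; the mixed case ($f_1$ in a pullback and $f_2$ in some $C_\chi$ with $\chi\neq1$, or vice versa) also gives $0$ by the same $\mathbb T^{(k)}$-averaging in Proposition~\ref{prop:aop_nil:1}, since the pullback lies in $H_1$ and the orthogonality relation forces vanishing whenever $\chi_1\neq\chi_2$ after raising to the appropriate powers — here one just needs $\chi_1^{r^k}\neq\chi_2^{s^k}$ with one side trivial, which holds for all but at most one $(r,s)$.

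For part (ii) I would again split by the type of $f\in\mathscr{C}$. If $f=\pi^{(k-1)*}f'$ with $f'\in\mathscr{C}'$, the induction hypothesis supplies $g'\in G/G^{(k)}$ with $f'(l_{\pi^{(k-1)}(u)}^n(l_{g'}y))=\omega f'(l_{\pi^{(k-1)}(u)}^n y)$; lifting $g'$ to any $g\in G$ with $\pi^{(k-1)}(g)=g'$ and using equivariance of $\pi^{(k-1)}$ gives $f(l_u^n(l_g x))=\omega f(l_u^n x)$. If $f\in C_\chi$ with $\chi$ a nontrivial character of $\mathbb T^{(k)}=G^{(k)}/(G^{(k)}\cap\Gamma)$, then I choose $g=\exp(Z)\in G^{(k)}$: since $G^{(k)}$ is central, $l_g$ commutes with $l_u$, so $f(l_u^n(l_g x))=f(l_g(l_u^n x))=\chi(g)\,f(l_u^n x)$ by the defining property \eqref{l2} of $H_\chi$; as $\chi$ is a nontrivial character of the torus $\mathbb T^{(k)}$, its image is all of the circle, so I can pick $Z\in\mathfrak g^{(k)}$ with $\chi(\exp Z)=\omega$. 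That finishes both parts.

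The main obstacle, and the place where the paper's lemmas of Section~\ref{sec:dynamics-joinings} do the real work, is part (i) for the case $f_1,f_2\in C_\chi$: one must know that the stabilizer $H$ of the ergodic joining $\rho$ is large enough in the central direction, precisely that $H\supset L=\{\exp(r^kZ,s^kZ):Z\in\mathfrak g^{(k)}\}$ — this is exactly Lemma~\ref{lem:aop_nil:1}, built from Ratner's theorem (Theorem~\ref{thm:ratner}), the torus computation (Lemma~\ref{lem:torus}, Corollary~\ref{lem:torus1}), and Lemma~\ref{lem:rsh}, feeding into Proposition~\ref{prop:aop_nil:1}. With those in hand the remaining arguments are the bookkeeping of the induction and the counting "$\chi_1^{r^k}=\chi_2^{s^k}$ holds for at most one coprime pair $(r,s)$," which is elementary; care is only needed to phrase the induction so that "connected nilmanifold'' is preserved when passing to $M^{(k-1)}$ and so that the pullback set $\pi^{(k-1)*}\mathscr{C}'$ together with $\bigcup_{\chi\neq1}C_\chi$ is genuinely contained in $C(M)\cap L^2_0(M,\lambda)$.
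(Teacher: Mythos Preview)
Your proposal is correct and follows essentially the same inductive strategy as the paper: induction on the nilpotency step~$k$, with the torus case as the base, and at step~$k$ taking $\mathscr{C}=\pi^{(k-1)*}\mathscr{C}^{(k-1)}\cup\bigcup_{\chi\neq 1}C_\chi$, invoking Proposition~\ref{prop:aop_nil:1} for part~(i) on the new pieces and centrality of $G^{(k)}$ for part~(ii). Your treatment is slightly more explicit than the paper's in two places (the lifting of $g'\in G/G^{(k)}$ to $g\in G$ for pullback functions in~(ii), and the observation that pushforwards of ergodic joinings are ergodic joinings in~(i)), but these are exactly the details the paper's phrase ``follows from the induction hypothesis'' is hiding; note also that in the mixed case $\chi_1=1$, $\chi_2\neq 1$ the equality $\chi_1^{r^k}=\chi_2^{s^k}$ actually fails for \emph{all} coprime $(r,s)$, not merely all but one.
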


\begin{proof}
  The proof is by induction on the class of nilpotency~$ k $ of~$G$.

  If~$ k=1 $ then~$ M $ is the torus~$ \T^{(1)} $
  and~$ H_\chi=\C\chi $ for every character~$ \chi $ of~$ \T^{(1)} $.
  Let~$ \mathscr{C} $ be the set of nontrivial characters.  Then $\mathscr{C}$ is linearly dense in
  $L^2_0(M,\lambda)$.
  Moreover, for every pair of nontrivial characters~$ \chi_1 $, $ \chi_2 $ there
  is at most one pair of relatively prime natural numbers~$ r,s $ such
  that~$ \chi_1^r=\chi_2^s $.  In view of Proposition~%
  \ref{prop:aop_nil:1}, the hypotheses (i) of Theorem~%
\ref{thm:prop} are verified.  Moreover,
  if~$ \chi\in\mathscr {C} $ then for every~$ z\in \T^{(1)} $ we
  have~$ \chi(l_u^n(l_zx))=\chi (l_z(l_u^nx))=\chi(z)\chi(l_u^n x) $
  which verifying the hypotheses (ii)  of  Theorem~%
\ref{thm:prop}.

  Suppose that for every ergodic nil-translation on any compact
  connected nilmanifold of class~$ k-1 $ the required set of
  continuous functions does exist.  Let us consider an ergodic
  nil-translation~$ l_u $ on a compact connected nilmanifold~$ M=G/\Gamma
  $ of class~$ k $.  Then~$ M^{(k-1)} $ is a compact connected
  nilmanifold of class~$ k-1 $ and~$ \pi^{(k-1)}\colon M\to M^{(k-1)}
  $ intertwines~$ l_u $ with the ergodic rotation~$ l_{\pi^{(k-1)}u} $
  on~$ M^{(k-1)} $.  Denote by~$ \mathscr{C}^ {(k-1)} $ the subset of
  continuous functions on~$ M^{(k-1)} $ derived from the induction
  hypothesis.  Next we use
  \[
    L^2(M,\lambda)=\bigoplus_{\chi \in \widehat{ {\mathbb T}^{(k)}}} H_\chi.
  \]
  If~$ \chi=1 $ is the trivial character then~$ H_1 $ consists of~$ G^
  {(k)} $-periodic functions and $ H_1 $ can be naturally identified
  with~$ L^2(M^{(k-1)}) $.  Since the identification is~$ G $-equi\-va\-riant,
  for functions from~$ H_1 $ the dynamics given by the rotation~$ l_u $
  coincides with the dynamic of~$ l_{\pi^{(k-1)}u} $ on~$ M^{(k-1)} $.
  Let
  \[
    \mathscr{C}:=\mathscr{C}^{(k-1)}\cup\bigcup_{\chi \in \widehat{ {\mathbb
    T}^{(k)}}\setminus\{1\}}C_{\chi}.
  \]
 By
  Lemma~\ref{lem:densc} and by the induction hypothesis, the set
  $\mathscr{C}$ is linearly dense in the space $L_0^2(M)$.

  First note that for all functions from the subset~$ \mathscr{C}^{(k-1)}\subset
  \mathscr{C} $ both properties (i) and (ii) follows from the
  induction hypothesis.  To complete (i) we need to take~$ f_1\in H_{\chi_1}
  $ and~$ f_2\in H_{\chi_2} $ with at least one non-trivial character~$
  \chi_1 $, $ \chi_2 $ of~$ \T^{(k)} $.  As there is at most one pair
  of relatively prime natural numbers~$ r,s $ such that~$ \chi_1^{r^k}=\chi_2^
  {s^k} $, Proposition~%
  \ref{prop:aop_nil:1} implies (i).

  Let~$ f\in H_\chi $ for a non-trivial character~$ \chi $.  Since~$
  G^{(k)} $ is a subgroup of the center~$ Z(G) $, the action~$ z\in \T^{(k)}\mapsto l_z $ commutes with the nil-translation~$ l_u $. It
  follows that
  \[
    f(l_u^n(l_zx))=f(l_z(l_u^nx))=\chi(z)f(l_u^n x).
  \]
  The non-triviality of~$ \chi $ completes the proof of (ii).
\end{proof}

\section{AOP for affine diffeomorphisms of nilmanifolds}
\def\g{\mathfrak g}
\def\Aut{%
\operatorname{Aut}
}

% We recall that if~$M=G/\Gamma$ is a compact nilmanifold, with~$G$ a
% simply connected, connected Lie group and~$\Gamma$ a lattice in~$G$,
% an \emph{affine diffeomorphism of~$M$} is a mapping of the form
% $\phi(x) = u A(x)$, $x\in M$. Here~$u$ is an element of~$G$ and~$A$
% an automorphism of~$G$ such that~$A(\Gamma)=\Gamma$.  If we denote
% by~$l_g$ we shall then write~$\phi=l_u \circ A$ with the
% understanding that the latter conditions are satisfied
In this section we shall prove Theorem~\ref{ThA} and Corollaries~\ref{CorB}~and~\ref{CorC} for
zero entropy ergodic affine diffeomorphisms on compact connected
nilmanifolds.  Recall that each such affine diffeomorphism is
unipotent, see \cite{MR0260975} and \cite{MR1758456}.  In fact, we shall show that the hypotheses of Theorem~%
\ref{thm:prop} apply to such maps.

\subsection{On affine diffeomorphisms of nilmanifolds}%

For any  Lie group $G$, an
\emph {af\-fine diffeomorphism of~$ G $} is a mapping of the form~$ g
\mapsto u A(g) $, where~$ u\in G $ and~$ A $ is an automorphism of~$ G
$.

Let~$ l_u $ be the left translation on~$ G $ by an element~$ u\in G $.
Then the above affine map is the composition~$ l_u \circ A $. We shall
however use the shortened notation~$ u A $, whenever convenient.

Let~$ M=G/\Gamma $ be a compact nilmanifold, with~$ \Gamma $ a
lattice in~$ G $ and $G$ a connected, simply connected (nilpotent) Lie
group.  An affine diffeomorphism~$ uA $ of~$ G $ induces a
quotient diffeomorphism of~$ M $ if and only if~$ A(\Gamma)=\Gamma $;
an \emph{affine diffeomorphism of~$ M $} is a map of~$ M $ that arises
as such a quotient.  For simplicity, whenever the
condition~$ A(\Gamma)=\Gamma $ is satisfied, the symbol~$ \phi = uA $
(or~$ \phi = l_u\circ A $) will denote both an affine diffeomorphism
of~$ G $ and the induced quotient affine diffeomorphism of~$ M $.

We recall that the group~$ \Aut(G) $ of automorphisms of~$ G $ is
identified,  via the exponential map, with the
group~$ \Aut(\g) $ of automorphisms of the Lie algebra~$ \g $ of~$ G $;  thus, for~$ A\in \Aut(G)\approx\Aut (\g) $, we
have~$ \exp(A(X))= A(\exp X) $, for all $ X\in \g $.
%By abuse of notation
%we will use the same letter for the group automorphism~$ A:G\to G $
%and the corresponding Lie algebra automorphism~$ A:\g\to\g $.

An affine diffeomorphism~$ uA $ of~$ M $ (or of~$ G $) is \emph{unipotent},
if~$ A:\g\to\g $ is a unipotent automorphism. (Recall that an ergodic
affine diffeomorphism of~$ M $ has zero entropy if and only if it is
unipotent.)\enspace  In this case we can write  $
A=\exp B $, with~$ B:\g\to \g $ a nilpotent derivation of $\g$; by
definition of derivation we have
\begin{equation}
  \label{eq:deriv} B([X,Y])=[BX,Y]+[X,BY]\ \text{ for }\ X,Y\in\g.
\end{equation}

\subsubsection{Some rationality issues}
\label{sec:some-rati-issu}
Let $\Gamma$ be a lattice in $G$. Then $\Gamma$ determines a rational
structure on the Lie algebra $\g$. In fact, by Theorem~5.1.8
in~\cite{Co-Gr}, the vector space $ \g_\Gamma:=\Q $-$ \Span(\log
\Gamma) $ is a Lie algebra over $\Q$ such that~$
\g=\g_\Gamma\otimes_\Q\R $. Indeed, any strong Malcev basis strongly
based on the lattice~$ \Gamma $ is a $ \Q $-basis of~$ \g_\Gamma $.

Recall that a subalgebra~$ \mathfrak h\subset \g $ is \emph{rational (with
respect to $\Gamma$)} if $ (\mathfrak
h\cap \g_\Gamma)\otimes_\Q\R=\mathfrak h $, i.e.~if~$ \mathfrak h $ has a
basis contained in~$ \g_\Gamma $. For example, each~$ \g^ {(i)} $ is a
rational ideal with respect to any lattice of $G$.

By definition, a connected closed subgroup~$ H=\exp(\mathfrak h) $
is \emph{a rational subgroup of~$G$} if the subalgebra~$ \mathfrak h $
is rational.

In view of Theorem~5.1.11 in
\cite{Co-Gr}, a connected closed subgroup~$H$ is a rational subgroup of~$ G
$ if and only if  the intersection subgroup $
H \cap \Gamma $ is a lattice in~$ H $.  If furthermore $H$ is normal in~$G$, then~$ G/\Gamma H $ is a
compact nilmanifold; thus we obtain  a smooth~$ G $-equivariant factor map $
\pi_H:G/\Gamma \to G/\Gamma H $.

If $uA$ is an  affine unipotent diffeomorphism
of $M=G/\Gamma$, since~$ A(\Gamma)=\Gamma $, we have $ A(\g_\Gamma)=\g_\Gamma
$.  As the logarithm of a unipotent automorphims is a rational map, if
$A=\exp B$ we have
$ B(\g_\Gamma)\subset\g_\Gamma $. Hence $B$ is a rational endomorphism of $\g_\Gamma$.

\subsection{Suspensions}%
\label{sec:danis-construction} We consider the usual construction
turning an affine diffeomorphism into a translation~\cite{dani1977}.  Let~$ \phi=uA $ be a
unipotent affine diffeomorphism of the nilmanifold $ M=G/\Gamma $
such that~$ A\neq I $.  Let~$ B $ be the (nilpotent) derivation of~$ \g $ such that~$ A=\exp B $ and let~$ \mathcal A= \{A^t\}_
{t\in\R}$, with $ A^t=\exp t B $, be the one-parameter
subgroup of automorphisms of~$ G $ generated by $B$. (We have
a natural identification $A^t\in \mathcal A\mapsto t\in  \R$.)

The semi-direct product~$ \widetilde G=G\rtimes \mathcal A $ is a simply connected, connected
subgroup of the affine diffeomophisms of~$ G $ with the inherited product rule
defined by
\begin{equation}
  \label{eq:semi-direct}
    (g_1A^{t_1})\cdot (g_2A^{t_2})= g_1 A^{t_1}(g_2)\, A^{t_1+t_2}.
\end{equation}
We regard~$ G $ as a normal subgroup of~$ \widetilde G $ via the
inclusion map~$ l\colon g \in G \mapsto l_g\in \widetilde G $.  Thus we have a
split exact sequence
\begin{equation}
  \label{eq:NilpotentAOP-06-14:1}
   0 \to G \xhookrightarrow{l} \widetilde G \xrightarrow{\check p}  \R
   \to 0
\end{equation}
where $\check p (gA^{t}) = t$, for all $gA^{t}\in \widetilde G$.

The set~$ \widetilde \Gamma=\{\gamma A^n\mid \gamma\in \Gamma, n\in \Z\} $ is a
closed lattice subgroup of~$ \widetilde G $; thus  $ \widetilde M=\widetilde
G/\widetilde \Gamma $ is a compact
nilmanifold. By
formulas~\eqref{eq:NilpotentAOP-06-14:1} and \eqref{eq:semi-direct},
since $\check p(\widetilde \Gamma)=\Z$, the map $\check p$ induces a fibration
(in fact a $G$-bundle)
\begin{equation}
  \label{eq:NilpotentAOP-06-14:2}
p\colon  g A^t\widetilde \Gamma \in \widetilde G/\widetilde \Gamma \mapsto t\in \R/\Z
\end{equation}
with compact
fibers~$ p^{-1}\{t\}=\{gA^t\widetilde \Gamma\mid g\in G\}$
parametrized by $ t\in
\R/\Z $.
Since $ gA^t\widetilde \Gamma= g' A^t\widetilde \Gamma$, for  $t\in \R/\Z $ and
$g,g'\in G$, if and
only if $g  A^t(\Gamma) = g'  A^t(\Gamma)$ we identify $
p^{-1}\{t\}\approx G/A^t(\Gamma)$.  In  particular, $p^{-1}\{0\}\approx
G/\Gamma$, the identification being given by  the embedding~$ i\colon
G/\Gamma\hookrightarrow \widetilde G/\widetilde \Gamma $ defined
by~$ i(g\Gamma)=g \widetilde \Gamma $. Thus we shall consider $G/\Gamma$ as a
subset of  $\widehat G/\widetilde \Gamma $.

Let~$ \widetilde \g $ be the Lie algebra of the group~$ \widetilde G
$.
Since this group is generated by the normal subgroup~$G$ and
by~$\mathcal A=(\exp tB)_{t\in \R}$, the Lie algebra $ \widetilde \g $ is
the semi-direct product~$ \g\rtimes \R B $; in addition to the
commutation rules of elements of Lie algebra $ \g $, we have the rule\footnote{In fact, from~\eqref{eq:semi-direct}, $(\exp t B)\cdot \exp X\cdot (\exp -t B) = (\exp t B) (X)$  for every~$t\in
\R$ and every~$X\in \g$.}
\begin{equation}
  \label{eq:susplie}
  [ B, X] = B(X), \quad \text { for all }X \in \g.
\end{equation}
Let $\widetilde u= uA$. Since the derivation $B$ is nilpotent, the
group~$ \widetilde G $ is
nilpotent. Thus the exponential map of $\widetilde G$ is a
homeomorphism and we can write~$ \widetilde u= \exp (B+v) $
for some~$ v\in \g $. Indeed, since $\g$ is an ideal in
  $\widetilde \g$, by the Baker-Campbell-Hausdorff formula, setting
 $u=\exp   v_1$,  we have $u A= \exp v_1 \cdot \exp B = \exp (B +v)$ for some
  $v\in \g$.
On the compact
nilmanifold~$ \widetilde M=\widetilde G/\widetilde \Gamma $ we consider the
nil-translation~$ l_{\widetilde u} $.  Then
\begin{equation}
  \label{eq:elle_u}
  l_{\widetilde u}(gA^s\widetilde \Gamma)= \widetilde u \cdot (gA^s)\widetilde \Gamma = u A(g)A^{s+1}
  \widetilde \Gamma = u A(g)A^{s} \widetilde \Gamma,
\end{equation}
where in the last line we used the observation that~$ A^{-1} \in
\widetilde \Gamma $.  It follows that the nil-translation~$ l_{\widetilde u} $
preserves the fibres~$ p^{-1}\{t\} $ of the fibration~\eqref{eq:NilpotentAOP-06-14:2}.
By the formula~\eqref{eq:elle_u}, the
homeomorphism ~$ i\colon G/\Gamma\to p^{-1}\{0\}\subset \widetilde G/\widetilde \Gamma $
intertwines the affine diffeomor\-phism $ uA :G/\Gamma \to G/\Gamma $ with the
homeomorphism~$ (l_{\widetilde u})_{|p^{-1}\{0\}}\colon p^{-1}\{0\}\to p^{-1}\{0\} $.
 We shall therefore  identify these two maps.

% the Lie algebra of the group~$ \widetilde G $. It
% is standard that The
% group~$ \widetilde{G} $ has a natural representation in~$ GL(\g) $ given
% by~$ (gA^t)(X)=\Ad_g\circ A^t(X) $ for~$ X\in\g $.  The derivative of
% this representation yields a Lie algebra representation of~$ \widetilde \g $
% in $ \mathfrak{gl}(\g) $.  This representation is generated by the
% derivatives of two subrepresentations~$ g\mapsto \Ad_g\in GL(\g) $ and
% $ t\mapsto A^t\in GL(\g) $.  Therefore, the Lie algebra representation
% of~$ \widetilde \g $ is generated by~$ \ad_X\in \mathfrak{gl}(\g) $ for~$ X\in
% \g $ and~$ B\in \mathfrak{gl}(\g) $. Moreover, by \eqref{eq:deriv}, we
% have~$ [B,\ad_X]=\ad_{B(X)} $.  It follows that the Lie algebra~$ \widetilde
% \g $ can be identified with the semi-direct sum~$ \g\oplus_S \R B $
% endowed with the Lie product
% \[
%   [X_1+t_1B,X_2+t_2B]=[X_1,X_2]+t_1B(X_2)-t_2B(X_1), \quad X_1,X_2\in
%   \g, \,\, t_1,t_2 \in \R.
% \]

Let~$\{\widetilde u^t\}_{t\in \R}  < \widetilde G $ be the
one-parameter group defined by~$ \widetilde u^t=  \exp t (B+v)$.
Again, the Baker-Campbell-Hausdorff formula $\exp t (B+v)\exp (-t B)=u_t\in G$ for every $t\in\R$.
Therefore, $\widetilde u^t= u_t A^t$ for every $t\in\R$.

% The natural
% homomorphism $\widetilde G \to  \mathcal A $, ($gA^t \mapsto A^t$),
Denote by~$ ({\widetilde \phi}_t)_{t\in \R} $ the nilflow on~$ \widetilde
G/\widetilde \Gamma $ defined by the one-parameter group $\{\widetilde u^t\}_{t\in \R}$.  By definition,
$ {\widetilde \phi}_1=l_{\widetilde u} $ on~$ \widetilde G/\widetilde \Gamma $ and $ p\circ {\widetilde \phi}_t=p+t \mod \Z$. From the
invariance of the set $ p^{-1}\{0\}$ under the map $l_{\widetilde u} $ and
since the first return time of any point in $p^{-1}\{0\} $ for the
flow~$ ({\widetilde \phi}_t)_ {t\in \R} $ is equal to~$1$, we conclude that
$p^{-1}\{0\} $ is a Poincar\'e section of the flow~$ ({\widetilde \phi}_t)_
{t\in \R} $ (with constant return time~1). Moreover, $p^{-1}\{t\}= {\widetilde \phi}_t p^{-1}\{0\} $ for every $t\in\R/\Z$.
As the map~$ l_{\widetilde u} $
restricted to~$ p^{-1}\{0\} $ is identified with~$ \phi:M\to M $, we
conclude, by standard arguments, that any $\phi$ invariant measure
$\mu$ on $G/\Gamma$ defines a unique invariant measure $\widetilde \mu$ for
the flow $(\widetilde \phi^t)$ on $\widetilde G/\widetilde \Gamma$ given by
\[
\widetilde \mu(f) = \int_{0}^{1} dt\int_{G/\Gamma} d\mu (x)\, f (\widetilde\phi^t
(i (x)))= \int_{0}^{1} dt\int_{p^{-1}\{t\}} d\widetilde \mu_t (y)\, f (y) .
\]
where $\widetilde \mu_t$ is a probability measure on $p^{-1}\{t\}$ given by $\widetilde \mu_t=\widetilde\phi^t_*i_*\mu$. The above formula shows
that the family of measures $(\widetilde \mu_t)_{t\in[0,1)}$ form the conditional
measures of the measure $\widetilde \mu$ with respect to the projection
$p$.
%Conversely given this family of conditional
%measures for almost all $t\in \R/\Z$ we recover
%\[
%\mu= i^{-1}_*  \check \mu_0, \qquad
%\check \mu_0= \int_{0}^{1}  \phi^{-t}_* \widetilde \mu_t\,  dt.
%\]
\begin{Def}
  The measure preserving
  nilflow $(\widetilde G/\widetilde \Gamma, (\widetilde \phi^t), \widetilde \mu) $ is called
  the \emph{suspension} of the  measure preserving affine unipotent
  diffeomorphism $(G/\Gamma, \phi, \mu)$.
\end{Def}
% We summarize in the following proposition the elements of the
% construction of the suspension of $(G/\Gamma, \phi, \mu)$ which we'll need
% later on.
% \begin{Prop}
%   \label{lem:meas} Let $(\widetilde G/\widetilde \Gamma,i, ({\widetilde \phi}_t) ) $ be the suspension of the
%   affine unipotent diffeomorphism $(G/\Gamma, \phi)$.

% The image
%   $p^{-1}\{0\}=i(G/\Gamma)$ is an embedded sub-nilmanifold of
%   $\widetilde G/\widetilde \Gamma$ and a Poincaré section of the flow   $({\widetilde
%     \phi}_t)$ with constant return time $1$.

%  There is a bijective correspondence between the
%   measure If~$ \widetilde \mu $ is invariant under a
%   nil-translation~$ l_h:\widetilde G/\widetilde \Gamma\to \widetilde G/\widetilde \Gamma $ for
%   some~$ h\in G $ then~$ \mu $ is invariant under the nil-translation~$
%   l_h:G/\Gamma\to G/\Gamma $.
% \end{Prop}

A simple application of these ideas is the following lemma.

\begin{Lemma}
  \label{lem:meas} Let $\phi$ be a unipotent affine diffeomorphims of
  the  nilmanifold $G/\Gamma$ preserving a measure $\mu$.  Let
  $(\widetilde G/\widetilde \Gamma, (\widetilde \phi^t), \widetilde \mu)$ be the measure
  preserving nilflow suspension of $(G/\Gamma, \phi,\mu)$ and $Y$ the
  one-parameter group of $\widetilde G$ generating the flow.  Let
  $\Lambda(\widetilde\mu)< \widetilde G $ be the stabilizer
  of~$ \widetilde\mu $.  Then $\Lambda(\widetilde\mu)= H \rtimes \{\exp t Y\}$
   where $Y=B+v$ and $H$ is a closed subgroup of  $G$ satisfying $\Lambda(\mu)_0 < H
   <\Lambda(\mu)$, where $\Lambda(\mu)<G$  is the stabilizer
  of~$ \mu $ and $\Lambda (\mu)_0$ denotes the connected component
of the identity of $\Lambda (\mu)$.
\end{Lemma}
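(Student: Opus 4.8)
The statement relates the stabilizer $\Lambda(\widetilde\mu)$ of the suspended measure on $\widetilde G/\widetilde\Gamma$ to the stabilizer $\Lambda(\mu)$ of $\mu$ on $G/\Gamma$. I would argue by going back and forth between the flow on $\widetilde M$ and the transformation $\phi$ on $p^{-1}\{0\}\approx G/\Gamma$, using that the fibres $p^{-1}\{t\}$ carry the conditional measures $\widetilde\mu_t=\widetilde\phi^t_*i_*\mu$.

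First I would establish the two easy inclusions. For the flow direction: since $\widetilde\mu$ is by construction invariant under the one-parameter group $\{\exp tY\}$ with $Y=B+v$ (it is the suspension flow itself, $\widetilde\phi^t$), we have $\{\exp tY\}<\Lambda(\widetilde\mu)$. For the base direction: if $g\in\Lambda(\mu)_0$ (or more generally $g\in G$ with $(l_g)_*\mu=\mu$), then $l_g$ commutes with $p$ — because $g\in G=\ker\check p$, so $l_g$ preserves each fibre $p^{-1}\{t\}$ — and on the fibre $p^{-1}\{t\}$ it acts (after transporting by $\widetilde\phi^{-t}$ to $p^{-1}\{0\}\approx G/\Gamma$) as $l_{A^{-t}(g)}$. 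The key point here is that for $g\in\Lambda(\mu)$ one needs $l_{A^{-t}(g)}$ to preserve $\mu$ for all $t$; this holds automatically on the identity component $\Lambda(\mu)_0$ since $A^{-t}$ preserves the Lie algebra $\mathrm{Lie}(\Lambda(\mu))$ (here one uses that $\Lambda(\mu)$ is $A$-related to $\Lambda(\phi_*\mu)=\Lambda(\mu)$ — $\mu$ being $\phi$-invariant — hence $A$ normalizes $\Lambda(\mu)$ and fixes its identity component setwise). This gives $\Lambda(\mu)_0\rtimes\{\exp tY\}\subset\Lambda(\widetilde\mu)$, and more generally any $A$-invariant closed subgroup $H$ with $\Lambda(\mu)_0<H<\Lambda(\mu)$ works.

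Next, the reverse inclusion. Let $w\in\Lambda(\widetilde\mu)$ and write $w=hA^s$ via the splitting, $h\in G$, $s=\check p(w)$. Applying $\check p_*$ to $(l_w)_*\widetilde\mu=\widetilde\mu$ and using that $p_*\widetilde\mu=\mathrm{Leb}$ on $\R/\Z$ (from the disintegration formula), we see $s$ is unconstrained — rather, composing with the appropriate power of the flow $\widetilde\phi^{-s}$ (which already lies in $\Lambda(\widetilde\mu)$) we may assume $s=0$, i.e. $w=h\in G$. Then $l_h$ preserves $p^{-1}\{0\}$ and, under the identification $p^{-1}\{0\}\approx G/\Gamma$, $(l_h)_*\mu=\widetilde\mu_0$-a.s. must equal $\widetilde\mu_0=i_*\mu$, forcing $h\in\Lambda(\mu)$. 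Conversely applying the flow again, $l_h$ must also preserve $\widetilde\mu_t$ on every fibre, which translates (via $\widetilde\phi^{-t}$) to $l_{A^{-t}(h)}$ preserving $\mu$ for all $t\in\R$; the set $H:=\{g\in\Lambda(\mu): A^{-t}(g)\in\Lambda(\mu)\ \forall t\}$ is then a closed $A$-invariant subgroup containing $\Lambda(\mu)_0$ and contained in $\Lambda(\mu)$, and we get $\Lambda(\widetilde\mu)\subset H\rtimes\{\exp tY\}$ with this $H$. Combined with the forward inclusion, $\Lambda(\widetilde\mu)=H\rtimes\{\exp tY\}$ exactly.

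**Main obstacle.** The delicate point is the bookkeeping of the semidirect-product action of $A^t$ on $G$ when transporting the conditional measures between fibres: one must check carefully that "$l_h$ preserves $\widetilde\mu_t$ for all $t$" is equivalent to "$A^{-t}(h)\in\Lambda(\mu)$ for all $t$", which rests on formula~\eqref{eq:elle_u} and the identification $p^{-1}\{t\}\approx G/A^t(\Gamma)$, together with the fact that conjugation by $A^t$ inside $\widetilde G$ acts on the normal subgroup $G$ as the automorphism $A^t$. I also need the fact that $A$ normalizes $\Lambda(\mu)$ — this follows because $\mu$ is $\phi=l_u\circ A$-invariant, so $A_*\mu=(l_u)_*^{-1}\mu=(l_{u^{-1}})_*\mu$, whence $A_*\mu$ and $\mu$ have the same (conjugate) stabilizers, giving $A\,\Lambda(\mu)\,A^{-1}=\Lambda((l_{u^{-1}})_*\mu)=u^{-1}\Lambda(\mu)u$; since $\Lambda(\mu)_0$ is a connected closed subgroup determined by its Lie algebra and inner automorphisms are homotopic to the identity, $A$ fixes $\Lambda(\mu)_0$, which is what makes the choice $H=\Lambda(\mu)_0$ always admissible. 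Everything else is routine disintegration and the structure theory already recalled.
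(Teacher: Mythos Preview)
Your overall architecture matches the paper's: split $\Lambda(\widetilde\mu)$ as $(\Lambda(\widetilde\mu)\cap G)\rtimes\{\exp tY\}$, show the $G$-part $H$ is sandwiched between $\Lambda(\mu)_0$ and $\Lambda(\mu)$ by transporting the conditional measures $\widetilde\mu_t=(\widetilde\phi^t)_*i_*\mu$ back to the zero fibre. But the crucial inclusion $\Lambda(\mu)_0\subset H$ is not correctly justified, and the error is exactly where you flagged the ``main obstacle''.

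First, your transport formula is off. Conjugating $l_g$ by the flow gives
\[
\widetilde\phi^{-t}\circ l_g\circ\widetilde\phi^{t}=l_{\exp(-tY)\,g\,\exp(tY)},
\]
and $\exp(tY)=u_tA^t$ with $u_t\in G$, so the transported element is $A^{-t}(u_t^{-1}gu_t)$, not $A^{-t}(g)$. The relevant conjugation is therefore by $\exp(tY)$, not by $A^t$ alone. In particular, from $\phi_*\mu=\mu$ you get that $\exp(Y)=uA$ normalizes $\Lambda(\mu)$ (your own computation $A\Lambda(\mu)A^{-1}=u^{-1}\Lambda(\mu)u$ says precisely this), but \emph{not} that $A$ by itself does.

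Second, and more seriously, even granting the correct statement ``$\exp(nY)$ normalizes $\Lambda(\mu)$ for all $n\in\Z$'', your passage to real $t$ is invalid. ``Inner automorphisms are homotopic to the identity'' does not force a conjugate subgroup $u^{-1}\Lambda(\mu)_0u$ to equal $\Lambda(\mu)_0$; homotopy of maps says nothing about fixing subgroups setwise. The paper's argument here is different and is what you are missing: since $\widetilde G$ is nilpotent, $\Ad(\exp tY)=\exp(t\,\ad Y)$ is \emph{polynomial} in $t$, so the set of $t$ for which $\Ad(\exp tY)$ preserves $\Lie(\Lambda(\mu))$ is Zariski-closed in $\R$; it contains $\Z$, hence equals $\R$. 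This gives $\exp(tY)\Lambda(\mu)_0\exp(-tY)=\Lambda(\mu)_0$ for all real $t$, which is exactly what is needed to push $\Lambda(\mu)_0$ into $H$.

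A smaller point: for $H\subset\Lambda(\mu)$ you get $(l_h)_*\widetilde\mu_t=\widetilde\mu_t$ only for \emph{almost every} $t$ from the disintegration, and you need it at $t=0$. The paper closes this with the (easy) continuity of $(h,t)\mapsto (l_h)_*\widetilde\mu_t=(l_h)_*(\widetilde\phi^t)_*\widetilde\mu_0$; you should say this explicitly.
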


\begin{proof}
As usual, we identify the fiber $p^{-1}(\{0\})$ of the fiber bundle
$p\colon \widetilde G/\widetilde \Gamma \to \R/\Z$ with $G/\Gamma$ and the measure
$\mu$ with a measure $\widetilde \mu_0$ supported on  $p^{-1}(\{0\})\approx
G/\Gamma$.

\begin{sloppypar}The one-parameter group $ \{\exp t Y\}$ generating the
flow $\widetilde \phi^t$ is clearly contained in $ \Lambda(\widetilde \mu)$; hence
$H:= \Lambda(\widetilde \mu) \cap G$ is a normal closed subgroup of $ \Lambda(\widetilde \mu)$ and
$\Lambda(\widetilde \mu) = H \rtimes \{\exp t Y\}$.
\end{sloppypar}

For each $h\in H$, the left translation
$l_h: \widetilde G/\widetilde \Gamma \to \widetilde G/\widetilde \Gamma$ is a smooth
diffeomorphism fibering over the circle $\R/\Z$, i.e.\ $p\circ l_h=p$ and $l_h(p^{-1}(\{t\}))=p^{-1}(\{t\})$ for $t\in\R/\Z$. Thus, for all $h\in
H$, from
$(l_h)_*\widetilde\mu=\widetilde\mu$ we obtain that
$(l_h)_*\widetilde\mu_t=\widetilde\mu_t$ for almost all $t\in \R/\Z$.
%(Here, we denoted by
%$\widetilde \mu_t$ the conditional measure of $\widetilde \mu$ on the fiber
%$p^{-1}\{t\}$.)\enspace In fact, since we can chose a continuous family of
%conditional measures by setting $\widetilde\mu_t =  \phi^{t}_* \widetilde\mu_0
%=\phi^{t}_* \mu $ (where in the last equality we use the identification
%$G/\Gamma=p^{-1}\{0\}$), and
Since the mapping $(h,t) \in H\times
\R/\Z \to (l_h)_*\widetilde\mu_t = (l_h)_*\phi^{t}_*\widetilde\mu_0$ is continuous, we have $(l_h)_*\widetilde\mu_t=\widetilde\mu_t$ for all
$h\in H$ and \emph{all} $t\in \R/\Z$.
%Hence
In particular,
\[
(l_h)_* \mu =(l_h)_* \widetilde \mu_0 = \widetilde \mu_0 =\mu .
\]
% \[
% (l_h)_* \mu = \int_{0}^{1}  \widetilde \mu_t \circ \widetilde \phi^{-t}  \circ
% l_h  \, d t= \int_{0}^{1}    \widetilde \mu_t \circ
% l_{h_{t}} \circ \widetilde \phi^{-t}   \, d t =  \int_{0}^{1}  \widetilde \mu_t
% \circ \widetilde \phi^{-t} \, d t= \mu.
% \]
This shows that $H$ is contained in the stabilizer $ \Lambda (\mu)$ of
$\mu$.

The measure $\widetilde \mu_0$ on $\widetilde G/\widetilde \Gamma$ is preserved by the time
one map $\widetilde \phi^1$, since this map restricted to $ p^{-1}\{0\} $
coincides with the affine map $\phi$. Hence, for all $h\in  \Lambda
(\mu)$ and all $n\in \Z$, $( \widetilde \phi^n \circ l_h \circ \widetilde
\phi^{-n} )_*\widetilde \mu_0=\widetilde \mu_0$, that is $\exp (n Y) \Lambda (\mu) \exp (-n
Y) =\Lambda (\mu)$.  Since the adjoint action of $\widetilde G$ on $\g$
is algebraic, we obtain  the identity 
\[
\exp (t Y) \Lambda (\mu)_0 \exp (-t
Y) =\Lambda (\mu)_0,\] for all $t\in \R$. It follows that if $h\in \Lambda (\mu)_0$ then $\widetilde \mu_0$ is also
$\widetilde \phi^{-t}\circ l_h\circ\widetilde \phi^{t}$-invariant for every $t\in\R$. Therefore, $(l_h)_*\widetilde\mu_t=\widetilde\mu_t$ for every $t\in\R$ and
$(l_h)_*\widetilde\mu=\widetilde\mu$. Consequently, $\Lambda(\mu)_0 < H$.
\end{proof}

% \subsubsection{More on rationality issues}
% Let
% \[
%   \widetilde{\g} =\widetilde \g^{(1)}\supset\widetilde \g^{(2)}\supset\dots \supset \widetilde
%   \g^{(k)}\supset \widetilde \g^{(k+1)}=\{0\}
% \]
% be the descending central series of~$ \widetilde \g $ (with~$ \widetilde \g^{(k)}\neq\{0\}
% $).  Then
% \begin{equation}
%   \label{eq:comu} \widetilde \g^{(2)}= \g^{(2)}+B\g\text{ and }\widetilde \g^{(i)}=
%   [\g,\widetilde \g^{(i-1)}]+B\widetilde \g^{(i-1)}\text{ for }3\leq i\leq k.
% \end{equation}
% Hence~$ \widetilde \g^{(i)}\subset \g $ for all~$ 2\leq i\leq k $. Since
% each such~$ \widetilde \g^{(i)} $ is an ideal in~$ \widetilde\g $, it follows
% that~$ \widetilde \g^{(i)} $ is a~$ B $-invariant ideal in~$ \g $ and~$ \widetilde
% \g^{(k)}\subset\ker B $.

% Suppose that~$ \mathfrak h $ is a~$ B $-invariant rational ideal in $
% \g $.  Then~$ \mathfrak h'=[\g,\mathfrak h]+B\mathfrak h $ is also a
% rational space.  Indeed, if~$ \{X_i\mid i\in I\} $ is a basis of~$ \g $
% and~$ \{Y_j\mid j\in J\} $ is a basis of~$ \mathfrak h $ and all basis
% elements belong to~$ \g_\Q $, then~$ [X_i,Y_j] $ for~$ i\in I $, $ j\in
% J $ together with~$ B(Y_j) $ for~$ j\in J $ generates~$ \mathfrak h' $
% and all belong to~$ \g_\Q $. Proceeding by induction and using \eqref{eq:comu},
% it follows that~$ \widetilde \g^{(i)} $ is a~$ B $-invariant rational ideal
% in $ \g $ for all~$ 2\leq i\leq k $.

\subsubsection{A bit of categorical thinking}

Nilmanifolds are the objets of a category $\NilMan$ which we could
formalise in the following way: the objects of this category are
pairs~$(G,\Gamma)$ with~$G$ a connected, simply connected nilpotent
Lie group and~$\Gamma$ a lattice in $G$; a morphism~$f$ from~$(G,\Gamma)$
to~$(G',\Gamma')$ is a  smooth homomorphism $f\colon G\to G'$ such that
$f(\Gamma) \subset\Gamma'$. Thus a morphism $f$  determines a  smooth map
$\bar f\colon G/\Gamma\to G'/\Gamma'$.

This category can enriched by adding new structures to an
object~$(G,\Gamma)$; for example, we may add an element $X\in \g$,
(or, equivalently, the one-parameter group $\{\exp t X\}$, or the flow
determined by $\{\exp t X\}$ on $G/\Gamma$). Such category will be
called the \emph{category of nilflows}.

Morphisms for these enriched categories are morphisms $\NilMan$
respecting the additional structures. As another example, we may consider the
\emph{category of measure preserving nilflows}, whose objects are
quadruples consisting of a
nilmanifold $(G,\Gamma)$, an element $X\in \g$ and a probability
measure~$\mu$ on~$G/\Gamma$ invariant by the flow determined by
$\{\exp t X\}$ on $G/\Gamma$. A morphism~$f$ from the measure preserving
nilflow $(G,\Gamma, X, \mu)$
to $(G',\Gamma', X', \mu')$ is a morphism of nilmanifolds
$f\colon (G,\Gamma) \to (G',\Gamma')$ such that $f_* X=X'$ and
$\bar f_* \mu=\mu'$, where, as before, $\bar f$ denotes the quotient map
$\bar f\colon G/\Gamma\to G'/\Gamma'$.

 The reader will be easily define in an analogous way the category
$\Uaff$ of
\emph{measure preserving unipotent affine diffeomorphisms of
  nilmanifolds}.

The interest of these categories lies in the fact that we have already
seen some functorial constructions.

The \emph{abelianization functor $\Ab$} from the category of measure
preserving nilflows to itself associates to each measure preserving
nilflow $F=(G, \Gamma, X, \mu)$ the toral flow
$\Ab(F) = (G/G^{(2)}, \Gamma/\Gamma^{(2)} , \bar X, \bar \mu)$ where
$ \bar X = X + \g^{(2)}$ and $\bar \mu =\pi^{(2)}_* \mu$ is image of
$\mu$ by the projection $\pi^{(2)}\colon G/\Gamma \to G/G^{(2)}\Gamma$
(see \eqref{eq:fibration}).  We define, for any homomorphism
$f\colon G\to G_1$, the abelianized homomorphism $\Ab(f)$ as the
induced homomorphism $G/G^{(2)}\to G_1/G_1^{(2)}$. It is routine to
check that $\Ab$ is a well defined functor from the category of
measure preserving nilflows to itself. The Auslander- Green-Hahn
criterion states that the object $\Ab( F)$ is ergodic if and only if
$F$ is ergodic.

%\begin{sploppypar}
  The suspension construction we discussed above is in fact an
isomorphism of categories.  Let us define the category $\Nilover$ of
\emph{measure preserving nilflows fibering over the circle flow}. The
objects of this category are quadruples $(\widetilde G,\widetilde \Gamma, p, Y, \widetilde \mu)$ where $p$ is a
morphism from nilmanifolds $(\widetilde G,\widetilde \Gamma)$ to the nilmanifold $(\R,
\Z)$, $Y$ is an element in $\widetilde \g$ is such that $p_* Y=d/dt$,
and $\widetilde \mu$ is a  $Y$-invariant measure on $\widetilde G/\widetilde \Gamma$. (For short, we
write such an object as $(p, Y,\widetilde \mu)$ since $(\widetilde G,\widetilde \Gamma)$ is implied by $p$).\enspace Morphisms for this category are
defined in the obvious way (this is in fact a slice category).
%\end{sploppypar}

\begin{sloppypar}
The suspension construction associates to each measure preserving
unipotent affine diffeomorphisms of nilmanifolds
$(G, \Gamma, uA , \mu)$ a nilmanifold
$(\widetilde G, \widetilde \Gamma)$ and a morphism
$\check p\colon (\widetilde G, \widetilde \Gamma) \to (\R, \Z)$
inducing a fiber bundle
$p \colon\widetilde G/\widetilde \Gamma \to \R/\Z$; it further defines
a vector $Y=B+v = \log (uA)\in\widetilde \g $, satisfying
$\pi_*Y = d/dt$ and a $Y$-invariant measure $\widetilde \mu $ on
$\widetilde G/\widetilde \Gamma$. Thus to each object
$(G, \Gamma, uA , \mu)$ in the category $\Uaff$ of measure preserving
unipotent affine diffeomorphisms of nilmanifolds we have associated
the object $(\pi,Y,\widetilde \mu) = \Susp (G, \Gamma, uA , \mu)$ in
the category $\Nilover$ of measure preserving nilflows fibering over
the circle flow.
\end{sloppypar}

% We leave to the reader the
% care of defining the correspondence of morphisms and to check that we
% have actually defined a functor $\Susp$ between these
% categories.

\begin{sloppypar}
  Suppose $f \colon  (G, \Gamma, uA , \mu) \to (G', \Gamma', u'A' ,
\mu')$ is a morphism of  $\Uaff$. By definition $f$ is
a homomorphism $f \colon  G \to G'$ such that $f(\Gamma)\subset
\Gamma'$ and such that the induced map $\bar f  \colon  G/ \Gamma \to
G'/ \Gamma'$ is a morphism of the measure preserving dynamical
systems $(G/\Gamma, uA , \mu) $ and  $(G'/\Gamma', u'A' ,
\mu')$.
Let $(\widetilde
G,\widetilde \Gamma, \check p, Y, \widetilde \mu)$ and $(\widetilde G',\widetilde \Gamma', \check p', Y', \widetilde
\mu')$ be the  suspensions of the systems $(G, \Gamma, uA , \mu)$ and $(G', \Gamma', u'A' ,
\mu')$. By definition $\widetilde G = G \rtimes \{A^t\}$, $Y=\log
uA$ etc. Define $\widetilde f = \Susp (f)$ by setting
\end{sloppypar}
\[
\widetilde f
\colon   \widetilde
G \to \widetilde G',\quad  \widetilde f (g A^t)= f(g) (A')^t \qquad
\text{for all }g\in G.
\]
Since $A^t g A^{-t} = A^t(g)$ and $f\circ A=A'\circ f$, we have
\[
\begin{split}\widetilde f (A^t g A^{-t}) &= \widetilde f ( A^t(g) ) =  f ( A^t(g) ) = (A')^t f(g)\\& = (A')^t f(g)
(A')^{-t} = \widetilde f (A^t) \widetilde f(g) \widetilde f (A^{-t});
\end{split}
\] thus $\widetilde f$ is
a homomorphism. We leave to the reader the care of checking that
$\widetilde f$ is a morphism of $\Nilover$, that is that  $\widetilde f(\widetilde \Gamma) \subset
\widetilde \Gamma'$; that $p'\circ \widetilde f = p$; that $\widetilde f_* Y = Y'$; and that
$\overline {\widetilde f}_* \widetilde \mu = \widetilde \mu '$.

We have showed that $\Susp$ is a functor. The functor $\Susp$ is in
fact an
isomorphism of categories. In fact given $(\check p ,Y,\widetilde \mu)$,
where $\check p\colon (\widetilde G, \widetilde \Gamma) \to (\R, \Z)$ induces a
fiber bundle $p\colon \widetilde G/\widetilde \Gamma \to \R/\Z$, we recover $G$ as
$\ker \check p$ and $\Gamma$ as $\widetilde \Gamma \cap G$; the affine map $uA$
is obtained as the first return map of the flow generated by $Y$ to
the fiber $p^{-1}(\{0\})=G/\Gamma $; finally the measure measure $\mu$
is obtained as the conditional measure of $\mu$ on the fiber
$p^{-1}(\{0\})= G/\Gamma$. If $F\colon (\check p ,Y,\widetilde \mu)  \to
(\check p' ,Y',\widetilde \mu')$ is a morphism of $\Nilover$ from $ \check
p' \circ F = p$ we obtain that $ F(\ker p) \subset \ker p'$; thus $F$
restricted to $G:=\ker p$ is a homomorphism of $G$ into $G':=\ker
p'$. We let to the reader the verification that $F_{|G}$ is a morphism
of $\Uaff$ from $  \Susp^{-1}(\check p ,Y,\widetilde \mu)$   to
$\Susp ^{-1}(\check p' ,Y',\widetilde \mu')$,

Just as for the abelianization functor, the functor $\Susp$ has the
property that the object $(G, \Gamma, uA , \mu)$ is ergodic if and
only if the image $\Susp (G, \Gamma, uA , \mu)$ is ergodic.

For nil-translations the proof of the AOP property was based on the
functorial properties of the abelianization. For affine unipotent maps
it is based on the interplay of the two functors $\Ab$ and $\Susp$;
more precisely on study of the functor
$\Susp^{-1}\circ \Ab \circ \Susp$.

\subsection{Invariant measures for unipotent affine diffeomorphisms}

\begin{sloppypar}
  The construction above gave a correspondence, which preserves
ergodicity, between affine unipotent diffeomorphisms of
nilmanifolds and nilflows. Criteria of ergodicity for these
dynamical systems were given by Parry~\cite{MR0260975} and by Hahn
\cite{MR0155956, MR0164001}, for affine diffeomorphims and, as
previouly mentioned, by Auslander, Green and Hahn for the
nilflows. We shall exploit this correspondence and generalize it to
non-ergodic measures. To simplify matters and to clarify a main point
in the proof let us start examining the simplest case.
\end{sloppypar}

Assume $G=\R^d$ and let $\phi=uA$ be an affine unipotent
diffeomorphism of the a torus $ G/\Gamma $. The
suspension of $ (G/\Gamma, \phi) $ yields a nilmanifold
$ \widetilde G/\widetilde \Gamma $ and a flow $\widetilde \phi^t$. If~$ A=\exp B $, the Lie algebra of~$\widetilde G$ is $\widetilde \g = \R^n \oplus \R B$ with the only commutation
relation~$[B,X]=B(X)$ for any~$X\in \R^n$ (here we identified~$\g$
with~$\R^n$). In particular the class  of nilpotency of~$\widetilde G$
is equal to the class of nilpotency of the endomorphism~$B$.

The first derived subalgebra $[\widetilde \g, \widetilde \g] $ is
therefore the subspace of~$\g$ image of~$B$, denoted~$\mathfrak v$,
and the group~$[\widetilde G, \widetilde G] $ is the subgroup $V$ of~$G$
generated by~$\mathfrak v$. The subspace $\mathfrak v$ is rational
because the automorphism of the torus $A$ maps the lattice $\Gamma$ to
itself (hence it is an element of $\SL_{n}(\Z)$ is a suitable integral
basis for $\Gamma$) and because the~$\log$ of a unipotent automorphism
is a rational map. Hence the orbits of the subgroup $V$ are closed and
the space of orbits~$G/V\Gamma$ is a lower dimensional quotient torus
of the torus~$G/\Gamma$.

The affine map~$\phi=uA$ passes to the quotient torus~$G/V\Gamma$,
(since $\mathfrak v = B(\R^n)$ and
$A(\mathfrak v) = (\exp B) (\mathfrak v) \subset \mathfrak v$),
yielding a quotient affine map $\bar \phi$ of the torus~$G/V\Gamma$ onto
itself. However, since $B$ is nilpotent, the automorphism induced by
$A$ on the quotient torus~$G/V\Gamma$ is the identity automorphism. It
follows that the map $\bar \phi$ is a pure translation by the element
$\bar u$, projection of $u$ in $G/V$.

Coming back to the suspended
nilflow~$(\widetilde G/\widetilde \Gamma, \widetilde \phi^t)$, we know by the
Auslander-Green-Hahn criterion that this flow is ergodic if and only
if the abelianized flow on the torus
$\widetilde G/[\widetilde G, \widetilde G]\widetilde \Gamma$ is. Since
$\widetilde G/[\widetilde G, \widetilde G]\widetilde \Gamma=\widetilde G/V\widetilde \Gamma$, the
abelianized flow is the suspension of the translation $l_{\bar u}$ on
the torus~$G/V\Gamma$. (The latter assertion can be easily verified
going through the steps of the construction of the suspension, but it
is, in fact, a consequence of the functoriality of the
construction.)\enspace

The reader will have no difficulty in generalizing the above discussion
to the case where $G/\Gamma$ is a general connected nilmanifold. The
conclusion is the following lemma.

\begin{Lemma}
  \label{lem:susp} Let $\phi$ be a unipotent affine diffeomorphism of
  the  nilmanifold $G/\Gamma$ preserving a probability measure
  $\mu$.  Let $(\widetilde G/\widetilde \Gamma, (\widetilde \phi^t), \widetilde \mu)$ be the
  measure preserving nilflow suspension of $(G/\Gamma, \phi,\mu)$.

Let
  $F$ be a rational  subgroup of $G$ normal in $ \widetilde G$ (hence
  normal in $G$). Then
  \begin{enumerate}
  \item The affine map $\phi$ project to an affine map $\phi_F$ on
    $G/F\Gamma$, preserving the measure~$\mu_F$ image of $\mu$ by
  the  quotient map $G/\Gamma  \to G/F\Gamma$.

\item The suspension of the unipotent affine diffeomorphim  $(G/F\Gamma, \phi_F,\mu_F)$ is the flow
  $(\widetilde G/F\widetilde \Gamma, (\widetilde{\phi_F}^t),\widetilde \mu_F)$, where $(\widetilde{\phi_F}^t)$,
  $\widetilde \mu_F$ are the images of $(\widetilde \phi^t)$,
  $\widetilde \mu$ by the quotient map  $\widetilde
  G/\widetilde \Gamma \to \widetilde
  G/F\widetilde \Gamma$.
  \end{enumerate}
% Let
%   $\nu$ and $\widetilde \mu$ the measure image of $\mu$ and $\widetilde \mu$ bay
%   the projections $G/\Gamma  \to G/H\Gamma$ and $\widetilde
%   G/H\widetilde \Gamma$. Then
\end{Lemma}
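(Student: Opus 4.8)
The plan is to deduce both assertions from the functoriality of the suspension construction established above. For part (1), note first that since $F$ is a subgroup of $G$ which is normal in $\widetilde G=G\rtimes\{A^t\}$, conjugation by $A^t$ carries $F$ to $A^t(F)$ (a consequence of the product rule~\eqref{eq:semi-direct}, cf.\ the footnote to~\eqref{eq:susplie}); normality forces $A^t(F)=F$ for all $t$, in particular $A(F)=F$. Combined with $A(\Gamma)=\Gamma$ this gives $A(F\Gamma)=F\Gamma$, so $A$ descends to an automorphism $\bar A$ of $G/F$, which is unipotent as a quotient of a unipotent automorphism. Because $F$ is a rational subgroup of $G$ it is connected, $F\Gamma$ is closed, $G/F$ is a connected simply connected nilpotent Lie group, $\bar\Gamma:=F\Gamma/F\cong\Gamma/(F\cap\Gamma)$ is a lattice in $G/F$ with $\bar A(\bar\Gamma)=\bar\Gamma$, and $G/F\Gamma$ is a compact nilmanifold. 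Since any left translation descends to the quotient by right cosets, $\phi=l_u\circ A$ descends to the unipotent affine diffeomorphism $\phi_F:=l_{\bar u}\circ\bar A$ of $G/F\Gamma$ (with $\bar u:=uF$). Writing $\bar q\colon G/\Gamma\to G/F\Gamma$ for the quotient map we have $\bar q\circ\phi=\phi_F\circ\bar q$, so $\mu_F:=\bar q_*\mu$ is $\phi_F$-invariant. Equivalently, the quotient homomorphism $q\colon G\to G/F$ is a morphism $(G,\Gamma,uA,\mu)\to(G/F,\bar\Gamma,\bar u\bar A,\mu_F)$ in the category $\Uaff$.

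For part (2), I would apply the functor $\Susp$ to this morphism. The key point is the identification of the suspension nilmanifold of $\phi_F$ with $\widetilde G/F\widetilde\Gamma$: the suspension group of $\phi_F$ is $(G/F)\rtimes\{\bar A^t\}$, and because $F\subset G$ is normal in $\widetilde G$ with $A(F)=F$, the assignment $(gF,\bar A^t)\mapsto gA^t F$ is a well-defined group isomorphism $(G/F)\rtimes\{\bar A^t\}\cong\widetilde G/F$; under it the suspension lattice $\{\bar\gamma\,\bar A^n:\bar\gamma\in\bar\Gamma,\,n\in\Z\}$ of $\phi_F$ corresponds to $F\widetilde\Gamma/F$, so the suspension nilmanifold of $\phi_F$ is $\widetilde G/F\widetilde\Gamma$ (in particular $F\widetilde\Gamma$ is closed). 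Moreover the explicit formula $\Susp(f)(gA^t)=f(g)(A')^t$ shows that $\Susp(q)$ is, under this identification, the quotient homomorphism $\widetilde G\to\widetilde G/F$; hence the induced map of nilmanifolds $\overline{\Susp(q)}=:Q\colon\widetilde G/\widetilde\Gamma\to\widetilde G/F\widetilde\Gamma$ is the quotient map of the statement. Since $\Susp$ is a functor, $\Susp(q)$ is a morphism in $\Nilover$ from $\Susp(G,\Gamma,uA,\mu)=(\check p,Y,\widetilde\mu)$ to $\Susp(G/F,\bar\Gamma,\bar u\bar A,\mu_F)$, the latter being by definition the suspension of $\phi_F$; functoriality then gives that this suspension has infinitesimal generator $\Susp(q)_*Y=\bar Y=\log(\bar u\bar A)$ and invariant measure $\overline{\Susp(q)}_*\widetilde\mu=Q_*\widetilde\mu$. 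In other words $(\widetilde{\phi_F}^t)$ and $\widetilde\mu_F$ are the images of $(\widetilde\phi^t)$ and $\widetilde\mu$ under $Q$, which is precisely the claim.

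Should one prefer to avoid the categorical language, part (2) can also be verified by hand: running through the steps of the suspension construction one checks that $\widetilde G/F\widetilde\Gamma$, equipped with the flow generated by the image $\bar Y$ of $Y=B+v$ in $\widetilde G/F$, has $p^{-1}\{0\}=G/F\Gamma$, constant first-return time $1$, and first-return map $\phi_F$; and then one checks that $Q_*\widetilde\mu$ satisfies the defining identity for the suspension measure of $\phi_F$, using $Q\circ i=i_F\circ\bar q$ on the zero-fibre, the intertwining $Q\circ\widetilde\phi^t=\widetilde{\phi_F}^t\circ Q$, and the corresponding integral formula for $\widetilde\mu$.

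I expect the main obstacle to be precisely the identification of suspensions in part (2), namely the statement that \emph{the suspension of the quotient is the quotient of the suspension}: this is exactly where the hypothesis that $F$ be normal in $\widetilde G$ (and not merely in $G$) is essential, since without $A(F)=F$ neither does $\phi$ descend nor does the isomorphism $(G/F)\rtimes\{\bar A^t\}\cong\widetilde G/F$ make sense. The accompanying measure bookkeeping---that the suspension measure built from $\mu_F$ agrees with $Q_*\widetilde\mu$---is then a routine consequence of the defining formula for the suspension measure together with the fact that $Q$ intertwines the two flows and restricts to $\bar q$ on the base fibre.
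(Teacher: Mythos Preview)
Your proof is correct and follows exactly the approach the paper intends: the paper does not actually give a proof of this lemma, but introduces it with ``The reader will have no difficulty in generalizing the above discussion\ldots'' and remarks just before that the analogous special case ``is, in fact, a consequence of the functoriality of the construction.'' Your argument makes this functoriality explicit---in particular the key identification $(G/F)\rtimes\{\bar A^t\}\cong \widetilde G/F$, which is precisely where the hypothesis that $F$ be normal in $\widetilde G$ (not just in $G$) is used---and supplies the details the paper leaves to the reader.
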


We remark that the above lemma applies to the case
$F=[\widetilde G, \widetilde G^{(i)}]$. In fact we have:

\begin{Lemma}
  \label{lem:rationcomm}
  %Assume the notations of Lemma~\ref{lem:susp}.
  The commutator $[\widetilde G, \widetilde G]$ is a rational subgroup of
  $G$ normal in $\widetilde G$. In fact
  $[\widetilde \g, \widetilde \g]= [\g, \g]+ B\g$ is a rational sub-algebra
  of~$G$. Furthemore the descending central series
  $[\widetilde G, \widetilde G^{(i)}]$ and $[\widetilde \g, \widetilde \g^{(i)}]$ of
  $\widetilde G$ and of $\widetilde \g$ form a descending series of rational
  normal subgroups of $G$ % normal in $\widetilde G$
  and of rational normal ideals of $\g$% normal in $\widetilde g$
  .
\end{Lemma}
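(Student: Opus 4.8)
The plan is to prove the three assertions in turn, deducing each from the previous one together with the defining commutation rule \eqref{eq:susplie} of $\widetilde\g=\g\rtimes\R B$ and the rationality facts recorded in Section~\ref{sec:some-rati-issu}. First I would establish the identity $[\widetilde\g,\widetilde\g]=[\g,\g]+B\g$. Writing a general element of $\widetilde\g$ as $X+tB$ with $X\in\g$, $t\in\R$, bilinearity of the bracket gives
\[
[X_1+t_1B,\,X_2+t_2B]=[X_1,X_2]+t_2[X_1,B]+t_1[B,X_2]=[X_1,X_2]-t_2B(X_1)+t_1B(X_2),
\]
using \eqref{eq:susplie} and $[B,B]=0$. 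Hence $[\widetilde\g,\widetilde\g]\subset[\g,\g]+B\g$; conversely $[\g,\g]$ arises by taking $t_1=t_2=0$ and $B(X)=[B,X]$ arises by taking $X_1=0$, $t_1=1$, $X_2=X$, $t_2=0$. This proves the displayed formula. Note that $[\g,\g]+B\g\subset\g$, so $[\widetilde G,\widetilde G]$ is a connected subgroup \emph{of $G$}, and it is normal in $\widetilde G$ because a derived subalgebra is always a (characteristic, hence normal) ideal of the ambient Lie algebra; in particular it is normal in $G$.

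Next I would check rationality of $\mathfrak h:=[\g,\g]+B\g$ with respect to $\Gamma$. The ideal $[\g,\g]=\g^{(2)}$ is rational with respect to any lattice of $G$, as observed in Section~\ref{sec:some-rati-issu}. Since $uA$ is a \emph{unipotent} affine diffeomorphism of $M=G/\Gamma$, we have $A(\Gamma)=\Gamma$, hence $A(\g_\Gamma)=\g_\Gamma$, and since the logarithm of a unipotent automorphism is a polynomial (rational) map, $B=\log A$ satisfies $B(\g_\Gamma)\subset\g_\Gamma$; that is, $B$ is a rational endomorphism of $\g_\Gamma$. Therefore $B\g=B(\g_\Gamma\otimes_\Q\R)=(B\g_\Gamma)\otimes_\Q\R$, and $B\g_\Gamma\subset\g_\Gamma$ is a $\Q$-subspace; so $B\g$ is rational. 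The sum of two rational subspaces is rational (the rational points of the sum contain the sum of the rational points, which already spans), whence $\mathfrak h$ is a rational subalgebra; equivalently, $[\widetilde G,\widetilde G]$ is a rational subgroup of $G$.

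Finally I would treat the descending central series by induction on $i$. For $i=1$ the claim is what we have just proved. Assume $[\widetilde\g,\widetilde\g^{(i)}]$ is a rational ideal of $\g$ contained in $\g$ and normal in $\widetilde G$ (equivalently, $B$-invariant: $B[\widetilde\g,\widetilde\g^{(i)}]\subset[\widetilde\g,\widetilde\g^{(i)}]$, which follows from $[B,[\widetilde\g,\widetilde\g^{(i)}]]\subset[\widetilde\g,[\widetilde\g,\widetilde\g^{(i)}]]=[\widetilde\g,\widetilde\g^{(i+1)}]\subset\widetilde\g^{(i+1)}$ and the fact that $\widetilde\g^{(i+1)}\subset\g$). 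Then $\widetilde\g^{(i+1)}=[\widetilde\g,\widetilde\g^{(i)}]=[\g+\R B,\,\widetilde\g^{(i)}]=[\g,\widetilde\g^{(i)}]+B\widetilde\g^{(i)}$, again by \eqref{eq:susplie} and bilinearity. Both summands are rational: $[\g,\widetilde\g^{(i)}]$ is the span of brackets of rational generators of $\g_\Gamma$ with rational generators of $\widetilde\g^{(i)}\cap\g_\Gamma$ (using that $\g_\Gamma$ is a Lie algebra over $\Q$), and $B\widetilde\g^{(i)}$ is rational since $B$ is a rational endomorphism of $\g_\Gamma$ preserving the rational structure. As in the base case, a sum of rational ideals is a rational ideal; and being a term of the lower central series of $\widetilde\g$, $\widetilde\g^{(i+1)}$ is an ideal of $\widetilde\g$, hence $[\widetilde G,\widetilde G^{(i)}]$ is normal in $\widetilde G$. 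This closes the induction and proves the lemma.

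The one point requiring care — the "main obstacle," such as it is — is the bookkeeping that each $\widetilde\g^{(i)}$ for $i\ge 2$ actually lies inside $\g$ (not merely inside $\widetilde\g$), so that "rational subgroup of $G$" is meaningful; this is immediate from the $i=1$ computation $\widetilde\g^{(2)}=[\g,\g]+B\g\subset\g$ together with $\widetilde\g^{(i+1)}=[\widetilde\g,\widetilde\g^{(i)}]\subset\widetilde\g^{(i)}$, but it should be stated explicitly before invoking the rationality criterion of Section~\ref{sec:some-rati-issu}.
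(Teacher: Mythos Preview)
Your proof is correct and matches the paper's own approach; indeed the paper gives two arguments, and yours is exactly its ``alternative'' one (showing $[\g,\g]$ and $B\g$ are separately rational and using that a sum of rational subalgebras is rational), carried out in more detail than the paper's terse \emph{mutatis mutandis}. One small slip: in stating the inductive hypothesis you write ``assume $[\widetilde\g,\widetilde\g^{(i)}]$ is a rational ideal'' but then compute $\widetilde\g^{(i+1)}=[\widetilde\g,\widetilde\g^{(i)}]=[\g,\widetilde\g^{(i)}]+B\widetilde\g^{(i)}$ using rationality of $\widetilde\g^{(i)}$ --- the hypothesis you actually need (and use) is that $\widetilde\g^{(i)}$ is rational in $\g$, not $\widetilde\g^{(i+1)}$; once you fix the index the induction is clean.
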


\begin{proof}
  The equality $[\widetilde \g, \widetilde \g]= [\g, \g]+ B\g$ and the inclusion
  $[\widetilde G, \widetilde G]\triangleleft G$ are immediate consequences of the
  definition of suspended flow $(\widetilde G/\widetilde \Gamma, \widetilde \phi^t)$ and
  of the commutation relations~\eqref{eq:susplie}. Since
  $[\widetilde \g, \widetilde \g]$ is a rational sub-algebra of $\widetilde \g$ the
  orbits of $[\widetilde G, \widetilde G]$ are closed in
  $\widetilde G/\widetilde \Gamma$. Thus the intersections of the
  $[\widetilde G, \widetilde G]$ orbits with
  $G/\Gamma\subset \widetilde G/\widetilde \Gamma$ are also closed. It follows that
  $[\widetilde G, \widetilde G]$ is a rational subgroup of
  $G$.

Alternatively we may argue as in
  paragraph~\ref{sec:some-rati-issu} that both $[\widetilde \g, \widetilde g]$ and
  $B(\g)$ are rational sub-algebras of $\widetilde \g$ and use the fact that
  the sum of  rational sub-algebras is rational.

\emph{Mutatis mutandis}, the same arguments apply to $[\widetilde \g, \widetilde
\g^{(i)}]$ and to  $[\widetilde G, \widetilde G^{(i)}]$.
\end{proof}

\begin{Cor}
  \label{cor:susp} Let $\Sigma= ( G, \Gamma, \phi, \mu)$ be a measure
  preserving unipotent affine diffeomorphism with $\phi =uA$. Let
  $(\widetilde G/\widetilde \Gamma, (\widetilde \phi^t), \widetilde \mu)$ be the measure
  preserving nilflow corresponding to the suspension
  $\widetilde \Sigma= \Susp (\Sigma)$.

  \begin{enumerate}
  \item The affine map $\phi$ projects to a translation $\phi_T$ on the torus
    $G/[\widetilde G, \widetilde G]\Gamma$ 
    by the element $\bar u = u[\widetilde G, \widetilde G]$. The
    translation  $\phi_T$   preserves the
    measure~$\mu_T$ image of $\mu$ by the quotient map
    $G/\Gamma \to G/[\widetilde G, \widetilde G]\Gamma$.
\item The suspension of the toral translation   $\Sigma_T:=( G/[\widetilde G,
  \widetilde G]\Gamma, \phi_T,\mu_T)$ is the linear flow
  $(\widetilde G/[\widetilde G,
  \widetilde G]\widetilde \Gamma, (\widetilde{\phi_T}^t),\widetilde \mu_T)$, where $ (\widetilde{\phi_T}^t)$,
  $\widetilde \mu_T$ are the images of $(\widetilde \phi^t)$,
  $\widetilde \mu$ by the quotient map  $\widetilde
  G/\widetilde \Gamma \to \widetilde
  G/[\widetilde G, \widetilde G]\widetilde \Gamma$.
  \end{enumerate}
% Let
%   $\nu$ and $\widetilde \mu$ the measure image of $\mu$ and $\widetilde \mu$ bay
%   the projections $G/\Gamma  \to G/H\Gamma$ and $\widetilde
%   G/H\widetilde \Gamma$. Then
\end{Cor}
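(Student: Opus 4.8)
The plan is to obtain Corollary~\ref{cor:susp} as the special case $F=[\widetilde G,\widetilde G]$ of Lemma~\ref{lem:susp}, the only extra ingredient being that for this particular $F$ the resulting quotient affine map collapses to a pure translation. First I would set $F:=[\widetilde G,\widetilde G]$; by Lemma~\ref{lem:rationcomm} this group has Lie algebra $[\widetilde\g,\widetilde\g]=[\g,\g]+B\g\subset\g$, so it is a connected subgroup of $G$, it is rational with respect to $\Gamma$, and it is normal in $\widetilde G$. Hence $F$ satisfies the hypotheses of Lemma~\ref{lem:susp}. Note that $B\,\Lie(F)\subset B\g\subset\Lie(F)$ and $A(\Gamma)=\Gamma$, so $F\Gamma$ is $\phi$-invariant and $\phi$ genuinely descends; this well-definedness check is the one small point not to skip.

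Next I would feed $F$ into Lemma~\ref{lem:susp}. Its Part~(1) produces an affine self-map $\phi_F$ of $G/F\Gamma$ through which $\phi$ factors and which preserves the pushforward $\mu_F$ of $\mu$; its Part~(2) identifies the suspension of $(G/F\Gamma,\phi_F,\mu_F)$ with the quotient flow $(\widetilde G/F\widetilde\Gamma,(\widetilde{\phi_F}^t),\widetilde\mu_F)$, that is, with the images of $(\widetilde\phi^t)$ and $\widetilde\mu$ under the quotient map $\widetilde G/\widetilde\Gamma\to\widetilde G/F\widetilde\Gamma$.

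It remains to recognise the base and the flow. Since $[\g,\g]\subset[\widetilde\g,\widetilde\g]$, the group $G/F$ is abelian, connected and simply connected, hence a vector group, and $F$ being rational $\pi_F(\Gamma)$ is a lattice, so $G/F\Gamma$ is a torus. Writing $A=\exp B$ with $B$ the nilpotent derivation from \eqref{eq:susplie}, the endomorphism induced by $B$ on $\g/\Lie(F)$ vanishes, hence $A$ induces the identity automorphism of $G/F$; therefore $\phi_F$ is the pure translation by $\bar u=u[\widetilde G,\widetilde G]$, which I rename $\phi_T$, with $\mu_T:=\mu_F$, and its invariance $(\phi_T)_*\mu_T=\mu_T$ follows from the $\pi_F$-equivariance of $\phi$ and from $\phi_*\mu=\mu$. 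Likewise $\widetilde G/F\widetilde\Gamma=\widetilde G/[\widetilde G,\widetilde G]\widetilde\Gamma$ is a torus, and the quotient flow on it is linear; in fact it is precisely the abelianized flow $\Ab(\Susp(\Sigma))$. Combining this with the output of Lemma~\ref{lem:susp} gives statements~(1) and~(2).

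Because the two preceding lemmas carry out all the real work, I do not expect a genuine obstacle here; the only places that warrant care are the well-definedness of $\phi_F$ in the first step and, in the second step, invoking the functoriality of the suspension functor $\Susp$ rather than re-running the suspension construction by hand for the quotient system $(G/F\Gamma,\phi_F,\mu_F)$.
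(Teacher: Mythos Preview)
Your proposal is correct and follows essentially the same route as the paper: apply Lemma~\ref{lem:susp} with $F=[\widetilde G,\widetilde G]$, using Lemma~\ref{lem:rationcomm} to verify its hypotheses, and then observe that since $B\g\subset[\widetilde\g,\widetilde\g]$ the automorphism $A$ projects to the identity on $G/[\widetilde G,\widetilde G]$, so $\phi_T$ is a pure translation. Your write-up is a bit more careful about well-definedness and about why the quotients are tori, but the argument is the same.
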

\begin{proof}
  The nilmanifold $\widetilde G/[\widetilde G, \widetilde G]\widetilde \Gamma$ is clearly a
  torus. The
  nilmanifold   $G/[\widetilde G, \widetilde G]\Gamma$ is a torus because it
  is a quotient of the torus  $G/[\widetilde G,\widetilde G]\Gamma$. Thus the  nilflow   $(\widetilde{\phi_T}^t)$ is a linear flow.
  The affine unipotent
  map $\phi_T$ is indeed a translation on $G/[G,G]\Gamma$: in fact,
  since $B(\g) \subset [\widetilde\g,\widetilde\g]$ the derivation $B$ is trivial
  on $\g/ [\widetilde\g,\widetilde\g]$, which implies that the automorphims $A$
  projects to the identity automorphism of $G/[\widetilde G, \widetilde
  G]$.

  The other statements of the corollary are proved in
  Lemmata~\ref{lem:susp} and~\ref{lem:rationcomm}.
\end{proof}

\begin{Remark}
  Observe that the nilflow
  $\widetilde \Sigma_T=(\widetilde G/[\widetilde G, \widetilde G]\widetilde \Gamma, (\widetilde{ \phi_T}^t),\widetilde \mu_T)$
  is just the abe\-lia\-nization $\Ab(\widetilde \Sigma)$ of
  $\widetilde\Sigma=\Susp(\Sigma)$ and that
  \[\Sigma_T = \Susp^{-1}(\widetilde \Sigma_T)= \Susp^{-1}(\Ab(
  \Susp(\Sigma))).\]

Thus the main content of the above corollary is the toral rotation
  $\Sigma_T$ is obtained from $\Sigma$ via the quotient morphism
  $G/\Gamma \to G/[\widetilde G, \widetilde G]\Gamma$.
\end{Remark}

\subsection{Joinings of unipotent affine diffeomorphisms of
  nilmanifolds}

Let~$ \phi=uA $, with $ u\in G $
and~$ A\in \Aut(G) $, be an ergodic affine unipotent diffeomorphism of
the connected nilmanifold~$ G/\Gamma $ such that~$ A\neq I $.  Denote
by~$ \lambda $ the uniquely $ \phi $-invariant probability measure
on~$ G/\Gamma $, i.e. the Haar measure on $ G/\Gamma $.

Let~$ r,s\in \N $ be relatively prime positive integers.  Let us
consider the product diffeomorphism~$ \phi^r\times \phi^s $
on~$ G^2/\Gamma^2=(G\times G)/(\Gamma\times \Gamma) $.  The map
$ \phi^r\times \phi^s$ is an affine unipotent diffeomorphism of
$ G/\Gamma\times G/\Gamma$.  Indeed,
$ \phi^r\times \phi^s=u_{r,s}A_{r,s} $
with~$ u_{r,s}=(u_r,u_s)\in G^2 $
and~$ A_{r,s}=A^r\times A^s\in \Aut(G^2) $.  Note
that~$ B_{r,s}:=\log A_ {r,s}=(rB,sB) $, with $B:=\log A$. Thus

\begin{sloppypar}
  Let~$ \rho $ be an ergodic joining of the measure theoretical systems
$ \Sigma_r= (G/\Gamma,\phi^r, \lambda) $
and~$ \Sigma_s=(G/\Gamma,\phi^s, \lambda) $.  By definition $ \rho $
is a measure on~$ G/\Gamma\times G/\Gamma $ with marginals $ \lambda $
on each factor, invariant and ergodic for the
transformation~$ \phi^r\times \phi^s $.  Thus we have an measure
preserving unipotent affine diffeomorphism
$\Sigma_{r,s} =(G^2, \Gamma^2, \phi^r\times \phi^s, \rho)$ and
morphisms $q_i \colon \Sigma_{r,s} \to\Sigma_i$, $(i=r,s)$.
\end{sloppypar}

Let
$\widetilde \Sigma_{r,s}=( \widetilde G_{r,s} , \widetilde \Gamma_{r,s} , p_{r,s},
Y_{r,s}, \widetilde \rho_{r,s})=\Susp ( \Sigma_{r,s})$.
Here $ \widetilde G_{r,s} = G^2 \rtimes \{\exp t B_{r,s}\}$ and
$Y_{r,s} = \log ( \phi^r\times \phi^s) = (rY, sY)$, with
$Y=\log \phi=\log uA= B + v$, for some $v\in \g$. In the sequel we
shall write
$\widetilde \Sigma_{r,s}=( \widetilde G_{r,s}/\widetilde \Gamma_{r,s} , Y_{r,s}, \widetilde
\rho_{r,s})$,
for short.  Let $ \widetilde\g_{r,s} $ denote the Lie algebra
of~$ \widetilde G_{r,s} $. Then $ \widetilde\g_{r,s} = \g^2\oplus\R B_{r,s}$.

For $i=r,s$, let
$\widetilde \Sigma_{i}=(\widetilde G_{i}, \widetilde \Gamma_{i}, p_i, Y_i , \lambda_i)$ be the
suspensions of $\Sigma_i$ and denote by~$\g_i$ the Lie algebra of
$G_i$. Observe that $\lambda_r$ and $\lambda_s$ are the Haar measures
on the corresponding nilmanifolds and that $Y_r=rY$ and $Y_s=sY$.

Note that, since $\widetilde\g_{r,s}$ is the semi-direct product of
$\g^2 =\g\oplus \g$ and the span of the derivation $B_{r,s}= (rB,sB)$
of $\g^2$ we have
\[
  \widetilde\g_{r,s}= (\g\oplus \g)\rtimes \R (rB,sB) = (\g \rtimes \R r B)
  \oplus (\g \rtimes \R s B)= \widetilde \g_r \oplus\widetilde \g_s = \widetilde \g
  \oplus \widetilde \g =\widetilde \g^2,
\]
where $\widetilde\g= \g \rtimes \R B$.
Hence~$ \widetilde G_{r,s}=\widetilde G_r\times\widetilde G_s =\widetilde G\times\widetilde G$,
with $\widetilde G = G \rtimes \{\exp tB\}$.  Let $k$ be the class of
nilpotency of the group $\widetilde G$. Then, for all $i=2, \dots, k$,
we have:
\begin{gather*}
    \widetilde
  \g^{(i)}_{r,s}=\widetilde\g_r^{(i)}\oplus\widetilde\g_s^{(i)}=\widetilde\g^{(i)}\oplus\widetilde\g^{(i)}\subset\g\oplus\g\intertext{and} \widetilde G^{(i)}_{r,s}=\widetilde G_r^{(i)}\times\widetilde
  G_s^{(i)}=\widetilde G^{(i)}\times\widetilde G^{(i)}< G\times G.
\end{gather*}
We have\footnote{Remark that $\widetilde \Sigma_{r,s}$ is not a joining
  of $\widetilde \Sigma_{r}$ and $\widetilde \Sigma_{s}$, since
  $\widetilde G_{r,s}/\widetilde \Gamma_{r,s} \not= \widetilde G_{r}/\widetilde \Gamma_{r} \times \widetilde
  G_{s}/\widetilde \Gamma_{s}$.
  In fact $\dim \widetilde G_{r,s} = 2 \dim G +1 $ and
  $\dim \widetilde G_{r}=\dim \widetilde G_{s}= \dim G +1$; thus
  $\dim \widetilde G_{r,s} \not= \dim \widetilde G_{r}+\dim \widetilde G_{s}$.}:
\[
  \begin{tikzcd}[column sep=small]
    & \Sigma_{r,s}=( (G/\Gamma)^2, \phi^r\times \phi^s , \rho)
    \arrow{dl}[description]{q_r} \arrow{dr}[description]{q_s}
    \arrow[dashed]{dd}[description]{\Susp}
    & \\
    \Sigma_{r}=(G/\Gamma, \phi^r ,
    \lambda)\arrow[dashed]{dd}[description]{\Susp} &
    & \Sigma_{s}=(G/\Gamma, \phi^s , \lambda)\arrow[dashed]{dd}[description]{\Susp}\\
    &\widetilde \Sigma_{r,s}=( \widetilde G_{r,s}/\widetilde \Gamma_{r,s} , Y_{r,s} ,
    \widetilde \rho_{r,s}) \arrow{dl}[description]{\widetilde q_r=\Susp( q_r)}
    \arrow{dr}[description]{\widetilde q_s=\Susp( q_s)}\\
    \widetilde \Sigma_{r}=(\widetilde G_{r}/ \widetilde \Gamma_{r}, rY , \lambda_r) & &
    \widetilde \Sigma_{s}=(\widetilde G_{s}/\widetilde \Gamma_{s}, sY , \lambda_s)
  \end{tikzcd}
\]
By Lemma~\ref{lem:meas} we have:
\begin{Lemma}
  \label{lem:stabil}
  Denote by~\( \widetilde {\mathfrak h}_{r,s}\subset \widetilde\g_{r,s} \)
  and by \( {\mathfrak h}_{r,s} \)
  the Lie algebras of the stabilisers \(\Lambda (\widetilde\rho_{r,s})<\widetilde G_{r,s} \)
  and~\(\Lambda (\rho_{r,s})< G^2 \)
  of the measures~\(\widetilde\rho_{r,s} \) and~\(\rho\). Then
  \begin{equation}
    \label{eq:NilpotentAOP-06-14:3}
    \widetilde {\mathfrak h}_{r,s} =  {\mathfrak h}_{r,s} \rtimes \R Y_{r,s}.
  \end{equation}
\end{Lemma}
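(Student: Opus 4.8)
The plan is to deduce Lemma~\ref{lem:stabil} directly from Lemma~\ref{lem:meas}, applied to the measure preserving unipotent affine diffeomorphism $\Sigma_{r,s}=(G^2,\Gamma^2,\phi^r\times\phi^s,\rho)$ in place of $(G/\Gamma,\phi,\mu)$.

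First I would collect the data of the suspension $\widetilde\Sigma_{r,s}=\Susp(\Sigma_{r,s})$. Since $\phi=uA$ with $A=\exp B$ unipotent and $A\neq I$, the product $\phi^r\times\phi^s$ is the unipotent affine diffeomorphism $u_{r,s}A_{r,s}$ of $(G/\Gamma)^2$, with $A_{r,s}=\exp B_{r,s}$ and $B_{r,s}=(rB,sB)$; it is again non-trivial, because $A^r,A^s\neq I$ ($\log$ being injective on nilpotent automorphisms and $rB,sB\neq 0$). Its suspension is the nilflow on $\widetilde G_{r,s}/\widetilde\Gamma_{r,s}$ generated by $\{\exp tY_{r,s}\}$, where, as recorded above, $Y_{r,s}=\log(\phi^r\times\phi^s)=(rY,sY)$ with $Y=\log(uA)=B+v$ for a suitable $v\in\g$, and the invariant measure is $\widetilde\rho_{r,s}$. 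I would also note the harmless fact that the $B_{r,s}$-component of $Y_{r,s}$ is non-zero, so that $\R Y_{r,s}$ is a vector-space complement of $\g^2$ in $\widetilde\g_{r,s}=\g^2\oplus\R B_{r,s}$; hence for any subspace of $\g^2$, and in particular for $\mathfrak h_{r,s}$, the expression $\mathfrak h_{r,s}\rtimes\R Y_{r,s}$ is unambiguous and coincides, as a subspace of $\widetilde\g_{r,s}$, with $\mathfrak h_{r,s}\oplus\R Y_{r,s}$.

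Then I apply Lemma~\ref{lem:meas}. It yields a closed subgroup $H<G^2$ with $\Lambda(\widetilde\rho_{r,s})=H\rtimes\{\exp tY_{r,s}\}$ and $\Lambda(\rho_{r,s})_0<H<\Lambda(\rho_{r,s})$, where $\Lambda(\rho_{r,s})<G^2$ is the stabiliser of $\rho$. Passing to Lie algebras I use two things. First, $H$ is normal in $\Lambda(\widetilde\rho_{r,s})$, so $\Lie(H)$ is an $\ad(Y_{r,s})$-invariant ideal of $\widetilde{\mathfrak h}_{r,s}$ and $\widetilde{\mathfrak h}_{r,s}=\Lie(H)\rtimes\R Y_{r,s}$. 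Second, a closed subgroup sandwiched between $\Lambda(\rho_{r,s})_0$ and $\Lambda(\rho_{r,s})$ shares the Lie algebra of $\Lambda(\rho_{r,s})$, so $\Lie(H)=\Lie(\Lambda(\rho_{r,s}))=\mathfrak h_{r,s}$. Combining, $\widetilde{\mathfrak h}_{r,s}=\mathfrak h_{r,s}\rtimes\R Y_{r,s}$, which is \eqref{eq:NilpotentAOP-06-14:3}.

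I do not expect a genuine obstacle here: all the work has already gone into Lemma~\ref{lem:meas} (itself resting on Ratner's theorem and the suspension formalism), and what remains is bookkeeping — identifying $\Susp(\Sigma_{r,s})$ with $\widetilde\Sigma_{r,s}$ and its generator with $Y_{r,s}$, and justifying the passage from the group-level decomposition $\Lambda(\widetilde\rho_{r,s})=H\rtimes\{\exp tY_{r,s}\}$ to the algebra-level one. The only point deserving a moment's care is the transversality of $\R Y_{r,s}$ to $\g^2$, which is what makes the semidirect-product notation in the statement meaningful.
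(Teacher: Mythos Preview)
Your proposal is correct and follows exactly the paper's approach: the paper simply states ``By Lemma~\ref{lem:meas} we have:'' before the lemma, and you have spelled out the (straightforward) bookkeeping that this citation entails, including the passage from the group-level decomposition to the Lie-algebra level via $\Lambda(\rho)_0<H<\Lambda(\rho)$.
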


By Corollary~\ref{cor:susp}, applying the functor
$\Ab'=\Susp^{-1}\circ \Ab\circ \Susp$ to the triangle $\Sigma_{r,s}$,
$\Sigma_{r}$, $\Sigma_{s}$ we obtain a diagram
\[
  \begin{tikzcd}[column sep=small]
    & \Ab'(\Sigma_{r,s})=( T^2, l_{\bar u}^r\times l_{\bar u}^s ,
    \rho') \arrow{dl}[description]{\Ab'(q_r)}
    \arrow{dr}[description]{\Ab'(q_s)}
    & \\
    \Ab'(\Sigma_{r})=( T, l_{\bar u}^r , \lambda_{\mathbb T}) & &
    \Ab'(\Sigma_{s})=( T, l_{\bar u}^s, \lambda_{\mathbb T})
  \end{tikzcd}
\]
where $ T$ is the torus $G/[\widetilde G,\widetilde G]\Gamma$,
$\lambda_{ T}$ is the Lebesgue/Haar measure on
$G/[\widetilde G,\widetilde G]\Gamma$, $\Ab'(q_i)$ are the projections of
$ T^2= T\times T$ on the corresponding factors, $l_{\bar u}$ is the
left translation by $\bar u=u [\widetilde G,\widetilde G]$ and $\rho'$ is
the projection of the ergodic joining $\rho$ via the map
$(G/\Gamma)^2\to (G/[\widetilde G,\widetilde G]\Gamma)^2$. In particular
the measure $ \rho'$ is a joining of the ergodic and minimal
toral rotations $ \Ab'(\Sigma_{r})$ and $\Ab'(\Sigma_{s})$.

Since the measure $\rho$ is ergodic, the measure $\rho'$, image of
$\rho$ by the morphism $\Sigma_{r,s} \to \Ab'(\Sigma_{r,s})$ is an
ergodic measure for the for the rotation
~$ l^r_{\bar u}\times l^s_{\bar u} $ of~$\mathbb T^2$.  By Lemma~%
\ref{lem:torus}, the stabilizer $\Lambda(\rho')$ of the measure
$\rho'$ is the connected group
\[
  \Lambda(\rho')= \big\{ \exp(rX,sX)\mid X\in \operatorname
  {Lie}(T)=\g/[\widetilde \g,\widetilde\g] \big\}.
\]

Now we focus on the quadrangle of morphisms given by
Corollary~\ref{cor:susp}.
\[
  \begin{tikzcd}[column sep=small]
    \Sigma_{r,s}=( (G/\Gamma)^2, \phi^r\times \phi^s , \rho) \arrow{r}
    \arrow[dashed]{d}[description]{\Susp} & \Ab'(\Sigma_{r,s})=( T^2,
    l_{\bar u}^r\times l_{\bar u}^s , \rho')
    \\
    \widetilde \Sigma_{r,s}=( \widetilde G_{r,s}/\widetilde \Gamma_{r,s} , Y_{r,s} , \widetilde
    \rho_{r,s}) \arrow{r} & \Ab(\widetilde \Sigma_{r,s})=(\widetilde
    G_{r,s}/[\widetilde G_{r,s}, \widetilde G_{r,s}]\widetilde \Gamma_{r,s},
    Y'_{r,s}, \widetilde \rho_{r,s}')
    \arrow[dashed]{u}[description]{\Susp^{-1}}
  \end{tikzcd}
\]
\begin{Lemma}
  \label{lem:aop_nil:12} The stabilizer $\Lambda( \widetilde \rho_{r,s}')$
  of the measure $ \widetilde \rho_{r,s}'$, image of $\widetilde \rho_{r,s}$ by
  the morphism $\widetilde \Sigma_{r,s}\to \Ab(\widetilde \Sigma_{r,s})$,
  is the connected group
  \[
    \Lambda( \widetilde \rho_{r,s}')= \big\{ \exp(rX,sX)\mid X\in \widetilde
    \g/[\widetilde \g,\widetilde\g] \big\}.
  \]
\end{Lemma}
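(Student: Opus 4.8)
The plan is to read the stabilizer off from the suspension machinery, exploiting that the commuting diagram displayed above identifies $\Ab(\widetilde\Sigma_{r,s})$ with $\Susp(\Ab'(\Sigma_{r,s}))$, the suspension of the toral rotation system $\Ab'(\Sigma_{r,s})=(T^2,l_{\bar u}^r\times l_{\bar u}^s,\rho')$; in particular $\widetilde\rho_{r,s}'=\Susp(\rho')$ is exactly the suspended measure attached to $\rho'$, and the generating vector field of $\Ab(\widetilde\Sigma_{r,s})$ is $Y_{r,s}'$, the image of $Y_{r,s}=(rY,sY)$ in $\widetilde\g_{r,s}/[\widetilde\g_{r,s},\widetilde\g_{r,s}]$. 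Since $[\widetilde\g_{r,s},\widetilde\g_{r,s}]=([\widetilde\g,\widetilde\g])^2$, this image is $(r\bar Y,s\bar Y)$, where $\bar Y=\bar B+\bar v$ is the class of $Y=B+v$ in $\widetilde\g/[\widetilde\g,\widetilde\g]=\operatorname{Lie}(T)\oplus\R\bar B$ and $\bar v\in\operatorname{Lie}(T)$.

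First I would apply Lemma~\ref{lem:meas} to the translation $l_{\bar u}^r\times l_{\bar u}^s$ of $T^2$ (a pure translation, whose suspension is the torus flow $\Ab(\widetilde\Sigma_{r,s})$ with flow generator $Y_{r,s}'$). This yields
\[
\Lambda(\widetilde\rho_{r,s}')=H\rtimes\{\exp tY_{r,s}'\mid t\in\R\}
\]
for some closed subgroup $H$ with $\Lambda(\rho')_0<H<\Lambda(\rho')$. But, as established above through Lemma~\ref{lem:torus}, the group $\Lambda(\rho')=\{\exp(rX_0,sX_0)\mid X_0\in\operatorname{Lie}(T)\}$ is \emph{connected}; hence $H=\Lambda(\rho')$ and
\[
\Lambda(\widetilde\rho_{r,s}')=\{\exp(rX_0,sX_0)\mid X_0\in\operatorname{Lie}(T)\}\cdot\{\exp t(r\bar Y,s\bar Y)\mid t\in\R\}.
\]

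It then remains to identify this set with $\{\exp(rX,sX)\mid X\in\widetilde\g/[\widetilde\g,\widetilde\g]\}$. Everything now takes place in the abelian group $\widetilde G_{r,s}/[\widetilde G_{r,s},\widetilde G_{r,s}]\widetilde\Gamma_{r,s}$, in which $\exp$ is a homomorphism, so that
\[
\exp(rX_0,sX_0)\cdot\exp t(r\bar Y,s\bar Y)=\exp\bigl(r(X_0+t\bar v+t\bar B),\,s(X_0+t\bar v+t\bar B)\bigr);
\]
putting $X:=X_0+t\bar v+t\bar B\in\operatorname{Lie}(T)\oplus\R\bar B=\widetilde\g/[\widetilde\g,\widetilde\g]$ gives one inclusion, and conversely, given $X=X_1+t\bar B$ with $X_1\in\operatorname{Lie}(T)$, the choice $X_0:=X_1-t\bar v$ exhibits $\exp(rX,sX)$ as such a product. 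The resulting group is connected automatically, being the product of the connected group $\Lambda(\rho')$ with a one-parameter subgroup.

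I do not expect a serious obstacle: the rigidity input (Ratner/Starkov/Lesigne, entering through Lemma~\ref{lem:torus}, together with the suspension--stabilizer description of Lemma~\ref{lem:meas}) is already isolated, and what is left is linear bookkeeping. The spot demanding care is the last computation, where one must correctly track how the $\bar v$-summand of the flow generator is traded between the $\Lambda(\rho')$-factor and the flow direction; a secondary point is that Lemma~\ref{lem:meas} must be applied with $Y_{r,s}'$ itself as the flow generator (its stated $B+v$ decomposition degenerating when the automorphism is trivial), which is harmless since the lemma's proof only uses the flow generator.
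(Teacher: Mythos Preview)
Your proof is correct and follows essentially the same route as the paper: apply Lemma~\ref{lem:meas} to identify $\Lambda(\widetilde\rho_{r,s}')$ as $\Lambda(\rho')\cdot\{\exp tY'_{r,s}\}$, then use that $\g$ and $Y$ span $\widetilde\g$ to rewrite this as $\{\exp(rX,sX)\mid X\in\widetilde\g/[\widetilde\g,\widetilde\g]\}$. You are in fact more careful than the paper on two points---you explicitly invoke the connectedness of $\Lambda(\rho')$ to pin down $H$ in Lemma~\ref{lem:meas}, and you spell out the $\bar v$-bookkeeping in the final identification---whereas the paper compresses both into a single sentence.
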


\begin{proof}
  Since the toral flow $ \Ab( \Sigma_{r,s})$ is a suspension of the
  toral rotation $\Ab'(\Sigma_{r,s})$, it follows by elementary
  reasons of by Lemma~\ref{lem:meas} that we have
  \[
    \Lambda( \widetilde \rho_{r,s}')= \Lambda(\rho')\times \{\exp t
    Y'_{r,s}\}= \big\{ \exp(rX,sX)\mid X\in \g/[\widetilde \g,\widetilde\g]
    \big\}\times \{\exp t Y'_{r,s}\},
  \]
  where $ Y'_{r,s} = Y_{r,s}\mod [\widetilde \g,\widetilde \g]^2 $. Thus, $ Y'_{r,s}= (r Y', s Y')$, with
  $Y'= Y \mod [\widetilde \g,\widetilde \g]$. Since the span of $\g$ and of $Y$ is
  the Lie algebra $\widetilde \g$, the statement follows.
\end{proof}

By Lesigne-Starkov-Ratner Theorem~\ref{thm:ratner}, since the measure
$\widetilde\rho_{r,s}$ is ergodic, the stabilizer
$ \Lambda (\widetilde\rho_{r,s})<\widetilde G_{r,s} $ of $\widetilde\rho_{r,s}$ is a
connected closed subgroup of $\widetilde G_{r,s} $ such that the topological
support of~$ \widetilde\rho_{r,s} $ is a~$ \Lambda (\widetilde\rho_{r,s} )$-orbit
in~$ \widetilde G_{r,s}/\widetilde \Gamma_ {r,s} $.  Clearly the same properties hold
for the stabilizer $\Lambda( \widetilde \rho_{r,s}')$ of the
measure~$ \widetilde \rho_{r,s}'$. Since the action of
$\widetilde G_{r,s}/[\widetilde G_{r,s}, \widetilde G_{r,s}]$ on the torus
$\widetilde G_{r,s}/[\widetilde G_{r,s}, \widetilde G_{r,s}]\widetilde \Gamma_{r,s}$ is
locally free, we obtain:

\begin{Lemma}
  \label{lem:aop_nil:120} The projection of the stabilizer
  $ \Lambda (\widetilde\rho_{r,s})<\widetilde G_{r,s} $ onto
  $\widetilde G_{r,s}/[\widetilde G_{r,s}, \widetilde G_{r,s}]$ is the
  stabilizer $\Lambda( \widetilde \rho_{r,s}')$ of the
  measure~$ \widetilde \rho_{r,s}'$.
\end{Lemma}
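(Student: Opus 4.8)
The plan is to read off the statement from Ratner's description of supports of ergodic measures (Theorem~\ref{thm:ratner}) combined with the local freeness of the abelianized torus action recorded just before the lemma. Write $\pi\colon\widetilde G_{r,s}\to\widetilde G_{r,s}/[\widetilde G_{r,s},\widetilde G_{r,s}]$ for the abelianization homomorphism and $\bar\pi\colon\widetilde G_{r,s}/\widetilde\Gamma_{r,s}\to\widetilde G_{r,s}/[\widetilde G_{r,s},\widetilde G_{r,s}]\widetilde\Gamma_{r,s}$ for the induced factor map of nilmanifolds, so that $\widetilde\rho_{r,s}'=\bar\pi_*\widetilde\rho_{r,s}$. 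The first, easy inclusion is $\pi(\Lambda(\widetilde\rho_{r,s}))\subset\Lambda(\widetilde\rho_{r,s}')$: if $g\in\Lambda(\widetilde\rho_{r,s})$ then, using the equivariance relation $\bar\pi\circ l_g=l_{\pi(g)}\circ\bar\pi$, we get $(l_{\pi(g)})_*\widetilde\rho_{r,s}'=\bar\pi_*(l_g)_*\widetilde\rho_{r,s}=\bar\pi_*\widetilde\rho_{r,s}=\widetilde\rho_{r,s}'$, so $\pi(g)\in\Lambda(\widetilde\rho_{r,s}')$.

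For the reverse inclusion I would compare the two closed orbits carrying the supports. By Theorem~\ref{thm:ratner} (as already invoked before the lemma), $\supp(\widetilde\rho_{r,s})=\Lambda(\widetilde\rho_{r,s})\,\tilde x$ for some $\tilde x\in\widetilde G_{r,s}/\widetilde\Gamma_{r,s}$, and this orbit is closed, hence compact, in the compact nilmanifold $\widetilde G_{r,s}/\widetilde\Gamma_{r,s}$. Consequently $\bar\pi\big(\supp(\widetilde\rho_{r,s})\big)=\pi(\Lambda(\widetilde\rho_{r,s}))\,\bar\pi(\tilde x)$ is compact, hence closed, and since $\supp(\bar\pi_*\nu)=\overline{\bar\pi(\supp\nu)}$ in general, we obtain $\supp(\widetilde\rho_{r,s}')=\pi(\Lambda(\widetilde\rho_{r,s}))\,\bar\pi(\tilde x)$. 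On the other hand $\supp(\widetilde\rho_{r,s}')$ is invariant under $\Lambda(\widetilde\rho_{r,s}')$ and contains $\bar\pi(\tilde x)$, hence contains $\Lambda(\widetilde\rho_{r,s}')\,\bar\pi(\tilde x)$, which by the first step already contains $\pi(\Lambda(\widetilde\rho_{r,s}))\,\bar\pi(\tilde x)=\supp(\widetilde\rho_{r,s}')$. Therefore
\[
\pi(\Lambda(\widetilde\rho_{r,s}))\,\bar\pi(\tilde x)=\Lambda(\widetilde\rho_{r,s}')\,\bar\pi(\tilde x),
\]
an equality of closed subsets of the torus $\widetilde G_{r,s}/[\widetilde G_{r,s},\widetilde G_{r,s}]\widetilde\Gamma_{r,s}$.

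Finally I would use local freeness to upgrade this orbit equality to an equality of groups. The group $\widetilde G_{r,s}/[\widetilde G_{r,s},\widetilde G_{r,s}]$ is a connected simply connected abelian Lie group, so $\pi(\Lambda(\widetilde\rho_{r,s}))$ (the image of a connected group under a homomorphism) and $\Lambda(\widetilde\rho_{r,s}')$ (connected by Lemma~\ref{lem:aop_nil:12}) are connected subgroups, hence closed linear subspaces. Since the translation action of $\widetilde G_{r,s}/[\widetilde G_{r,s},\widetilde G_{r,s}]$ on the torus is locally free, for any connected subgroup $H$ and any point $z$ of the torus the orbit map $H\to Hz$ is a local diffeomorphism, so a closed orbit $Hz$ is a subtorus of dimension $\dim H$. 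Applying this to both sides of the displayed equality yields $\dim\pi(\Lambda(\widetilde\rho_{r,s}))=\dim\Lambda(\widetilde\rho_{r,s}')$; combined with the inclusion $\pi(\Lambda(\widetilde\rho_{r,s}))\subset\Lambda(\widetilde\rho_{r,s}')$ of connected subgroups of equal dimension, this forces $\pi(\Lambda(\widetilde\rho_{r,s}))=\Lambda(\widetilde\rho_{r,s}')$, which is the claim. The only point requiring a little care is bookkeeping of base points when projecting orbits under $\bar\pi$; everything else is a direct consequence of Ratner's theorem and of the local freeness already noted in the text.
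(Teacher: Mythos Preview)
Your proof is correct and follows exactly the line the paper sketches in the paragraph preceding the lemma: Ratner's theorem identifies the supports as closed orbits of the stabilizers, you push this forward under $\bar\pi$, and then local freeness of the abelianized torus action lets you recover the group from its orbit via a dimension count. The paper leaves all of this implicit; you have simply written out the details, including the easy equivariance inclusion and the connectedness inputs (Ratner for $\Lambda(\widetilde\rho_{r,s})$, Lemma~\ref{lem:aop_nil:12} for $\Lambda(\widetilde\rho_{r,s}')$), so there is nothing to add.
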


\begin{Lemma}
  \label{lem:aop_nil:121} The Lie
  algebra~$ \widetilde{\mathfrak h}_{r,s}\subset\widetilde\g\oplus\widetilde\g $
  satisfies
  \[
    \{ (r^k Z, s^k Z )\mid Z\in \widetilde\g^{(k)}\} \subset \widetilde{\mathfrak
      h}_{r,s} \cap (\widetilde\g^{(k)}\oplus\widetilde\g^{(k)} ).
  \]
  It follows that the stabilizer~$ \Lambda(\rho)<G\times G $ of the
  joining~$ \rho $ contains the subgroup
  \[
    L\defin \{ \exp(r^k Z, s^k Z)\mid Z\in\widetilde\g^{(k)}\} <\widetilde
    G^{(k)}\times\widetilde G^{(k)}=\widetilde G^{(k)}_{r,s}.
  \]
\end{Lemma}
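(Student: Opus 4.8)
The plan is to transcribe the proof of Lemma~\ref{lem:aop_nil:1} into the suspended setting, with $\g$, $\g^{(2)}$, $\g^{(k)}$ there replaced by $\widetilde\g$, $[\widetilde\g,\widetilde\g]=\widetilde\g^{(2)}$, $\widetilde\g^{(k)}$ here, where $k$ now denotes the nilpotency class of $\widetilde G$ (possibly strictly larger than that of $G$, so that $\widetilde\g^{(k)}\neq\{0\}$ and $\widetilde\g^{(k+1)}=\{0\}$). The only substantive changes are that the role played by Lemma~\ref{lem:rsh} for nil-translations is taken over by the combination of Lemmas~\ref{lem:aop_nil:120} and~\ref{lem:aop_nil:12}, and that at the end one must descend from $\widetilde G_{r,s}=\widetilde G\times\widetilde G$ back to $G\times G$.

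First I would produce lifted generators. Put $\widetilde d_1=\dim\widetilde\g/[\widetilde\g,\widetilde\g]$ and fix $\widetilde X_1,\dots,\widetilde X_{\widetilde d_1}\in\widetilde\g$ whose classes modulo $[\widetilde\g,\widetilde\g]$ form a basis of $\widetilde\g/[\widetilde\g,\widetilde\g]$. By Lemma~\ref{lem:aop_nil:120} the image of $\widetilde{\mathfrak h}_{r,s}$ under the projection $\widetilde\g\oplus\widetilde\g\to(\widetilde\g/[\widetilde\g,\widetilde\g])\oplus(\widetilde\g/[\widetilde\g,\widetilde\g])$ is the Lie algebra of $\Lambda(\widetilde\rho_{r,s}')$, which by Lemma~\ref{lem:aop_nil:12} equals $\{(rX,sX)\mid X\in\widetilde\g/[\widetilde\g,\widetilde\g]\}$; hence, exactly as in Lemma~\ref{lem:rsh}, for each $i$ there are $\widetilde X_i',\widetilde X_i''\in\widetilde\g$ with $\widetilde X_i'\equiv\widetilde X_i''\equiv\widetilde X_i\mod[\widetilde\g,\widetilde\g]$ and $(r\widetilde X_i',s\widetilde X_i'')\in\widetilde{\mathfrak h}_{r,s}$. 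Next I would take $k$-fold brackets. Since the classes of the $\widetilde X_i$ span $\widetilde\g/[\widetilde\g,\widetilde\g]$, the set $\{\widetilde X_1,\dots,\widetilde X_{\widetilde d_1}\}$ generates $\widetilde\g$ as a Lie algebra (Lemma~\ref{lem:aop_nil:2}), so by Lemma~\ref{lem:aop_nil:3} the iterated brackets $\widetilde S_{i_1,\dots,i_k}:=[\widetilde X_{i_1},[\widetilde X_{i_2},\dots,[\widetilde X_{i_{k-1}},\widetilde X_{i_k}]\dots]]$ span $\widetilde\g^{(k)}$. Computing the bracket of $\widetilde\g\oplus\widetilde\g$ componentwise, pulling out the scalars by multilinearity, and using that a $k$-fold bracket with one factor in $[\widetilde\g,\widetilde\g]=\widetilde\g^{(2)}$ lands in $\widetilde\g^{(k+1)}=\{0\}$ (Lemma~\ref{lem:aop_nil:4}), one gets, just as in the proof of Lemma~\ref{lem:aop_nil:1},
\[
[(r\widetilde X'_{i_1},s\widetilde X''_{i_1}),[(r\widetilde X'_{i_2},s\widetilde X''_{i_2}),\dots,[(r\widetilde X'_{i_{k-1}},s\widetilde X''_{i_{k-1}}),(r\widetilde X'_{i_k},s\widetilde X''_{i_k})]\dots]]=(r^k\widetilde S_{i_1,\dots,i_k},s^k\widetilde S_{i_1,\dots,i_k}).
\]
As $\widetilde{\mathfrak h}_{r,s}$ is a subalgebra containing every $(r\widetilde X_i',s\widetilde X_i'')$, the right-hand side lies in $\widetilde{\mathfrak h}_{r,s}\cap(\widetilde\g^{(k)}\oplus\widetilde\g^{(k)})$; since the $\widetilde S_{i_1,\dots,i_k}$ span $\widetilde\g^{(k)}$ and $\widetilde{\mathfrak h}_{r,s}$ is a linear subspace, $\{(r^kZ,s^kZ)\mid Z\in\widetilde\g^{(k)}\}\subseteq\widetilde{\mathfrak h}_{r,s}\cap(\widetilde\g^{(k)}\oplus\widetilde\g^{(k)})$, which is the first assertion.

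For the descent, note that $\widetilde\g^{(k)}$ lies in the center of $\widetilde\g$, so $\{(r^kZ,s^kZ)\mid Z\in\widetilde\g^{(k)}\}$ is an abelian subalgebra and $L=\{\exp(r^kZ,s^kZ)\mid Z\in\widetilde\g^{(k)}\}$ is the corresponding connected subgroup of $\widetilde G_{r,s}$; since $\operatorname{Lie}(L)\subseteq\widetilde{\mathfrak h}_{r,s}$ and $\Lambda(\widetilde\rho_{r,s})$ is connected by Ratner's Theorem~\ref{thm:ratner}, we get $L\subseteq\Lambda(\widetilde\rho_{r,s})$. Moreover $\widetilde\g^{(k)}\subseteq[\widetilde\g,\widetilde\g]=[\g,\g]+B\g\subseteq\g$ by Lemma~\ref{lem:rationcomm}, so $L<\widetilde G^{(k)}\times\widetilde G^{(k)}=\widetilde G^{(k)}_{r,s}\subseteq G\times G$, and therefore $L\subseteq\Lambda(\widetilde\rho_{r,s})\cap(G\times G)$, which by Lemma~\ref{lem:meas} is contained in $\Lambda(\rho)$; thus $L<\Lambda(\rho)$. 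The combinatorial core is verbatim Lemma~\ref{lem:aop_nil:1}; I expect the only delicate points to be the bookkeeping in this last step: that $k$ here is the class of $\widetilde G$ and not of $G$, that $L$ automatically lands in $G\times G$ because $\widetilde\g^{(k)}\subseteq[\widetilde\g,\widetilde\g]\subseteq\g$, and (if one prefers the Lie-algebra form of the descent) that $\widetilde{\mathfrak h}_{r,s}\cap(\g\oplus\g)=\mathfrak h_{r,s}$, which holds by Lemma~\ref{lem:stabil} because $Y_{r,s}=(rY,sY)$ with $Y=B+v$ has nonzero $B$-component under the standing assumption $A\neq I$.
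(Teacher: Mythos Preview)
Your proof is correct and follows essentially the same approach as the paper's: the paper's one-line proof simply says that the first statement follows from Lemmas~\ref{lem:aop_nil:12} and~\ref{lem:aop_nil:120} together with Lemma~\ref{lem:aop_nil:4} ``as in the proof of Lemma~\ref{lem:aop_nil:1}'', and that the second is an application of Lemma~\ref{lem:stabil}, which is exactly what you have unpacked in detail. The only (harmless) difference is that for the descent step you offer two routes, via Lemma~\ref{lem:meas} and Ratner or via Lemma~\ref{lem:stabil}; the paper cites only the latter, but both are valid.
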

\begin{proof}
  The first statement follows from the previous two lemmata and from
  Lem\-ma~\ref{lem:aop_nil:4} as in the proof of Lemma~\ref{lem:aop_nil:1}.
 The second is an application of Lemma~\ref{lem:stabil}.
\end{proof}

Recall that~$ \widetilde G^{(k)}<G $ is a closed normal subgroup in the
center of~$ G $ such that its Lie algebra~$ \widetilde\g^{(k)} $ is a
rational ideal in $ \g $ include in the kernel of~$ B:\g\to\g $. It
follows that $ M^{(k-1)}:=G/\Gamma \widetilde G^{(k)} $ is a nilmanifold
and $ \T^{(k)}:=\widetilde G^{(k)}/(\widetilde G^{(k)}\cap\Gamma) $ is a torus.
The group~$ \T^{(k)} $ acts on~$ G/\Gamma $ by left translation.
Thus, we have an orthogonal decomposition
\[
  L^2(M,\lambda) = \bigoplus_{\chi \in \widehat{\T^{(k)}}} H_\chi
\]
where, for each character~$ \chi $ of the torus~$ \mathbb T^{(k)} $ we
set
\[
  H_\chi= \big\{ f \in L^2(M,\lambda) \mid f(zx)= \chi (z)f(x),\, \forall
  z\in \mathbb T^{(k)}, \,\forall x \in M\big\}.
\]

%Therefore, $ l_{\pi^{(2)}_{r,s}(u_{r,s})} $ is the product rotation
%$ l^r_{\pi^{(2)}(u)}\times l^s_{\pi^{(2)}(u)} $
%on~$ G/\Gamma\widetilde G^{(2)}\times G/\Gamma\widetilde G^{(2)} $.

\begin{Prop}
  \label{prop:aop_nil:2} Let~$ \chi_1 $, $ \chi_2 $ be characters of
  the torus~$ \mathbb T^{(k)} $ so that at least one is non-trivial.
  Then, if~$ \chi_1^{r^k}\not= \chi_2^{s^k} $, for
  any~$ f_i\in H_{\chi_i} $, $ i=1,2 $, and any ergodic
  joining~$ \rho $ of~$ (M,\phi^r, \lambda) $
  and~$ (M,\phi^s, \lambda) $, we have
  \begin{equation}
    \label{eq:zero} \rho(f_1\otimes \bar{f}_2)=0.
  \end{equation}
  Moreover, if~$ f\in H_\chi $ for nontrivial~$ \chi $ then for every
  $ \omega\in \C $ with~$ |\omega|=1 $ there exists~$ g\in G $ such
  that
  \[
    f(\phi^n(l_gx))=\omega f(\phi^n x)\text{ for all }x\in M,\ %
    n\in\Z.
  \]
\end{Prop}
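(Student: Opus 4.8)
The plan is to run the same argument as in the proof of Proposition~\ref{prop:aop_nil:1}, feeding in Lemma~\ref{lem:aop_nil:121} in place of Lemma~\ref{lem:aop_nil:1}, and letting the structural properties of $\widetilde G^{(k)}$ recorded just before the statement do the extra work needed to pass from translations to affine maps.

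First I would prove~\eqref{eq:zero}. By Lemma~\ref{lem:aop_nil:121} the stabilizer $\Lambda(\rho)<G\times G$ of the ergodic joining $\rho$ contains the subgroup $L=\{\exp(r^kZ,s^kZ)\mid Z\in\widetilde\g^{(k)}\}$. Since $\widetilde G^{(k)}$ is abelian (being central), $\exp(r^kZ)=(\exp Z)^{r^k}$, so for $f_i\in H_{\chi_i}$ and any $(\exp(r^kZ),\exp(s^kZ))\in L$ one computes
\[
(f_1\otimes\bar f_2)\bigl((\exp(r^kZ),\exp(s^kZ))(x_1,x_2)\bigr)=\bigl(\chi_1^{r^k}\chi_2^{-s^k}\bigr)(\exp Z)\,(f_1\otimes\bar f_2)(x_1,x_2)
\]
for all $(x_1,x_2)\in M\times M$. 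Integrating against the $L$-invariant measure $\rho$ gives $\rho(f_1\otimes\bar f_2)=(\chi_1^{r^k}\chi_2^{-s^k})(z)\,\rho(f_1\otimes\bar f_2)$ for every $z\in\mathbb T^{(k)}=\widetilde G^{(k)}/(\widetilde G^{(k)}\cap\Gamma)$. The hypothesis $\chi_1^{r^k}\neq\chi_2^{s^k}$ says exactly that $\chi_1^{r^k}\chi_2^{-s^k}$ is a nontrivial character of $\mathbb T^{(k)}$, so some $z$ makes this scalar differ from $1$, forcing $\rho(f_1\otimes\bar f_2)=0$.

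For the ``Moreover'' assertion I would use that $\widetilde G^{(k)}$ is a subgroup of $G$ lying in the center $Z(G)$ and that its Lie algebra $\widetilde\g^{(k)}$ is annihilated by $B$. The latter gives $A(g)=g$ for every $g\in\widetilde G^{(k)}$ (since $A=\exp B$ and $B$ kills $\widetilde\g^{(k)}$), and the former gives $ug=gu$; together these yield $\phi\circ l_g=l_g\circ\phi$ on $M$, hence $\phi^n\circ l_g=l_g\circ\phi^n$ for all $n\in\Z$. Then for $f\in H_\chi$,
\[
f(\phi^n(l_gx))=f(l_g(\phi^nx))=\chi(g)\,f(\phi^nx).
\]
Since a nontrivial character $\chi$ of the torus $\mathbb T^{(k)}$ is surjective onto the unit circle, given $\omega$ with $|\omega|=1$ one chooses $g\in\widetilde G^{(k)}\subset G$ with $\chi(g)=\omega$, which finishes the proof.

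I do not expect a real obstacle here: the substantive work — identifying $\Lambda(\rho)$ through the $\Susp$/$\Ab$ formalism and locating $L$ inside it — has already been done in Lemmas~\ref{lem:aop_nil:12}--\ref{lem:aop_nil:121}. The only points that need care are elementary bookkeeping: that $\exp$ sends $r^kZ$ to the $r^k$-th power in the abelian group $\widetilde G^{(k)}$, that $A$ restricts to the identity on $\widetilde G^{(k)}$ (so that $l_g$ commutes with the affine map $\phi$ itself, not merely with a translation), and the surjectivity of a nontrivial torus character onto $S^1$.
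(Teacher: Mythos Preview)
Your proposal is correct and follows essentially the same approach as the paper's own proof: both invoke Lemma~\ref{lem:aop_nil:121} to get $L$-invariance of $\rho$ and deduce~\eqref{eq:zero} from the nontriviality of $\chi_1^{r^k}\chi_2^{-s^k}$, and both use $\widetilde\g^{(k)}\subset\ker B$ together with the centrality of $\widetilde G^{(k)}$ to obtain $\phi\circ l_g=l_g\circ\phi$ for the ``Moreover'' part. The extra bookkeeping you spell out (the exponential-to-power identity in the abelian group, surjectivity of a nontrivial torus character) is left implicit in the paper but is exactly what is being used.
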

\begin{proof}
  Suppose that~$ \chi_1^{r^k}\neq \chi_2^{s^k} $
  and~$ f_1\in H_{\chi_1} $ and~$ f_2\in H_{\chi_2} $.  Let~$ \rho $
  be any ergodic joining of~$ (M,\phi^r,\lambda) $
  and~$ (M,\phi^s,\lambda) $.  By Lemma~%
  \ref{lem:aop_nil:121}, every element of the
  group~$ L<\widetilde G^{(k)}\times \widetilde G^{(k)} $ stabilizes the
  measure~$ \rho $.  It follows that the rotation
  by $ (z^{r^k},z^{s^k})\in \T^{(k)}\times \T^{(k)} $ on~$ M\times M $
  preserves~$ \rho $ for every~$ z\in \T^{(k)} $. Therefore
  \[
    \rho(f_1\otimes
    \bar{f}_2)=(\chi_1^{r^k}\chi_2^{-r^k})(z)\rho(f_1\otimes
    \bar{f}_2),
  \]
  which gives \eqref{eq:zero}.

  Since~$ \widetilde\g^{(k)}\subset \ker B $, for
  every~$ h\in \widetilde G^{(k)} $ we have~$ A(h)=h $, so
  \begin{equation*}
    \phi(l_{h}g\Gamma)=uA(hg)\Gamma=l_{h}\phi(g\Gamma)\text{ for every
    }g\Gamma\in M.
  \end{equation*}
  It follows that, if~$ f\in H_\chi $ for nontrivial~$ \chi $ then for
  every~$ \omega\in \C $ with~$ |\omega|=1 $ there
  exists~$ g\in \widetilde G^{(k)} $ such that
  \[
    f(\phi^n(l_gx))=f(l_g\phi^n(x))=\chi(g) f(\phi^n x)=\omega
    f(\phi^n x)\text{ for all }x\in M,\ n\in\Z,
  \]
  and the proof is complete.
\end{proof}
Finally, proceeding by induction on~$ k $ the clas of
nilpotency of the group $\widetilde G$, we obtain the following
result.
\begin{Th}
  \label{thm:mainaff} Let~$ \phi $ be an ergodic affine unipotent
  diffeomorphism of a compact connected nilmanifold~$ M=G/\Gamma $.
  Then there exists a set~$ \mathscr{C}\subset C(M) \cap L^2_0(M,\lambda) $ whose span is
  dense in~$ L^2_0(M,\lambda) $ such that:
  \begin{itemize}
  \item[(i)] for any pair~$ f_1,f_2\in\mathscr{C} $, for all but at
    most one pair of relatively prime natural numbers~$ (r,s) $ we
    have~$ \rho(f_1\otimes \bar{f}_2)=0 $ for all ergodic
    joinings~$ \rho $ of $ \phi^r $ and~$ \phi^s $;
  \item[(ii)] for every~$ f\in\mathscr{C} $ and any~$ \omega \in \C $
    with~$ |\omega|=1 $ there exists~$ g\in G $ such
    that we have $ f(\phi^n (l_gx))=\omega f(\phi^n x) $ for every~$ x\in M $
    and~$ n\in\Z $.
  \end{itemize}
\end{Th}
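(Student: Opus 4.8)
The plan is to prove Theorem~\ref{thm:mainaff} by induction on $k$, the class of nilpotency of the suspension group $\widetilde G$ attached to $\phi=uA$, following the scheme of the proof of Theorem~\ref{thm:mainrot} for nil-translations and feeding in the joining analysis carried out above, which culminates in Proposition~\ref{prop:aop_nil:2} (itself resting on Lemmas~\ref{lem:stabil} and~\ref{lem:aop_nil:121} and on the functorial interplay of $\Susp$ and $\Ab$ expressed by Lemma~\ref{lem:susp} and Corollary~\ref{cor:susp}). For the base case $k=1$ the group $\widetilde G$ is abelian, which forces $B=\log A=0$, hence $A=I$ and $G$ abelian; then $\phi=l_u$ is an ergodic rotation of the torus $M=G/\Gamma$ and one takes $\mathscr C$ to be the set of nontrivial characters of $M$, exactly as in the base case of Theorem~\ref{thm:mainrot} (indeed $\phi$ is then a nil-translation, so one may quote that theorem directly).

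For the inductive step, assume $k\ge2$ and that the statement holds whenever the suspension has nilpotency class smaller than $k$. Since $k\ge2$ we have $\widetilde\g^{(k)}\subset\g$, and, as recalled just before Proposition~\ref{prop:aop_nil:2}, $\widetilde G^{(k)}$ is a rational subgroup of $G$ lying both in the centre of $G$ and in $\ker B$; it is moreover normal in $\widetilde G$. Hence $M^{(k-1)}:=G/\Gamma\widetilde G^{(k)}$ is a compact connected nilmanifold, $\phi$ descends to an affine unipotent diffeomorphism $\phi^{(k-1)}$ of $M^{(k-1)}$, and by Lemma~\ref{lem:susp} the suspension of $(M^{(k-1)},\phi^{(k-1)})$ is $\widetilde G/\widetilde G^{(k)}\widetilde\Gamma$, of nilpotency class exactly $k-1$; moreover $\phi^{(k-1)}$ is ergodic, being a factor of $\phi$. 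Applying the induction hypothesis to $\phi^{(k-1)}$ gives a set $\mathscr C^{(k-1)}\subset C(M^{(k-1)})\cap L^2_0(M^{(k-1)})$ with dense span satisfying (i) and (ii). Writing $C_\chi$ for the space of continuous functions of $H_\chi$ --- dense in $H_\chi$ by the argument of Lemma~\ref{lem:densc} --- I would set
\[
\mathscr C:=\mathscr C^{(k-1)}\cup\bigcup_{\chi\in\widehat{\T^{(k)}}\setminus\{1\}}C_\chi .
\]
Since $H_1$ is identified $G$-equivariantly with $L^2(M^{(k-1)})$ through $\pi^{(k-1)}$, the density of $\Span\mathscr C^{(k-1)}$ in $L^2_0(M^{(k-1)})$, together with the linear density of $\bigcup_{\chi\ne1}C_\chi$ in $\bigoplus_{\chi\ne1}H_\chi$, shows that $\Span\mathscr C$ is dense in $L^2_0(M,\lambda)$.

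It then remains to verify (i) and (ii) for $\mathscr C$. For $f_1,f_2\in\mathscr C^{(k-1)}\subset H_1$, any ergodic joining $\rho$ of $\phi^r$ and $\phi^s$ pushes forward under $\pi^{(k-1)}\times\pi^{(k-1)}$ to an ergodic joining $\rho'$ of $(\phi^{(k-1)})^r$ and $(\phi^{(k-1)})^s$, and $\rho(f_1\otimes\bar f_2)$ equals the integral of the corresponding functions on $M^{(k-1)}\times M^{(k-1)}$ against $\rho'$; hence (i) for such pairs follows from the induction hypothesis, and (ii) follows by lifting through $G\to G/\widetilde G^{(k)}$ the group element produced on $M^{(k-1)}$ and using $G$-equivariance. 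If instead $f_1\in H_{\chi_1}$ and $f_2\in H_{\chi_2}$ with at least one $\chi_i$ nontrivial, then, because $\widehat{\T^{(k)}}\cong\Z^{d_k}$ is torsion free, the relation $\chi_1^{r^k}=\chi_2^{s^k}$ can hold for at most one coprime pair $(r,s)$, so for all remaining pairs Proposition~\ref{prop:aop_nil:2} gives $\rho(f_1\otimes\bar f_2)=0$ for every ergodic joining $\rho$, which is (i); and the second assertion of Proposition~\ref{prop:aop_nil:2} supplies, for each $f\in C_\chi$ with $\chi\ne1$ and each $\omega\in\C$ with $|\omega|=1$, an element $g\in\widetilde G^{(k)}\subset G$ with $f(\phi^n(l_gx))=\omega f(\phi^n x)$, which is (ii). This closes the induction.

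The substantive algebraic content --- the inclusion $\{\exp(r^kZ,s^kZ)\mid Z\in\widetilde\g^{(k)}\}\subset\Lambda(\rho)$ of Lemma~\ref{lem:aop_nil:121}, which ultimately rests on the Lesigne--Starkov--Ratner theorem (Theorem~\ref{thm:ratner}) --- is already in place, so the principal obstacle in assembling the present theorem is the bookkeeping around the trivial-character block $H_1$: one must be certain that the $\phi$-dynamics on $H_1$ genuinely coincides with the $\phi^{(k-1)}$-dynamics on $M^{(k-1)}$, that ergodicity of $\rho$ is inherited by the projected joining $\rho'$ so that the induction hypothesis is applicable, and that the element realising the spectral symmetry in (ii) can be lifted to $G$. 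The only point calling for a genuinely separate computation is the torsion-freeness argument bounding by one the number of coprime pairs $(r,s)$ with $\chi_1^{r^k}=\chi_2^{s^k}$.
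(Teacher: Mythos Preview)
Your proposal is correct and follows the same approach as the paper: induction with the inductive step supplied by Proposition~\ref{prop:aop_nil:2}, the set $\mathscr C$ built exactly as in Theorem~\ref{thm:mainrot}, and the verification of (i) and (ii) split between the trivial-character block $H_1$ (handled by the induction hypothesis via the factor map $\pi^{(k-1)}$) and the nontrivial blocks (handled by Proposition~\ref{prop:aop_nil:2}). Your elaboration of the torsion-freeness argument, the ergodicity of the projected joining $\rho'$, and the lifting for (ii) are all correct and fill in details the paper leaves to the reader.

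There is one organisational point where you diverge from the paper and a small gap opens. You take the base case to be $k=1$, which forces both $A=I$ and $G$ abelian. But the entire suspension apparatus of Section~4.2---hence $\widetilde G$, $\widetilde G^{(k)}$, $\mathbb T^{(k)}$, and Proposition~\ref{prop:aop_nil:2} itself---is set up under the standing hypothesis $A\neq I$. In the course of your induction, when you descend to $\phi^{(k-1)}$, the induced automorphism $\bar A$ may become the identity while $G/\widetilde G^{(k)}$ is still non-abelian; at that point the machinery you invoke is no longer literally available, and your base case $k=1$ does not catch it. The paper handles this by declaring the base of the induction to be \emph{any} nil-translation (trivial automorphism, arbitrary connected nilmanifold), dispatched by Theorem~\ref{thm:mainrot}. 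Your scheme is repaired by the same move: at each stage, if the automorphism part is trivial, invoke Theorem~\ref{thm:mainrot}; otherwise $A\neq I$, hence $k\ge2$, and your inductive step applies verbatim.

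A minor slip: you write $\widehat{\mathbb T^{(k)}}\cong\Z^{d_k}$ as ``$\mathbb T^{(k)}\cong\Z^{d_k}$''.
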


\begin{proof}
  First note that if~$ \chi$ is the trivial character, then every
  element of~$ H_\chi $ is invariant under the action of~$ \widetilde G^{%
    (k)} $,
  hence it can be treated as an element of $ L^2(M^{(k-1)},\lambda) $.
  Furthermore, we can define the affine unipotent
  diffeomorphism~$ \phi^ {(k-1)}:M^{(k-1)}\to M^{(k-1)} $ by
  \[
    \phi^{(k-1)}(g\Gamma\widetilde G^{(k)})=uA(g)\Gamma\widetilde G^{(k)}.
  \]
  As~$ A(\Gamma)=\Gamma $ and~$ A(\widetilde G^{(k)})=\widetilde G^{(k)} $, this
  map is well defined and is a factor of~$ \phi $ via the
  natural~$ G $-equivariant
  projection~$ \pi^{(k)}:G/\Gamma\to G/\Gamma \widetilde G^{(k)} $.  It
  allows us to proceed by induction.

  The inductive step is given by Proposition~%
  \ref{prop:aop_nil:2}.  Note that Proposition~%
  \ref{prop:aop_nil:2} requires the non-triviality of the
  automorphism~$ A $.  Therefore induction must start from an affine
  diffeomorphism with trivial automorphism, so from a nil-transla\-tion.
  Thus the basis case of induction is derived from Theorem~%
  \ref{thm:mainrot}.  Finally, the construction of the
  set~$ \mathscr {C} $ runs as in the proof of Theorem~%
  \ref{thm:mainrot}.
\end{proof}

\begin{proof}[Proof of Theorem~\ref{ThA} and Corollaries~\ref{CorB}~and~\ref{CorC}]
  Theorem~\ref{ThA} as well as the  Corollaries~\ref{CorB}~and~\ref{CorC} are direct consequences of
  Theorem~%
  \ref{thm:mainaff} and Theorem~%
  \ref{thm:prop}.
\end{proof}

\subsection{Nil-translations on non-connected nilmanifolds} Suppose
that~$ G $ is a nil\-potent Lie group and~$ \Gamma< G $ its lattice.
Then~$ M=G/\Gamma $ is a compact nilmanifold which we assume to be
non-connected.  Let~$ l_u $ for~$ u\in G $ be a uniquely ergodic
translation on~$ M $.  Denote by~$ G_0<G $ the identity component of~$ G $.
The group~$ G_0 $ is closed (and open) normal, and we may assume that
it is simply connected.  Moreover, $ M_0=(G_0\Gamma)/\Gamma\approx G_0/
(G_0\cap\Gamma) $ is a connected component of~$ M $.  Since~$ l_u $ is
minimal and~$ M $ is compact, there is a minimal integer~$ m>1 $ such that~$ M $ is the
disjoint union of clopen sets~$ l^k_uM_0 $ for~$ 0\leq k<m $ and $ l^m_uM_0=M_0
$.  Moreover, $ l^m_u:M_0\to M_0 $ is a uniquely ergodic
diffeomorphism which is a unipotent affine diffeomorphism of the
compact connected nilmanifold~$ M_0= G_0/(G_0\cap\Gamma) $.  Indeed, as $ l^m_u
((G_0\Gamma)/\Gamma)=(G_0\Gamma)/\Gamma $, we have~$ u^m\in G_0\Gamma $,
and hence there are~$ u_0\in G_0 $ and~$ \gamma\in \Gamma $ such that~$
u^m=u_0\gamma $.  It follows that~$ l_u^m=l_{u_0}\circ \Ad_{\gamma} $
on~$ M_0 $, where~$ l_{u_0} $ is a nil-translation on the
nilmanifold~$ G_0/(G_0\cap\Gamma) $ and~$ \Ad_{\gamma}:G_0\to G_0 $
is a unipotent automorphism of~$ G_0 $ with $
\Ad_{\gamma}(G_0\cap\Gamma)=
G_0\cap\Gamma
$.  Therefore~$ l^m_u:M_0\to M_0 $ is an ergodic unipotent affine
diffeomorphism which will be denoted by~$ \phi:M_0\to M_0 $

Let us consider the factor map~$ p:M\to\Z/ m\Z  $ defined by \[
 p(l_u^kx)=k\mod
m\Z, \quad \text{ if }~ k\in\Z \text{ and~}  x\in M_0 .\]  Then the map~$ p $ intertwines
the nil-translation~$ l_u $ on~$ M $ with the rotation on $ \Z/ m\Z  $
by~$ 1 $.  Denote by~$ H^+ $ the space of~$ L^2 (M,\lambda) $ of
functions constant on fibers of~$ p $ and by~$ H^- $ its
ortho\-com\-ple\-ment in~$ L^2(M,\lambda) $.  Then~$ H^- $ coincides with
the space of $ L^2 $-functions which have zero mean on each fiber~$ M_k:=l^k_uM_0
$ for~$ 0\leq k<m $.

\begin{Prop}
  \label{prop:nonconnect} Let~$ l_u $ be an ergodic nil-translation of
  a compact nilmanifold $ M=G/\Gamma $ with $m$ connected components.  Then there exists a set~$
  \mathscr{C}\subset C(M)\cap H^- $ whose span is dense in~$ H^- $
  such that:
  \begin{itemize}
    \item[(i)]
      for any $ f_1,f_2\in\mathscr{C} $, for all but a finite number of pairs of distinct
      prime numbers $(r, s)$, we have~$
      \rho(f_1\otimes \bar{f}_2)=0 $ for all ergodic joinings~$ \rho $
      of~$ l_u^r $ and~$ l_u^s $;
    \item[(ii)]
      for every~$ f\in\mathscr{C} $ and any~$ \omega\in \C $ with~$ |\omega|=1
      $ there exists a homeomorphism~$ S:M\to M $ such that~$ f(l_u^n
      (Sx))=\omega f(l_u^n x) $ for every~$ x\in M $ and~$ n\in\Z $.
  \end{itemize}
\end{Prop}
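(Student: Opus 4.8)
The plan is to lift a good family of functions from the connected component $M_0$ to $M$. Recall from the construction above that $M=\bigsqcup_{k=0}^{m-1}M_k$ with $M_k\defin l_u^kM_0$ clopen, that $l_u$ cyclically permutes the $M_k$, and that $\phi\defin l_u^m|_{M_0}$ is an ergodic unipotent affine diffeomorphism of the connected nilmanifold $M_0=G_0/(G_0\cap\Gamma)$; write $\lambda_0$ for the (unique) $\phi$-invariant probability on $M_0$ and $\iota_k\colon M_0\to M_k$, $\iota_k(x)=l_u^kx$, for the canonical homeomorphisms ($\iota_0=\mathrm{id}$). First I would invoke Theorem~\ref{thm:mainaff} for $(M_0,\phi)$, obtaining a set $\mathscr{C}_0\subset C(M_0)\cap L^2_0(M_0,\lambda_0)$ with dense span and properties (i), (ii) of that theorem. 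For $h\in\mathscr{C}_0$ and $\zeta\in\C$ with $\zeta^m=1$ I would then define $F_{\zeta,h}\in C(M)\cap H^-$ by $F_{\zeta,h}|_{M_k}\defin\zeta^k\,(h\circ\iota_k^{-1})$ (which has zero mean on each $M_k$), and put $\mathscr{C}\defin\{F_{\zeta,h}\mid \zeta^m=1,\ h\in\mathscr{C}_0\}$. Density of $\Span\mathscr{C}$ in $H^-$ follows from the identification $H^-\cong\bigoplus_{k=0}^{m-1}L^2_0(M_0)$ (via $F\mapsto(F|_{M_k}\circ\iota_k)_k$) together with the fact that the vectors $(\zeta^k)_{k<m}$, $\zeta^m=1$, span $\C^m$.

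To prove (i), I would fix $F_1=F_{\zeta_1,h_1}$, $F_2=F_{\zeta_2,h_2}$ and discard the finitely many pairs of distinct primes $(r,s)$ with $\min(r,s)\le m$; for the remaining pairs $\gcd(r,m)=\gcd(s,m)=1$. Given an ergodic joining $\rho$ of $l_u^r$ and $l_u^s$, pushing $\rho$ forward by $p\times p$ yields an ergodic joining of the cyclic rotations $(\Z/m\Z,+r)$ and $(\Z/m\Z,+s)$, which is necessarily the uniform measure on a graph $\{(a,\psi(a))\mid a\in\Z/m\Z\}$ for an affine automorphism $\psi$ of $\Z/m\Z$. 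Hence $\rho$ is carried by the $m$ blocks $B_a\defin M_a\times M_{\psi(a)}$, each of $\rho$-mass $1/m$ and cyclically permuted by $l_u^r\times l_u^s$ with constant return time $m$; since $(l_u^r\times l_u^s)^m$ restricted to $B_a$ is, after transport by $\iota_a\times\iota_{\psi(a)}$, equal to $\phi^r\times\phi^s$, a Kakutani tower argument shows that the normalized restriction of $\rho$ to $B_a$, transported to $M_0\times M_0$, is an ergodic joining $\eta_a$ of $(M_0,\phi^r,\lambda_0)$ and $(M_0,\phi^s,\lambda_0)$ (its marginals are $\lambda_0$ because $l_u$ preserves $\lambda$). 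Splitting $\rho(F_1\ot\bar F_2)$ over the blocks and using $F_1|_{M_a}=\zeta_1^a(h_1\circ\iota_a^{-1})$ and $\bar F_2|_{M_{\psi(a)}}=\bar\zeta_2^{\psi(a)}(\bar h_2\circ\iota_{\psi(a)}^{-1})$, one gets $\rho(F_1\ot\bar F_2)=\frac{1}{m}\sum_{a}\zeta_1^a\bar\zeta_2^{\psi(a)}\,\eta_a(h_1\ot\bar h_2)$; by property (i) of Theorem~\ref{thm:mainaff} applied to $\mathscr{C}_0$, every factor $\eta_a(h_1\ot\bar h_2)$ vanishes except possibly for one pair $(r,s)$, so $\rho(F_1\ot\bar F_2)=0$ for all but finitely many $(r,s)$.

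For (ii), fix $F=F_{\zeta,h}$ and $\omega\in\C$ with $|\omega|=1$. Writing $n+k=qm+k'$ with $0\le k'<m$ yields, for $x\in M_k$, the bookkeeping identity $F(l_u^nx)=\zeta^{\,n+k}\,h\bigl(\phi^{\lfloor(n+k)/m\rfloor}(\iota_k^{-1}x)\bigr)$. By property (ii) of Theorem~\ref{thm:mainaff} there is $g^*\in G_0$ with $h(\phi^q(l_{g^*}y))=\omega\,h(\phi^qy)$ for every $q\in\Z$ and $y\in M_0$; I would then let $S\colon M\to M$ be the homeomorphism defined componentwise by $S|_{M_k}\defin\iota_k\circ l_{g^*}\circ\iota_k^{-1}$ (a homeomorphism because the $M_k$ are clopen; in general it is \emph{not} a single left translation of $M$). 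Since $\iota_k^{-1}(Sx)=l_{g^*}(\iota_k^{-1}x)$, the same identity gives $F(l_u^n(Sx))=\zeta^{\,n+k}\,h\bigl(\phi^{\lfloor(n+k)/m\rfloor}(l_{g^*}(\iota_k^{-1}x))\bigr)=\omega\,F(l_u^nx)$ for all $x\in M$ and $n\in\Z$, which is (ii).

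I expect the delicate point to be (ii): one must recognize that the required $S$ need not be a translation of $M$ but can be assembled fibrewise over $\Z/m\Z$ (the \emph{same} $g^*$ on every component, read through the $\iota_k$), and — because the exponent $\lfloor(n+k)/m\rfloor$ sweeps all of $\Z$ as $n$ varies — that Theorem~\ref{thm:mainaff}(ii) has to be used in its strong form giving $\omega$-equivariance simultaneously for all powers of $\phi$, which it does. The joining reduction in (i) is comparatively routine once the graph structure of self-joinings of finite cyclic rotations and the Kakutani tower picture over the blocks $B_a$ are in place.
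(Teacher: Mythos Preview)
Your proof is correct and follows the same strategy as the paper: apply Theorem~\ref{thm:mainaff} to the connected component $(M_0,\phi)$, lift the resulting family to $M$, define the homeomorphism $S$ component-wise via $\iota_k\circ l_{g^*}\circ\iota_k^{-1}$, and reduce ergodic joinings of $l_u^r,l_u^s$ (for $r,s$ coprime to $m$) to ergodic joinings of $\phi^r,\phi^s$ on $M_0\times M_0$ by a tower argument. The only difference is cosmetic: the paper takes $\mathscr{C}=\bigcup_k\{h\circ l_u^{-k}:h\in\mathscr{C}_0\}$ (functions supported on a single component), whereas you take the ``Fourier dual'' family $F_{\zeta,h}$ spread over all components with phases $\zeta^k$; the two families have the same linear span, and in part~(i) your choice forces you to sum over the blocks $B_a$ (and identify the graph structure of the $\Z/m\Z$-joining) while the paper can work with a single block $M_{k_1}\times M_{k_2}$ directly.
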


\begin{proof}

  Let us consider the ergodic unipotent affine
  diffeomorphism~$ \phi:=l_ {u_0}\Ad_\gamma $ on~$ M_0 $. By Theorem~%
  \ref{thm:mainaff} applied to~$ \phi $, there exists a
  set~$ \mathscr{C}_0\subset C(M_0)\cap L^2_0(M_0) $ whose span is
  dense in~$ L^2_0(M_0,\lambda) $ and which satisfies the conditions
  (i) and (ii) of that theorem.  We consider~$ \mathscr{C}_0$ as a set
  of continuous functions on $M$ by extending its elements as $0$ on
  $M\setminus M_0$. For every~$ k\in \Z/ m\Z  $ the
  map~$ l_u^k: M\to M $ establishes a homeomorphism between~$ M_0 $
  and~$ M_k $.
  Set~$ \mathscr{C}_k= \{f\circ l_u^{-k}\mid f\in \mathscr{C}_0\}$.
  Elements of~$ \mathscr{C}_k$ are continuous functions on $M$
  supported on~$ M_k $.
  Then~$ \mathscr{C}:=\bigcup_ {k\in \Z/ m\Z }\mathscr{C}_k $ is a
  linearly dense set in~$ H^- $.

   Fix~$ f\in \mathscr{C}_k $ for some~$ 0\leq k<m $ and~$ \omega\in\C
  $ with~$ |\omega|=1 $.  Then~$ f $ is supported on~$ M_k $ and
  there exists~$ h\in \mathscr{C}_0 $ such that the restriction of~$
  f $ to~$ M_k $ is equal to~$ h\circ l_u^{-k} $.  By (ii) in
  Theorem~%
  \ref{thm:mainaff}, there exists~$ g\in G_0 $ such that~$ h(\phi^n (l_gx))=\omega
  h(\phi^n x) $ for every~$ x\in M_0 $ and~$ n\in\Z $.   Let~$ S:M\to M
  $ be the homeomorphism defined by~$ Sx=l_u^{k'}\circ l_g\circ l_u^{-k'}x
  $ if~$ x\in M_{k'} $. Then
  \begin{equation}
    \label{eq:dicon} f(l_u^n(Sx))=\omega f(l_u^nx)\text{ for all }n\in\Z\text
    { and }x\in M.
  \end{equation}
  Indeed, let~$ x\in M_{k'} $:  if~$ n\neq k-k'\mod m $ then both~$
  l_u^nx $ and~$ l_u^n(Sx) $ do not belong to~$ M_k $, and hence
  both sides of \eqref{eq:dicon} vanish.  If~$ n=k-k'+n' m $ for some
  $ n'\in\Z $ then~$ x=l^{k'}_uy $ for some~$ y\in M_0 $ and
  \[
    f(l_u^n(Sx))=h(l_u^{n+k'-k}l_{g}y)=h(\phi^{n'} l_{g}y)=\omega h(\phi^
    {n'} y) =\omega h(l_u^{n+k'-k}y)=\omega f(l_u^nx).
  \]

  Note that there exits only finitely many pairs of distinct
  prime numbers $(r, s)$ such that $m, r, s$ are not pairwise coprime.
  Therefore to prove we can assume that $(r,s)$ is a pair such that $m, r, s$ are pairwise coprime. Then both the
   nil-translations $
  l_u^r $ and~$ l_u^s $ are also uniquely ergodic.

  Let~$ \rho $ be an ergodic joining of~$ l_u^r $ and~$ l_u^s $.
  Choose any pair~$ f_1,f_2 $ of functions in~$ \mathscr{C} $. Suppose
  that~$ f_1\in \mathscr{C}_{k_1} $ and~$ f_2\in \mathscr {C}_{k_2}
  $ for some~$ 0\leq k_1,k_2<m $.  Then, for~$ i=1,2 $, $ f_i $ is
  supported on~$ M_{k_i} $ and~$ f_i= h_i\circ l_u^{-k_i} $ for some~$ h_i\in
  \mathscr{C}_0 $.  Denote by~$ \rho_{k_1,k_2} $ the restriction of~$
  \rho $ to the set~$ M_k\times M_{k'} $. Then the measure~$ (l_u^
  {-k_1}\times l_u^{-k_2})_*\rho_{k_1,k_2} $ is supported on~$ M_0\times
  M_0 $ and coincides with the restriction of~$ (l_u^ {-k_1}\times l_u^{-k_2})_*\rho
  $ to~$ M_0\times M_0 $. Then we have
  \[
    \rho(f_1\otimes \bar{f}_2)=(l_u^{-k_1}\times l_u^{-k_2})_*\rho_{k_1,k_2}
    (h_1\otimes \bar{h}_2).
  \]
  If the measure~$ \rho_{k_1,k_2} $ vanishes then ~$
  \rho(f_1\otimes \bar{f}_2)=0 $.  Thus we may assume that $ \rho_{k_1,k_2} $
  is not zero.  As~$ l_u^r $ and~$ l_u^s $ are uniquely ergodic and~$
  l_u^{-k_1}\times l_u^{-k_2} $ commutes with~$ l_u^{r}\times l_u^{s} $,
  the measure~$ (l_u^{-k_1}\times l_u^{-k_2})_*\rho $ is also an
  ergodic joining of~$ l_u^r $ and~$ l_u^s $.

  Since~$ r,s,m $ are pairwise coprime and~$ (l_u^{-k_1}\times l_u^ {-k_2})_*\rho
  $ does not vanish on~$ M_0\times M_0 $, the measure~$ (l_u^{-k_1}\times
  l_u^{-k_2})_*\rho $ is supported on~$ \bigcup_{k=0}^{m-1}(l_u^r\times
  l_u^s)^k(M_0\times M_0) $, where the elements of the union are
  pairwise disjoint and~$ (l_u^r\times l_u^s)^m(M_0\times M_0)=M_0\times
  M_0 $.  It follows that the restriction~$ (l_u^{-k_1}\times l_u^{-k_2})_*\rho_
  {k_1,k_2} $ is an~$ l_u^{rm}\times l_u^{sm} $-invariant ergodic
  measure on~$ M_0\times M_0 $.  By (i) in Theorem~%
  \ref{thm:mainaff}, it follows that
  \[
    \rho(f_1\otimes \bar{f}_2)=(l_u^{-k_1}\times l_u^{-k_2})_*\rho_{k_1,k_2}
    (h_1\otimes \bar{h}_2)=0
  \]
  for all but one pair~$ (r,s) $, which completes the proof.
\end{proof}

\begin{Th}
  \label{thm:nonconnect} Let~$ l_u $ be a nil-translation of
  the compact nilmanifold~$ M=G/\Gamma $.  Then
  \begin{equation}
    \label{short1} \frac1M\sum_{M\leq m<2M}\left|\frac1H\sum_{m\leq
    n<m+H}f(l_u^nx)\mob(n)\right|\to 0
  \end{equation}
  when~$ H\to\infty $, $ H/M\to0 $ for each~$ x\in M $ and each~$ f\in
  C(M) $. If additionally $l_u$ is ergodic then \eqref{short1} hold for all $f\in D(M)$.
\end{Th}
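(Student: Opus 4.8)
The plan is to reduce first to the case of a minimal nil-translation, and then to split $L^2(M,\lambda)$ into three summands on which $\mob$-orthogonality on typical short intervals holds for three different reasons; throughout, write $\ell$ for the number of connected components of $M$ (to avoid clashing with the summation index in \eqref{short1}). Fix $x\in M$ and $f\in C(M)$ and put $Y=\overline{\{l_u^nx:n\in\Z\}}$. Since $l_u$ is distal, $Y$ is a minimal subsystem; applying Theorem~\ref{thm:ratner} to any $l_u$-invariant ergodic measure on $Y$ --- which has full support because $Y$ is minimal --- shows that $Y$ is a sub-nilmanifold of $M$ on which $l_u$ acts as a minimal, hence uniquely ergodic, nil-translation of a compact (possibly disconnected) nilmanifold. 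As the left-hand side of \eqref{short1} depends only on $x$, $f|_Y$ and $l_u|_Y$, we may replace $(M,l_u,f)$ by $(Y,l_u|_Y,f|_Y)$, so it suffices to prove \eqref{short1} for every $f\in C(M)$ under the additional assumption that $l_u$ is ergodic; for the $D(M)$-assertion this assumption is already present, with $Y=M$.

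\textbf{Splitting off the easy summands.} Assume $l_u$ ergodic, and let $p\colon M\to\Z/\ell\Z$, $H^+$ and $H^-$ be as introduced before Proposition~\ref{prop:nonconnect} (with $\ell$ in place of the $m$ there), giving the $l_u$-invariant orthogonal decomposition $L^2(M,\lambda)=\C\cdot\raz\oplus\big(H^+\cap L^2_0(M,\lambda)\big)\oplus H^-$. Every $f\in C(M)$ is the sum of a constant, a function in $H^+\cap L^2_0(M,\lambda)$ (which is again continuous, indeed locally constant), and a continuous function in $H^-$, so by the triangle inequality it is enough to treat the three summands. For a constant, the left-hand side of \eqref{short1} is a multiple of $\frac1M\sum_{M\le m<2M}\big|\frac1H\sum_{m\le n<m+H}\mob(n)\big|$, which tends to $0$ by the theorem of \cite{Ma-Ra-Ta} (cf.\ Remark~\ref{rem:mrt}). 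For $f\in H^+\cap L^2_0(M,\lambda)$, expanding $f$ in characters of $\Z/\ell\Z$ and using that $f$ has zero mean gives $f(l_u^nx)=\sum_{t=1}^{\ell-1}c_te^{2\pi itn/\ell}$ with $n$-independent coefficients $c_t$, so \eqref{short1} follows by applying Remark~\ref{rem:mrt} to each phase $e^{2\pi itn/\ell}$ and summing. (These two cases rest on \cite{Ma-Ra-Ta} and are special to $\bfu=\mob$.)

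\textbf{The summand $H^-$.} Let $\mathscr C\subset C(M)\cap H^-$ be the set from Proposition~\ref{prop:nonconnect}, with $\Span\mathscr C$ dense in $H^-$ and properties (i) and (ii) of that proposition. I would rerun the proof of Theorem~\ref{thm:prop}(ii) with $T=l_u$ and this $\mathscr C$ and an arbitrary multiplicative $\bfu$, $|\bfu|\le1$: for $f\in\mathscr C$ and a fixed increasing $(b_k)$ with $b_{k+1}-b_k\to\infty$, property (ii) produces homeomorphisms $S_k$ of $M$ such that, setting $x_k=S_kx$,
\[
\Big|\sum_{b_k\le n<b_{k+1}}f(l_u^nx)\bfu(n)\Big|=\sum_{b_k\le n<b_{k+1}}f(l_u^nx_k)\bfu(n),
\]
and it remains to show $\frac1{b_{K+1}}\sum_{k\le K}\big(\sum_{b_k\le n<b_{k+1}}f(l_u^nx_k)\bfu(n)\big)\to0$. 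As in \cite{Ab-Le-Ru} (proof of Theorem~3), applying the K\'atai--Bourgain--Sarnak--Ziegler criterion to the sequence $(a_n)$ obtained by concatenating the blocks $(f(l_u^nx_k))_{b_k\le n<b_{k+1}}$, and decomposing $\{1,\dots,N\}$ according to the (ever longer) blocks into which $rn$ and $sn$ fall, reduces this to showing that, for all but finitely many pairs of distinct primes $(r,s)$,
\begin{equation}\label{eq:unifavg}
\lim_{N\to\infty}\ \sup_{y_1,y_2\in M}\ \Big|\frac1N\sum_{n=0}^{N-1}f(l_u^{rn}y_1)\,\overline{f(l_u^{sn}y_2)}\Big|=0.
\end{equation}
To prove \eqref{eq:unifavg}, note that only primes dividing $\ell$ can fail $\gcd(r,\ell)=1$, and for $r$ coprime to $\ell$ the translation $l_u^r$ is again a minimal (hence uniquely ergodic) nil-translation of $M$ --- it still permutes the $\ell$ components cyclically, and a suitable power of it induces on each component an ergodic, hence totally ergodic, affine unipotent diffeomorphism of a connected nilmanifold. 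Assume this for both $r$ and $s$. Then any weak-$*$ limit $\sigma$ of measures $\frac1{N_j}\sum_{n<N_j}\delta_{(l_u^{rn}y_1^{(j)},\,l_u^{sn}y_2^{(j)})}$ with $N_j\to\infty$ is $l_u^r\times l_u^s$-invariant and, by unique ergodicity of $l_u^r$ and $l_u^s$, has both marginals equal to $\lambda$, hence is a joining of $l_u^r$ and $l_u^s$. By property (i) of Proposition~\ref{prop:nonconnect}, and since the supremum of $|\sigma(f\otimes\bar f)|$ over ergodic joinings equals its supremum over all joinings (Remark~\ref{r:weaktop}), for all but finitely many distinct primes $(r,s)$ we get $\sigma(f\otimes\bar f)=0$ for every joining $\sigma$; a standard compactness argument then gives \eqref{eq:unifavg}. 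With \eqref{eq:unifavg} established, the proof of Theorem~\ref{thm:prop}(ii) yields \eqref{eq:convb1} for every $f\in\mathscr C$, hence, by linearity, for every $f\in\Span\mathscr C$, and then, using the unique ergodicity of $l_u$ and the $L^1(M,\lambda)$-density of $\Span\mathscr C$ in $H^-$, for every continuous $f\in H^-$; finally \eqref{short1} for such $f$ (with $\bfu=\mob$) follows as in \cite{Ab-Le-Ru} (proof of Theorem~5). Combining the three summands gives \eqref{short1} for all $f\in C(M)$. For the last sentence of the theorem, when $l_u$ is ergodic on $M$ the conclusion of Theorem~\ref{thm:prop}(ii) holds for every continuous zero-mean $f$, so the approximation argument of Remark~\ref{rem:riemint} (which uses only unique ergodicity) extends \eqref{short1} to every zero-mean $f\in D(M)$, and --- adding back the constant handled above --- to all $f\in D(M)$.

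\textbf{Main obstacle.} The crux is that $l_u$ on a nilmanifold with $\ell>1$ components fails to be totally ergodic (it has $e^{2\pi i/\ell}$ as an eigenvalue), so Theorem~\ref{thm:prop} is not applicable as a black box. The two points that rescue the argument are that the orthogonality of joinings of prime powers survives on the $l_u$-invariant subspace $H^-$ --- precisely Proposition~\ref{prop:nonconnect}(i), which is all that the \cite{Ab-Le-Ru} machinery uses for $H^-$-valued functions --- and that all but finitely many primes are coprime to $\ell$, so the relevant prime powers of $l_u$ are again minimal nil-translations, letting Ratner's theorem identify the orbit-closure measures of their products with ergodic joinings. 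I expect the most delicate bookkeeping to be the verification that the \cite{Ab-Le-Ru} blocking argument really does reduce to \eqref{eq:unifavg}, i.e. the interchange of the block structure with the uniform orbit averages (which one handles by translation-invariance of \eqref{eq:unifavg} over intervals).
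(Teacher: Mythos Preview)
Your proposal is correct and follows essentially the same approach as the paper: reduce to the ergodic case via orbit closures, split $L^2(M,\lambda)$ into the finite-spectrum part $H^+$ (handled by the Matom\"aki--Radziwi\l\l--Tao short-interval result, Remark~\ref{rem:mrt}) and the complement $H^-$ (handled by Proposition~\ref{prop:nonconnect} together with the \cite{Ab-Le-Ru} block/KBSZ machinery), and then invoke Remark~\ref{rem:riemint} for $D(M)$. The only cosmetic differences are that the paper treats the ergodic case first and then reduces the general case (citing \cite{Rat91} for the orbit-closure structure rather than Theorem~\ref{thm:ratner}), and that it delegates your explicit uniform-average estimate~\eqref{eq:unifavg} to ``arguments as in the proof of Theorem~3 in \cite{Ab-Le-Ru}'' rather than spelling them out.
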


\begin{proof}
  Assume that the nil-translation is ergodic.  Denote by~$ \mathcal{B}_{rat}\subset\mathcal{B} $ the maximal factor
  of~$ l_u $ with the rational discrete spectrum.  Then~$ H^+=L^2 (M,\mathcal
  {B}_{rat},\lambda) $.  In view of (i) in Proposition~%
  \ref{prop:nonconnect}, the nil-translation satisfies the AOP
  property (i.e. \eqref{cond:aop}) for all functions~$ f,g\in H^-=L^2
  (M,\mathcal{B}_{rat},\lambda)^\perp $.  Since the space~$ H^- $ does
  not depend on the choice of the topological model of the
  nil-translation, similar arguments to those used in the proof of
  Theorem~3 in
  \cite{Ab-Le-Ru} show that for each zero mean~$ f\in C(M)\cap H^- $,
  for any sequence~$ (x_k)_{k\geq 1} $ in~$ M $ and any~$ (b_k)_ {k\geq
  1} $ with~$ b_{k+1}-b_k\to +\infty $ we have
  \[
    \frac1{b_{K+1}}\sum_{k\leq K}\Big(\sum_{b_k\leq n< b_{k+1}}f(l_u^nx_k)\mob
    (n)\Big)\to0\text{ when }K\to\infty.
  \]
  Together with (ii) in Proposition~%
  \ref{prop:nonconnect} this gives \eqref{short1} for every~$ f\in C(M)\cap
  H^- $.  If~$ f\in H^+ $ then the sequence $(f(l_u^nx))$ is periodic and the property~\eqref{short1} follows
  from Theorem~1.7 in
  \cite{Ma-Ra-Ta} applied to the Möbius function (cf.\ Remark~\ref{rem:mrt}).  If follows that \eqref{short1}
  holds for every~$ f\in C(M) $, and finally, by Remark~%
  \ref{rem:riemint}, also for every~$ f\in D(M) $.

  We need now to consider the case where the  nil-translation~$ l_u $
  is not ergodic. Then, for any~$ x\in M $ denote by~$ M_x $ its orbit closure.  By
  \cite{Rat91}, the restriction of~$ l_u $ to~$ M_x $ is topologically
  isomorphic to an ergodic nil-translation on a compact (not necessary
  connected) nilmanifold.  Therefore, our claim is reduced to the
  ergodic case.
\end{proof}

\subsection{Polynomial type sequences}
\begin{Th}
  \label{thm:gammaaffine} Let~$ \phi $ be an ergodic affine unipotent
  diffeomorphism of a compact connected nilmanifold~$ M=G/\Gamma $.
  Then for all~$ \gamma\in \R\setminus\{0\} $ and $\varrho\in\R$, we have
  \begin{equation}
    \label{short2} \frac1M\sum_{M\leq m<2M}\left|\frac1H\sum_{m\leq
    n<m+H}f(\phi^{[\gamma n+\varrho]}x)\mob(n)\right|\to 0
  \end{equation}
  when~$ H\to\infty $, $ H/M\to0 $ for each~$ x\in M $ and each~$ f\in
  C(M) $.
\end{Th}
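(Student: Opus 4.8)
The plan is to suspend $\phi$ to a nilflow, to absorb the floor function $[\,\cdot\,]$ into the (Borel) first backward hitting map of the Poincar\'e section $p^{-1}\{0\}$, and then to invoke Theorem~\ref{thm:nonconnect}. First, the case $\gamma\in\Z\setminus\{0\}$ is immediate: since $\gamma n\in\Z$ we have $[\gamma n+\varrho]=\gamma n+[\varrho]$, hence $f(\phi^{[\gamma n+\varrho]}x)=f\big((\phi^{\gamma})^{n}(\phi^{[\varrho]}x)\big)$; by Theorem~\ref{ThA} the map $\phi$ is totally ergodic, so $\phi^{\gamma}$ is again an ergodic affine unipotent diffeomorphism of $M$, and \eqref{short2} is Corollary~\ref{CorC} applied to $\phi^{\gamma}$, to $f$ and to the point $\phi^{[\varrho]}x$. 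Assume from now on that $\gamma\notin\Z$; we may also assume $A\neq I$, the nil-translation case $A=I$ being covered by the same argument with the product suspension group $\widetilde G=G\times\R$.

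Next, form the suspension nilflow $(\widetilde G/\widetilde\Gamma,(\widetilde\phi^{t}))$ of $(M,\phi)$ as in Section~\ref{sec:danis-construction}: here $\widetilde\phi^{t}=l_{\widetilde u^{t}}$ with $\widetilde u^{t}=\exp\big(t(B+v)\big)$ (where $A=\exp B$ and $uA=\exp(B+v)$, $v\in\g$), with the $G$-bundle $p\colon\widetilde G/\widetilde\Gamma\to\R/\Z$ and the embedding $i\colon M\hookrightarrow p^{-1}\{0\}$ satisfying $\widetilde\phi^{k}\circ i=i\circ\phi^{k}$ for all $k\in\Z$. Since $\widetilde G$ is a semidirect product of the connected, simply connected, nilpotent group $G$ with $\R$ acting unipotently, $\widetilde G$ is again connected, simply connected and nilpotent, so $\widetilde M:=\widetilde G/\widetilde\Gamma$ is a compact connected nilmanifold and $l_{\widetilde u^{\gamma}}$ is a nil-translation of $\widetilde M$. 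Define $F\colon\widetilde M\to\C$ by
\[
F(z)=f\big(i^{-1}(\widetilde\phi^{-t(z)}z)\big),
\]
where $t(z)\in[0,1)$ is the representative of $p(z)\in\R/\Z$ (so that $p(\widetilde\phi^{-t(z)}z)=0$); this $F$ is bounded, Borel, and continuous on $\widetilde M\setminus p^{-1}\{0\}$. Put $z_{0}:=\widetilde\phi^{\varrho}(i(x))$. From $\widetilde\phi^{\gamma n+\varrho}=l_{\widetilde u^{\gamma}}^{n}\circ\widetilde\phi^{\varrho}$ and $p(i(x))=0$ one gets $p(l_{\widetilde u^{\gamma}}^{n}z_{0})=\gamma n+\varrho\bmod\Z$, whence, for every $n\in\Z$,
\[
F\big(l_{\widetilde u^{\gamma}}^{n}z_{0}\big)
=f\big(i^{-1}\widetilde\phi^{\,\gamma n+\varrho-\{\gamma n+\varrho\}}(i(x))\big)
=f\big(i^{-1}\widetilde\phi^{[\gamma n+\varrho]}(i(x))\big)
=f\big(\phi^{[\gamma n+\varrho]}x\big).
\]

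Finally, set $N:=\overline{\{l_{\widetilde u^{\gamma}}^{n}z_{0}:n\in\Z\}}\subset\widetilde M$. By \cite{Rat91}, exactly as in the proof of Theorem~\ref{thm:nonconnect}, the restriction $l_{\widetilde u^{\gamma}}|_{N}$ is topologically isomorphic to an ergodic (hence uniquely ergodic) nil-translation of a compact, possibly disconnected, nilmanifold; let $\mu_{N}$ be its invariant probability measure. I then check that $F|_{N}$ lies in the class $D(N)$ of Remark~\ref{rem:riemint}: its discontinuity set is contained in the closed set $p^{-1}\{0\}\cap N$, and if $\gamma$ is irrational then $p$ intertwines $l_{\widetilde u^{\gamma}}|_{N}$ with the uniquely ergodic rotation by $\gamma$ on $\R/\Z$, so $(p|_{N})_{*}\mu_{N}$ is Lebesgue measure and $\mu_{N}(p^{-1}\{0\}\cap N)=0$, while if $\gamma\in\Q\setminus\Z$ then $p(N)$ is finite, so $p|_{N}$ and hence $t|_N$ is locally constant and $F|_{N}$ is continuous. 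In either case $F|_{N}\in D(N)$, and applying the $D$-function conclusion of Theorem~\ref{thm:nonconnect} to the ergodic nil-translation $l_{\widetilde u^{\gamma}}|_{N}$, to the point $z_{0}\in N$ and to $F|_{N}$ (transported across the isomorphism) yields
\[
\frac1M\sum_{M\le m<2M}\Big|\frac1H\sum_{m\le n<m+H}F\big(l_{\widetilde u^{\gamma}}^{n}z_{0}\big)\,\mob(n)\Big|\longrightarrow 0
\]
as $H\to\infty$, $H/M\to0$, which by the identity above is precisely \eqref{short2}.

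The main obstacle is the discontinuity produced by the floor function: the observable $F$ on the suspension is genuinely discontinuous along the section $p^{-1}\{0\}$, so no statement about \emph{continuous} observables applies directly, and one must pass to the orbit closure $N$ and verify that there the discontinuities still lie in a closed null set. This is what forces the use of the $D(M)$-strengthening of Theorem~\ref{thm:nonconnect}, together with the dichotomy on $\gamma$ above (irrational: the section meets $N$ in a Lebesgue-null fiber; rational non-integer: $p|_N$ is locally constant, so $F|_N$ is in fact continuous). Everything else is routine bookkeeping within the functorial suspension picture already developed in the paper.
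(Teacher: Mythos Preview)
Your argument is correct and follows essentially the same route as the paper's proof: suspend $\phi$ to a nilflow, lift $f$ to the natural piecewise-continuous observable $\widetilde f$ (your $F$), pass to the $l_{\widetilde u^\gamma}$-orbit closure via Ratner, verify that the restricted observable lies in $D$ by the finite/full dichotomy for $p(N)$, and invoke Theorem~\ref{thm:nonconnect}. The only cosmetic differences are that you place the $\varrho$-shift in the initial point $z_0=\widetilde\phi^{\varrho}(i(x))$ rather than in the observable $\widetilde f\circ\widetilde\phi_\varrho$, you single out $\gamma\in\Z$ separately, and you phrase the dichotomy as $\gamma$ irrational versus $\gamma\in\Q\setminus\Z$ (equivalent to the paper's $W=\R/\Z$ versus $W$ finite).
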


\begin{proof}
  Let~$ \widetilde \Sigma=(\widetilde G, \widetilde \Gamma, p , ({\widetilde \phi}_t)_{t\in \R}, \widetilde \lambda) $
  be the suspension of the ergodic affine unipotent
  diffeomorphism $\Sigma=(G,\Gamma, \phi,\lambda)$, with $\lambda$ and $\widetilde
  \lambda$ the Haar measures on the corresponding nilmanifolds.
  The nilflow $\widetilde \Sigma$ is uniquely ergodic. As usual we
  identify $G/\Gamma$ with $p^{-1}\{0\}$. For each continuous function
  $f$ on $G/\Gamma$ let $\widetilde f$ be the unique function on $\widetilde
  G/\widetilde \Gamma$ defined by the condition
\[
\widetilde f({\widetilde \phi}_t x ) = f(x), \quad \forall x \in G/\Gamma\approx p^{-1}\{0\},~
\forall t\in [0,1)\,,
\]
or, equivalently, by the condition
\begin{equation}
  \label{eq:fractional}
\widetilde f({\widetilde \phi}_t x ) = f(\phi^{\lfloor t\rfloor}x), \quad \forall
x \in G/\Gamma\approx p^{-1}\{0\} ,~
\forall t\in \R\,.
\end{equation}

Since the set of discontinuities of~$ \widetilde{f} $ is contained
in~$p^{-1}\{0\}$, the function $\widetilde f$ belong so the class
$D(\widetilde{M}) $ defined in  Remark~\ref{rem:riemint}.

  The map $ {\widetilde \phi}_\gamma$ is a nil-translation $ l_{\widetilde {u}}
  $ on
  $ \widetilde G/\widetilde \Gamma $ by an element $\widetilde u\in \widetilde G$; thus we
  shall consider two cases.

 \noindent \textbf{Ergodic case:} Suppose that the nil-translation~$ {\widetilde
    \phi}_\gamma =l_{\widetilde {u}}$ on~$ \widetilde G/\widetilde \Gamma $ is
  ergodic.  Then ${\widetilde
    \phi}_{\gamma n+\varrho}={\widetilde
    \phi}_{\varrho}\circ l_{\widetilde {u}}^n $ and, by
  Theorem~\ref{thm:nonconnect} and formula~\eqref{eq:fractional}, we have, for all $x\in G/\Gamma$,
  \begin{align*}
    \frac1M&\sum_{M\leq m<2M}\left|\frac1H\sum_{m\leq n<m+H}f(\phi^{[\gamma
    n+\varrho]}x)\mob(n)\right|\\
    &=\frac1M\sum_{M\leq m<2M}\left|\frac1H\sum_{m\leq n<m+H}\widetilde{f}\circ {\widetilde
    \phi}_{\varrho}
    (l_{\widetilde {u}}^n(x))\mob(n)\right|\to 0.
  \end{align*}
  \textbf{Non-ergodic case:} Assume that the nil-translation~$ {\widetilde
  \phi}_\gamma = l_{\widetilde {u}}$ on~$ \widetilde G/\widetilde \Gamma $ is not ergodic.  Then, by
  \cite{Rat91}, for every~$ x=g\Gamma\in G/\Gamma $ there exists a closed
  subgroup~$ H<\widetilde{G} $ so that $ \widetilde{u}\in H $, the~$ l_{\widetilde u}
  $-orbit closure of~$ \widetilde{x}:=g\widetilde \Gamma $
  coincides with the orbit~$ H\widetilde{x} $ and the restriction of~$
  l_{\widetilde{u}} $
  to the sub-nilmanifold~$ \widetilde W:=H\widetilde{x}\approx  H/
  (H\cap g\widetilde \Gamma g^{-1}) $ is uniquely ergodic. The
  sub-nilmanifold~$  \widetilde W$ projects via $p$ to a sub-nilmanifold of
  $\R/\Z$, that is to  a closed subgroup $W$ of $\R/\Z$. There are two
  possibilities: either $W$ is finite or $W=\R/\Z$. In the first case
  the nilmanifold $ \widetilde W $ has a finite number of components
  and the restriction $ {\widetilde{f}\circ {\widetilde
    \phi}_{\varrho}}_{|  \widetilde W}$ of~$ \widetilde{f} \circ {\widetilde
    \phi}_{\varrho}$ to  $\widetilde W$ is continuous. In the second case, the
  discontinuities of $ {\widetilde{f}\circ {\widetilde
    \phi}_{\varrho}}_{|  \widetilde W}$ belong to the
  lower dimensional sub-nilmanifold $\widetilde W \cap p^{-1} \{-\varrho\}$.

  % Recall that~$ G $ is a normal closed
  % subgroup of~$ \widetilde{G} $ and its orbit~$ G\widetilde{x} $ is equal
  % to~$ \Psi(G\times\{0\})=\mathcal{L}_0 $.  Therefore, the set of
  % discontinuities of~$ \widetilde{f}:H\widetilde{x}\to\C $ is contained
  % in~$ H\widetilde{x}\cap G\widetilde{x}=(H\cap G)\widetilde{x} $. Of
  % course, $ H\cap G $ is a Lie subgroup of~$ \widetilde{G} $.

  % If~$ \dim(H\cap G)<\dim H $ then~$ (H\cap G)\widetilde{x} $ is a
  % compact submanifold of~$ H\widetilde{x} $ with a drop of dimension. It
  % follows that~$ \widetilde{f}\in D(H\widetilde{x}) $.

  % Assume that~$ \dim(H\cap G)=\dim H $.  Denote by~$ H_0<H $ the
  % connected identity component of~$ H $.  Then~$ H_0 $ is a clopen
  % normal subgroup of~$ G $ and~$ H_0<H\cap G $.  Moreover, there
  % exists~$ k\in\N $ such that~$ H\widetilde{x} $ is the disjoint union
  % of the compact sets $ l^j_{\widetilde{u}}H_0\widetilde{x} $, $ 0\leq j<k
  % $.  Since~$ \widetilde{x}\in H $ and~$ H_0 $ is a normal subgroup of~$
  % H $, each s~$ l^j_{\widetilde{u}}H_0\widetilde{x} $ is either a subset
  % of~$ (H\cap G)\widetilde{x} $ or is disjoint from this set.  As~$
  % \widetilde{f} $ restricted to~$ G\widetilde{x} $ is continuous, it
  % follows that~$ \widetilde{f}:H\widetilde{x}\to\C $ is continuous.

  In summary, in both cases we have~$ {\widetilde{f}\circ {\widetilde
    \phi}_{\varrho}}_{|  \widetilde W}\in D(\widetilde{W}) $
  and it can be treated as a function on the compact nilmanifold $\widetilde{W}= H/
  (H\cap g\widetilde \Gamma g^{-1}) $.  Then, by Theorem~%
  \ref{thm:nonconnect}, we have
  \[
    \frac1M\sum_{M\leq m<2M}\left|\frac1H\sum_{m\leq n<m+H}\widetilde{f}\circ {\widetilde
    \phi}_{\varrho}
    (l_{\widetilde {u}}^n(x)(\widetilde{x}))\mob(n)\right|\to 0,
  \]
  which completes the proof in the non-ergodic case.
\end{proof}

\begin{proof}[Proof of Proposition~\ref{PropD}]
  Let~$ P(x)=a_dx^d+\ldots+a_1x+a_0\in\R[x] $ have the leading
  coefficient $ a_d $ irrational.  Let~$ \alpha=a_d\cdot d! $ and let~$
  \phi\colon\T^d\to\T^d $ be given by
  \[
    \phi(x_1,x_2,\ldots, x_d)=(x_1+\alpha, x_1+x_2,\ldots, x_{d-1}+x_d).
  \]
  Then~$ \phi $ is an ergodic affine unipotent diffeomorphism of~$ \T^d
  $.  Following
  \cite{Fu} (see also
  \cite{Ei-Wa}), we can now find~$ x_1,\ldots,x_d\in\T $ so that
  \begin{equation*}
    P(n)=\binom{n}{d} \alpha+ \binom{n}{d-1}x_1+\ldots+nx_{d-1}+x_d\bmod
    1\text{ for each }n\in\Z.
  \end{equation*}
  Moreover, notice that (mod~1) we have
  \begin{equation*}
  {n\choose d}
    \alpha+{n\choose d-1}x_1+\ldots+nx_{d-1}+x_d=f(\phi^n(x_1,\ldots,x_d)),
  \end{equation*}
  where~$ f\colon \T^d\to\T $ is the projection on the last
  coordinate.  Finally, the assertion of the theorem follows directly
  from Theorem~%
  \ref{thm:gammaaffine} applied to the function $\exp(\imath f)$.

\end{proof}
% \bibliographystyle{siam} \bibliography{aop_nil.bib}

% \begin{thebibliography}{99}
% \bibitem{Fr-Le}K.\ Fr\k{a}czek, M.\ Lema\'nczyk, \emph{Smooth
%   singular flows in dimension 2 with the minimal self-joining
%   property}, Monatsh. Math. 156 (2009), 11-45.
% \end{thebibliography}
% \end{document}
%
% \begin{Remark} By taking~$\bfu(n)=e^{it\log n}$ ($0\neq t\in\R$), we
%   obtain
%$$\frac1M\sum_{M\leq m<2M}\frac1H\left|\sum_{m\leq n< m+H}e^{2\pi i(P([\gamma n])+t\log n)}\right|\to 0$$
%when~$M,H\to\infty$ and~$H/M\to 0$. In particular, the sequence
% $(P([\gamma n])+t\log n)_{n\geq2}$ is uniformly distributed mod~1.
% \end{Remark}

\subsection{Nilsequences and polynomial multiple correlations}
\label{pmc}

 Following \cite{MR2138068} and \cite{MR2643713}, recall that a bounded sequence
$(c_n)\in \C^\N$ is called a \emph{basic nilsequence} if there exist
a compact but not necessarily connected nilmanifold $G/\Gamma$, a
continuous function $f\in C(G/\Gamma)$, an element $u\in G$ and a
point $x\in G/\Gamma$ such that
\[
c_n=f(l_{u}^n x), \qquad \forall n\in\N,
\]
where as usual $l_u$ denotes the nil-translation $G/\Gamma\ni x
\mapsto ux\in G/\Gamma$. A sequence $(d_n)\in  \C^\N$ is called a
\emph{nilsequence}, if it is a uniform limit of basic nilsequences,
i.e.\ for
every $\varepsilon>0$, there exists a basic nil-sequence $(c_n)$ such that
\[
|c_n-d_n|<\varepsilon\text{ for all }n\in\N.
\]

\begin{proof}[Proof of Theorem~\ref{TheoremE}] Theorem~\ref{TheoremE} follows directly from Theorem~\ref{thm:nonconnect}.
\end{proof}

\begin{Th*}[Leibman, \cite{MR2643713}]
  Let $T$ be an automorphism of a probability standard Borel space
  $(X,{\mathcal B},\mu)$. Given, for all $i=1,\ldots,k$, functions
  $g_i\in L^\infty(X,\mu)$ and polynomials $p_i\in\Z[x]$, there exists a nilsequence $(d_n)$ such that
  \begin{equation}
    \label{eq:leibman}
  \limsup_{{N-M}\to\infty} \frac1{N-M}\sum_{n=M}^{N-1} \left|d_n-\int_X
g_1\circ T^{p_1(n)}\cdot\ldots\cdot g_k\circ T^{p_k(n)}\,d\mu\right|=0.
\end{equation}
\end{Th*}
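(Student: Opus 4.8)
The plan is to reduce, via the Host--Kra--Ziegler structure theory, to the case of a single nilsystem with continuous observables, and then to invoke Leibman's analysis of polynomial orbits on nilmanifolds; only the nilsystem case calls for an explicit computation. For a bounded sequence $a=(a_n)$ write $\|a\|_u:=\limsup_{N-M\to\infty}\frac{1}{N-M}\sum_{M\le n<N}|a_n|$, so that \eqref{eq:leibman} is precisely the statement that the correlation sequence $c_n:=\int_X g_1\circ T^{p_1(n)}\cdots g_k\circ T^{p_k(n)}\,d\mu$ satisfies $\|c-d\|_u=0$ for some nilsequence $d$.

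\emph{Step 1 (reduction to an ergodic nilsystem).} For ergodic $T$, the polynomial form of the Host--Kra--Ziegler theorem (see \cite{MR2138068}, \cite{MR2643713}) provides a characteristic factor for the averages $\frac1N\sum_{n\le N}\prod_i g_i\circ T^{p_i(n)}$ that is a pro-nilsystem $Z=\varprojlim_j Z_j$, each $Z_j=G_j/\Gamma_j$ a nilsystem. In the strong form valid for correlation \emph{sequences}, ``characteristic'' means that replacing each $g_i$ by $\E(g_i\mid Z)$ alters $(c_n)$ by a $\|\cdot\|_u$-null sequence; hence one may assume every $g_i$ is $Z$-measurable. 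Approximating $g_i$ in $L^{2k}(Z)$ by a continuous function factoring through a single quotient $Z_j=G/\Gamma$, and using multilinearity together with H\"older's inequality (legitimate because each $T^{p_i(n)}$ preserves $\mu$), changes $(c_n)$ by $O(\varepsilon)$ uniformly in $n$, hence by $O(\varepsilon)$ in $\|\cdot\|_u$. As nilsequences are closed under uniform limits, it suffices to treat $X=G/\Gamma$ a single (possibly disconnected) nilmanifold, $T=l_a$ the translation by some $a\in G$, and $g_1,\dots,g_k\in C(X)$. The general non-ergodic case is then recovered from the ergodic one by integrating over the ergodic decomposition, which requires a measurable selection of the approximating nilsequences together with a limiting argument.

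\emph{Step 2 (a correlation sequence of a nilsystem is a basic nilsequence).} Write $c_n=\int_{G/\Gamma} g_1(a^{p_1(n)}x)\cdots g_k(a^{p_k(n)}x)\,d\mu(x)$. Put $F:=g_1\otimes\cdots\otimes g_k\in C\big((G/\Gamma)^k\big)$, let $\Delta\subset(G/\Gamma)^k$ be the diagonal sub-nilmanifold --- a rational sub-nilmanifold isomorphic to $G/\Gamma$ --- and let $\mu_\Delta$ be its Haar measure, identified with $\mu$. Then
\[
c_n=\int_\Delta F\big(b(n)\cdot z\big)\,d\mu_\Delta(z),\qquad b(n):=\big(a^{p_1(n)},\dots,a^{p_k(n)}\big)\in G^k .
\]
Since $\partial^{D+1}\big(n\mapsto a^{p(n)}\big)$ is trivial whenever $\deg p\le D$, each coordinate of $b$, hence $b$ itself, is a polynomial sequence $\Z\to G^k$ in the sense of Leibman. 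It remains to show that $n\mapsto\int_\Delta F(b(n)z)\,d\mu_\Delta(z)$ is a basic nilsequence; this is the technical heart of \cite{MR2643713}, established there by linearizing the polynomial orbit $b(n)$ into a genuine translation orbit $c^ny_0$ on a larger (possibly disconnected) nilmanifold $\widehat G/\widehat\Gamma$, and realizing the integral of $F$ over the moving diagonal $b(n)\Delta$ as the value at $c^ny_0$ of a suitable continuous function on $\widehat G/\widehat\Gamma$. Combining this with Step 1 yields the required nilsequence $d$ with $\|c-d\|_u=0$.

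\emph{Main obstacle.} The substantial input is Step 1: the existence of a pro-nilsystem characteristic factor for polynomial multiple averages, and --- crucially --- upgrading ``characteristic'' from the usual $L^2$ statement about the averages to a statement about the correlation \emph{sequences} in the pseudo-norm $\|\cdot\|_u$, uniformly in $n$; the ergodic-decomposition patching for non-ergodic $T$ is a further technical wrinkle. By contrast Step 2, although it relies on Leibman's non-trivial equidistribution and linearization theorems for polynomial sequences on nilmanifolds, is essentially bookkeeping once those are granted.
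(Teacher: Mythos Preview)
The paper does not prove this theorem; it is quoted verbatim as a result of Leibman \cite{MR2643713} and used as a black box in the proof of Corollary~\ref{CorollaryF}. There is therefore no ``paper's own proof'' to compare against.

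Your proposal is a reasonable high-level sketch of how Leibman's original argument in \cite{MR2643713} proceeds: reduce via the Host--Kra(--Ziegler) structure theory to a pro-nilsystem, approximate by continuous functions on a single nilmanifold, and then identify the resulting polynomial correlation sequence as a basic nilsequence via the linearization of polynomial orbits. One point worth flagging: in Step~1 you assert that the non-ergodic case ``is then recovered from the ergodic one by integrating over the ergodic decomposition, which requires a measurable selection of the approximating nilsequences together with a limiting argument.'' This is genuinely delicate --- the nilsequence approximating the correlation sequence of an ergodic component depends on that component, and it is not automatic that the integrated object is again a nilsequence (nilsequences are not obviously closed under such integrals). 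Leibman handles this, but it is not a triviality, and your parenthetical does not quite discharge it. Otherwise the plan matches the structure of the cited source.
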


%\begin{Th}
%Let $T$ be an automorphism of the standard Borel probability space
%$(X,{\mathcal B},\mu)$,   $k$ be a  positive integer,
%$g_i\in L^\infty(X,\mu)$ and $p_i\in\Z[X]$, for
%all $i=1,\ldots,k$. Then
%\[
%\frac1M\sum_{M\leq m<2M}\left| \frac1H\sum_{m\leq n<m+H} %\mob(n)\int_X
%g_1\circ T^{p_1(n)}\cdot\ldots\cdot g_k\circ
%T^{p_k(n)}\,d\mu\right|\to 0
%\]
%when $H\to\infty$ and $H/M\to0$.
%\end{Th}

\begin{proof}[Proof of Corollary~\ref{CorollaryF}]
  By Theorem~\ref{TheoremE}, we have
\[
\frac1M\sum_{M\leq m<2M}\left| \frac1H\sum_{m\leq n<m+H} \mob(n) d_n\right|\to 0
\]
for a
nilsequence  $(d_n)$ satisfying the assertion~\eqref{eq:leibman} of
Leibman's theorem.
%Thus
% \[
%   \begin{split}
%    \frac1H&\sum_{m\leq n<m+H} \mob(n)\int_X
%   g_1\circ T^{p_1(n)}\cdot\ldots\cdot g_k\circ
%   T^{p_k(n)}\,d\mu\\
% &=\frac1H\sum_{m\leq n<m+H} \mob(n) \left(d_n - \int_X
%   g_1\circ T^{p_1(n)}\cdot\ldots\cdot g_k\circ
%   T^{p_k(n)}\,d\mu\right)
% \\&\qquad\qquad
% + \frac1H\sum_{m\leq n<m+H}  \mob(n) d_n
%   \end{split}
% \]
The statement follows now by an  immediate application of the triangle inequality.
\end{proof}

\section{Lifting AOP to induced action}
\subsection{Induced actions}%
\label{ap-d} In this section, we follow
\cite{Ma},
\cite{Zi}.  Assume that $ G $ is a locally compact second countable (lcsc)
group.  Assume that $ H\subset G $ is a closed subgroup of~$ G $.
% We will constantly assume that
% \begin{equation}\label{in1}\mbox{$H$ has no non-trivial compact
%     subgroups.}\footnote{Equivalently, we assume that the dual group
%     %   $\widetilde{H}$ is connected. It may happen that~$G$ has non-trivial
%     compact subgroups.}
% \end{equation}
The quotient space~$ G/H $ is lcsc for the quotient topology.  Let $
\pi:G\to G/H $ be the canonical, continuous quotient map.  Let $ \tau=
(\tau_g)_{g\in G} $ denote the~$ G $-action on~$ G/H $  by left
translations:~$ \tau_g(xH)= gxH $.  The action~$ \tau $ is continuous,
transitive, hence ergodic for any~$ G $-quasi-invariant Borel
probability measure~$ m $ on~$ G/H $ (such measures always exist).  We
say that~$ G/H $ is a finite volume space if~$ G/H $ supports a~$ G $-invariant
probability measure~$ m_{G/H} $,

Let~$ \cs=(S_h)_{h\in H} $ be an almost free Borel left action of~$ H $
on a probability standard Borel space~$ \ycn $.  The group~$ H $ acts
on the right on the product $ Y\times G $, by
\[
  (y,g)h = (S_{h^{-1}} y, gh)
\]
Let~$ (Y\times G)/H $ be the orbit space endowed with the quotient
measurable structure.  Since~$ G/H $ is Hausdorff, since~$ \ycn $ is
a standard Borel space, and since the projection~$ (Y\times G)/H \to
G/H $ measurable, the Borel space~$ (Y\times G)/H $ is countably
separated (i.e. the~$ H $-action on~$ Y\times G $ is smooth, in the
sense of
\cite[Def.\ 2.1.9]{Zi}).

% It is also countably generated and complete, i.e. a standard Borel
% space.

The~$ G $-action $ ( (\tau_{\cs})_g)_{g\in G} $ on~$ (Y\times G)/H $
is just given by left translation on the second factor:
\begin{equation}
  \label{eq:NilpotentAOP-14-06:1} (\tau_{\cs})_g:  (Y\times G)/H\to (Y\times
  G)/H, \quad (\tau_{\cs})_g(y,g_1)H=(y, gg_1)H
\end{equation}
is called the \emph{$ G $-action induced from~$ \cs $}.

If~$ m $ is any~$ G $-quasi-invariant Borel probability measure on~$
G/H $, then we may define a Borel probability measure~$ \nu \otimes_{\mathcal
S} m $ on~$ (Y\times G)/H $, by setting, for any positive measurable
function~$ F $ on~$ (Y\times G)/H $,
\[
  \nu \otimes_{\mathcal S} m (F) = \int_{G/H} \left(\int_Y \widetilde
  F(y,g) \, \D \nu(y)\right)\, \D m (gH)
\]
where~$ \widetilde F $ is the~$ H $-invariant lift of~$ F $ to~$ Y\times
G $.  The measure~$ \nu \otimes_{\mathcal S} m $ is quasi-invariant
for the~$ G $-action induced from~$ \cs $ and it is ergodic if the
action~$ \cs $ is ergodic on $ \ycn $. Furthermore, if~$ G/H $ admits
a~$ G $-invariant probability measure~$ m_{G/H} $, then~$ \nu
\otimes_{\mathcal S} m_{G/H} $ is a~$ G $-invariant Borel probability
measure on~$ (Y\times G)/H $.

Let
\begin{equation}
  \label{in2} s:G/H\to G,\text{ a Borel map, }\pi\circ s=
  \operatorname{Id}
  _{G/H}
\end{equation}
be a (measurable) selector for~$ \pi $.
% Since~$H$ is cocompact, there is a relatively compact fundamental
% domain, that is, a subset~$A\subset G$ such that
% \begin{equation}\label{in2a}
%   \mbox{$\ov{A}$ is compact, $hA\cap h'A=\emptyset$ whenever~$h\neq h'$ and~$\bigcup_{h\in H}hA=G$.}\end{equation} We can always assume that
% \begin{equation}\label{in2b}
%   s:H\to A\text{ and }s(H)=e.\end{equation}
%   % and let
% \begin{equation}\label{in2a}
%   \mbox{$\ov{A}$ is compact, $hA\cap h'A=\emptyset$ whenever~$h\neq h'$ and~$\bigcup_{h\in H}hA=G$.}\end{equation}
%   % We can always assume that
% \begin{equation}\label{in2b}
%  s(H)=e.
% \end{equation}
The map~$ \theta $ defined on~$ G\times G/H $ by setting
\begin{equation}
  \label{in3} \theta(g,xH)=s\big(gxH\big)^{-1}\, g \,s\big(xH\big).
\end{equation}
takes its values in~$ H $, since %$$
%\big(gs(gxH)^{-1}s(xH)\big)H=gH\cdot (gxH)^{-1}\cdot xH=H,
%$$
$$
  s\big(gxH\big) H= gxH, \quad \text{and}\quad g \,s\big(xH\big)= gxH.
$$
% whence
% \begin{equation}\label{in3a} \theta:G\times G/H\to H.
% \end{equation}

\begin{Lemma}
  \label{l:in1} The map~$ \theta:G\times G/H\to H $ is a (left)
  cocycle for the~$ G $-action~$ \tau $.  % Moreover,
  % \begin{equation}\label{in4}
  %   \theta(h,xH)=h\end{equation} for each~$h\in H$ and~$xH\in G/H$.
\end{Lemma}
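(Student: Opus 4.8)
The plan is to verify directly the three ingredients in the definition of a (left) measurable cocycle for the $G$-action $\tau$ on $G/H$: that $\theta$ is a well-defined Borel map into $H$, that it is normalized, and that it satisfies the cocycle identity $\theta(g_1g_2,xH)=\theta(g_1,\tau_{g_2}(xH))\,\theta(g_2,xH)$ for all $g_1,g_2\in G$ and $xH\in G/H$.

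First I would recall, as already observed in the excerpt just before the statement, that $\theta(g,xH)=s(gxH)^{-1}\,g\,s(xH)$ indeed lands in $H$: both $s(gxH)$ and $g\,s(xH)$ project to $gxH$ under $\pi$, so their ``difference'' $s(gxH)^{-1}g\,s(xH)$ lies in the fibre over the identity coset, which is $H$. Measurability of $\theta$ is then immediate, since $s\colon G/H\to G$ is Borel by~\eqref{in2}, the map $xH\mapsto gxH$ is continuous, and multiplication and inversion in $G$ are continuous; hence $\theta$ is a composition of Borel maps. Normalization is the trivial computation $\theta(e,xH)=s(xH)^{-1}\,e\,s(xH)=e$.

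The core of the argument is the cocycle identity, which is a telescoping cancellation. Writing out the right-hand side using~\eqref{in3} and the fact that $\tau_{g_2}(xH)=g_2xH$, one gets
\[
\theta(g_1,g_2xH)\,\theta(g_2,xH)
= s(g_1g_2xH)^{-1}\,g_1\,s(g_2xH)\cdot s(g_2xH)^{-1}\,g_2\,s(xH)
= s(g_1g_2xH)^{-1}\,g_1g_2\,s(xH),
\]
and the last expression is exactly $\theta(g_1g_2,xH)$ by the definition~\eqref{in3} applied to the element $g_1g_2$. Note that the computation requires no choice or consistency of the selector $s$ beyond $\pi\circ s=\operatorname{Id}_{G/H}$, since the two middle occurrences of $s(g_2xH)$ cancel regardless.

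I do not foresee a genuine obstacle here: the only point needing a word of care is that the identity must be read inside $H$ (so that it is an identity of cocycle values, not merely of elements of $G$), but this is guaranteed by the first step showing $\theta$ is $H$-valued. Thus the ``hard part'' is purely bookkeeping, and the lemma follows.
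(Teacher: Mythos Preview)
Your proof is correct and follows essentially the same approach as the paper: a direct telescoping verification of the cocycle identity by inserting $s(g_2xH)\,s(g_2xH)^{-1}$ between $g_1$ and $g_2$ in the definition of $\theta(g_1g_2,xH)$. You include a few extra sanity checks (measurability, normalization, $H$-valuedness) that the paper either records just before the lemma or leaves implicit, but the substance is identical.
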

\begin{proof}
  We have
  \[
    \begin{split}
      \theta(g_1g_2,xH)&=s(g_1g_2xH)^{-1} g_1g_2 s(xH)\\
      &= s(g_1g_2xH)^{-1} g_1\,s(g_2 xH)^{-1}s(g_2 xH)\,g_2\,s(xH)\\
      &= \theta(g_1,g_2xH)\,\theta(g_2,xH).\qedhere
    \end{split}
  \]
\end{proof}

% Now, \eqref{in4} follows by the definition~\eqref{in3} of~$\theta$.

The skew product~$ G $-action $ \tau_{\theta,\cs}=((\tau_{\theta,\cs})_g)_
{g\in G} $, defined by
\begin{equation}
  \label{in5} (\tau_{\theta,\cs})_g:  Y\times G/H\to Y\times G/H,\qquad\quad
  (\tau_{\theta,\cs})_g(y, xH)=(S_{\theta(g,xH)}y,gxH)
\end{equation}
is isomorphic to the~$ G $-action~$ \tau_{\cs} $ on~$ (Y\times G)/H $
induced from~$ \cs $, via the Borel isomorphism~$ \Phi\colon Y\times
G/H\to (Y\times G)/H $ defined by \[ \Phi( y, gH) = \big(S_{g^{-1}s(gH)}y, g\big)H = \big(y, s(gH)\big)H.\]

% \footnote{Note that~$\theta$ is so called {\em transient}
% cocycle. Hence we will not have any non-trivial essential value of
% it, that is, $E(\theta)=\{e,\infty\}$ \cite{Sch}. Recall that if
% there is a non-trivial essential value of a cocycle then it has to
% be recurrent \cite{Sch}.}

% By Remark~\ref{r:in1} and the ergodicity of~$\theta$ (and of~$\cs$),
% it follows immediately that
% \begin{equation*}
%   \tau_{\theta,\cs}\text{ is ergodic.}
% \end{equation*}

% It is quite often (see the classical construction of the suspension
% of an automorphism) that we reverse the order: that is once
% $\cs=(S_h)_{h\in H}$ is given and~$G$ extending~$H$ is known then
% we write~$\widetilde{\cs}=(\widetilde{S}_g)_{g\in G}$ acting
% on~$(Y\times G/H,\nu\ot m_{G/H})$, where
% \begin{equation}\label{in7}
%   \widetilde{S}_g(y,xH)=(S_{\theta(g,xH)}(y),\tau_g(xH))=(S_{\theta(g,xH)}(y),gxH).
% \end{equation}

% It follows that~$\widetilde{\cs}$ is an ergodic~$G$-action.

\begin{Remark}
  \label{r:in1} In the vocabulary of
  \cite{Le-Le},
  \cite{Le-Pa}, the induced~$ G $-action is a Rokhlin cocycle
  extension of the~$ G $-action~$ \tau $ through the~$ H $-valued
  cocycle~$ \theta $ and an ergodic~$ H $-action~$ \cs $.  Both
  descriptions \eqref{eq:NilpotentAOP-14-06:1} and \eqref{in5} for
  the~$ G $-action induced from $ \cs $ have their advantages and
  disadvantages:  the first is natural and intrinsic; the second,
  albeit depending of a arbitrary selector~$ \theta $, yields an easier
  description of the measure~$ \nu\otimes_{\cs} m $ which is just~$
  \nu\otimes m $ on the space~$ Y\times G/H $; the second definition
  also make apparent that~$ (Y\times G)/H $ endowed with the measure~$
  \nu\otimes m $ is a standard Borel probability space.
\end{Remark}

\subsection{The AOP property}\label{liftingAOPia}
\label{ap-e} Assume now that~$ G $ is an abelian group without torsion
elements, that $ H<G $ is closed and co-compact and let~$ m_{G/H} $ be
the~$ G $-invariant probability measure on $ G/H $.  Assume also that
we have an ergodic~$ H $-action $ \cs=(S_h)_{h\in H} $ on~$ \ycn $ and
let~$ p\in\Z $.  Then~$ p $ determines another~$ H $-action~$ \cs^{(p)}=
(S^{(p)}_h)_{h\in H} $, where
\begin{equation}
  \label{in8} S^{(p)}_h(y)=(S_h)^p(y)=S_{h^p}(y);
\end{equation}
indeed the map~$ h\mapsto h^p $ is a group homomorphism.

Following
\cite{Ab-Le-Ru}, we now define:
\begin{Def}
  An ergodic~$ H $ action~$ \cs $ is said to have the AOP property if
  for each~$ f,g\in L^2_0\ycn $
  \begin{equation}
    \label{in9} \lim_{p\neq q, p,q\in\mathscr{P},p,q\to\infty}
    \sup_{\kappa\in J^e(\cs^{(p)},\cs^{(q)})}\left|\int_{Y\times Y}f\ot
    g\,d\kappa\right|=0.
  \end{equation}
\end{Def}
If \eqref{in9} is satisfied for~$ f,g $, then we say that~$ f,g $
satisfy the AOP property.

\begin{Remark}
  \label{r:in00} (i) In order to talk about the AOP property, we need
  to know that the~$ H $-actions are ``totally'' ergodic, that is
  that~$ \cs^{(p)} $ is ergodic for each prime~$ p $.  To see its
  meaning let us pass to the character group~$ \widehat{H} $ and
  consider the endomorphism~$ E_p:  \chi\mapsto \chi^p $.  We need to
  assume that the kernel of~$ E_p $ is a set of measure zero for the
  maximal spectral type of~$ \cs $ on~$ L^2_0\ycn $.  If~$ H $
  equals~$ \Z $ then it simply means that~$ T $ has no roots of unity
  as eigenvalues.  If~$ \widehat{H} $ is torsion free, as for example
  it holds for~$ H=\R $, then each ergodic action is totally ergodic.

  (ii) If \eqref{in9} holds then it also holds if~$ f\in L^2\ycn $
  and~$ g\in L^2_0\ycn $ (or vice versa); indeed, we
  write~$ f=f_0+c $, with~$ c=\int_Yf\,d\nu $
  then~$ \int_{Y\times Y}c\ot g\,d\kappa=\int_Y g\,d\nu=0 $.

  (iii) If \eqref{in9} holds, then, in fact, we can replace the set of
  ergodic joinings $ J^e(\cs^
  {(p)},\cs^{(q)}) $ by $ J(\cs^{(p)},\cs^{(q)}) $; indeed,
  $$
    \sup_{\kappa\in J^e(\cs^{(p)},\cs^{(q)})}\left|\int_{Y\times Y}f\ot
    g\,d\kappa\right|=\sup_{\kappa\in J(\cs^{(p)},\cs^{(q)})}\left|\int_
    {Y\times Y}f\ot g\,d\kappa\right|
  $$
  for each~$ f,g\in L^2\ycn $.

  (iv) If~$ L $ is a closed subgroup of~$ H $ and the~$ L $-subaction
  of~$ \cs $ has the AOP property, then the original action has the
  AOP property.  Indeed, any ergodic invariant measure for~$ S_{h^p}\times
  S_{h^q} $, $ h\in H $ is an invariant measure for the subaction~$ S_
  {\ell^p}\times S_{\ell^q} $, $ \ell\in L $.  Then use~(iii).
\end{Remark}

We will constantly assume that
\begin{equation}\label{in1}\mbox{$H$ has no non-trivial compact subgroups.}\footnote{Equivalently, we assume that the dual group $\widehat{H}$ is connected. It may happen that $G$ has non-trivial compact subgroups.}
\end{equation}
Since $H$ is cocompact, there is a relatively compact fundamental domain, that is, a subset $A\subset G$ such that
\begin{equation}\label{in2a}
\mbox{$\ov{A}$ is compact, $hA\cap h'A=\emptyset$ whenever $h\neq h'$ and $\bigcup_{h\in H}hA=G$.}\end{equation} We can always assume that (cf.~\eqref{in2})
\begin{equation}\label{in2b}
s:H\to A\text{ and }s(H)=e.\end{equation}

%Our aim will be to prove the following.

%\begin{Prop}
%\label{p:in1} Assume that an~$ H $-action~$ \cs $ has the AOP
%property.  The for each~$ F,G\in L^2(Y\times G/H)\ominus L^2(G/H) $
%we have~\eqref{in9} for the induced~$ G $-action~$ \widetilde{\cs} $
%(i.e. the induced action has the ``relative AOP'' property with
%respect to the factor~$ \tau $).
%\end{Prop}

\subsection{Ergodic components for the \texorpdfstring{$ G $}{G}-action
\texorpdfstring{$ \tau^{(p)}\times\tau^{(q)} $}{tau\^{}(p) times tau\^
{}(q) } and regularity of cocycles} Recall~$ \tau^{(p)}_g(xH)=g^pxH $.
Assume that~$ (p,q)=1 $ and let $ a,b\in\Z $ be so that
\[
  ap+bq=1.
\]
The following result is a more general version of Lemma~2.2.1 in \cite{Ku-Le} and Lem\-ma~\ref{lem:torus}.
Since the proof runs along similar lines, we omit it.
\begin{Lemma}
  \label{l:in2}The ergodic components of~$ \tau^{(p)}\times\tau^{(q)}=
  (\tau^{(p)}_g\times\tau^{(q)}_g)_{g\in G} $ are of the form
  \[
    \ch_c:=\big\{(x_1H,x_2cH):\:x_1^qH=x_2^pH\big\},\,c\in G.%
    \footnote{Note that since $G$ is abelian, $x^mH=(xH)^m$ in $G/H$ }
  \]
  Moreover, the action of~$ \tau^{(p)}\times\tau^{(q)} $ on~$ \ch_c $
  is (topologically)%
  \footnote{The action of~$ \tau $ on~$ G/H $ is uniquely ergodic and
  so is the action of~$ \tau^{(p)}\times\tau^{(q)} $ on~$ \ch_c $.}
  isomorphic to~$ \tau $.
\end{Lemma}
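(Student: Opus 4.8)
The plan is to carry out, in the group $K:=G/H$, the argument used for Lemma~2.2.1 of \cite{Ku-Le} and for Lemma~\ref{lem:torus}. Since $G$ is abelian, $H$ is normal and $K$ is a compact metrizable \emph{abelian} group (compact because $H$ is cocompact). Writing $\pi\colon G\to K$ for the quotient homomorphism, we have $\tau_g(xH)=\pi(g)\cdot xH$ and $\tau^{(p)}_g(xH)=\pi(g)^p\cdot xH$; hence $\tau^{(p)}\times\tau^{(q)}$ is the $G$-action on $K\times K$ under which $g$ acts as translation by $(\pi(g)^p,\pi(g)^q)$. As $\pi(g)$ runs over all of $K$, these translations form the subgroup
\[
\Delta:=\{(\xi^p,\xi^q):\xi\in K\}=\{(\xi_1,\xi_2)\in K\times K:\xi_1^q=\xi_2^p\},
\]
which is closed, being a continuous image of the compact group $K$. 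The second description of $\Delta$ is where the coprimality relation $ap+bq=1$ first enters: if $\xi_1^q=\xi_2^p$ then $\xi:=\xi_1^a\xi_2^b$ satisfies $\xi^p=\xi_1$ and $\xi^q=\xi_2$ (using commutativity and $\xi_1^q=\xi_2^p$). Consequently the orbits of $\tau^{(p)}\times\tau^{(q)}$ on $K\times K$ are precisely the cosets of $\Delta$, and translating $\Delta$ by $(H,cH)$ yields exactly the set $\ch_c$.

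Next I would exhibit, for every $c\in G$, an explicit conjugacy of $(\tau^{(p)}\times\tau^{(q)})|_{\ch_c}$ with $\tau$ on $G/H$, in the spirit of the map $I_c$ in the proof of Lemma~\ref{lem:torus}. Using $ap+bq=1$ again, set $I_c(x_1H,x_2cH):=x_1^ax_2^bH$. One checks — using only that $H$ is normal, that $G$ is abelian, and that $ap+bq=1$ — that $I_c$ does not depend on the chosen representatives $x_1,x_2$, that it is a homeomorphism of $\ch_c$ onto $G/H$ with inverse $yH\mapsto(y^pH,y^qcH)$, and that
\[
I_c(g^px_1H,\,g^qx_2cH)=(g^px_1)^a(g^qx_2)^bH=g\cdot x_1^ax_2^bH=\tau_g(I_c(x_1H,x_2cH)),
\]
so that $I_c$ intertwines $(\tau^{(p)}\times\tau^{(q)})|_{\ch_c}$ with $\tau$.

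To conclude, I would use that the transitive $G$-action $\tau$ on $G/H$ is minimal and uniquely ergodic (its only invariant probability measure is $m_{G/H}$); by the conjugacy $I_c$, the restriction of $\tau^{(p)}\times\tau^{(q)}$ to each $\ch_c$ is minimal and uniquely ergodic as well. Since every ergodic invariant measure is supported on a single orbit closure, it follows that each ergodic component of $\tau^{(p)}\times\tau^{(q)}$ is one of the sets $\ch_c$, carrying its Haar measure, and that on it the action is isomorphic to $\tau$. (When $\tau^{(p)}$, equivalently $\tau^{(q)}$, is itself ergodic — i.e.\ $\{\xi^p:\xi\in K\}=K$, which holds for all large primes in the settings where the lemma is used — the $\ch_c$, $c\in G$, sweep out all cosets of $\Delta$, so this is also the full ergodic decomposition of $m_{G/H}\otimes m_{G/H}$.)

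I do not anticipate a genuine obstacle: the entire content is the elementary structure of the compact abelian group $G/H$ together with the relation $ap+bq=1$, which is presumably why the proof is omitted. The only part deserving care is the bookkeeping in the second paragraph — verifying that $I_c$ is well defined and bijective and that the family $\{\ch_c\}_{c\in G}$ really is a (and, in the totally ergodic case, the full) collection of cosets of $\Delta$.
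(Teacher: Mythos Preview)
Your proposal is correct and follows exactly the approach the paper intends: it says the proof ``runs along similar lines'' to Lemma~\ref{lem:torus} and \cite[Lemma~2.2.1]{Ku-Le} and omits it, and your argument is precisely the compact-abelian-group version of the proof of Lemma~\ref{lem:torus}, with the same map $I_c(x_1H,x_2cH)=x_1^ax_2^bH$ playing the role of the conjugacy. Your closing caveat about surjectivity of $\xi\mapsto\xi^p$ on $K=G/H$ is also to the point: in the paper's applications one only needs that every ergodic \emph{joining} of $\tau^{(p)}$ and $\tau^{(q)}$ is supported on some $\ch_c$, and having full marginals forces $K^p=K$, so the issue does not arise there.
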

%\begin{proof}
%  Clearly, either~$ \ch_c=\ch_{c'} $ or they are disjoint. Moreover, $
%  \ch_c $ is closed and~$ \tau^{(p)}_g\times\tau^{(q)}_g $-invariant, $
%  g\in G $.  Define
%  \begin{equation}
%    \label{in10} R:\ch_c\to G/H,\;R(x_1H,x_2cH):=x_1^ax_2^bH.
%  \end{equation}
%  Since~$ g=g^{ap+bq} $, we have
%  $$
%    R\circ (\tau^{(p)}_g\times\tau^{(q)}_g)=\tau_g\circ R.
%  $$
%  Moreover, $ R $ is continuous, invertible and its inverse is given
%  by the map $ xH\mapsto (x^pH,x^qcH) $.
%\end{proof}

We are now interested in the (infinite) skew product $ G $-action $
(\tau_\theta)^{(p)}\times(\tau_\theta)^{(q)} $,
\[
\begin{split}
    &\big((\tau_\theta)^{(p)}\times(\tau_\theta)^{(q)}
    \big)_g(x_1H,h_1,x_2H,h_2)=\\ &\qquad\qquad
  \big(g^px_1H,\theta(g^p,x_1H)h_1,g^qx_2H, \theta(g^q,x_2H)h_2\big)
\end{split}
\]
restricted to~$ \ch_c\times H\times H $ (up to a natural interchange
of coordinates).

\begin{Lemma}
  \label{l:in3}For each~$ c\in G $, the map~$ \widetilde{R}(x_1H,h_1,x_2cH,h_2):=
  (x_1^ax_2^bH,h_1,h_2) $ establishes an isomorphism of~$ (\tau_\theta)^
  {(p)}\times(\tau_\theta)^{(q)}|_{\ch_c} $ with~$ \tau_{\theta^{(p,q)}}
  $, where the cocycle
  \[
    \theta^{(p,q)}:G\times G/H\to H\times H,
  \]
  is given by the formula
  \begin{equation}
    \label{in11} \theta^{(p,q)}(g,yH):=\big(\theta(g^p,y^pH),\theta(g^q,y^qcH)\big).
  \end{equation}
\end{Lemma}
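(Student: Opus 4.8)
The plan is to verify by hand that $\widetilde R$ is a Borel isomorphism of the two spaces which conjugates the $G$-actions; essentially all the work is in the cocycle bookkeeping. Up to the interchange of the two $H$-factors noted in the statement, $\widetilde R$ equals $I_c\times\mathrm{id}_H\times\mathrm{id}_H$, where $I_c\colon\ch_c\to G/H$, $I_c(x_1H,x_2cH)=x_1^ax_2^bH$, is the homeomorphism produced by (the proof of) Lemma~\ref{l:in2}; its inverse is $(yH,h_1,h_2)\mapsto(y^pH,h_1,y^qcH,h_2)$, as one checks using $ap+bq=1$ and commutativity of $G$ (so that $H$ is normal, $G/H$ is a group, and $(xh)^mH=x^mH$ for all $m$). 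Since $I_c$ intertwines $\tau^{(p)}\times\tau^{(q)}|_{\ch_c}$ with $\tau$ and carries the invariant measure on $\ch_c$ to $m_{G/H}$, the map $\widetilde R$ is a Borel isomorphism transporting the invariant measure on $\ch_c\times H\times H$ to that on $G/H\times H\times H$. So everything reduces to matching the fibre coordinates.

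I would then push the two actions through $\widetilde R$. Applying $(\tau_\theta)^{(p)}_g\times(\tau_\theta)^{(q)}_g$ to $(x_1H,h_1,x_2cH,h_2)$ gives $(g^px_1H,\theta(g^p,x_1H)h_1,g^qx_2cH,\theta(g^q,x_2cH)h_2)$, which $\widetilde R$ then sends to $\bigl((g^px_1)^a(g^qx_2)^bH,\,\theta(g^p,x_1H)h_1,\,\theta(g^q,x_2cH)h_2\bigr)$. By $ap+bq=1$ and commutativity, $(g^px_1)^a(g^qx_2)^bH=g\,x_1^ax_2^bH=\tau_g\bigl(I_c(x_1H,x_2cH)\bigr)$, so the base coordinate already matches. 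Writing $yH:=x_1^ax_2^bH$, the action $\tau_{\theta^{(p,q)}}$ produces at $yH$ the fibre coordinates $\theta(g^p,y^pH)h_1$ and $\theta(g^q,y^qcH)h_2$; hence it remains to establish $\theta(g^p,y^pH)=\theta(g^p,x_1H)$ and $\theta(g^q,y^qcH)=\theta(g^q,x_2cH)$.

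Since $\theta(g,\cdot)$ depends only on its $G/H$-argument, these follow once $y^pH=x_1H$ and $y^qH=x_2H$ in $G/H$ (the $c$-shifted identity being immediate from the second of these), and this is precisely where the defining relation $x_1^qH=x_2^pH$ of $\ch_c$ enters: from $y^p=x_1^{ap}x_2^{bp}$ and $ap-1=-bq$ one gets $y^pH=x_1H\cdot(x_1^{-q}x_2^{p})^bH$, and $(x_1^{-q}x_2^{p})^bH=H$ since $x_1^{-q}x_2^{p}\in H$ on $\ch_c$; the identity $y^qH=x_2H$ is symmetric, using $bq-1=-ap$. Finally, $\widetilde R$ is the identity on the $H\times H$ fibres, hence equivariant for the fibre translation action, so it is a genuine isomorphism of skew products; and $\theta^{(p,q)}$ is indeed a cocycle for $\tau$, being componentwise the pullback of $\theta$ along $g\mapsto g^p$ and $yH\mapsto y^pH$ (resp.\ $yH\mapsto y^qcH$), so that $\tau_{\theta^{(p,q)}}$ is well defined. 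I do not expect a real obstacle: the one substantive input, that $I_c$ conjugates $\tau^{(p)}\times\tau^{(q)}|_{\ch_c}$ with $\tau$, is supplied by Lemma~\ref{l:in2}, and the rest is exponent bookkeeping modulo $H$, legitimate because $G$ is abelian.
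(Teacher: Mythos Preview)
Your proposal is correct and follows essentially the same route as the paper's proof: verify that $\theta^{(p,q)}$ is a cocycle, then check equivariance by reducing to the identities $y^pH=x_1H$ and $y^qcH=x_2cH$ for $yH=x_1^ax_2^bH$, which both arguments derive from $ap+bq=1$ and the defining relation $x_1^qH=x_2^pH$ of $\ch_c$. Your write-up is somewhat more explicit about the role of $I_c$ from Lemma~\ref{l:in2} and about the inverse map and measures, but the substance is the same.
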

\begin{proof}
  First notice that indeed~$ \theta^{(p,q)} $ is a cocycle, that is,
  we have
  \begin{align*}
    \theta^{(p,q)}(g_1g_2,yH)&=\big( \theta((g_1g_2)^p,y^pH),(\theta((g_1g_2)^q,y^qcH)\big)\\
    &=
    \big(\theta(g_1^p,y^pH),\theta(g_1^qy^qcH)\big)\cdot \big(\theta(g_2^p,g_1^py^pH),\theta
    (g_2^q,g_1^qy^qcH)\big)\\
    & =
    \theta^{(p,q)}(g_1,yH)\cdot\theta^{(p,q)}(g_2,\tau_{g_1}(yH)).
  \end{align*}
  Since
  \[
    \theta^{(p,q)}(g,x_1^ax_2^bH)=\big(\theta(g^p,x_1^{pa}x_2^{pb}H),
    \theta(g^q,x_1^{qa}x_2^{qb}cH)\big)
  \]
  and
  \[
    x_1^{pa}x_2^{pb}H=x_1^{pa}x_1^{qb}H=x_1H
  \]
  with a similar observation concerning the second coordinate, the
  equivariance easily follows.
\end{proof}

\begin{Lemma}
  \label{l:in4} The cocycle~$ \theta^{(p,q)} $ is regular.
\end{Lemma}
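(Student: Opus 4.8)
The plan is to read off the Mackey range of $\theta^{(p,q)}$ from the explicit model built in Lemma~\ref{l:in3}. Recall that $\theta$ is the canonical $H$-valued cocycle of the transitive $G$-action $\tau$ on $G/H$, so that, with the selector normalised by $s(H)=e$, the skew product $\tau_\theta$ on $G/H\times H$ is carried by $(xH,h)\mapsto s(xH)h$ onto the action of $G$ on itself by left translations; this action is uniquely ergodic (equivalently, $\tau$ on $G/H$ is, cf.\ the footnote to Lemma~\ref{l:in2}). Hence $(\tau_\theta)^{(p)}\times(\tau_\theta)^{(q)}$ is isomorphic to the $G$-action on $G\times G$ given by $g\cdot(x_1,x_2)=(g^px_1,g^qx_2)$, and by Lemma~\ref{l:in3} the cocycle extension $\tau_{\theta^{(p,q)}}$ is isomorphic to the restriction of the latter to the invariant set $\{(x_1,x_2):x_1^qx_2^{-p}\in c^{-p}H\}$ sitting over the component $\ch_c$.

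I would then analyse the orbit structure of $g\cdot(x_1,x_2)=(g^px_1,g^qx_2)$. Put $D_{p,q}=\{(g^p,g^q):g\in G\}$ and let $\phi\colon G\times G\to G$, $\phi(x_1,x_2)=x_1^qx_2^{-p}$; this is a continuous homomorphism constant on the $G$-orbits (the cosets of $D_{p,q}$), and, using $ap+bq=1$, one checks that its fibres are exactly those cosets and that it is onto $G$. Thus $D_{p,q}=\ker\phi$ is closed and the orbit relation is the (smooth) level-set relation of $\phi$; since $G$ is torsion-free, $g\mapsto(g^p,g^q)$ is injective, so $G$ acts freely, transitively, and hence uniquely ergodically on each coset of $D_{p,q}$. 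Consequently the $G$-orbit partition is the ergodic decomposition, and restricting to $\phi^{-1}(c^{-p}H)$ the space of ergodic components is the coset $c^{-p}H\cong H$.

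Finally, the group $K=H\times H$ acts on $G\times G$ on the right by $(x_1,x_2)\cdot(k_1,k_2)=(x_1k_1,x_2k_2)$, and since $\phi(x_1k_1,x_2k_2)=\phi(x_1,x_2)\,k_1^qk_2^{-p}$ the induced (Mackey) action of $K$ on $c^{-p}H\cong H$ is by multiplication by $k_1^qk_2^{-p}$. As $ap+bq=1$ gives $\{k^p:k\in H\}\cdot\{k^q:k\in H\}=H$, the map $(k_1,k_2)\mapsto k_1^qk_2^{-p}$ is a continuous (hence open) surjection of $H\times H$ onto $H$, so the Mackey action is transitive, isomorphic to $(H\times H)/K_0$ with $K_0=\{(k_1,k_2):k_1^q=k_2^p\}$; transitivity of the Mackey action is precisely the regularity of $\theta^{(p,q)}$. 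The only delicate point is the identification of the ergodic decomposition in the infinite-measure skew product with the orbit partition, which causes no trouble because that partition is the level-set partition of the continuous map $\phi$; alternatively one can simply invoke that any cocycle over a transitive action — here $\tau$ on $G/H$ — is regular, being cohomologous to $\rho\circ\theta$ for the homomorphism $\rho\colon H\to H\times H$, $h\mapsto(h^p,h^q)$, hence to a cocycle into the closed subgroup $\overline{\rho(H)}=K_0$ on which it is ergodic.
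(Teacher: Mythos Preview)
Your argument is correct and takes a genuinely different route from the paper. The paper applies the algebraic automorphism $J=\left[\begin{smallmatrix} q & -p \\ a & b \end{smallmatrix}\right]$ of $H\times H$ to write $J\circ\theta^{(p,q)}=(\Psi_1,\Psi_2)$; it then reads off from~\eqref{in3} that $\Psi_1$ takes values in the relatively compact set $(A^{-1}A)^{-p+q}$ and invokes the Moore--Schmidt theorem together with the standing hypothesis~\eqref{in1} (no non-trivial compact subgroups in $H$) to conclude that $\Psi_1$ is a coboundary, while $\Psi_2$ is ergodic because $\Psi_2(h,yH)=h$ makes $\tau_{\Psi_2}$ transitive; undoing $J$ gives the cohomology to $(\Psi_2^p,\Psi_2^q)$ into $\cf^{(p,q)}$. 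Your alternative route---that any Borel cocycle $\alpha$ over the transitive action $\tau$ on $G/H$ is cohomologous to $\rho\circ\theta$ with $\rho(h)=\alpha(h,eH)$, and here $\rho(h)=(\theta(h^p,eH),\theta(h^q,cH))=(h^p,h^q)$ by~\eqref{in2b}---reaches the same endpoint by an \emph{explicit} transfer function (namely $\xi(yH)=\theta^{(p,q)}(s(yH),eH)$), so it bypasses Moore--Schmidt and, in principle, the hypothesis~\eqref{in1}. Since $\rho$ is a topological isomorphism of $H$ onto the closed subgroup $K_0=\cf^{(p,q)}$ (inverse $(k_1,k_2)\mapsto k_1^ak_2^b$), ergodicity of $\rho\circ\theta$ in $K_0$ is just ergodicity of $\theta$, i.e.\ transitivity of $G$ on itself. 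Your main Mackey-range argument via $\phi(x_1,x_2)=x_1^qx_2^{-p}$ is also sound; the point you flag about the ergodic decomposition is handled because the orbit partition is the level-set partition of a continuous homomorphism onto $G$, hence smooth, but the alternative route is cleaner. Either way you obtain exactly what Lemma~\ref{l:in5} and the proof of Proposition~\ref{PropositionG} require: a transfer function into $H\times H$ conjugating $\theta^{(p,q)}$ to an ergodic $\cf^{(p,q)}$-valued cocycle.
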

\begin{proof}
  Let~$ J $ be an algebraic automorphism of~$ H\times H $ given by the
  matrix~$ \begin{bmatrix}
    q & -p\\
    a& b
  \end{bmatrix}
  $%
  \footnote{$ J(h_1,h_2)=(h_1^qh_2^{-p},h_1^ah_2^b) $.  The inverse of
  it is given by~$ \left[
  \begin{array}{cc}
    b & p\\
    -a& q
  \end{array}
  \right] $.} and consider the cocycle~$ J\circ \theta^{(p,q)}=:(\Psi_1,\Psi_2)
  $, where (in view of~\eqref{in3})
  \begin{align*}
    \Psi_1(g,yH)&=\left(\theta(g^p,y^pH)\right)^q\left(\theta(g^q,y^qcH)
    \right)^{-p}\\
    &=
    \left(g^ps(g^py^pH)^{-1}s(y^pH)\right)^q \left( g^qs(g^qy^qcH)^{-1}s
    (y^qcH)\right)^{-p}\\
    &=
    \left(s(g^py^pH)^{-1}s(y^pH)\right)^q \left(s(g^qy^qcH)^{-1}s(y^qcH)\right)^
    {-p}.
  \end{align*}
  In view of~\eqref{in2b} the values of the cocycle~$ \Psi_1 $ belong
  to the set $ (A^{-1}A)^{-p+q} $ which is relatively compact.  It
  follows by~\eqref{in1} and Theorem~5.2 in
  \cite{Mo-Sch} that the cocycle $ \Psi_1 $ is a coboundary.  Now,
  \begin{align*}
    \Psi_2(g,yH)&= \left(\theta(g^p,y^pH)\right)^a\left(\theta(g^q,y^qcH)
    \right)^{b}\\
    &=
    g^{pa}\left(s(g^py^pH)^{-1}s(y^pH)\right)^a g^{qb}\left(s(g^qy^qcH)^
    {-1}s(y^qcH)\right)^{b}\\
    &=
    g\left(s(g^py^pH)^{-1}s(y^pH)\right)^a \left(s(g^qy^qcH)^{-1}s(y^qcH)\right)^
    {b}.
  \end{align*}
  Note that for each~$ h\in H $, we have
  \[
    \Psi_2(h,yH)=h.
  \]
  It follows immediately that~$ \tau_{\Psi_2} $ is transitive,%
  \footnote{Indeed, we have
  \[
    \left(\tau_{\Psi_2}\right)_{hg}(H,e)=\left(\tau_{\Psi_2}\right)_h
    \left(\tau_{\Psi_2}\right)_g(H,e)=
    \left(\tau_{\Psi_2}\right)_h(gH,\Psi_2(g,H))=(gH,h\Psi_2(g,H)).
  \]
  } whence~$ \Psi_2 $ is an ergodic cocycle.

  It follows that our cocycle~$ J\circ \theta^{(p,q)} $ is
  cohomologous to a cocycle taking values in the group~$ \{e\}\times H
  $ and the cocycle is ergodic (the corresponding skew product is
  transitive).  If we write
  \[
    J\circ \theta^{(p,q)}(g,yH)= \big(\eta(gyH)^{-1}\eta(yH),\Psi_2(g,yH)\big)
  \]
  then
  \[
    \theta^{(p,q)}(g,yH)=
    \big(\Psi_2(x,yH)^p \big(\eta(gyH)^{-1}\eta(yH)\big)^b, \Psi_2
    (x,yH)^q\big(\eta(gyH)^{-1}\eta(yH)\big)^{-a}\big).
  \]
  It follows that~$ \theta^{(p,q)} $ is cohomologous to the cocycle~
  \[
    (g,yH)\mapsto\left(\left(\Psi_2(g,yH)\right)^p,\left(\Psi_2(x,yH)\right)^q\right)
  \]
  taking values in the subgroup
  \[
    \cf^{(p,q)}:=\big\{(h_1,h_2)\in H\times H:\:  h_1^q=h_2^p\big\}=\big\{(h^p,h^q):\:h\in
    H\big\}.
  \]
  The latter cocycle is ergodic,%
  \footnote{The map~$ (xH,(h_1,h_2))\mapsto (xH,h_1^ah_2^b) $ settles
  an isomorphism of~$ \tau_{(\Psi_2^p,\Psi_2^q)} $ with~$ \tau_{\Psi_2}
  $.} which completes the proof.
\end{proof}

\subsection{Ergodic joinings of the \texorpdfstring{$ G $}{G}-action
\texorpdfstring{$ (\tau_{\theta,\cs})^{(p)} $}{} with \texorpdfstring{$
(\tau_{\theta,\cs})^{(q)} $}{}} We are interested in a description of~$
\tilde{\la}\in J^e((\tau_{\theta,\cs})^{(p)},(\tau_ {\theta,\cs})^{(q)})
$.  Recall that
\begin{align*}
  \left((\tau_{\theta,\cs})^{(p)}\times (\tau_{\theta,\cs})^{(q)}\right)_g
  &\big((x_1H,y_1),(x_2H,y_2)\big)\\=&
  \big(g^px_1H,S_{\theta(g^p,x_1H)}(y_1),g^qx_2H,S_{\theta(g^q,x_2H)}(y_2)\big).
\end{align*}
But~$ \widetilde{\la}|_{G/H\times G/H}=:\la $ is an ergodic joining of
$ \tau^{(p)} $ with~$ \tau^{(q)} $, and by unique ergodicity, it is an
ergodic component of the~$ G $-action~$ \tau^{(p)}\times \tau^{(q)} $.
Hence we can pass to~$ \ch_c $ replacing~$ (G/H\times G/H,\la) $ by~$
(G/H,m_{G/H}) $, with~$ c\in G/H $ in this notation implicit.  It
follows that we now consider the~$ G $-action $ \tau_{\theta^{(p,q)},\cs\ot\cs}
$ with an ergodic measure $ \widetilde{\la} $ (whose projection on the
first and the second, and the first and the third coordinates are
equal to $ m_{G/H}\ot\nu $). Here, $ \cs\ot\cs $ denotes the product~$
H\times H $-action~$ (S_h\times S_{h'})_{(h,h')}\in H\times H $.
Moreover,
\[
  \left(\tau_{\theta^{(p,q)},\cs\ot\cs}\right)_g(xH,y_1,y_2)= \big(\tau_g(xH),S_
  {\theta(g^p,x^pH)}(y_1),S_{\theta(g^p,x^pcH)}(y_2)\big).
\]
But, by Lemma~%
\ref{l:in4}, $ \theta^{(p,q)} $ is cohomologous to the cocycle~$ (\Psi_2^p,\Psi_2^q)
$ taking values in~$ \cf^{(p,q)}\subset H\times H $, and the latter
cocycle is ergodic.  As a matter of fact:
\begin{equation*}
   \theta^{(p,q)}\cdot\big(\eta(g,\cdot)^b, \eta(g,\cdot)^
  {-a}\big)=(\eta^b,\eta^{-a}) \cdot\big(\Psi_2^p,\Psi_2^q\big).
\end{equation*}
This yields an isomorphism (an equivariant map)
  \[
  Id_{(S_{\eta^b},S_{\eta^{-a}})}\big(xH,y_1,y_2\big)= \big(xH,S_{\eta
  (xH)^b}(y_1),S_{\eta(xH)^{-a}}(y_2)\big)
  \]
between the~$ G $-actions~$ \tau_{\theta^{(p,q)},\cs\ot\cs} $ and~$
\tau_{(\Psi_2^p,\Psi_2^q),\cs^{(p)}\times\cs^{(q)}} $.

Now, by Theorem~3 in
\cite{Le-Me-Na}, since~$ (\Psi^p_2,\Psi_2^q) $ is ergodic, we obtain
the following.

\begin{Lemma}
  \label{l:in5} We have
 \begin{equation}
  \label{in21} 
    \Big(\big(Id_{(S_{\eta^b},S_{\eta^{-a}})}\big)^{-1}\Big)_\ast (\widetilde
    {\la})=m_{G/H}\ot \kappa,
  \end{equation}
  where~$ \kappa\in J^e(\cs^{(p)},\cs^{(q)}) $.
\end{Lemma}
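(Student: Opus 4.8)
The plan is to transport the ergodic joining $\widetilde{\la}$ through the chain of reductions set up above and then to quote the joining theorem of Lemańczyk--Mentzen--Nakada.

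\textbf{Step 1 (reduction to an ergodic cocycle extension over a uniquely ergodic base).} By Lemma~\ref{l:in2} the projection $\la:=\widetilde{\la}|_{G/H\times G/H}$ is an ergodic joining of $\tau^{(p)}$ and $\tau^{(q)}$, hence supported on an ergodic component $\ch_c$ of $\tau^{(p)}\times\tau^{(q)}$ on which this action is (uniquely ergodically) isomorphic to $\tau$ on $(G/H,m_{G/H})$; in particular the base marginal of $\widetilde{\la}$ is $m_{G/H}$. Applying the reparametrisation $\widetilde R$ of Lemma~\ref{l:in3} identifies $\big((\tau_\theta)^{(p)}\times(\tau_\theta)^{(q)}\big)|_{\ch_c\times H\times H}$ with the Rokhlin cocycle extension $\tau_{\theta^{(p,q)},\cs\ot\cs}$, the cocycle $\theta^{(p,q)}$ being as in~\eqref{in11}, and carries $\widetilde{\la}$ to an ergodic $\tau_{\theta^{(p,q)},\cs\ot\cs}$-invariant probability measure on $G/H\times Y\times Y$ whose marginals on the coordinates $(1,2)$ and $(1,3)$ are both $m_{G/H}\ot\nu$. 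By Lemma~\ref{l:in4}, $\theta^{(p,q)}$ is cohomologous, through the transfer function $(\eta^b,\eta^{-a})$, to the cocycle $(\Psi_2^p,\Psi_2^q)$ taking values in $\cf^{(p,q)}=\{(h^p,h^q):h\in H\}\cong H$, and $\Psi_2$ is an \emph{ergodic} $H$-valued cocycle over $(G/H,m_{G/H},\tau)$. The equivariant map $Id_{(S_{\eta^b},S_{\eta^{-a}})}$ then carries $\tau_{\theta^{(p,q)},\cs\ot\cs}$ to $\tau_{(\Psi_2^p,\Psi_2^q),\cs^{(p)}\times\cs^{(q)}}$, so it suffices to show that the correspondingly transported measure equals $m_{G/H}\ot\kappa$ with $\kappa\in J^e(\cs^{(p)},\cs^{(q)})$.

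\textbf{Step 2 (apply the joining theorem).} Here I would invoke Theorem~3 of \cite{Le-Me-Na}: for a Rokhlin cocycle extension over a (uniquely) ergodic base by an \emph{ergodic} cocycle --- here $\Psi_2$ --- every ergodic invariant measure projecting onto the base measure $m_{G/H}$ has the product form $m_{G/H}\ot\kappa$, where $\kappa$ is an invariant measure for the fibre action $\cs^{(p)}\times\cs^{(q)}$ on $Y\times Y$, and $\kappa$ is ergodic because $\widetilde{\la}$ is. Applying this requires $\cs^{(p)}$ and $\cs^{(q)}$ to be ergodic, which is exactly the total ergodicity of $\cs$ built into the standing hypotheses (cf.\ Remark~\ref{r:in00}(i)). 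Finally, the prescribed $(1,2)$- and $(1,3)$-marginals $m_{G/H}\ot\nu$ of $\widetilde{\la}$ force both $Y$-marginals of $\kappa$ to equal $\nu$, so $\kappa$ is a genuine ergodic joining, $\kappa\in J^e(\cs^{(p)},\cs^{(q)})$. Transporting back through $\big(Id_{(S_{\eta^b},S_{\eta^{-a}})}\big)^{-1}$ yields~\eqref{in21}.

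\textbf{Main obstacle.} The only non-routine ingredient is the invocation of the Lemańczyk--Mentzen--Nakada theorem, whose applicability rests on the ergodicity of $\Psi_2$ established in Lemma~\ref{l:in4}; everything else is bookkeeping with cocycle cohomology, with marginals, and with the identification of ergodic components via unique ergodicity of $\tau$ on $G/H$. The subtlety worth flagging is the passage from the fibre action $\cs\ot\cs$ (an action of $H\times H$) to $\cs^{(p)}\times\cs^{(q)}$ (its restriction along the one-parameter subgroup $\cf^{(p,q)}\cong H$), since this is precisely what makes the output $\kappa$ a joining of the prime powers $\cs^{(p)},\cs^{(q)}$ rather than of $\cs$ with itself --- the structural reason this lemma feeds into the AOP argument.
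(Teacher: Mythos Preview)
Your proposal is correct and follows exactly the paper's approach: the paper's proof is the single sentence ``by Theorem~3 in \cite{Le-Me-Na}, since~$(\Psi^p_2,\Psi_2^q)$ is ergodic, we obtain the following,'' and your Step~2 is precisely this invocation, with Step~1 recapitulating the preceding reductions (Lemmata~\ref{l:in2}--\ref{l:in4}) already laid out in the text. Your added remarks on why the marginals of $\kappa$ equal $\nu$ and why ergodicity of $\kappa$ follows from that of $\widetilde{\la}$ are correct and make explicit what the paper leaves implicit.
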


\subsection{Proof of Proposition~\ref{PropositionG}} Before we start proving Proposition~\ref{PropositionG}, let us make the following observation.  Assume that~$
\crr=(R_g)_{g\in G} $ is a $ G $-action on~$ (Z,\mu) $ and let~$ (W,\xi)
$ be a probability standard Borel space.  Assume that we have two
Rokhlin cocycles:
\[
  \Theta,\Psi:G\times Z\to {\rm Aut}(W,\xi)
\]
and a measurable map $ \Sigma:Z\to {\rm Aut}(W,\xi) $.  Assume that~$
Id_{\Sigma} $ establishes an isomorphism of~$ (\crr_{\Psi},Z\times
W,\mu\ot\xi) $ with~$ (\crr_{\Theta},Z\times W,\rho) $, where~$ \rho|_Z=\mu
$.  Assume that~$ f\in L^1(Z,\mu) $, $ g\in L^1(Y,\xi) $, then
\begin{align*}
  \int_{Z\times W}f(z)g(w)\,d\rho(z,w)&= \int_{Z\times W}(f\ot
  g)\circ Id_{\Sigma} (z,w)\,d\mu(z)d\xi(w)
  \\&=
\int_{Z}f(z)\left(\int_W g(\Sigma_z(w))\,d\xi(w)\right)d\mu(z)
\\&=
 \int_{Z}f(z)\left(\int_W g(w)\,d\left((\Sigma_z)_\ast(\xi)\right)(w)\right)d\mu
  (z).
\end{align*}
It follows that
\begin{equation}
  \label{in22} \left|\int_{Z\times W}f(z) g(z)\,d\rho(z,w)\right|\leq \|f\|_
  {L^1(\mu)}\sup_{z\in Z}\left|\int_W g\,d\left(\left(\Sigma_z\right)_\ast
  (\xi)\right)\right|.
\end{equation}

\begin{proof}[Proof of Proposition~\ref{PropositionG}]
Consider now the situation in which~$ \crr=\tau $, $ Z=G/H $, $\mu=m_{G/H}$, $ W=Y\times
Y $, $\xi=\kappa$:  here~$ \crr_{\Theta}=\tau_{\theta^{(p,q)},\cs\otimes\cs} $ acts  on~$ G/H\times
Y\times Y $ and preserves~$ \rho=\tilde{\la} $ (the parameter~$ c $ of
the ergodic component is implicit), and the other $ G $-action $\crr_{\Psi}=
\tau_{(\Psi_2^p,\Psi_2^q),\cs^{(p)}\times\cs^ {(q)}} $ preserves
$ m_{G/H}\ot\kappa $.  In view of~\eqref{in21}, we consider the
isomorphism $ Id_{(S_{\eta^b},S_{\eta^{-a}})} $ between $ \tau_{\theta^
{(p,q)},\cs\otimes\cs} $ and $ \tau_{(\Psi_2^p,\Psi_2^q),\cs^ {(p)}\times\cs^
{(q)}} $.  Hence, in our notation,
\[
  \Sigma_{xH}=S_{\eta(xH)^b}\times S_{\eta(xH)^{-a}}.
\]
It follows that the fiber automorphisms are of the form~$ S_{h^{b}}\times
S_{h^{-a}} $.  Therefore, each fiber automorphism commute with the $ G
$-action~$ \cs^{(p)}\times \cs^{(q)} $.  It easily follows that
\[
  \left(\Sigma_{xH}\right)_\ast \kappa\in J^e(\cs^{(p)},\cs^{(q)})
\]
and from~\eqref{in22}, we obtain that for each~$ f\in L^2(G/H,m_{G/H})
$ and~$ F_1,F_2\in L^2(Y,\nu) $, we have
\begin{equation}
  \label{in23} \left|\int_{G/H\times Y\times Y}f\ot F_1\ot F_2\,d\widetilde
  {\la}\right|\leq \|f\|_1\sup_{\kappa'\in J^e(\cs^{(p)},\cs^{(q)})}\left|
  \int_{Y\times Y}F_1\ot F_2\,d\kappa'\right|.
\end{equation}
To deduce the assertion, it is now enough to notice that, although on
the LHS of~\eqref{in23}, the constant~$ c $ is implicit, it has no
influence since if we consider (as we should)~$ f_1\ot F_1\ot f_2\ot F_2
$ as a member of~$ L^2(G/H\times Y\times G/H\times Y,\widetilde{\la}) $
(with~$ \widetilde{\la} $ an arbitrary ergodic joining of~$ (\tau_ {\theta,\cs})^
{(p)} $ and $ (\tau_{\theta,\cs})^{(q)} $) then by the Schwarz
inequality, the $ L^1(\widetilde{\la}) $-norm of~$ f_1\ot f_2 $ will
be bounded by $ \|f_1\|_{L^2(m_{G/H})}\|f_2\|_{L^2(m_{G/H})} $, hence
does not depend on~$ \widetilde{\la} $.  The result follows.
\end{proof}

\subsection{Examples and applications} Assume that we have an ergodic
$ G $-action $ \cs=(S_g)_{g\in G} $ on~$ \ycn $.  If~$ H\subset
G $ is cocompact then, by
\cite{Zi}, the induced~$ G $-action from the subaction~$ (S_h)_{h\in H}
$ is isomorphic to~$ \cs\times \tau $ (by an isomorphism being the
identity on the $ \tau $-coordinates).

It follows now from Proposition~\ref{PropositionG} that:%
\footnote{This should be compared with Remark~%
\ref{r:in00} (iv).}

\begin{Cor}
  If~$ \cs=(S_g)_{g\in G} $ is as above and the subaction~$ (S_h)_ {h\in
  H} $ has the AOP property, then the~$ G $-action~$ \cs\times\tau= (S_g\times\tau_g)_
  {g\in G} $ on~$ (Y\times G/H,\nu\otimes m_{G/H}) $ has the relative
  (with respect to~$ \tau $) AOP property.
\end{Cor}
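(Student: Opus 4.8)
The plan is to deduce this corollary directly from Proposition~\ref{PropositionG}, using the identification (recalled just above) of an induced action with a product action. First I would fix notation: write $\cs_0:=(S_h)_{h\in H}$ for the sub-$H$-action of the given ergodic $G$-action $\cs$, and let $\widetilde{\cs}_0$ be the $G$-action induced from $\cs_0$. By \cite{Zi}, $\widetilde{\cs}_0$ is isomorphic to $\cs\times\tau=(S_g\times\tau_g)_{g\in G}$ on $(Y\times G/H,\nu\ot m_{G/H})$ via an isomorphism $\Phi$ which is the identity on the $\tau$-coordinate; in particular $\Phi$ commutes with the projections onto $G/H$. Hence, for every integer $p$, the map $\Phi$ intertwines $(\widetilde{\cs}_0)^{(p)}$ with $(\cs\times\tau)^{(p)}=(S_{g^p}\times\tau_{g^p})_{g\in G}$, so it induces, for any $p,q$, an affine homeomorphism
\[
J^e\bigl((\widetilde{\cs}_0)^{(p)},(\widetilde{\cs}_0)^{(q)}\bigr)\;\longrightarrow\;J^e\bigl((\cs\times\tau)^{(p)},(\cs\times\tau)^{(q)}\bigr).
\]
Moreover, since $\Phi$ is the identity on $G/H$, it preserves the subspace $L^2(G/H)\subset L^2(Y\times G/H)$ and therefore carries the orthocomplement $L^2(Y\times G/H)\ominus L^2(G/H)$ onto itself.

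Next I would verify that Proposition~\ref{PropositionG} applies to $\cs_0$: the action $\cs_0$ is ergodic and, by hypothesis, has the AOP property, while $H$ satisfies the standing assumption~\eqref{in1} of having no non-trivial compact subgroups. The proposition then yields, for all $E,F\in L^2(Y\times G/H)\ominus L^2(G/H)$,
\[
\lim_{\substack{p\neq q\\ p,q\in\mathscr{P}\\ p,q\to\infty}}\ \sup_{\kappa\in J^e((\widetilde{\cs}_0)^{(p)},(\widetilde{\cs}_0)^{(q)})}\ \left|\int_{Y\times G/H\times Y\times G/H}E\ot F\,d\kappa\right|=0.
\]
Transporting this identity through the isomorphism $\Phi$ of the previous step, which respects both the passage to $p$-th powers (hence the ergodic joinings of these powers) and the subspace $L^2(Y\times G/H)\ominus L^2(G/H)$, I obtain the same limit with $J^e((\cs\times\tau)^{(p)},(\cs\times\tau)^{(q)})$ replacing $J^e((\widetilde{\cs}_0)^{(p)},(\widetilde{\cs}_0)^{(q)})$; this is exactly the assertion that $\cs\times\tau$ has the relative AOP property with respect to $\tau$.

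I do not expect any genuine obstacle here, since all the substance is already packaged in Proposition~\ref{PropositionG}. The only item requiring (routine) checking is that Zimmer's isomorphism $\Phi$ is compatible with the endomorphism $g\mapsto g^p$ of the acting group and with the factor map onto $G/H$; both are immediate from the explicit description of the induced action as left translation in the $G$-variable on $(Y\times G)/H$, which under $\Phi$ becomes the action $(S_g\times\tau_g)_{g\in G}$, so that replacing $g$ by $g^p$ on one side corresponds to replacing it by $g^p$ on the other while the $G/H$-coordinate is left untouched.
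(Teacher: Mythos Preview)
Your proposal is correct and is exactly the approach taken in the paper: the corollary is deduced immediately from Proposition~\ref{PropositionG} together with Zimmer's identification of the induced $G$-action with $\cs\times\tau$ via an isomorphism that is the identity on the $\tau$-coordinate. The paper in fact gives no further argument beyond the sentence ``It follows now from Proposition~\ref{PropositionG}''; your additional verification that the isomorphism respects the passage to powers and the subspace $L^2(Y\times G/H)\ominus L^2(G/H)$ is the routine unpacking of that sentence.
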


\begin{Example}
  If we consider~$ \Z\subset\R $, then a natural choice of the
  selector~$ s:\R/Z\to\R $ satisfying~\eqref{in2},~\eqref{in2a} and~\eqref{in2b} is to
  set~$ s(r):=\{r\} $ being the fractional part of~$ r\in\R $. Then
  \[
    \theta(t,\{r\})=-\{t+\{r\}\}+t+\{r\}=[t+\{r\}]\in\Z.
  \]
  Assume now that~$ S $ is an ergodic automorphism of~$ \ycn $. It
  follows that the induced $ \R $-action~$
  \widetilde{S} $ is given by
  \[
    \widetilde{S}_t(y,\{r\})=(S^{\lfloor t+\{r\}\rfloor}y,\{t+r\}),
  \]
  hence, it is the standard {\em suspension} construction over~$ S $.
\end{Example}

\begin{Example}
  Consider~$ n\Z\subset\Z $.  Then~$ s:\Z/n\Z\to\Z $ is given by~$ s(k+n\Z)=k\text
  { mod }n $.  Then, by writing~$ m+k=tn+r $ with~$ 0\leq r<n $, we
  have
  \[
    \theta(m,k\text{ mod }n)=-(m+(k\text{ mod }n))+m+(k\text{ mod }n)=-r+m+k=tn\in
    n\Z.
  \]
  If we are now given an ergodic $ n\Z $-action%
  \footnote{This is of course a~$ \Z $-action.} we can induce. Instead
  of doing this directly, first write the obvious automorphism~$ n\ell=\ell
  $ between~$ n\Z $ and~$ \Z $ and then rewrite the cocycle~$ \theta $,
  which now becomes
  \[
    \theta(m,x)=\left\lfloor\frac{x+m}n\right\rfloor.
  \]
  Since this is a~$ \Z $-cocycle, it is entirely determined by the
  function~$ \theta=\theta(1,\cdot) $ given by:~$ \theta(x)=0 $ if~$
  x=0,\ldots,n-2 $ and~$ \theta(n-1)=1 $. If now $ S $ is an ergodic
  automorphism of~$ \ycn $ then its ($ n $-discrete) induced is given
  by~$ \widetilde{S} $ acting on~$ Y\times\{0,1,\ldots,n-1\} $ by the
  formula:~$ \widetilde{S}(y,j)=(y,j+1) $ if~$ j=0\ldots,n-2 $, and $
  \widetilde{S}(y,n-1)=(Sy,0) $.  Hence, we obtain the classical $ n $-discrete
  suspension.
\end{Example}

\begin{proof}[Proof of Corollary~\ref{CorollaryH}]
  The~$ k $-discrete suspension~$ \widetilde{T} $ of~$ T $ is a
  uniquely ergodic homeomorphism of the
  space~$ X\times\{0,1,\ldots,k-1\} $.  Define~$ F(z,i)=0 $
  if~$ i\neq j $ and~$ z\in X $, and~$ F(z,j)=f (z) $.
  Then~$ F\perp L^2(\{0,1,\ldots,k-1\}) $.  Now, by the relative AOP
  property, we have
  \[
    \frac1N\sum_{n\leq N}F(\widetilde{T}^n(x,0))\bfu(n)\to0.
  \]
  It is however not hard to see that by the definition of~$ F $,
  \[
    \frac1N\sum_{n\leq N}F(\widetilde{T}^n(x,0))\bfu(n)=\frac1N\sum_{n\leq
    N,n=j\text{ mod }k}f(T^nx)\bfu(n)
  \]
  and the result follows.
\end{proof}

\begin{Lemma}\label{l:podciag}
  Let $ T $ be a uniquely ergodic homeomorphism
  of~$ X $, with the unique invariant measure~$ \mu $.  Assume
  that~$ (X,\mu,T) $ has the AOP property. Assume additionally that there exists a set $ \mathscr{C}\subset
  C(X)\cap L^2_0(X,\mu) $ whose linear span is dense in~$ L^2_0   (X,\mu) $
  such that for all~$ f\in\mathscr{C} $ and all~$
      \omega\in \C $ with~$ |\omega|=1 $ there exists a homeomorphism
      $ S:X\to X $ such that~$ f(T^n(Sx))=\omega f(T^n x) $ for every
      $ x\in X $ and~$ n\in\Z $.

      Let $\widetilde{T}:\widetilde{X}\to\widetilde{X}$ be a discrete suspension of $T$.
  Then for every $F\in C(\widetilde{X})$  and every $\widetilde{x}\in \widetilde{X}$ we have
   \begin{equation} \label{eq:krotper:0}
   \frac 1 M\sum_{M\leq m<2M}\left|\frac1H\sum_{m\leq
        n<m+H}F(\widetilde{T}^n\widetilde{x})\mob(n)\right| \longrightarrow 0
  \end{equation}
      when~$ H\to\infty $ and~$ H/M\to 0 $.
\end{Lemma}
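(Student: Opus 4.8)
The plan is to run, for the given discrete suspension $\widetilde T$, the scheme of the proof of Theorem~\ref{thm:prop}(ii) over the subspace on which AOP is available, exactly as in the proof of Theorem~\ref{thm:nonconnect}. Write $\widetilde X=X\times\{0,1,\dots,k-1\}$ for the underlying space of the $k$-discrete suspension (as in the proof of Corollary~\ref{CorollaryH}), $\widetilde\mu=\mu\otimes u_k$ for its (unique) invariant measure, $u_k$ the uniform measure on $\Z/k\Z$, and $p\colon\widetilde X\to\Z/k\Z$ for the projection, which intertwines $\widetilde T$ with the rotation by $1$ on $\Z/k\Z$. Let $H^+\subset L^2(\widetilde X,\widetilde\mu)$ be the functions pulled back from $\Z/k\Z$ via $p$, and $H^-$ its orthocomplement, i.e.\ the functions with zero integral over each fibre $X\times\{i\}$. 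Because each fibre $X\times\{i\}$ is clopen, the orthogonal projections $F\mapsto F^+\in H^+$ and $F\mapsto F^-=F-F^+\in H^-$ send $C(\widetilde X)$ to itself; hence it is enough to prove \eqref{eq:krotper:0} separately for $F\in H^+$ and for $F\in C(\widetilde X)\cap H^-$.

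For $F\in H^+$ the sequence $n\mapsto F(\widetilde T^n\widetilde x)$ is periodic with period dividing $k$, hence a finite linear combination of the sequences $n\mapsto e^{2\pi i nj/k}$, so \eqref{eq:krotper:0} holds by Theorem~1.7 of \cite{Ma-Ra-Ta} for the Möbius function (cf.\ Remark~\ref{rem:mrt}). For $F\in C(\widetilde X)\cap H^-$ I would first build a linearly dense subset of $H^-$ with the twist property. For $f\in\mathscr C$ and $0\le j<k$ define $(f\otimes\delta_j)(x,i)=f(x)$ if $i=j$ and $0$ otherwise; this is continuous (each $X\times\{i\}$ being clopen), lies in $H^-$ since $\int_Xf\,d\mu=0$, and, as $\mathscr C$ is linearly dense in $L^2_0(X,\mu)$, the family $\widetilde{\mathscr C}=\{f\otimes\delta_j:f\in\mathscr C,\ 0\le j<k\}$ is linearly dense in $H^-$. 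Using $\widetilde T^n(x,i)=(T^{\lfloor(i+n)/k\rfloor}x,(i+n)\bmod k)$ for all $n\in\Z$, one checks that if $S\colon X\to X$ is the homeomorphism given by the hypothesis for the pair $(f,\omega)$, then $\widetilde S:=S\times\mathrm{id}$ is a homeomorphism of $\widetilde X$ with $(f\otimes\delta_j)(\widetilde T^n(\widetilde S\widetilde x))=\omega\,(f\otimes\delta_j)(\widetilde T^n\widetilde x)$ for every $\widetilde x\in\widetilde X$ and $n\in\Z$; thus $\widetilde{\mathscr C}$ has the twist property relative to $\widetilde T$.

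Next, $\widetilde T$ is uniquely ergodic, and by Proposition~\ref{PropositionG} applied with $H=k\Z\subset\Z=G$ — this is precisely the relative AOP property already used in the proof of Corollary~\ref{CorollaryH} — the AOP condition \eqref{cond:aop} holds for all $f,g\in H^-$. Discarding the finitely many pairs of distinct primes $(r,s)$ for which $r,s,k$ are not pairwise coprime, for the remaining pairs $\widetilde T^r$ and $\widetilde T^s$ still cyclically permute the $k$ fibres, and the argument of Theorem~3 in \cite{Ab-Le-Ru}, carried out on the (intrinsic) subspace $H^-$ exactly as in the proof of Theorem~\ref{thm:nonconnect}, gives that for every $g\in C(\widetilde X)\cap H^-$, every sequence $(\widetilde x_K)$ in $\widetilde X$ and every increasing $(b_K)$ with $b_{K+1}-b_K\to\infty$,
\[
\frac1{b_{K+1}}\sum_{K'\le K}\Big(\sum_{b_{K'}\le n<b_{K'+1}}g(\widetilde T^n\widetilde x_{K'})\,\mob(n)\Big)\longrightarrow0 .
\]
Combining this with the twist property of $\widetilde{\mathscr C}$ as in the proof of Theorem~\ref{thm:prop}(ii) (for each block choose a unimodular $\omega_K$ making the block sum non-negative, replace $\widetilde x$ by $\widetilde S_K\widetilde x$, then drop the absolute values) yields \eqref{eq:krotper:0} for every $g\in\widetilde{\mathscr C}$, hence for $g\in\Span\widetilde{\mathscr C}$ by linearity, and finally for every $g\in C(\widetilde X)\cap H^-$ by the $L^1(\widetilde\mu)$-approximation argument of that proof, which uses only the unique ergodicity of $\widetilde T$ (cf.\ Remark~\ref{rem:riemint}). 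Together with the $H^+$ case and the triangle inequality this proves \eqref{eq:krotper:0} for all $F\in C(\widetilde X)$.

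The step I expect to be the main obstacle is the passage from ``relative AOP, i.e.\ \eqref{cond:aop} on $H^-$'' to the displayed multi-block cancellation over $H^-$: this forces one to re-run the proof of Theorem~3 of \cite{Ab-Le-Ru} on the subspace $H^-$ instead of on all of $L^2_0$, checking in particular that the decomposition into ergodic components of joinings of $\widetilde T^r$ and $\widetilde T^s$ interacts correctly with $H^-$. This is, however, precisely the point already settled for nil-translations on disconnected nilmanifolds in the proof of Theorem~\ref{thm:nonconnect}, so it transfers here with only notational changes.
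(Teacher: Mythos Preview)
Your proof is correct and follows essentially the same route as the paper's: decompose $L^2(\widetilde X)$ into $H^+=L^2(\Z/k\Z)$ and its orthocomplement $H^-$, handle $H^+$ via periodicity and Remark~\ref{rem:mrt}, obtain the relative AOP on $H^-$ from Proposition~\ref{PropositionG}, build the linearly dense twist family $f\otimes\delta_j$ with $\widetilde S=S\times\mathrm{id}$, and then rerun the argument of Theorem~\ref{thm:prop}(ii). The only cosmetic difference is that you spell out the coprimality-with-$k$ caveat and invoke Theorem~\ref{thm:nonconnect} as a template, whereas the paper simply appeals directly to the proof of Theorem~\ref{thm:prop}.
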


\begin{proof}
 Suppose that $\widetilde{T}$ the $k$-discrete suspension of $T$. Let us consider the subspace $H:=L^2(\widetilde{X})\ominus L^2(\{0,\ldots,k-1\})$.
 By Proposition~\ref{PropositionG}, the subspace $H$ satisfies the AOP property.

 For every $f\in \mathscr{C}$ and $0\leq j<k$ denote by $f_j:\widetilde{X}\to\C$ the continues function which vanishes on all level sets except $X\times\{j\}$, where
 is given by $f_j(x,j)=f(x)$. Functions of such form  establish a linearly dense subset of $H$.

 Suppose that  $ S:X\to X $ is a homeomorphism such that~$ f(T^n(Sx))=\omega f(T^n x) $ for every
      $ x\in X $ and~$ n\in\Z $. Then for the homeomorphism $\widetilde{S}:\widetilde{X}\to\widetilde{X}$ given by $\widetilde{S}(x,l)=(Sx,l)$ we have
      $ f_j(\widetilde{T}^n(\widetilde{S}\widetilde{x}))=\omega f_j(\widetilde{T}^n \widetilde{x}) $ for every
      $ \widetilde{x}\in \widetilde{X} $ and~$ n\in\Z $.
 Now the arguments used in the proof of Theorem~\ref{thm:prop} applied to the family of functions $\{f_j\}$ show that \eqref{eq:krotper:0} holds
 for every continuous function $F\in H$. If $F\in L^2(\{0,\ldots,k-1\})$ then the sequence $\big(F(\widetilde{T}^n\widetilde{x})\big)$ is periodic
 and \eqref{eq:krotper:0} follows from Remark~\ref{rem:mrt}. It follows that \eqref{eq:krotper:0} holds
 for every $F\in C(\widetilde{X})$.
\end{proof}

\begin{Lemma}
 Let $\widetilde{T}:\widetilde{X}\to\widetilde{X}$ be a homomorphism coming from Lemma~\ref{l:podciag}. Then for every $k\geq 1$, $0\leq j\leq k$,
 $F\in C(\widetilde{X})$ and $\widetilde{x}\in\widetilde{X}$ we have
 \begin{equation} \label{eq:krotper:01}
   \frac 1 M\sum_{M\leq m<2M}\left|\frac1H\sum_{m\leq
        n<m+H}F(\widetilde{T}^n\widetilde{x})\mob(kn+j)\right| \longrightarrow 0
  \end{equation}
      when~$ H\to\infty $ and~$ H/M\to 0 $.
\end{Lemma}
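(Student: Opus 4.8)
The plan is to iterate once more the discrete-suspension trick already used to deduce Corollary~\ref{CorollaryH} and Lemma~\ref{l:podciag}, this time in order to absorb the arithmetic progression $kn+j$ into the argument of $\mob$. First I would reduce to the case $0\le j<k$: if $j\ge k$, write $j=j'+tk$ with $0\le j'<k$, so that $\mob(kn+j)=\mob(k(n+t)+j')$, and after the substitution $n\mapsto n-t$ the sum $\sum_{m\le n<m+H}F(\widetilde T^n\widetilde x)\mob(kn+j)$ becomes $\sum_{m+t\le n<m+t+H}(F\circ\widetilde T^{-t})(\widetilde T^n\widetilde x)\mob(kn+j')$; since $F\circ\widetilde T^{-t}\in C(\widetilde X)$ and translating the window of outer variables $m$ is harmless, this reduces \eqref{eq:krotper:01} to the case $0\le j<k$ for a possibly different continuous function.

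Next, let $\widehat T\colon\widehat X\to\widehat X$ be the $k$-discrete suspension of $\widetilde T$, so that $\widehat X=\widetilde X\times\{0,1,\dots,k-1\}$, $\widehat T(z,i)=(z,i+1)$ for $0\le i<k-1$ and $\widehat T(z,k-1)=(\widetilde Tz,0)$. Writing $\widetilde T$ as a $k_0$-discrete suspension of the uniquely ergodic homeomorphism $T$ provided by the hypotheses of Lemma~\ref{l:podciag} (the one carrying the AOP property together with the distinguished family $\mathscr{C}$), one checks directly — ordering the points of $\widehat X$ along a single $\widehat T$-orbit — that $\widehat T$ is topologically conjugate to the $k_0k$-discrete suspension of $T$. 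Hence Lemma~\ref{l:podciag} applies to $\widehat T$ and yields, for every $G\in C(\widehat X)$ and every $\widehat x\in\widehat X$,
\[
\frac1M\sum_{M\le m<2M}\left|\frac1H\sum_{m\le\ell<m+H}G(\widehat T^\ell\widehat x)\mob(\ell)\right|\longrightarrow 0,\qquad H\to\infty,\ H/M\to 0.
\]

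The last step is the translation back to $\widetilde T$. Given $F\in C(\widetilde X)$ and $\widetilde x\in\widetilde X$, put $G(z,j):=F(z)$ and $G(z,i):=0$ for $i\ne j$; then $G\in C(\widehat X)$, and since $\widehat T^\ell(\widetilde x,0)=(\widetilde T^{\lfloor\ell/k\rfloor}\widetilde x,\ \ell\bmod k)$ one gets $G(\widehat T^\ell(\widetilde x,0))=F(\widetilde T^{(\ell-j)/k}\widetilde x)$ when $\ell\equiv j\pmod k$ and $0$ otherwise. Consequently a block $km\le\ell<k(m+H)$ contributes exactly $\sum_{m\le n<m+H}F(\widetilde T^n\widetilde x)\mob(kn+j)$, so after the change of variables $\ell'=km$ (with inner window $kH$) and the passage from the residue class $\ell'\equiv 0\pmod k$ to all $\ell'$ — legitimate, at the cost of a bounded factor, by non-negativity of the summands — the displayed short-interval estimate for $\widehat T$ at $\widehat x=(\widetilde x,0)$ transforms into \eqref{eq:krotper:01} (with $M,H$ rescaled by the fixed factor $k$, which does not affect the limiting regime $H\to\infty$, $H/M\to0$). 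I expect the only genuinely delicate points to be (a) the clean identification of $\widehat T$ with a discrete suspension of $T$ of the larger period $k_0k$, so that Lemma~\ref{l:podciag} itself — rather than its hypotheses — can be invoked, and (b) keeping the index bookkeeping in the final change of variables consistent; both are routine but need care. An alternative to (a) is to re-run the proof of Lemma~\ref{l:podciag} verbatim for $\widehat T$, using Proposition~\ref{PropositionG} to supply the relative AOP property of the suspension.
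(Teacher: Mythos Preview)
Your proposal is correct and follows essentially the same route as the paper: take the $k$-discrete suspension of $\widetilde T$, recognize it as a discrete suspension of the original $T$ (the paper invokes Proposition~2.4 of \cite{Zi} for this, you propose to verify it directly), apply Lemma~\ref{l:podciag} to that suspension, and then unwind the resulting short-interval estimate via the function supported on the $j$-th level together with the change of variables $\ell=kn+j$, bounding the sparse outer sum over $km$ by the full sum over $[kM,2kM)$ using non-negativity. Your preliminary reduction to $0\le j<k$ is a harmless extra step not spelled out in the paper.
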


\begin{proof}
  Following the proof of Corollary~\ref{CorollaryH} we consider the $ k $-discrete suspension~$ \widetilde{\widetilde{T}} $ of $\widetilde{T}$.
  By Proposition~2.4 in~\cite{Zi}, $ \widetilde{\widetilde{T}} $ is also a discrete suspension of ${T}$.

  For every $F\in C(\widetilde{X})$ define
  $\widetilde{F}(\widetilde{z},j)=F(\widetilde{z})$ and $\widetilde{F}(\widetilde{z},l)=0$ for $l\neq j$.
  Since $\widetilde{F}\in C(\widetilde{\widetilde{X}})$, by Lemma~\ref{l:podciag} applied to $ \widetilde{\widetilde{T}} $ as a discrete suspension of ${T}$,
  we have
  \[
  \frac 1 {kM}\sum_{kM\leq m<2kM}\left|\frac1{kH}\sum_{m\leq
        n<m+kH}\widetilde{F}(\widetilde{\widetilde{T}}^n(\widetilde{x},0))\mob(n)\right|\longrightarrow 0.
  \]
  Moreover,
  \begin{align*}
  \frac 1 M\sum_{M\leq m<2M}&\left|\frac1H\sum_{m\leq
        n<m+H}F(\widetilde{T}^n\widetilde{x})\mob(kn+j)\right|\\&= k^2\frac 1 {kM}\sum_{kM\leq km<2kM}\left|\frac1{kH}\sum_{km\leq
        n<km+kH}\widetilde{F}(\widetilde{\widetilde{T}}^n(\widetilde{x},0))\mob(n)\right|\\&
        \leq k^2\frac 1 {kM}\sum_{kM\leq m<2kM}\left|\frac1{kH}\sum_{m\leq
        n<m+kH}\widetilde{F}(\widetilde{\widetilde{T}}^n(\widetilde{x},0))\mob(n)\right|\longrightarrow 0,
  \end{align*}
  which completes the proof.
\end{proof}

\begin{proof}[Proof of Corollary~\ref{CorollaryI}]
  First observe the it suffices to focus only on basic nil\-se\-quences,
  since all considered sequences are or are appropriately approximated
  by basic nil-sequences.  Moreover, every nilsequence is of the form
  $\big(F(\widetilde{T}^n\widetilde{x})\big)$, where $T$ is a
  homeomorphism satisfying the assumption of Lemma~\ref{l:podciag} and
  $F\in C(\widetilde{X})$.  Indeed, $T$ is an ergodic affine unipotent
  diffeomorphism on a compact connected nilmanifold. Therefore, by
  Lemma~\ref{l:podciag}, the sequence
  $\big(F(\widetilde{T}^n\widetilde{x})\big)$ meets \eqref{eq:corI}.
\end{proof}

All examples of AOP actions that appeared so far in the paper have
either purely discrete or, they have mixed spectrum:  discrete and
Lebesgue.  We will now give examples of AOP flows that have mixed
spectrum:  discrete and continuous singular, making use of recent
results from
\cite{Ku-Le1}.

Below, we use Remark~%
\ref{r:in1} and \eqref{in9}.

\begin{Lemma}
  \label{l:sre1} Let~$ T $ be an ergodic automorphism of probability a
  standard Borel space~$ \xbm $ and~$ \va:X\to K $ a cocycle with
  values in an abelian lcsc group.  Then the formula
  \[
    \widetilde{\va}(t,(x,s)):=\va^{(\lfloor t+s\rfloor )}(x)
  \]
  defines a cocycle for the $ \R $-action of the suspension~$
  \widetilde{T} $ of~$ T $.
\end{Lemma}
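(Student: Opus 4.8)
The plan is to verify directly that $\widetilde\va$ satisfies the cocycle identity over the suspension flow; the entire content is elementary bookkeeping with the floor and fractional–part functions, so I expect no genuine obstacle.

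First I would fix a convenient model of the suspension: realise $\widetilde T=(\widetilde T_t)_{t\in\R}$ on $X\times[0,1)$, with $[0,1)$ identified with $\R/\Z$ via the fractional part, by
\[
\widetilde T_t(x,s)=\big(T^{\lfloor t+s\rfloor}x,\{t+s\}\big),
\]
exactly as for the inclusion $\Z\subset\R$ in the Example above. Since $\va^{(n)}$ is Borel for every $n\in\Z$ and $(t,s)\mapsto\lfloor t+s\rfloor$ is Borel, the map $\widetilde\va$ is Borel, so only the cocycle relation needs to be checked. I would also record the elementary $\Z$-cocycle identity for $\va$ (writing the group of $K$ additively, $\va^{(0)}=0$, $\va^{(-n)}(x)=-\va^{(n)}(T^{-n}x)$):
\[
\va^{(m+n)}(x)=\va^{(n)}(x)+\va^{(m)}(T^nx),\qquad m,n\in\Z,\ x\in X.
\]

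The key step is the numerical identity
\[
\lfloor t_1+t_2+s\rfloor=\lfloor t_2+s\rfloor+\big\lfloor t_1+\{t_2+s\}\big\rfloor,\qquad t_1,t_2\in\R,\ s\in[0,1),
\]
which is immediate from $t_1+t_2+s=\lfloor t_2+s\rfloor+\big(t_1+\{t_2+s\}\big)$ together with the integrality of $\lfloor t_2+s\rfloor$; the same computation gives $\{t_1+t_2+s\}=\{t_1+\{t_2+s\}\}$, which is precisely the statement that $\widetilde T_{t_1}\circ\widetilde T_{t_2}=\widetilde T_{t_1+t_2}$. Setting $n:=\lfloor t_2+s\rfloor$ and $m:=\lfloor t_1+\{t_2+s\}\rfloor$, so that $m+n=\lfloor t_1+t_2+s\rfloor$ and $\widetilde T_{t_2}(x,s)=(T^nx,\{t_2+s\})$, I would then conclude
\[
\widetilde\va(t_1+t_2,(x,s))=\va^{(m+n)}(x)=\va^{(n)}(x)+\va^{(m)}(T^nx)=\widetilde\va(t_2,(x,s))+\widetilde\va\big(t_1,\widetilde T_{t_2}(x,s)\big),
\]
using $\widetilde\va\big(t_1,(T^nx,\{t_2+s\})\big)=\va^{(\lfloor t_1+\{t_2+s\}\rfloor)}(T^nx)=\va^{(m)}(T^nx)$. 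This is exactly the (left) cocycle identity for $\widetilde\va$ over $\widetilde T$, which finishes the proof.

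The only point that needs a little care is orientation: one must split $\lfloor t_1+t_2+s\rfloor$ in the order corresponding to ``first flow by $t_2$, then by $t_1$'' and invoke the $\Z$-cocycle identity in the matching direction — consistently with the cocycle $\theta(t,\{r\})=\lfloor t+\{r\}\rfloor$ used earlier for $\Z\subset\R$; the opposite convention would yield a transposed cocycle that does not match. Everything else is routine.
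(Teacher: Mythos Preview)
Your proof is correct and follows essentially the same approach as the paper: both verify the cocycle identity by combining the floor identity $\lfloor t_1+t_2+s\rfloor=\lfloor t_2+s\rfloor+\lfloor t_1+\{t_2+s\}\rfloor$ with the $\Z$-cocycle relation for $\va$. The only cosmetic differences are that the paper derives the floor identity from the flow relation $\widetilde T_{t_1+t_2}=\widetilde T_{t_2}\circ\widetilde T_{t_1}$ rather than proving it directly, and it writes the resulting identity with the roles of $t_1$ and $t_2$ interchanged (which is immaterial since both $\R$ and $K$ are abelian).
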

\begin{proof}
  First notice that
  \[
    \widetilde{T}_t(x,s)=(T^{\lfloor t+s\rfloor }x,\{t+s\}),
  \]
  whence $ \widetilde{T}_{t_1+t_2}(x,s)=\widetilde{T}_{t_2}(\widetilde
  {T}_{t_1}(x,s)) $ implies
  \begin{equation}
    \label{su1} \lfloor t_1+t_2+s\rfloor =\lfloor t_1+s\rfloor +\lfloor t_2+\{t_1+s\}\rfloor .
  \end{equation}
  Now, we have
  \begin{align*}
    \widetilde{\va}(t_1+t_2,(x,s))&=\va^{(\lfloor t_1+t_2+s\rfloor )}(x)\stackrel{\eqref
    {su1}}{=}\va^{(\lfloor t_1+s\rfloor +\lfloor t_2+\{t_1+s\}\rfloor )}(x)\\
    &    =
    \va^{(\lfloor t_1+s\rfloor )}(x)+\va^{(\lfloor
      t_2+\{t_1+s\}\rfloor )}(T^{\lfloor t_1+s\rfloor }x)
\\&=
    \widetilde{\va}(t_1,(x,s))+\widetilde{\va}(t_2,\widetilde{T}_{t_1}
    (x,s)).
  \end{align*}
\end{proof}

Assume now that~$ K=\R $, so~$ \va:X\to\R $.  We will now consider a
special class of extensions of~$ T $.  Namely, let~$ \cs=(S_t) $ be an
ergodic flow on a probability standard Borel space~$ \ycn $. Then
consider the (probability) space~$ (X\times Y,\cb\ot\cc,\mu\ot\nu) $
on which we consider the (measure-preserving) automorphism:
\[
  \tfs(x,y):=(Tx,S_{\va(x)}(y)).
\]
Note that if~$ \cs $ is a continuous flow acting on~$ Y $ compact and
if~$ \va $ is continuous then~$ \tfs $ is a homeomorphism of~$ X\times
Y $.  Our special interest in this class of actions come from the
following result.

\begin{Th}[\cite{Ku-Le1}] \label{t:kule}
  For each~$ \va:\T\to\R $ of class~$ C^2 $, $ \va $
  different from a tri\-go\-polynomial, there is~$ \alpha $
  irrational that if~$ Tx=x+\alpha $ and~$ \cs $ is an arbitrary
  uniquely ergodic flow then~$ \tfs $ has the AOP property.  If~$ \va
  (x)=x-\frac12 $ then for each~$ \alpha $ with bounded partial
  quotients and each uniquely ergodic~$ \cs $ which has no non-trivial
  rational eigenvalues, $ \tfs $ has the AOP property.
\end{Th}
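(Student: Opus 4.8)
The plan is to verify the AOP property of $\tfs$ directly, along the lines of \cite{Ku-Le1}. Writing $\tfs^n(x,y)=(T^nx,S_{\va^{(n)}(x)}(y))$ with $\va^{(n)}=\va+\va\circ T+\dots+\va\circ T^{n-1}$, I would first split
\[
L^2_0(\T\times Y,\mu\otimes\nu)=L^2_0(\T,\mu)\ \oplus\ \bigl(L^2(\T,\mu)\otimes L^2_0(Y,\nu)\bigr).
\]
On the first summand $\tfs$ reduces to the rotation $T=R_\alpha$, which is already known to be AOP by \cite{Ab-Le-Ru}; concretely, for characters $x\mapsto e^{2\pi i kx}$, $x\mapsto e^{2\pi i lx}$ and distinct primes $p,q$ both exceeding $\max(|k|,|l|)$, Lemma~\ref{lem:torus} (with $d=1$) shows every ergodic joining of $R_{p\alpha}$ and $R_{q\alpha}$ annihilates $e^{2\pi i kx_1}\overline{e^{2\pi i lx_2}}$. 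So it would remain to control, as $p\ne q$ run over the primes to infinity, $\sup_{\rho\in J^e(\tfs^p,\tfs^q)}\bigl|\int f_1\otimes F_1\otimes f_2\otimes F_2\,d\rho\bigr|$ for $f_1,f_2\in C(\T)$ and $F_1,F_2$ in a dense subset of $L^2_0(Y,\nu)$ (eigenfunctions of $\cs$ together with a dense set in its continuous part).

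Next I would peel off the base. For $\rho\in J^e(\tfs^p,\tfs^q)$ the image measure on $\T\times\T$ is an ergodic joining of $R_{p\alpha}$ and $R_{q\alpha}$, hence by Lemma~\ref{lem:torus} the Haar measure on a coset $\{(ps,qs+c):s\in\T\}$ on which the action is conjugate to $R_\alpha$. Carrying $\rho$ through that conjugacy produces an ergodic invariant measure $\widehat\rho$ on $\T\times Y\times Y$ for the skew product
\[
\Psi_c(s,y_1,y_2)=\bigl(R_\alpha s,\ S_{\psi_1(s)}(y_1),\ S_{\psi_{2,c}(s)}(y_2)\bigr),\qquad
\psi_1(s)=\va^{(p)}(ps),\quad \psi_{2,c}(s)=\va^{(q)}(qs+c),
\]
with $\T$-marginal $\mu$ and each $Y$-marginal $\nu$. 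The maps $\psi_1,\psi_{2,c}$ are $C^2$ cocycles over $R_\alpha$ (one has $\psi_1^{(n)}(s)=\va^{(pn)}(ps)$, and similarly for $\psi_{2,c}$): they are the ``$p$-th'' and ``$q$-th'' rescalings of $\va$, whose Fourier spectra live essentially on $p\Z$ and $q\Z$ and inherit the decay of $\widehat\va$.

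The remaining task, which I expect to be the main obstacle, is to show that for all large distinct primes and every $c$ the measure $\widehat\rho$ kills $F_1\otimes F_2$ whenever $F_1,F_2$ have zero $\nu$-mean (and, in the mixed terms, kills $f\otimes F$-type functions with $f$ of zero $\mu$-mean on $\T$). Decomposing along the spectral decomposition of $\cs$, this reduces, on the subspace associated with a pair of spectral parameters $\lambda_1,\lambda_2$ of $\cs$ (not both $0$), to the assertion that the multiplicative cocycle $s\mapsto e^{2\pi i(\lambda_1\psi_1(s)-\lambda_2\psi_{2,c}(s))}$ is not a coboundary over $R_\alpha$: if it is not, an $R_\alpha$-equivariance argument (the twisted rotation operator has no invariant functional on the fibre) forces the corresponding integral against $\widehat\rho$ to vanish. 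One is thus left to rule out, for all large primes $p,q$ and every $c$, the solvability of this family of cohomological equations --- built from Birkhoff sums of $\va$ at the two disjoint scales $p$ and $q$. When $\va\in C^2$ is not a trigonometric polynomial, infinitely many $\widehat\va(n)$ are non-zero with controlled decay, and I would construct $\alpha$ irrational by a Liouville-type recursion calibrated to the support and size of $\widehat\va$ so that no such equation is solvable at two disjoint prime scales at once; this makes $\widehat\rho$ equal to $\mu\otimes(\nu\otimes\nu)$ on the relevant subspaces, uniformly in $\rho$. For the sawtooth cocycle $\va(x)=x-\tfrac12$ the choice of $\alpha$ is constrained instead to have bounded partial quotients, and one invokes the classical rigidity/joining theory of Anzai-type skew products $R_\alpha\times_{x-1/2}\R$ to enumerate all ergodic couplings of the fibre; here the extra hypothesis that $\cs$ has no non-trivial rational eigenvalues is exactly what discards the spurious couplings created by the continued-fraction (``finite order'') behaviour of $x-\tfrac12$.

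Finally I would put the pieces together: for $f_i\otimes F_i$ with $F_i$ of zero mean, Cauchy--Schwarz in the $\T$-variable absorbs the bounded, $c$-independent base contribution, so that $\sup_{\rho\in J^e(\tfs^p,\tfs^q)}\bigl|\int f_1\otimes F_1\otimes f_2\otimes F_2\,d\rho\bigr|$ is dominated by $\|f_1\|_{L^2(\mu)}\|f_2\|_{L^2(\mu)}$ times the supremum over fibre couplings of $\bigl|\int F_1\otimes F_2\,d\widehat\rho\bigr|$, which tends to $0$. Combined with the AOP of the base rotation this establishes the AOP property of $\tfs$, in both regimes.
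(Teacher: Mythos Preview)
This theorem is quoted from \cite{Ku-Le1} and is not proved in the present paper; it is used as a black box to produce examples of AOP flows with singular continuous spectrum (Corollary~\ref{c:kule}, Proposition~\ref{p:negK}). There is therefore no proof here to compare your proposal against.

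That said, your outline is a reasonable reconstruction of the strategy one expects in \cite{Ku-Le1}: decompose $L^2_0(\T\times Y)$ into the base part (handled by the AOP of irrational rotations) and the fibre part; push any ergodic joining of $\tfs^p$ and $\tfs^q$ down to a coset of the diagonal in $\T\times\T$ via Lemma~\ref{lem:torus}; rewrite the joining as an ergodic measure for a Rokhlin-cocycle extension over $R_\alpha$ with cocycle $(\psi_1,\psi_{2,c})$; and reduce the vanishing of the fibre integrals to the non-coboundary property of the twisted cocycles $s\mapsto e^{2\pi i(\lambda_1\psi_1(s)-\lambda_2\psi_{2,c}(s))}$. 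The two regimes (generic $\alpha$ for $C^2$ non-trigonometric $\va$, bounded partial quotients for $\va(x)=x-\tfrac12$) are exactly where the cohomological obstruction is established in \cite{Ku-Le1}. Your sketch is honest about where the real work lies (the cohomology step), and the final Cauchy--Schwarz reduction you describe is essentially the content of the estimate~\eqref{in23} proved later in this paper in the abstract induced-action setting.
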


It follows now from Theorem~\ref{PropositionG} that the suspension of the~$ \tfs $
with the AOP property will also enjoy the same property (for flows and relatively).
We will now try to say a little bit more on the structure of these
suspensions (including a relationship with nilflows, see Proposition~%
\ref{p:negK}).

\begin{Lemma}
  \label{l:sre2} We have (up to natural isomorphism)~$ \widetilde{\tfs}=\widetilde
  {T}_{\widetilde{\va},\cs} $.
\end{Lemma}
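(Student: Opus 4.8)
The plan is to show that the two $\R$-actions differ by nothing more than an obvious permutation of coordinates, and then to verify equivariance by unwinding both constructions.

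First I would pin down the two models. The automorphism $\tfs$ acts on $X\times Y$, so its suspension $\widetilde{\tfs}$ (with roof function identically $1$) is the $\R$-action on $(X\times Y)\times[0,1)$ preserving $(\mu\ot\nu)\ot\Leb$, given by $\widetilde{\tfs}_t((x,y),s)=(\tfs^{\lfloor t+s\rfloor}(x,y),\{t+s\})$. On the other side, $\widetilde{T}$ is the suspension flow of $T$ on $X\times[0,1)$, and $\widetilde{\va}$ is the $\R$-cocycle over $\widetilde{T}$ furnished by Lemma~\ref{l:sre1}; thus $\widetilde{T}_{\widetilde{\va},\cs}$ is the $\R$-action on $(X\times[0,1))\times Y$ preserving $(\mu\ot\Leb)\ot\nu$, given by $(\widetilde{T}_{\widetilde{\va},\cs})_t((x,s),y)=(\widetilde{T}_t(x,s),S_{\widetilde{\va}(t,(x,s))}(y))$. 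The natural isomorphism will be the coordinate transposition $\Psi((x,y),s)=((x,s),y)$, which is plainly a measure-preserving Borel isomorphism (a homeomorphism in the topological set-up), carrying $(\mu\ot\nu)\ot\Leb$ onto $(\mu\ot\Leb)\ot\nu$.

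Next I would check that $\Psi$ intertwines the two actions. Writing $\tfs^n(x,y)=(T^nx,S_{\va^{(n)}(x)}(y))$, where $\va^{(n)}$ is the $n$-th Birkhoff sum of $\va$ along $T$ (with the standard extension to $n\in\Z$), one gets
\[
\widetilde{\tfs}_t\big((x,y),s\big)=\Big(\big(T^{\lfloor t+s\rfloor}x,\,S_{\va^{(\lfloor t+s\rfloor)}(x)}(y)\big),\,\{t+s\}\Big);
\]
while, using Lemma~\ref{l:sre1} in the form $\widetilde{\va}(t,(x,s))=\va^{(\lfloor t+s\rfloor)}(x)$,
\[
\big(\widetilde{T}_{\widetilde{\va},\cs}\big)_t\big((x,s),y\big)=\Big(\big(T^{\lfloor t+s\rfloor}x,\,\{t+s\}\big),\,S_{\va^{(\lfloor t+s\rfloor)}(x)}(y)\Big).
\]
These two right-hand sides correspond under $\Psi$, so $\Psi\circ\widetilde{\tfs}_t=(\widetilde{T}_{\widetilde{\va},\cs})_t\circ\Psi$ for all $t\in\R$, which is exactly the claim.

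As for the work involved, there is no genuine obstacle: the lemma is essentially a tautology once both constructions are written out. The only points needing care are routine bookkeeping — that $\widetilde{\va}$ really is a cocycle over $\widetilde{T}$ (this is precisely Lemma~\ref{l:sre1}, which in turn rests on the additivity $\va^{(m+n)}=\va^{(m)}+\va^{(n)}\circ T^m$ together with the identity $\lfloor t_1+t_2+s\rfloor=\lfloor t_1+s\rfloor+\lfloor t_2+\{t_1+s\}\rfloor$ used there), that the computation above is consistent for negative $t$ via the usual extension $\va^{(-n)}(x)=-\va^{(n)}(T^{-n}x)$, and that one uses the same convention for the suspension (roof $1$, base $[0,1)$, identical floor/fractional-part bookkeeping) on both sides, so that the measures match under $\Psi$.
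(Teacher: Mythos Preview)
Your proposal is correct and follows essentially the same approach as the paper: both arguments unwind the two constructions and verify that they agree up to the coordinate swap $((x,y),s)\mapsto((x,s),y)$, using the definition $\widetilde{\va}(t,(x,s))=\va^{(\lfloor t+s\rfloor)}(x)$ from Lemma~\ref{l:sre1}. The paper's version is simply the bare chain of equalities you wrote out, without naming $\Psi$ explicitly or discussing the measure-preservation and negative-$t$ points (which are indeed routine).
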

\begin{proof} We have
  \begin{align*}
    \left(\widetilde{\tfs}\right)_t((x,y),s)&=\big(\left(\tfs\right)^{\lfloor t+s\rfloor }
    (x,y),\{t+s\}\big)\\& = \big(T^{\lfloor t+s\rfloor }x, S_{\va^{(\lfloor t+s\rfloor )}(x)}(y),\{t+s\}\big)
    \\&=
    \big(T^{\lfloor t+s\rfloor }x, \{t+s\}, S_{\va^{(\lfloor t+s\rfloor )}(x)}(y)\big)=\big(\widetilde{T}_t(x,s),S_
    {\widetilde{\va}(t,(x,s))}(y)\big)
    \\&= \left(\widetilde{T}_{\widetilde{\va},\cs}\right)_t
    ((x,s),y).
  \end{align*}
\end{proof}
By its definition, each suspension flow has the linear flow~$ {\mathcal L}
$:~$ L_tx=x+t $ on~$ \T $, as its factor, %
\footnote{The factor map is given by~$ (x,s)\mapsto s $.} so if we
think about good ergodic properties of the suspension, it should be
considered relatively to this factor~$ {\mathcal L} $.  For example, if~$
T $ is weakly mixing then $ \widetilde{T} $ is relatively weakly
mixing over~$ {\mathcal L} $.  Indeed,
\[
\begin{split}
    \left(\widetilde T\times_{\mathcal L}\widetilde T\right)_t(x_1,x_2,s)&=
  \big(T^{\lfloor t+s\rfloor }x_1,T^{\lfloor t+s\rfloor }x_2,\{s+t\}\big)\\&=
  \big((T\times T)^{\lfloor t+s\rfloor }(x_1,x_2),\{s+t\}\big),
\end{split}
\]
so the relative product is just the suspension over~$ T\times T $.
Similar calculation can be done if we want to compute some relative
properties of~$ \widetilde\tfs $ over~$ \widetilde{T} $.  Indeed, for
the relative product we have:
\begin{align*}
  &(\widetilde\tfs)\times_{\widetilde T}(\widetilde\tfs)
  ((x,y_1),(x,y_2),s)\\&\qquad=
  \big((\tfs)^{\lfloor t+s\rfloor }(x,y_1),(\tfs)^{\lfloor t+s\rfloor }(x,y_2)
  ,\{s+t\}\big)\\&\qquad
  =
  \big(\left(\tfs\times_{T}\tfs\right)^{\lfloor t+s\rfloor }(x,y_1,y_2),\{s+t\}\big),
\end{align*}
so again the relative product of the suspension is the suspension of
the relative product.  Now, the examples of~$ \tfs $ in Theorem~%
\ref{t:kule} are relatively weakly mixing over~$ T $ whenever $ \cs $
is weakly mixing.  It follows that, we have the following.

\begin{Cor}
  \label{c:kule} Assume that~$ T $ and~$ \va $ are as in Theorem~%
  \ref{t:kule}.  Assume that~$ \cs $ is weakly mixing.  Then~$
  \widetilde\tfs $ have the relative AOP property and they are relatively
  weakly mixing extensions of~$ \widetilde{T} $.~%
  \footnote{Note that this is completely different case than nilflows
  which are distal extensions of of linear flows.}
\end{Cor}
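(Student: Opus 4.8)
The plan is to prove the two assertions separately, drawing on the facts already assembled in this subsection. For the relative AOP property the only real work is to recognise the suspension $\widetilde{\tfs}$ as an \emph{induced} action in the sense of Section~\ref{ap-d}, so that Proposition~\ref{PropositionG} applies; for the relative weak mixing the only work is to transport ergodicity of a relatively independent self-joining across the suspension construction.

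First I would handle the relative AOP property. By Theorem~\ref{t:kule}, under the stated hypotheses on $T$ and $\va$, the automorphism $\tfs$ has the AOP property as a $\Z$-action. Taking $H=\Z$ and $G=\R$ in the set-up of Section~\ref{ap-e} --- note that $\Z$ is closed and cocompact in $\R$ and, as required in~\eqref{in1}, has no non-trivial compact subgroups, while $\R/\Z$ carries its Haar probability measure --- the first Example of the subsection on examples and applications together with Lemma~\ref{l:sre2} identifies the $\R$-action induced from $\tfs$ with the suspension flow $\widetilde{\tfs}=\widetilde{T}_{\widetilde\va,\cs}$, the factor $G/H=\R/\Z$ corresponding to the linear circle flow $\mathcal L$. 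Proposition~\ref{PropositionG} then yields exactly the conclusion that for all $E,F$ in $L^2\big((X\times Y)\times\R/\Z\big)\ominus L^2(\mathcal L)$ one has $\sup_{\kappa}\big|\int E\otimes F\,d\kappa\big|\to 0$ along distinct primes $p\neq q\to\infty$, with $\kappa$ ranging over ergodic joinings of $\widetilde{\tfs}^{(p)}$ and $\widetilde{\tfs}^{(q)}$; that is, $\widetilde{\tfs}$ has the AOP property relative to $\mathcal L$.

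For the relative weak mixing over $\widetilde T$, I would start from the fact recalled just above, namely that the systems $\tfs$ of Theorem~\ref{t:kule} are relatively weakly mixing extensions of $T$ whenever $\cs$ is weakly mixing; equivalently, the relatively independent self-joining $\tfs\times_T\tfs$, acting on $X\times Y\times Y$ by $(x,y_1,y_2)\mapsto(Tx,S_{\va(x)}y_1,S_{\va(x)}y_2)$, is ergodic. The computation preceding the statement shows that $(\widetilde{\tfs})\times_{\widetilde T}(\widetilde{\tfs})$ is, up to the natural identification of the underlying spaces, the suspension flow over $\tfs\times_T\tfs$; one checks moreover that the fibre measures over $\widetilde T$ are the product measures $\nu\otimes\nu$, so this identification respects the relatively independent joining structure. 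Since a suspension flow is ergodic if and only if its base automorphism is, ergodicity of $\tfs\times_T\tfs$ gives ergodicity of $(\widetilde{\tfs})\times_{\widetilde T}(\widetilde{\tfs})$, which is precisely the statement that $\widetilde{\tfs}$ is a relatively weakly mixing extension of $\widetilde T$.

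I expect the bulk of the care to go not into any hard estimate but into the two bookkeeping points flagged above: first, verifying that the abstract induced $G$-action of Proposition~\ref{PropositionG} genuinely coincides --- as a measure-preserving system, together with its circle factor --- with the concrete suspension flow $\widetilde{\tfs}$, so that the proposition's conclusion about $L^2(Y\times G/H)\ominus L^2(G/H)$ is the desired relative-AOP statement over $\mathcal L$; and second, checking that forming the suspension commutes with forming the relatively independent self-joining, which is what lets relative weak mixing descend from $\tfs/T$ to $\widetilde{\tfs}/\widetilde T$. Neither point is deep, but both must be stated cleanly.
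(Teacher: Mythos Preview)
Your proposal is correct and follows exactly the approach the paper lays out: the relative AOP part is Proposition~\ref{PropositionG} applied with $H=\Z\subset G=\R$ once one identifies the induced action with the suspension (the Example preceding Corollary~\ref{c:km} and Lemma~\ref{l:sre2}), and the relative weak mixing part is the computation just before the corollary showing that $(\widetilde{\tfs})\times_{\widetilde T}(\widetilde{\tfs})$ is the suspension of $\tfs\times_T\tfs$, together with the stated fact that $\tfs$ is relatively weakly mixing over $T$ when $\cs$ is weakly mixing. The paper gives no separate proof, treating the corollary as immediate from this preceding discussion; your write-up simply makes the two bookkeeping points explicit.
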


The situation changes if on the fibers, instead of~$ \cs $ weakly
mixing, we put a distal flow.  For example, if we apply the above to $
\va(x)=x-\frac12 $ and~$ S_t=x+t $, we obtain a nilflow (the
Heisenberg case), but because of restrictions on~$ \cs $ (which in
this case has rational eigenvalues), we cannot apply
\cite{Ku-Le1} directly.  Consider~$ S_tx=x+\beta t $ with~$ \beta $
irrational.  Here we obtain that~$ \tfs $ has the AOP property.  The
natural question arises whether in case of~$ \beta $ irrational, we
also obtain a nilflow.  The answer is negative as the following result
shows.

\begin{Prop}
  \label{p:negK} If~$ \va(x)=x-\frac12 $ and~$ S^{(\beta)}_t(x):=x+t\beta
  $ with~$ \beta\notin\Q $ then~$ \widetilde\tfs $ has singular
  spectrum.
\end{Prop}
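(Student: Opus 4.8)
The plan is to use the suspension identity of Lemma~\ref{l:sre2} to reduce the spectral analysis of $\widetilde\tfs$ to that of a single twisted unitary over the base rotation, and then to appeal to the spectral theory of skew products over badly approximable rotations. First I would record that, writing $T=R_\alpha$ (the case $\va(x)=\{x\}-\tfrac12$ of Theorem~\ref{t:kule} forces $\alpha$ to have bounded partial quotients), Lemmas~\ref{l:sre1} and~\ref{l:sre2} identify $\widetilde\tfs$ with the $\R$-action $\widetilde T_{\widetilde\va,\cs}$ on $\widetilde X\times\T$: here $\widetilde T$ is the suspension of $R_\alpha$, i.e.\ a linear flow on $\T^2$ and hence of pure point spectrum; the cocycle is the Birkhoff cocycle $\widetilde\va(t,(x,s))=\va^{(\lfloor t+s\rfloor)}(x)$; and $\cs=(S^{(\beta)}_t)$ acts on $\T$ by $y\mapsto y+t\beta$, a flow of pure point spectrum with eigenvalue group $\beta\Z$. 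Since the fibre translations $y\mapsto y+z$ commute with the flow, $L^2$ splits into the isotypic subspaces $\mathcal H_n$, $n\in\Z$, on which the fibre acts through $z\mapsto e^{2\pi i n z}$. On $\mathcal H_0=L^2(\widetilde X)$ the flow is just $\widetilde T$, whose spectrum is pure point and so singular, so only $n\neq0$ will need attention.

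Next I would compute the spectral type on $\mathcal H_n$ for $n\neq0$. Writing an element of $\mathcal H_n$ as $G(x,s)\,e^{2\pi i n y}$, the Koopman operator acts by $G\mapsto e^{2\pi i n\beta\widetilde\va(t,\cdot)}\,G\circ\widetilde T_t$; its time-one operator is $V_{n\beta}\otimes\mathrm{Id}$, where $V_c h(x)=e^{2\pi i c(\{x\}-1/2)}\,h(x+\alpha)$ on $L^2(\T)$ and $\mathrm{Id}$ acts on the suspension-circle factor, and the whole family is the suspension flow built over this time-one operator. By the standard correspondence between the spectral type of a suspension flow and that of its time-one map, $\widetilde\tfs$ restricted to $\mathcal H_n$ has singular maximal spectral type if and only if $V_{n\beta}$ does; so everything comes down to proving that $V_{n\beta}$ has purely singular spectrum for every $n\neq0$.

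For this last point the hypothesis $\beta\notin\Q$ is decisive, and the argument must use the bounded partial quotients of $\alpha$. If $c=n\beta$ were an integer we would have $e^{2\pi i c(\{x\}-1/2)}=(-1)^{c}e^{2\pi i c x}$, so $V_c$ would be a unimodular weighted shift (by $c$ steps) on $\ell^2(\Z)$, with Lebesgue spectrum --- this is exactly the Heisenberg nilflow that occurs when $\beta=1$. Since $c$ is irrational, $e^{2\pi i c(\{x\}-1/2)}$ is instead a unimodular function of bounded variation with a single jump of the irrational size $-c\bmod1$; in particular it is not cohomologous over $R_\alpha$ to a constant multiple of a character, so $V_c$ is not reducible to a weighted shift. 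Because $\alpha$ has bounded partial quotients, the step-cocycle skew product $R_\alpha\ltimes_{c(\{\cdot\}-1/2)\bmod1}\T$ admits a cyclic approximation fast enough to fall under the Katok--Stepin-type criteria, hence has purely singular (indeed simple) spectrum; as $V_{n\beta}$ is its first fibre mode, it too has purely singular spectrum. Summing over the isotypic components then gives that $\sigma^{\widetilde\tfs}=\sigma_{\mathcal H_0}\oplus\bigoplus_{n\neq0}\sigma_{\mathcal H_n}$ is singular.

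The hard part will be this singularity of $V_{n\beta}$ --- equivalently, of the Anzai skew product over $R_\alpha$ given by the step cocycle with irrational jump: it is the one genuinely non-soft ingredient, and I would extract it either from the spectral analysis of Rokhlin cocycle extensions of rotations in \cite{Ku-Le1} (and the references therein), or prove it directly via approximation by periodic transformations. Everything else --- the suspension identification and the isotypic decomposition --- is routine bookkeeping.
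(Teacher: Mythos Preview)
Your approach is essentially the paper's: reduce via Lemma~\ref{l:sre2} and an isotypic (Fourier) decomposition in the fibre to the singularity of the weighted Koopman operators $V_{n\beta}:h\mapsto e^{2\pi i n\beta\va}\,h\circ R_\alpha$ for $n\in\Z\setminus\{0\}$. The paper streamlines two of your steps. First, it replaces the flow on the fibre by the algebraic identity $T_{\va,\cs^{(\beta)}}=T_{\beta\va,\cs^{(1)}}$ and then invokes the general spectral formula for Rokhlin cocycle extensions from \cite{Le-Pa3}; this yields your isotypic reduction in one stroke and makes your ``time-one/suspension'' discussion unnecessary. Second, for the singularity of $V_{n\beta}$ the paper does not go through \cite{Ku-Le1} or a Katok--Stepin approximation: it cites \cite{Iw-Le-Ma} (Iwanik--Lema\'nczyk--Mauduit), which shows that for a piecewise absolutely continuous cocycle over an irrational rotation whose sum of jumps is not an integer, the associated weighted operator has purely singular spectrum. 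Since the jump of $n\beta\va$ is $-n\beta\notin\Z$, this applies directly and for \emph{every} irrational $\alpha$, so the bounded-partial-quotients hypothesis plays no role in the spectral statement itself.
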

\begin{proof}
  We have~$ \widetilde{T}_{\widetilde{\va},\cs^{(\beta)}}=\widetilde
  {T}_{\beta\widetilde{\va},\cs^{(1)}} $.  Then
  $$
    \widetilde{T}_{\beta\widetilde{\va},\cs^{(1)}}=\widetilde{T_{\beta\va,\cs^
    {(1)}}}.
  $$
  We have to now argue that the maximal spectral type of $ T_{\beta\va,\cs^
  {(1)}} $ is singular.  In view of
  \cite{Le-Pa3}, to compute the maximal spectral type of~$ T_{\beta\va,\cs^
  {(1)}} $, we need to calculate the maximal spectral types of
  weighted unitary operators given by the multiples~$ r\beta\va $, $ r\in\R
  $, and then integrate this against the maximal spectral type of~$
  \cs^{(1)} $.  The latter is simply a purely atomic measure whose
  atoms are in~$ \Z $.  This means that we are interested only in the
  weighted operators given by~$ m\beta\va $ and they are all with
  singular spectrum by
  \cite{Iw-Le-Ma}.
\end{proof}

\section*{Appendix}
\appendix
\section{Cocycles and group extensions} Assume that~$ T $ is an
ergodic automorphism of a probability standard Borel space $ \xbm $.
Let~$ K $ be a compact (metric) abelian group.  Each Borel function~$
\va:X\to K $ is called a {\em cocycle}.  Actually, $ \va $ determines~$
\va^{(\,\cdot\,)}(\,\cdot\,):\Z\times X\to K $:
\begin{equation}
  \label{defcoc} \va^{(n)}(x)=\left\{%
  \begin{array}{ccc}
    \va(x)+\va(Tx)+\ldots+ \va(T^{n-1}x)&\mbox{if}&n\geq1\\
    0&\mbox{if}&n=0\\
    -(\va(T^{n}x)+\ldots+\va(T^{-1}x))& \mbox{if}&n<0,
  \end{array}
  \right.
\end{equation}
satisfying the cocycle identity~$ \va^{(m+n)}(x)=\va^{(m)}(x)+\va^{(n)}
(T^mx) $ for all~$ m,n\in\Z $ and $ \mu $-a.e.~$ x\in X $. Having~$ T $
and~$ \va $, we can define the corresponding {\em group extension}~$ T_\va
$ of~$ T $ by setting:
\[
  T_\va:X\times K\to X\times K,\,T_\va(x,k)=(Tx,\va(x)+k).
\]
Clearly $ T_\va $ is an automorphism of~$ (X\times K,\cb\ot\cb(K),\mu\ot\la_K)
$, where~$ \la_K $ stands for Haar measure on~$ K $.  Then~$ T_\va $
is ergodic if and only if the only measurable solutions~$ \xi:X\to\bs^1
$ of the equations
\[
  \chi\circ \va=\xi\circ T/\xi,\,\chi\in\widehat{K}
\]
(i.e.\ $ \chi\circ\va $ is a {\em coboundary}) exist when~$ \xi $ is a
constant function and~$ \chi=\raz $
\cite{An}.

Let~$ \sigma_k:X\times K\to X\times K $ be defined by $ \sigma_k(x,k')=
(x,k'+k) $.  Then~$ \sigma_k $ ($ k\in K $) is an automorphism of~$ (X\times
K,\cb\ot\cb(K),\mu\ot\la_K) $ and $ \sigma_k$ is an element of the centralizer $ C(\tf) $.  Then, by a
slight abuse of notation, we have $ K\subset C(\tf) $ and~$ K $ is a
compact (abelian) subgroup in the weak topology.  The reciprocal is
also true.

\begin{Prop}[e.g.\   \cite{Ju-Ru}]  \label{p:grext}
  Assume that~$ \ov{T} $ is an ergodic automorphism of a
  probability standard Borel space~$ (\ov{X},\ov{\cb},\ov{\mu}) $.
  Assume that~$ K\subset C(\ov{T}) $ is a compact abelian subgroup (we
  assume that~$ K $ acts freely on~$ X $) of the centalizer.  Let
  \[
    \ov\ca:=\{\ov{A}\in\ov{\cb}:\:k\ov A=\ov A\text{ for each } k\in K\}.
  \]
  Then~$ \ov{\ca} $ is an~$ \ov{T} $-invariant~$ \sigma $-algebra and
  \[
  K=\{\ov R\in C(\ov T):\:\ov R\ov A=\ov A\text{ for each }\ov{A}\in\ov
  {\ca}\}.
  \]
  If~$ T=\ov{T}|_{\ov\ca} $ is the factor automorphism
  acting on the factor space \[ \xbm := (\ov{X}/\ov{\ca},\ov{\ca},\ov{\mu}|_
  {\ov\ca}) \] then there is a cocycle~$ \va:X\to K $ such that~$ \ov{T}
  $ is isomorphic to~$ \tf $ (with an isomorphism being the identity
  on~$ \ov\ca $).
\end{Prop}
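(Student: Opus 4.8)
The plan is to run the classical structure theory of compact group extensions in three steps. First I would check the elementary facts about $\ov\ca$. It is a $\sigma$-algebra, being $\bigcap_{k\in K}\{\ov A\in\ov\cb:k\ov A=\ov A\}$, and it is $\ov T$-invariant: since every $k\in K$ commutes with $\ov T$, from $k\ov T^{-1}=\ov T^{-1}k$ we get $k(\ov T^{-1}\ov A)=\ov T^{-1}(k\ov A)=\ov T^{-1}\ov A$ for $\ov A\in\ov\ca$. Next, because $K$ is compact it acts on $\ov X$ with closed orbits and the orbit space is countably separated; modulo $\ov\mu$, $\ov\ca$ is exactly the $\sigma$-algebra generated by the $K$-orbits, so the disintegration of $\ov\mu$ over $\ov\ca$ has conditional measures $\ov\mu_x$ each supported on a single $K$-orbit. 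As $K$ acts freely and (being inside $C(\ov T)$) preserves $\ov\mu$, each $\ov\mu_x$ is a $K$-invariant probability on that orbit, hence is the push-forward of Haar measure $\la_K$ under $k\mapsto k\cdot s(x)$ for any chosen point $s(x)$ of the orbit, by uniqueness of Haar measure.

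Secondly I would build the isomorphism with $\tf$. Fix a Borel cross-section $s\colon X\to\ov X$ of the orbit map, where $X:=\ov X/\ov\ca$ (it exists by the countable separation just used), and set $\Phi(x,k)=k\cdot s(x)$. By the preceding computation of conditional measures, $\Phi$ is a measure-space isomorphism $(X\times K,\mu\ot\la_K)\to(\ov X,\ov\mu)$ carrying $\ov\ca$ onto $\cb\ot\{\emptyset,K\}$, where $\mu=\ov\mu|_{\ov\ca}$. Since $\ov T$ commutes with $K$ it descends to the factor automorphism $T=\ov T|_{\ov\ca}$ of $X$, and $\ov T(s(x))$ lies in the $K$-orbit of $s(Tx)$; by freeness there is a unique $\va(x)\in K$ with $\ov T(s(x))=\va(x)\cdot s(Tx)$, and $\va$ is readily seen to be measurable. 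Then $\Phi^{-1}\ov T\,\Phi(x,k)=(Tx,\va(x)+k)$ in additive notation on $K$, so $\ov T$ is isomorphic to $\tf$; the cocycle identity for $\va$ is forced by $\ov T$ being an automorphism.

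Thirdly I would prove the centralizer identity. One inclusion is immediate: every $\sigma_k$ lies in $C(\ov T)$ and fixes every $K$-invariant set. Conversely, let $\ov R\in C(\ov T)$ fix each set of $\ov\ca$; in the coordinates of the second step $\ov R$ is fibre-wise, $\ov R(x,k)=(x,R_x(k))$ with $R_x\in\operatorname{Aut}(K,\la_K)$ for a.e.\ $x$, and commuting with $\tf$ forces
\[
R_{Tx}=\tau_{\va(x)}\circ R_x\circ\tau_{-\va(x)}\qquad(\text{for a.e.\ }x),
\]
where $\tau_a$ denotes translation by $a\in K$. The translation group $\tau(K)\cong K$ is a \emph{compact} subgroup of the Polish group $\operatorname{Aut}(K,\la_K)$, so its conjugation action has a standard Borel orbit space; the displayed relation says $x\mapsto(\text{the }\tau(K)\text{-conjugacy class of }R_x)$ is $T$-invariant, hence a.e.\ constant by ergodicity of $T$, and a measurable selection gives $R_x=\tau_{b(x)}\circ R_0\circ\tau_{-b(x)}$ for a fixed $R_0$ and measurable $b\colon X\to K$. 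Feeding this back, $\tau_{e(x)}$ commutes with $R_0$, where $e(x):=\va(x)+b(x)-b(Tx)$; hence $e$ is valued in the closed subgroup $Z:=\{w\in K:\tau_wR_0=R_0\tau_w\}$, and $\va$ modulo $Z$ is a $K/Z$-valued coboundary over $T$. If $Z\neq K$, a nontrivial $\chi\in\widehat K$ trivial on $Z$ would make $\chi\circ\va$ a coboundary $\chi\circ\va=(\chi\circ b)\circ T/(\chi\circ b)$ — impossible, since $\tf$ is ergodic. So $Z=K$, i.e.\ $R_0$ commutes with every translation; a $\la_K$-preserving bijection with that property is itself a translation, so $R_0=\tau_{a_0}$ and then $R_x=\tau_{a_0}$ for a.e.\ $x$, whence $\ov R=\sigma_{a_0}\in K$.

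The first two steps are routine measure-theoretic bookkeeping, relying only on compactness and freeness of the $K$-action, existence of Borel cross-sections, and uniqueness of Haar measure. The hard part will be the third step, the cocycle-rigidity argument; the delicate points there are justifying the ``$T$-invariant $\Rightarrow$ a.e.\ constant'' reduction (legitimate precisely because $\tau(K)$ is compact, so the conjugation orbit space is tame) together with the measurable selection, and then converting the resulting coboundary relation into triviality by invoking ergodicity of the \emph{extension} $\tf$ in the coboundary form recalled above. It is this last step that consumes all the ergodicity hypotheses.
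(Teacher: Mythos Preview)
The paper does not actually prove this proposition; it is stated with the citation [Ju--Ru], and the only argument given is the Remark immediately following it, which sketches the construction of the cocycle~$\va$ via a measurable selector~$\xi:X\to\ov X$ and the relation $\va(x)\cdot\ov T\xi(x)=\xi(Tx)$. Your steps~1 and~2 reproduce exactly this construction (your $s$ is the paper's $\xi$), together with the routine verification that $\ov\ca$ is $\ov T$-invariant and that the conditional measures on fibres are Haar.

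Your step~3, the centralizer identity $K=\{\ov R\in C(\ov T):\ov R\ov A=\ov A\text{ for all }\ov A\in\ov\ca\}$, is not addressed at all in the paper --- it is part of what is delegated to the reference. Your argument for it is correct: you use compactness of $\tau(K)$ in $\operatorname{Aut}(K,\la_K)$ to get a tame orbit space for conjugation, ergodicity of $T$ to make the conjugacy class of $R_x$ constant, and then ergodicity of $\tf$ (equivalently of $\ov T$) to force $Z=K$ via the coboundary criterion. One small point to tighten: the final step ``a $\la_K$-preserving bijection commuting with every translation is itself a translation'' cannot be done by literally setting $k=0$, since the commutation relations hold only modulo null sets; the clean way is a Fubini argument showing that $k\mapsto R_0(k)-k$ is translation-invariant a.e., hence a.e.\ constant. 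With that adjustment the proof is complete and goes well beyond what the paper itself supplies.
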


\begin{Remark}
  We recall how to define~$ \va $.  Let~$ \pi:\ov X\to X $ be the
  factor map.  Then the fibers~$ \pi^{-1}(x) $ are copies of~$ K $.
  Let~$ \xi:X\to \ov X $ be a measurable selector (for~$ \pi $). Then
  for each~$ \ov x\in\pi^{-1}(x) $ there is a unique~$ k_{\ov x}\in K
  $ such that~$ k_{\ov x}\ov x=\xi(x) $.  Note that then
  $$
    k_{\ov x}(\ov T \ov x)=\ov T(k_{\ov x}\ov x)=\ov T\xi(x),
  $$
  whence if we define~$ \va(x)\in K $ as the only element of~$ K $
  such that $ \va(x)(\ov T\xi(x))=\xi(Tx) $ then the map~$ \ov x\mapsto
  (\pi(\ov x),k_{\ov x}) $ establishes an isomorphism between~$ \ov T $
  and $ T_\va $.
\end{Remark}

\section{Invariant measures for group extensions} If~$ T_\va $ is
ergodic, then~$ \mu\ot\la_K $ is the only~$ \tf $-invariant
probability measure~$ \kappa $ whose projection $ \left(\pi_X\right)_\ast
(\kappa) $ is~$ \mu $
\cite{Fu}.  This result has a refinement as follows (see, e.g.\ %
\cite{Ke-Ne},
\cite{Le-Me}).  If~$ \va:X\to K $ is not ergodic then there is a (unique)
closed subgroup~$ K'\subset K $, a cocycle~$ \va':X\to K' $ and a
Borel map $ j:X\to K $ such that
\begin{equation}
  \label{essgp1} \mbox{$ T_{\va'} $ is ergodic as an automorphism of~$
  \big(X\times K',\cb\ot\cb(K'),\mu\ot\la_{K'}\big) $}
\end{equation}
and
\begin{equation}
  \label{essgp2} \va(x)=\va'(x)+j(Tx)-j(x)
\end{equation}
for a measurable~$ j:X\to K $ (i.e.~$ \va $ is {\em cohomologous} to a
cocycle~$ \va' $ which is taking values in a smaller closed subgroup $
K' $ and~$ T_{\va'} $ is ergodic).  The group~$ K' $ is called the {\em
group of essential values} of~$ \va $ and is denoted by $ E(\va) $.
Moreover,
\begin{equation}
  \label{essgp3} E(\va')=E(\va)=\Lambda(\va)^\perp,
\end{equation}
where
\[
  \Lambda(\va):=\big\{\chi\in \widehat{K}:\:  \chi\circ\va\text{ is a
  coboundary}\big\}.
\]
In particular,
\begin{equation}
  \label{essgp4} \mbox{$ \tf $ is ergodic if and only if~$ E(\va)=K $.}
\end{equation}

We intend to describe the set~$ \cm(X\times K,\tf;\mu) $ of all $ \tf $-invariant
probability measures~$ \kappa $ on~$ X\times K $ that $ \left(\pi_X\right)_\ast
(\kappa)=\mu $.  It is again a simplex (with its natural affine
structure).  The set of extremal points is equal to $ \cm^e(X\times K,\tf;\mu)
$ of ergodic members of~$ \cm(X\times K,\tf;\mu) $.  We have the
following.

\begin{Prop}[\cite{Le-Me}]
  \label{p:transl1}
  \begin{sloppypar}
    If~$ \kappa\in \cm^e(X\times K,\tf;\mu) $ then~$
  \left(\sigma_{k}\right)_\ast(\kappa)\in \cm^e(X\times K,\tf;\mu) $
  for each~$ k\in K $.  Moreover, if~$ \kappa,\kappa'\in \cm^e(X\times
  K,\tf;\mu) $ then~$ \left(\sigma_{k_0}\right)_\ast(\kappa)=\kappa' $
  for some~$ k_0\in K $.
  \end{sloppypar}
\end{Prop}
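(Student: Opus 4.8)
The plan is to prove the two assertions separately; the first is formal, the second is where the work lies. For the first, I would note that $\sigma_k$ commutes with $\tf$ (both $\sigma_k\circ\tf$ and $\tf\circ\sigma_k$ send $(x,k')$ to $(Tx,\va(x)+k'+k)$) and that $\pi_X\circ\sigma_k=\pi_X$; hence $\sigma_k$ is a Borel automorphism of $X\times K$ conjugating $\tf$ to itself and respecting the projection to $X$, so $(\sigma_k)_\ast$ maps $\cm(X\times K,\tf;\mu)$ into itself, and it preserves ergodicity because it carries the $\sigma$-algebra of $\tf$-invariant sets bijectively onto itself, whence mod-$0$ triviality is transported along.

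For the second assertion I would exploit the structure of the fibre measures. Given $\kappa\in\cm^e(X\times K,\tf;\mu)$, disintegrate $\kappa=\int_X\kappa_x\,d\mu(x)$ with $\kappa_x$ a probability measure on the fibre $\{x\}\times K$, which we identify with $K$; then $\tf$-invariance says that $\kappa_{Tx}$ is the image of $\kappa_x$ under $k\mapsto k+\va(x)$ for $\mu$-a.e.\ $x$. For $\chi\in\widehat K$ put $\widehat{\kappa_x}(\chi)=\int_K\chi\,d\kappa_x$; then $\widehat{\kappa_{Tx}}(\chi)=\chi(\va(x))\widehat{\kappa_x}(\chi)$, so $x\mapsto|\widehat{\kappa_x}(\chi)|$ is $T$-invariant, hence a.e.\ constant $c_\chi\ge0$. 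If $c_\chi>0$ then $\xi:=\widehat{\kappa_\bullet}(\chi)/c_\chi$ is a measurable unimodular solution of $\xi\circ T=(\chi\circ\va)\,\xi$, so $\chi\circ\va$ is a coboundary, i.e.\ $\chi\in\Lambda(\va)$. Thus $\widehat{\kappa_x}(\chi)=0$ a.e.\ whenever $\chi\notin\Lambda(\va)$, and (using that $\widehat K$ is countable) for $\mu$-a.e.\ $x$ the function $\widehat{\kappa_x}$ vanishes off $\Lambda(\va)$. Since $\Lambda(\va)^\perp=E(\va)=:K'$, this forces $\kappa_x$ to be invariant under translation by $K'$, hence a mixture of normalized Haar measures of $K'$-cosets.

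To reduce that mixture to one coset I would pass to the factor on $X\times(K/K')$; its cocycle $\bar\va$ has all characters in $\Lambda(\va)$, so $E(\bar\va)$ is trivial and $\bar\va$ is a coboundary, $\bar\va(x)=\beta(Tx)-\beta(x)$ for a measurable $\beta:X\to K/K'$. Consequently the factor is measurably isomorphic to $T\times\mathrm{Id}_{K/K'}$, whose ergodic invariant measures projecting to $\mu$ are precisely the $\mu\otimes\delta_{\dot c}$, $\dot c\in K/K'$; since the image of $\kappa$ there is ergodic over $\mu$, its fibre measures are the point masses $\delta_{\beta(x)+\dot c_0}$ for a fixed $\dot c_0$, i.e.\ $\kappa_x$ is Haar on the coset $\beta(x)+\dot c_0+K'$. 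Running the same analysis for $\kappa'$ shows $\kappa'_x$ is Haar on $\beta'(x)+\dot c_0'+K'$, with $\beta'$ again a transfer function for $\bar\va$; then $(\beta+\dot c_0)-(\beta'+\dot c_0')$ is $T$-invariant, hence $\mu$-a.e.\ a constant $\dot k_0\in K/K'$. Taking any lift $k_0\in K$ of $-\dot k_0$ and using that $K'$-invariance of $\kappa_x$ makes $(\sigma_{k_0})_\ast\kappa_x$ depend only on $k_0+K'$, we get $(\sigma_{k_0})_\ast\kappa_x=\kappa'_x$ $\mu$-a.e., which (as $\sigma_{k_0}$ respects fibres and $\pi_X$) is exactly $(\sigma_{k_0})_\ast\kappa=\kappa'$.

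I expect the crux to be the passage from ``$\kappa_x$ is $K'$-invariant'' to ``$\kappa_x$ is Haar on a single $K'$-coset'': this is the one place where ergodicity of $\kappa$ itself, not merely of $T$, is needed, and the smoothest route seems to be the coboundary trivialization of $\bar\va$ on $X\times(K/K')$ together with the already recorded facts $E(\va)=\Lambda(\va)^\perp$ and the coboundary criterion for ergodicity of $\tf$. An alternative, bypassing fibre measures, is to average: $\int_K(\sigma_k)_\ast\kappa\,d\la_K(k)$ is $\sigma_\bullet$-invariant with $X$-projection $\mu$, hence equals $\mu\otimes\la_K$; writing $\mu\otimes\la_K$ both as $\int_K(\sigma_k)_\ast\kappa\,d\la_K$ and as $\int_K(\sigma_k)_\ast\kappa'\,d\la_K$ and invoking uniqueness of the ergodic decomposition then forces $\kappa$ and $\kappa'$ into one $\sigma_\bullet$-orbit.
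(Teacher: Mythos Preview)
The paper does not supply a proof of this proposition; it is quoted as a known result from \cite{Le-Me}. So there is nothing to compare against, and the question is simply whether your argument is correct.

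It is. The first assertion is indeed formal. For the second, your Fourier-coefficient computation on the fibre measures is the standard way to see that each $\kappa_x$ is invariant under $E(\va)=K'$, and your passage to the quotient $X\times(K/K')$ is exactly the right mechanism: the characters of $K/K'$ are $(K')^\perp=\Lambda(\va)$, so every character of the reduced cocycle $\bar\va$ is a coboundary, $E(\bar\va)$ is trivial, and $\bar\va$ is itself a coboundary. From there the identification of the ergodic measures over $\mu$ for $T\times\mathrm{Id}_{K/K'}$ and the comparison of transfer functions is routine and correctly carried out.

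Your alternative via averaging is also valid and arguably slicker: $\int_K(\sigma_k)_\ast\kappa\,d\la_K(k)=\mu\ot\la_K=\int_K(\sigma_k)_\ast\kappa'\,d\la_K(k)$ exhibits two ergodic decompositions of the same measure, and uniqueness of the ergodic decomposition forces the $\sigma_\bullet$-orbits of $\kappa$ and $\kappa'$ to coincide. One small point worth making explicit there is why the map $k\mapsto(\sigma_k)_\ast\kappa$ is measurable (so that the integral is a genuine ergodic decomposition); this follows from weak-$*$ continuity of $k\mapsto(\sigma_k)_\ast\kappa$.
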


\begin{sloppypar}
  It follows from Proposition~%
\ref{p:transl1} that to describe all ergodic members of~$ \cm(X\times
K,\tf;\mu) $, we need to describe just one.  Let~$ \kappa\in \cm^e(X\times
K,\tf;\mu) $.  Set
\end{sloppypar}

\[
  \operatorname{stab}(\kappa):=\{k\in K:\:\left(\sigma_k\right)_\ast(\kappa)=\kappa\}.
\]

\begin{Lemma}[\cite{Le-Me}]
  \label{l:leme} For each~$ \kappa\in \cm^e(X\times K,\tf;\mu) $, we
  have~$ \operatorname{stab}(\kappa)=E(\va) $.
\end{Lemma}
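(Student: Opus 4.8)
The plan is to reduce to the ergodic case by means of the cohomology relation \eqref{essgp2}, then read off the stabiliser from one explicit ergodic measure, and finally propagate the answer to all the others with Proposition~\ref{p:transl1}.

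For the reduction I would write $K'=E(\va)$ and take $\va'\colon X\to K'$ and $j\colon X\to K$ as in \eqref{essgp1}--\eqref{essgp2}, so that $\va(x)=\va'(x)+j(Tx)-j(x)$. Next I would introduce the bimeasurable bijection $\Psi\colon X\times K\to X\times K$, $\Psi(x,k)=(x,k-j(x))$, whose inverse is $(x,k)\mapsto(x,k+j(x))$, and check the two identities $\Psi\circ\tf=T_{\va'}\circ\Psi$ (a one-line computation which is precisely \eqref{essgp2}) and $\Psi\circ\sigma_k=\sigma_k\circ\Psi$ for all $k\in K$ (immediate, since on each fibre both maps act by translations of the abelian group $K$). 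Together with $\pi_X\circ\Psi=\pi_X$, these identities show that $\kappa\mapsto\Psi_*\kappa$ is a bijection of $\cm^e(X\times K,\tf;\mu)$ onto $\cm^e(X\times K,T_{\va'};\mu)$ which preserves stabilisers, i.e.\ $\operatorname{stab}(\Psi_*\kappa)=\operatorname{stab}(\kappa)$. Since also $E(\va')=E(\va)$ by \eqref{essgp3}, it suffices to prove the statement for $\va'$; thus I may assume from now on that $\va$ takes values in $K':=E(\va)$ and that $\tf$ is ergodic as an automorphism of $\bigl(X\times K',\cb\ot\cb(K'),\mu\ot\la_{K'}\bigr)$.

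In this reduced situation I would single out $\kappa_0:=\mu\ot\la_{K'}$, regarded as a measure on $X\times K$ supported on $X\times K'$. Because $\va$ is $K'$-valued, $\tf$ leaves $X\times K'$ invariant, so $\kappa_0$ is $\tf$-invariant, ergodic by hypothesis, and projects to $\mu$; hence $\kappa_0\in\cm^e(X\times K,\tf;\mu)$. Now for $k\in K$ the measure $(\sigma_k)_*\kappa_0$ is supported on $X\times(k+K')$, so $(\sigma_k)_*\kappa_0=\kappa_0$ forces $k+K'=K'$, i.e.\ $k\in K'$; conversely every $k\in K'$ fixes $\kappa_0$ by translation invariance of Haar measure on $K'$. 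Therefore $\operatorname{stab}(\kappa_0)=K'=E(\va)$.

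To conclude, given an arbitrary $\kappa\in\cm^e(X\times K,\tf;\mu)$, Proposition~\ref{p:transl1} provides $k_1\in K$ with $\kappa=(\sigma_{k_1})_*\kappa_0$; since $K$ is abelian, conjugation by $\sigma_{k_1}$ carries $\operatorname{stab}(\kappa_0)$ onto $\operatorname{stab}(\kappa)$ (indeed $(\sigma_k)_*\kappa=\kappa$ iff $(\sigma_{k_1})_*(\sigma_k)_*\kappa_0=(\sigma_{k_1})_*\kappa_0$ iff $(\sigma_k)_*\kappa_0=\kappa_0$), whence $\operatorname{stab}(\kappa)=E(\va)$. The only step with any subtlety is the reduction, where one must verify simultaneously that $\Psi$ intertwines $\tf$ with $T_{\va'}$ and that it commutes with all the fibre translations $\sigma_k$; everything else is bookkeeping combined with the already established structure of $E(\va)$ and with Proposition~\ref{p:transl1}.
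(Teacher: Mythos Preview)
Your argument is correct. The paper does not actually prove this lemma: it is stated with a citation to \cite{Le-Me} and no proof is given in the text, so there is nothing to compare your approach against. Your reduction via the conjugacy $\Psi(x,k)=(x,k-j(x))$ correctly transfers the problem to the case where the cocycle already takes values in $K'=E(\va)$, and the verification that $\Psi$ simultaneously intertwines $\tf$ with $T_{\va'}$ and commutes with every $\sigma_k$ is exactly what is needed to preserve both the set $\cm^e$ and the stabilisers. The computation $\operatorname{stab}(\mu\ot\la_{K'})=K'$ via supports, followed by the transitivity statement in Proposition~\ref{p:transl1} and the commutativity of $K$, then finishes the argument cleanly.
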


By~\eqref{essgp1},
\begin{equation}
  \label{essgp33}\cm\big(X\times E(\va),T_{\va'};\mu\big)=\cm^{e}\big(X\times E(\va),T_
  {\va'};\mu\big)=\{\mu\ot\la_{E(\va)}\}.
\end{equation}
Moreover, by~\eqref{essgp2}, the map~$ \Theta:X\times E(\va)\to X\times
K $ given by
\[
  \Theta(x,k')=(x,k'+j(x))
\]
is equivariant, i.e.~$ \Theta\circ T_{\va'}=\tf\circ \Theta $.  It
follows that $ \kappa:=\Theta_\ast(\mu\ot\la_{E(\va)})\in \cm^{e}(X\times
K,\tf;\mu) $.  Finally, notice that if~$ j:X\to K $ satisfies~\eqref {essgp2}
then, for each~$ k\in K $, also $ j_k(x):=j(x)+k $ satisfies~\eqref{essgp2}
(moreover, each Borel $ \ov{j}:X\to K $ satisfying~\eqref{essgp2} is
of the form $ j_k $).  Taking into account this and Proposition~%
\ref{p:transl1}, we obtain the following.

\begin{Prop}[\cite{Le-Me}]
  \label{p:leme} If~$ \kappa\in \cm^e(X\times K,\tf;\mu) $ then there
  exists a Borel~$ j:X\to K $ satisfying~\eqref{essgp2} for which
  \[
    \kappa=\int_X \left(\sigma_{j(x)}\right)_\ast(\delta_x\ot\la_{E(\va)})\,d\mu
    (x).
  \]
\end{Prop}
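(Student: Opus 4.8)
The plan is to compute one distinguished ergodic member of $\cm^e(X\times K,\tf;\mu)$ explicitly and then obtain all the others by translation. Fix a Borel $j:X\to K$ realising the cohomology relation~\eqref{essgp2}, so that $\va=\va'+j\circ T-j$ with $\va'$ valued in $E(\va)$ and $T_{\va'}$ ergodic on $\big(X\times E(\va),\mu\ot\la_{E(\va)}\big)$ by~\eqref{essgp33}, and let $\Theta(x,k')=(x,k'+j(x))$ be the equivariant map introduced just before the statement. The first step is to note that $\Theta$ is an injective Borel map whose image $\{(x,k):k-j(x)\in E(\va)\}$ is a $\tf$-invariant Borel set (a one-line check using~\eqref{essgp2} and $E(\va)+E(\va)=E(\va)$). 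Hence $\Theta$ conjugates $(X\times E(\va),T_{\va'})$ to the restriction of $\tf$ to that invariant set, so $\kappa_0:=\Theta_\ast(\mu\ot\la_{E(\va)})$ projects to $\mu$ and is ergodic for $\tf$; that is, $\kappa_0\in\cm^e(X\times K,\tf;\mu)$.

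The second step is the bookkeeping identity
\[
\kappa_0=\int_X\left(\sigma_{j(x)}\right)_\ast(\delta_x\ot\la_{E(\va)})\,d\mu(x).
\]
This follows by testing both sides against a bounded Borel $F$ on $X\times K$: writing $\mu\ot\la_{E(\va)}=\int_X\delta_x\ot\la_{E(\va)}\,d\mu(x)$ and using that $\Theta$ acts on the fibre over $x$ by the translation $k'\mapsto k'+j(x)$, one gets $\int F\,d\kappa_0=\int_X\int_{E(\va)}F(x,k'+j(x))\,d\la_{E(\va)}(k')\,d\mu(x)$, which is exactly the integral on the right evaluated at $F$.

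The third step passes to an arbitrary $\kappa\in\cm^e(X\times K,\tf;\mu)$. By Proposition~\ref{p:transl1} there is $k_0\in K$ with $\kappa=\left(\sigma_{k_0}\right)_\ast(\kappa_0)$; pushing the displayed formula forward by $\sigma_{k_0}$ and using $\sigma_{k_0}\circ\sigma_{j(x)}=\sigma_{k_0+j(x)}$ gives
\[
\kappa=\int_X\left(\sigma_{k_0+j(x)}\right)_\ast(\delta_x\ot\la_{E(\va)})\,d\mu(x).
\]
Setting $j_{k_0}(x):=j(x)+k_0$, which again satisfies~\eqref{essgp2} with the same $\va'$ (as recorded before the statement), this is precisely the asserted representation of $\kappa$ with $j$ replaced by the Borel cocycle $j_{k_0}$, which completes the argument.

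I do not expect a serious obstacle: the two substantive ingredients — the ergodic cocycle reduction $\va=\va'+j\circ T-j$ underlying~\eqref{essgp33} and the translation-transitivity of ergodic fibred measures (Proposition~\ref{p:transl1}) — are already available, and the remaining work is the identification of $\Theta_\ast(\mu\ot\la_{E(\va)})$ with the claimed integral together with the trivial adjustment of the selector $j$. The only points calling for a little care are the measurability of $x\mapsto\left(\sigma_{j(x)}\right)_\ast(\delta_x\ot\la_{E(\va)})$, which is handled by the Fubini computation above since $j$ is Borel and $K$ is a compact metric group, and the ergodicity of $\kappa_0$, which is handled by the $\tf$-invariance of the image of $\Theta$ and the fact that $\Theta$ is a Borel isomorphism onto it.
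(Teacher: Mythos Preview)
Your argument is correct and follows exactly the approach sketched in the paragraph preceding the proposition in the paper: construct the distinguished ergodic measure $\kappa_0=\Theta_\ast(\mu\ot\la_{E(\va)})$, identify it with the displayed integral, and then use Proposition~\ref{p:transl1} together with the observation that every $j_{k_0}=j+k_0$ again satisfies~\eqref{essgp2} to reach an arbitrary $\kappa$. You have simply filled in the details (the Fubini computation and the invariance of the image of $\Theta$) that the paper leaves implicit.
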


If we take~$ f\in L^2\xbm $ and a character~$ \chi\in\widehat{K} $
then, once~$ \chi $ is non-trivial, the tensor~$ f\ot\chi $ has the
zero mean for~$ \mu\ot\la_K $.  We will now show that the zero mean
phenomenon holds for many characters and ergodic members of~$ \cm(X\times
K,\tf;\mu) $.

\begin{Cor}
  \label{c:leme} Assume that~$ f\in L^2\xbm $.  If~$ \chi\notin\Lambda
  (\va) $ then~$ \int_{X\times K}f\ot\chi\,d\kappa=0 $ for each~$
  \kappa\in \cm^e(X\times K,\tf;\mu) $.
\end{Cor}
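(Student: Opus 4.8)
The plan is to simply unwind the explicit description of the ergodic fiber-preserving measures furnished by Proposition~\ref{p:leme}, and then carry out the elementary Fourier computation on the fiber group $K$.

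First I would record the duality bookkeeping. By~\eqref{essgp3} the group of essential values satisfies $E(\va)=\Lambda(\va)^\perp$, where the annihilator is taken inside $K$; since $\widehat K$ is discrete, $\Lambda(\va)$ is a closed subgroup of $\widehat K$, so Pontryagin duality gives $E(\va)^\perp=\Lambda(\va)$. Consequently, for any $\chi\in\widehat K$,
\[
\int_{K}\chi\,d\la_{E(\va)}=\int_{E(\va)}\chi|_{E(\va)}\,d\la_{E(\va)}=\raz_{\Lambda(\va)}(\chi),
\]
i.e.\ this integral equals $1$ if $\chi\in\Lambda(\va)$ and $0$ otherwise, because $\chi|_{E(\va)}$ is the trivial character of $E(\va)$ exactly when $\chi\in E(\va)^\perp=\Lambda(\va)$.

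Next I would fix $\kappa\in\cm^e(X\times K,\tf;\mu)$ and invoke Proposition~\ref{p:leme} to choose a Borel $j:X\to K$ (satisfying~\eqref{essgp2}) with
\[
\kappa=\int_X\bigl(\si_{j(x)}\bigr)_*\bigl(\delta_x\ot\la_{E(\va)}\bigr)\,d\mu(x).
\]
Since $\si_{j(x)}(x',k')=(x',k'+j(x))$, the inner measure is $\delta_x$ tensored with the translate of $\la_{E(\va)}$ by $j(x)$. Integrating the function $(f\ot\chi)(x',k')=f(x')\chi(k')$ against $\kappa$ and applying Fubini therefore yields
\[
\int_{X\times K}f\ot\chi\,d\kappa
=\int_X f(x)\,\chi\bigl(j(x)\bigr)\left(\int_{K}\chi(k')\,d\la_{E(\va)}(k')\right)d\mu(x).
\]
Here I use the measurability of $j$ and boundedness/integrability of the integrand to justify Fubini, together with $f\in L^2\xbm\subset L^1\xbm$. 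When $\chi\notin\Lambda(\va)$, the parenthesized factor vanishes by the first step, so the whole integral is $0$, which is the claim.

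There is no genuine obstacle here: the content is entirely concentrated in Proposition~\ref{p:leme} (already available) and in the harmonic-analytic identity $\int_{E(\va)}\chi\,d\la_{E(\va)}=\raz_{E(\va)^\perp}(\chi)$. The only point deserving a line of care is the Pontryagin duality step $E(\va)^\perp=\Lambda(\va)$, which relies on $\widehat K$ being discrete (hence all its subgroups closed); after that the computation is purely formal.
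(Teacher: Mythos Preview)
Your proof is correct and follows essentially the same approach as the paper: invoke Proposition~\ref{p:leme} for the explicit form of $\kappa$, then use that a nontrivial character of $E(\va)$ integrates to zero against (any translate of) $\la_{E(\va)}$. You simply spell out the Pontryagin duality step $E(\va)^\perp=\Lambda(\va)$ and the Fubini computation in slightly more detail than the paper does.
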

\begin{proof}
  By \eqref{essgp3}, $ \chi\notin\Lambda(\va) $ if and only if~$ \chi\notin
  E(\va)^\perp $ and the latter is equivalent to saying that~$ \chi|_{E
  (\va)}\neq\raz $.  By Proposition~%
  \ref{p:leme},
  \begin{equation}
    \label{fub}\kappa=\int_X \left(\sigma_{j(x)}\right)_\ast(\delta_x\ot\la_
    {E(\va)})\,d\mu(x).
  \end{equation}
  But if~$ \chi|_{E(\va)}\neq\raz $ then~$ \int_{K}\chi\,d\la_{E(\va)}=0
  $ and the same integral vanishes if we integrate against an
  arbitrary shift of~$ \la_{E(\va)} $.  The result now follows from~\eqref
  {fub}.
\end{proof}

Assume now that~$ S $ is an ergodic automorphism of a probability
standard Borel space~$ \ycn $ and let~$ \psi:Y\to L $ be a cocycle,
where~$ L $ is a compact (metric) abelian group.  Assume that~$ \tf $
and~$ S_\psi $ are ergodic and let~$ \widetilde{\rho}\in J^e(\tf,S_\psi)
$.  Denote $ \rho=\widetilde{\rho}|_{X\times Y} $. Then~$ \rho\in J^e(T,S)
$.  We also have
\[
  \widetilde{\rho}\in \cm^e\big((X\times Y)\times (K\times L), (T\times S)_
  {\va\times\psi};\rho\big),
\]
where~$ \va\times\psi:(X\times Y,\rho)\to K\times L $:
\[
  (\va\times\psi)(x,y)=(\va(x),\psi(y))\text{ for }\rho-\text{a.e. }(x,y)\in
  X\times Y.
\]
In other words,
\begin{equation}
  \label{jge} J^e(\tf,S_\psi)\subset \cm^e\big((X\times Y)\times (K\times
  L), (T\times S)_{\va\times\psi};\rho\big).
\end{equation}

We can now take up a more general problem and study the form of \[ \ov
{\rho}\in \cm^e\big(((X\times Y)\times (K\times L), (T\times S)_{\va\times\psi};\rho\big)
\] (with~$ \rho=\ov\rho|_{X\times Y} $). This problem is equivalent to
investigating the invariant measures for the group extensions $ (T\times S,\rho)_
{\va\times\psi} $ with~$ \rho\in J^e(T,S) $.  By denoting $ E_\rho (\va\times\psi)
$ the corresponding group of essential values, the problem is reduced
to study the form of such groups.

\begin{Lemma}
  \label{l:rzut1} If~$ \rho\in J^e(T,S) $ and the cocycles~$ \va $
  and~$ \psi $ considered over~$ (T\times S,\rho) $ are ergodic then~$
  \pi_{K}(E_\rho(\va\times\psi))=K $ and~$ \pi_{L}(E_\rho(\va\times\psi))=L
  $.
\end{Lemma}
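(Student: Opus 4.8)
The plan is to translate the statement into the Pontryagin‑duality language of Appendix~B, where the group of essential values is the annihilator of the group of ``coboundary characters'' (formula~\eqref{essgp3}), and then to feed in the hypothesis that $\va$ and $\psi$ remain ergodic once lifted to the joining.

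First I would fix notation: write $E:=E_\rho(\va\times\psi)\subset K\times L$ for the group of essential values of the product cocycle $\va\times\psi$ over the ergodic automorphism $(X\times Y,\rho,T\times S)$ (as usual $\va$ and $\psi$ are regarded as cocycles on $X\times Y$ via $(x,y)\mapsto\va(x)$ and $(x,y)\mapsto\psi(y)$), and let $\Lambda_\rho(\,\cdot\,)$ denote the group $\Lambda(\,\cdot\,)$ from~\eqref{essgp3} computed over this same base. Since $K\times L$ is compact, $E$ is a closed subgroup, hence so is its continuous homomorphic image $\pi_K(E)\subset K$; so $\pi_K(E)=K$ is equivalent to the annihilator $\pi_K(E)^\perp\subset\widehat K$ being trivial. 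I would establish this; the equality $\pi_L(E)=L$ then follows by the verbatim symmetric argument with $(\va,K)$ and $(\psi,L)$ interchanged.

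Then, for $\chi\in\pi_K(E)^\perp$, I would observe that $\chi(\pi_K(e))=1$ for all $e\in E$, i.e.\ the product character $\chi\ot\raz$ (with $\raz$ the trivial character of $L$) lies in $E^\perp$. By~\eqref{essgp3} over $(X\times Y,\rho,T\times S)$ we have $E=\Lambda_\rho(\va\times\psi)^\perp$, and since $\widehat{K\times L}=\widehat K\times\widehat L$ is discrete, iterating annihilators returns the same subgroup, so $E^\perp=\Lambda_\rho(\va\times\psi)$. By the definition of $\Lambda_\rho$ this means the function $(x,y)\mapsto(\chi\ot\raz)(\va(x),\psi(y))=\chi(\va(x))$ is a coboundary for $T\times S$ over $(X\times Y,\rho)$.

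Finally I would invoke the hypothesis: since $\va$, viewed as a cocycle over $(X\times Y,\rho,T\times S)$, is ergodic, \eqref{essgp4} gives $E_\rho(\va)=K$, hence by~\eqref{essgp3} $\Lambda_\rho(\va)=K^\perp=\{\raz\}$; in other words the trivial character is the only one whose composition with $\va$ is a coboundary over $(X\times Y,\rho)$. Our $\chi$ has this property, so $\chi=\raz$, which proves $\pi_K(E)^\perp=\{\raz\}$ and therefore $\pi_K(E)=K$. The whole argument is bookkeeping with the facts recalled in Appendix~B; the only point that deserves attention — not a real obstacle — is to use the hypothesis in the right place, namely that what is needed is the ergodicity of $\va$ and of $\psi$ \emph{after lifting to $\rho$}, not merely the ergodicity of $T_\va$ and $S_\psi$ over the base systems, since these conditions differ exactly when $\rho$ is not the product joining.
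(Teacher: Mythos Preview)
Your proof is correct and follows essentially the same idea as the paper's. The paper's proof simply asserts that $\pi_K(E_\rho(\va\times\psi))=E_\rho(\pi_K\circ(\va\times\psi))=E_\rho(\va)$ (``it is not hard to see'') and then invokes~\eqref{essgp4}; your duality argument is precisely the verification of that projection identity, so you have unpacked what the paper leaves implicit.
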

\begin{proof}
  It is not hard to see that~$ E(\pi_K\circ (\va\times\psi))=\pi_K(E_\rho
  (\va\times\psi)) $ and since~$ \pi_K\circ (\va\times\psi)=\va $ is
  ergodic over~$ (T\times S,\rho) $ by assumption, the result follows
  from~\eqref{essgp4}.
\end{proof}

\begin{Remark}
  \label{r:uwdonil} Although the problem of describing all elements of
  the set~$ \big\{E_\rho(\va\times \psi):\:\rho\in J^e(T,S)\big\} $ looks as a
  problem more general than the problem of description of the members
  of~$ J^e(\tf,S_\psi) $, in fact, quite often they are the same.  We
  will see the equality of the two sets in~\eqref{jge} when we study
  nil-cocycles in forthcoming sections.
\end{Remark}

\section{A criterion to lift AOP to a group extension} Assume again
that~$ T $ is a totally ergodic automorphism of a probability standard
Borel space~$ \xbm $.  Let~$ K $ be a compact (metric) abelian group
and let~$ \va:X\to K $ be a cocycle.  We assume also that~$ \tf $ is
also totally ergodic.~%
\footnote{This means that~$ \tf $ has no root of unity in its
spectrum.  This is equivalent to saying that for no~$ n\geq2 $, no
character~$ \chi\in\widehat{K} $, we can solve the functional
equation~$ \chi\circ \va=e^{2\pi i/n}\xi\circ T/\xi $ for a
measurable~$ \xi:X\to\bs^1 $
\cite{An}.} We want to study the AOP property for~$ \tf $ (assuming
that~$ T $ enjoys it).  To study this property, we will have to
consider the set $ J^e\left((\tf)^r,(\tf)^s\right) $ for~$
r,s $ coprime.  Note that~$ (\tf)^r=(T^r)_{\va^{(r)}} $, so
we are in the framework of the previous subsection in which~$ T $, $
\va $, $ S $, $ \psi $ are~$ T^r $, $ \va^{(r)} $, $ T^s $ and~$ \va^{%
(s)} $, respectively.

Assume that~$ K=\T^d $ and identify characters of~$ K $ with~$ \Z^d $:
given $ \ov m\in\Z^d $, we set~$ \chi_{\ov m}(\ov x)=e^{2\pi i\langle
\ov m,\ov x\rangle} $.  Given~$ k\geq1 $ and~$ r,s $ coprime, let
\[
  A_{k,r,s}:=\big\{(\ov m,\ov n)\in\Z^d\times\Z^d:\:s^k\ov m=r^k\ov n\big\}\subset
  \widehat{K}\times\widehat{K}.
\]

\begin{Prop}
  \label{p:critAOP} Let~$ T $ be a totally ergodic automorphism of a
  probability standard Borel space~$ \xbm $.  Let~$ \va:X\to \T^d $ be
  a cocycle so that~$ \tf $ is totally ergodic.  Assume that for
  some~$ k\geq1 $ and for all~$ r,s $ coprime, we have
  \begin{equation}
    \label{zawier} \Lambda_{(\va^{(r)}\times\va^{(s)},\rho)}\subset A_
    {k,r,s}\text{ for each }\rho\in J^e(T^r,T^s).
  \end{equation}
  If~$ T $ enjoys the AOP property, so does~$ \tf $.
\end{Prop}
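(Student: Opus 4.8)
The plan is to verify the AOP condition~\eqref{cond:aop} for $\tf$, which by Remark~\ref{r:weaktop} need only be checked on a linearly dense subset of $L^2_0(X\times\T^d,\mu\ot\la_{\T^d})$. Identifying $\widehat{\T^d}$ with $\Z^d$ as above, I would take as test functions the tensors $f=f_1\ot\chi_{\ov m}$ with $f_1\in L^2(X,\mu)$ and $\ov m\in\Z^d$ (requiring $f_1\in L^2_0(X,\mu)$ when $\ov m=0$); their linear span is dense in $L^2_0(X\times\T^d)$. So fix $f=f_1\ot\chi_{\ov m}$ and $g=f_2\ot\chi_{\ov n}$. For coprime $r,s$ and any $\widetilde\rho\in J^e\big((\tf)^r,(\tf)^s\big)$ set $\rho:=\widetilde\rho|_{X\times X}$. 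Since $(\tf)^r=(T^r)_{\va^{(r)}}$ and $(\tf)^s=(T^s)_{\va^{(s)}}$ are ergodic ($\tf$ being totally ergodic), and $\mu$ is $T^r$- and $T^s$-invariant ($T$ being totally ergodic), the inclusion~\eqref{jge} shows that $\rho\in J^e(T^r,T^s)$ and that $\widetilde\rho\in\cm^e\big((X\times X)\times(\T^d\times\T^d),(T^r\times T^s)_{\va^{(r)}\times\va^{(s)}};\rho\big)$.

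Next I would invoke Corollary~\ref{c:leme}, applied to the group extension $(T^r\times T^s)_{\va^{(r)}\times\va^{(s)}}$ over the ergodic base $(X\times X,T^r\times T^s,\rho)$, with the $L^2(X\times X,\rho)$-function $f_1\ot f_2$ and the character $\chi_{\ov m}\ot\chi_{\ov n}$ of $\T^d\times\T^d$, which corresponds to $(\ov m,\ov n)\in\Z^d\times\Z^d$: it yields $\int f\ot g\,d\widetilde\rho=0$ whenever $(\ov m,\ov n)\notin\Lambda_{(\va^{(r)}\times\va^{(s)},\rho)}$. By the hypothesis~\eqref{zawier}, $\Lambda_{(\va^{(r)}\times\va^{(s)},\rho)}\subset A_{k,r,s}$, and since $\gcd(r^k,s^k)=1$ one has $A_{k,r,s}=\{(r^k\ov v,s^k\ov v):\ov v\in\Z^d\}$; hence $\int f\ot g\,d\widetilde\rho=0$ unless $\ov m\in r^k\Z^d$, $\ov n\in s^k\Z^d$ and $\ov m/r^k=\ov n/s^k$.

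To conclude I would distinguish two cases. If $\ov m=\ov n=0$, then $f_1,f_2\in L^2_0(X,\mu)$ and $\int f\ot g\,d\widetilde\rho=\rho(f_1\ot f_2)$, so $\sup_{\widetilde\rho}\bigl|\int f\ot g\,d\widetilde\rho\bigr|\le\sup_{\rho\in J^e(T^r,T^s)}|\rho(f_1\ot f_2)|$, which tends to $0$ along distinct primes $r=p$, $s=q$ because $T$ has the AOP property. If $\ov m\neq0$, fix a coordinate $m_{i_0}\neq0$; for every prime $p$ with $p^k>|m_{i_0}|$ we have $\ov m\notin p^k\Z^d$, whence (by the previous paragraph with $r=p$ and $s=q$ any prime $\neq p$) $\int f\ot g\,d\widetilde\rho=0$ for every $\widetilde\rho\in J^e\big((\tf)^p,(\tf)^q\big)$; thus the supremum over such $\widetilde\rho$ vanishes for all but finitely many primes $p$, and so does the relevant limsup. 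The case $\ov n\neq0$ is symmetric. This verifies~\eqref{cond:aop} for the chosen test functions, so $\tf$ has the AOP property.

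The only delicate point is the set-up in the first two paragraphs: reading $(\tf)^r$ as the group extension $(T^r)_{\va^{(r)}}$, using~\eqref{jge} to view an ergodic joining of the two powers of $\tf$ as an ergodic invariant measure of the product group extension $(T^r\times T^s)_{\va^{(r)}\times\va^{(s)}}$ lying over an ergodic joining $\rho\in J^e(T^r,T^s)$, and matching the character $\chi_{\ov m}\ot\chi_{\ov n}$ with the subgroup $\Lambda_{(\va^{(r)}\times\va^{(s)},\rho)}$ appearing in~\eqref{zawier}. Once this is in place Corollary~\ref{c:leme} does the analytic work: the elementary observation that $\ov m\notin p^k\Z^d$ for all but finitely many primes $p$ kills every non-trivial Fourier mode, while the trivial mode is handled directly by the AOP property of $T$.
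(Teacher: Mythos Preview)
Your proof is correct and follows the same approach as the paper: use \eqref{jge} to regard an ergodic joining of $(\tf)^r$ and $(\tf)^s$ as an ergodic invariant measure for the product group extension over some $\rho\in J^e(T^r,T^s)$, apply Corollary~\ref{c:leme} to kill the integral whenever $(\ov m,\ov n)\notin\Lambda_{(\va^{(r)}\times\va^{(s)},\rho)}$, and then use the divisibility constraint coming from $A_{k,r,s}$. You are in fact slightly more explicit than the paper in separating out and handling the case $\ov m=\ov n=0$ via the AOP property of $T$; the paper's proof treats only the case where at least one of $\ov m_0,\ov n_0$ is non-zero and leaves the trivial-character case implicit.
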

\begin{proof}
  Fix~$ f,g\in L^2\xbm $ and~$ \ov m_0,\ov n_0\in\Z^d $ such that at least one is no-zero.  It is enough
  to show that for all~$ r,s $ coprime and sufficiently large, we have
  \begin{equation}
    \label{pao1} \int f\ot\chi_{\ov m_0}\ot g\ot \chi_{\ov n_0}\,d\widetilde
    {\rho}=0\text{ for each }\widetilde{\rho}\in J^e\left(\tf)^r,(\tf)^s\right),
  \end{equation}
  cf.\ Remark~%
  \ref{r:weaktop}.  By~\eqref{jge}, it is sufficient that~\eqref{pao1}
  holds for each~$ \widetilde{\rho}\in\cm^e\big((X\times X)\times(\T^d\times\T^d),
  (T^r\times T^s)_{\va^{(r)}\times\va^{(s)}};\rho\big) $.  By Corollary~%
  \ref{c:leme}, we only need to show that for all~$ r,s $ coprime and
  sufficiently large, $ (\ov m_0,\ov n_0)\notin\Lambda_{\va^{(r)}\times\va^
  {(s)}} $ for all~$ \rho\in J^e(T^r,T^s) $.  Hence, by assumption, it
  is sufficient to show that~$ (\ov m_0,\ov n_0)\notin A_{k,r,s} $
  for all~$ r,s $ coprime and sufficiently large.  This is however
  clear, if~$ (\ov m_0,\ov n_0)\in A_{k,r,s} $ then~$ s^k\ov m_0=r^k\ov
  n_0 $, whence the coordinates of~$ \ov m_0 $ must be multiples of~$
  r^k $ and the coordinates of~$ \ov n_0 $ must be multiples of~$ s^k $
  and this can hold only for finitely many~$ r,s $.  The result
  follows.
\end{proof}

Since~$ (r,s)=1 $, we have~$ A_{k,r,s}=\{(r^k\ov j,s^k \ov j):\:\ov j\in\Z^d\}
$.  Then, it follows that
\[
  A_{k,r,s}^\perp=\big\{(\ov x,\ov y)\in\T^d\times \T^d:  r^k\ov x+s^k\ov
  y=\ov 0\big\}.
\]

Using Proposition~%
\ref{p:critAOP}, Lemma~%
\ref{l:leme} and~\eqref{essgp3}, we obtain the following.

\begin{Cor}
  \label{c:critAOP} Let~$ T $ be a totally ergodic automorphism of a
  probability standard Borel space~$ \xbm $.  Let~$ \va:X\to \T^d $ be
  a cocycle so that~$ \tf $ is totally ergodic.  Assume that for
  some~$ k\geq1 $ and for all~$ r,s $ coprime, we have
  \begin{equation}
    \label{zawier10} \operatorname{stab}(\widetilde{\rho})\supset A^\perp_{k,r,s}
  \end{equation}
  \begin{sloppypar}
      for some~$ \widetilde{\rho}\in\cm^e\big((X\times X)\times(\T^d\times\T^d),
  (T^r\times T^s)_{\va^{(r)}\times\va^{(s)}};\rho\big) $ and for every
  $ \rho\in
  J^e(T^r,T^s) $.  If~$ T $ enjoys the AOP property, so does~$ \tf $.
  \end{sloppypar}
\end{Cor}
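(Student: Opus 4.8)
The plan is to deduce the hypothesis~\eqref{zawier} of Proposition~\ref{p:critAOP} from~\eqref{zawier10} by a Pontryagin duality argument, and then invoke that proposition directly. Fix coprime $r,s$ and an ergodic joining $\rho\in J^e(T^r,T^s)$, and regard $\va^{(r)}\times\va^{(s)}\colon(X\times X,\rho)\to\T^d\times\T^d$ as a cocycle over the ergodic automorphism $T^r\times T^s$. First I would apply Lemma~\ref{l:leme} to this group extension, with compact group $K\times K=\T^d\times\T^d$: every $\widetilde{\rho}\in\cm^e\big((X\times X)\times(\T^d\times\T^d),(T^r\times T^s)_{\va^{(r)}\times\va^{(s)}};\rho\big)$ satisfies $\operatorname{stab}(\widetilde{\rho})=E_\rho(\va^{(r)}\times\va^{(s)})$, where $E_\rho$ denotes the group of essential values computed relative to $\rho$. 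In particular this stabilizer does not depend on which $\widetilde{\rho}$ is chosen, so the quantifier ``for some $\widetilde{\rho}$'' in~\eqref{zawier10} is equivalent to ``for all $\widetilde{\rho}$''.

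Next I would use the identity~\eqref{essgp3}, which gives $E_\rho(\va^{(r)}\times\va^{(s)})=\Lambda_{(\va^{(r)}\times\va^{(s)},\rho)}^{\perp}$, the annihilator in $\T^d\times\T^d$ of the subgroup $\Lambda_{(\va^{(r)}\times\va^{(s)},\rho)}$ of $\widehat{\T^d\times\T^d}=\Z^d\times\Z^d$. Hypothesis~\eqref{zawier10} thereby becomes $\Lambda_{(\va^{(r)}\times\va^{(s)},\rho)}^{\perp}\supset A_{k,r,s}^{\perp}$. Since $A_{k,r,s}$ is a subgroup of the discrete group $\Z^d\times\Z^d$ it is closed, so $\big(A_{k,r,s}^{\perp}\big)^{\perp}=A_{k,r,s}$; taking annihilators in the displayed inclusion and using the order-reversing correspondence between closed subgroups of $\T^d\times\T^d$ and of $\Z^d\times\Z^d$ yields $\Lambda_{(\va^{(r)}\times\va^{(s)},\rho)}\subset A_{k,r,s}$. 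As $r,s$ and $\rho\in J^e(T^r,T^s)$ were arbitrary, this is exactly condition~\eqref{zawier}. Since $T$ enjoys the AOP property by assumption, Proposition~\ref{p:critAOP} then yields that $\tf$ enjoys the AOP property, which is the claim.

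I do not expect a genuine obstacle here: the content of the statement is that~\eqref{zawier10} is merely the Pontryagin-dual reformulation of~\eqref{zawier}, once Lemma~\ref{l:leme} identifies $\operatorname{stab}(\widetilde{\rho})$ with the group of essential values and~\eqref{essgp3} identifies the latter with $\Lambda^{\perp}$. The only points needing care are bookkeeping ones: that $E_\rho$ and $\Lambda_{(\cdot,\rho)}$ are taken relative to the joining $\rho$ and not over $\mu\ot\mu$; that the ambient compact group is $K\times K$, so the duality pairing is between $\T^d\times\T^d$ and $\Z^d\times\Z^d$; and that $A_{k,r,s}$ and $\Lambda_{(\cdot,\rho)}$ are closed, so double annihilation recovers them. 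No new analytic input is required beyond the three cited facts.
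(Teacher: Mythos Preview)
Your proposal is correct and follows exactly the route the paper indicates: the corollary is derived from Proposition~\ref{p:critAOP} by combining Lemma~\ref{l:leme} (identifying $\operatorname{stab}(\widetilde\rho)$ with the group of essential values) and~\eqref{essgp3} (identifying the latter with $\Lambda^\perp$), then taking annihilators. The paper gives no further details beyond citing these three facts, so your write-up is if anything more explicit than the original.
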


\section{Some lemmata on nilpotent Lie algebras}

In the sequel~$ \g $ is a~$ k $-step nilpotent Lie algebra and
\[
  \mathfrak g=\mathfrak g^{(1)}\supset\mathfrak g^{(2)}\supset\dots
  \supset \mathfrak g^{(k)}
\]
its descending central series.  By Lemma~1.1.1 in
\cite{Co-Gr},
\begin{equation*}
  [\g^{(i)},\g^{(j)}]\subset \g^{(i+j)}\quad\text{for all}\ i,j\geq 1.
\end{equation*}

\begin{Lemma}
  \label{lem:aop_nil:2} A set~$ S=\{X_1, \dots , X_j\}\subset \g $ is
  a minimal set of generators for a nilpotent Lie algebra~$ \mathfrak
  g $ if and only if~$ S[\mathfrak g,\mathfrak g] $ is a basis of the
  vector space~$ \mathfrak g/[\mathfrak g,\mathfrak g] $.
\end{Lemma}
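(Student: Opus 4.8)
The plan is to reduce everything to the abelian quotient $\bar\g:=\g/[\g,\g]$, with projection $\pi\colon\g\to\bar\g$, and to establish first the unconditional equivalence: \emph{$S$ generates $\g$ as a Lie algebra if and only if $\pi(S)$ spans the vector space $\bar\g$}. The implication from left to right is immediate: $\pi$ is a surjective Lie algebra homomorphism, so $\pi(S)$ generates $\pi(\g)=\bar\g$ as a Lie algebra; but $\bar\g$ is abelian, hence the Lie subalgebra it generates is simply its linear span, so $\pi(S)$ spans $\bar\g$. The reverse implication is the only substantive point, and is a Nakayama-type argument for nilpotent Lie algebras.

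For that reverse implication, assume $\pi(S)$ spans $\bar\g$ and let $\mathfrak h\subset\g$ be the Lie subalgebra generated by $S$; then $\mathfrak h+\g^{(2)}=\g$. I claim $\g=\mathfrak h+\g^{(i)}$ for every $i\ge 2$, which for $i=k+1$ yields $\mathfrak h=\g$ since $\g^{(k+1)}=\{0\}$. The case $i=2$ is the hypothesis. Assuming $\g=\mathfrak h+\g^{(i)}$, bilinearity of the bracket together with the inclusions $[\g^{(a)},\g^{(b)}]\subset\g^{(a+b)}$ recalled above give
\[
\g^{(2)}=[\g,\g]=[\mathfrak h+\g^{(i)},\,\mathfrak h+\g^{(i)}]\subset[\mathfrak h,\mathfrak h]+[\mathfrak h,\g^{(i)}]+[\g^{(i)},\g^{(i)}]\subset \mathfrak h+\g^{(i+1)}+\g^{(2i)}=\mathfrak h+\g^{(i+1)},
\]
using $[\mathfrak h,\mathfrak h]\subset\mathfrak h$, $[\mathfrak h,\g^{(i)}]\subset[\g,\g^{(i)}]=\g^{(i+1)}$, and $\g^{(2i)}\subset\g^{(i+1)}$. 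Hence $\g=\mathfrak h+\g^{(2)}\subset\mathfrak h+\g^{(i+1)}\subset\g$, completing the induction on $i$.

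Finally I would deduce the lemma from this equivalence. If $S$ is a minimal generating set, then $\pi(S)$ spans $\bar\g$; were $\pi(S)$ linearly dependent, some $\pi(X_{i_0})$ would lie in the span of the remaining images, so $\pi(S\setminus\{X_{i_0}\})$ would still span $\bar\g$, and by the reverse implication $S\setminus\{X_{i_0}\}$ would already generate $\g$, contradicting minimality; thus $\pi(S)$ is a basis. Conversely, if $\pi(S)$ is a basis then it spans $\bar\g$, so $S$ generates $\g$; and removing any $X_{i_0}$ leaves $\pi(S\setminus\{X_{i_0}\})$ spanning a proper subspace of $\bar\g$, so by the contrapositive of the forward implication the set $S\setminus\{X_{i_0}\}$ does not generate $\g$, i.e.\ $S$ is minimal. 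The main obstacle is the reverse implication above; the rest is routine manipulation of quotients and spanning sets, the one point needing care being that the minimality equivalence is deduced \emph{after} the unconditional equivalence ``generates $\Leftrightarrow$ image spans'' has been established, so that no circularity arises.
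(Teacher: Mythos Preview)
Your proof is correct. Both you and the paper reduce the lemma to the unconditional equivalence ``$S$ generates $\g$ as a Lie algebra if and only if the image of $S$ spans $\g/[\g,\g]$'', and then deduce minimality by the same elementary argument (extract a sub-basis, use the equivalence, invoke minimality). The difference lies in how the reverse implication of that equivalence is established.

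The paper inducts on the nilpotency class $k$: it passes to the quotient $\g/\g^{(k+1)}$, applies the induction hypothesis there to conclude that the subalgebra $\g_S$ generated by $S$ surjects onto $\g/\g^{(k+1)}$, and then checks by hand that $\g_S\supset\g^{(k+1)}$ using an auxiliary set $T\subset\g_S$ projecting to a spanning set of $\g^{(k)}/\g^{(k+1)}$. Your argument instead runs a single Nakayama-style induction on the index $i$ of the descending central series, showing directly that $\mathfrak h+\g^{(i)}=\g$ for all $i$ from the base case $i=2$ and the inclusion $[\g^{(a)},\g^{(b)}]\subset\g^{(a+b)}$. Your route is shorter and avoids the auxiliary constructions; the paper's route has the (minor) virtue of matching the induction scheme used in the two lemmas that follow it.
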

\begin{proof}
  The proof is by induction on~$ k $.  The statement is true for~$ \g $
  abelian ($ k=1 $).  Suppose the statement true for all nilpotent Lie
  algebras of class of nilpotency~$ \ell \le k $ and assume~$ \g $
  nilpotent of class of nilpotency~$ k +1 $.

  Let~$ S\subset \g $ be a set such that~$ S[\mathfrak g,\mathfrak g] $
  is a basis of the vector space~$ \mathfrak g/[\mathfrak g,\mathfrak
  g] $.  Then~$ \g=\< S\> + [\g,\g] $, where~$ \< S\> $ denotes the
  linear span of~$ S $.  It follows that $ \g^{(k+1)}= [\g, \g^ {(k)}]=
  [\<S\>, \g^{(k)}] $ as~$ [[\g,\g],\g^{(k)}]\subset \g^{(k+2)}=0 $.
  Furthermore, the set 
\[ (S\g^{(k+1)})\big[(\mathfrak g/\g^{(k+1)}),(\mathfrak g/\g^{%
  (k+1)})\big] ]\approx S[\g,\g]
\] is a basis of the vector space $ (\mathfrak
  g/\g^{(k+1)})/\big[(\mathfrak g/\g^{(k+1)}),(\mathfrak g/\g^{%
  (k+1)})\big]\approx \g/[\g,\g] $.  Since $ \g/\g^{(k+1)} $ is nilpotent
  of class of nilpotency~$ k $, the induction hypothesis applies to
  the Lie algebra~$ \g/\g^{(k+1)} $ and its subset~$ S\g^ {(k+1)} $,
  so~$ S\g^{(k+1)} $ is a minimal set of generators of~$ \g/\g^{(k+1)}
  $.  Thus the Lie subalgebra~$ \g_S $ generated by~$ S $ projects
  onto~$ \g/\g^{(k+1)} $ under the quotient mapping~$ \g \mapsto \g/\g^
  {(k+1)} $.  Thus it suffices to show that~$ S $ generates a set
  spanning~$ \g^{(k+1)} $.  Let~$ T\subset \g_S $ be a finite set
  projecting to a spanning set of~$ \g^{(k)}/\g^{(k+1)} $.  Then~$ \g^
  {(k)}=\<T\>+\g^{(k+1)} $.  By definition~$ [\<S\>,\<T\>]\subset \g_S
  $.  It follows that
  \[
    \g^{(k+1)}= [\<S\>, \g^{(k)}]= [ \<S\>, \<T\>+\g^{(k+1)}]\subset [\<S\>,
    \<T\>] + \g^{(k+2)}= [\<S\>, \<T\>]\subset \g_S.
  \]

  Finally observe that~$ S $ is minimal, since any proper subset of~$
  S $ does not project to a basis of~$ \g/[\g,\g] $, i.e. does not
  generate~$ \g/[\g,\g] $.  A fortiori it does not generate~$ \g $.

  Assume~$ S $ is a minimal set of generators of~$ \g $.  Then~$ S $
  projects to a generating set of~$ \g/[\g,\g] $, that is a finite
  spanning set for the vector space~$ \g/[\g,\g] $.  Let~$ S_1\subset
  S $ be a subset projecting to a basis of~$ \g/[\g,\g] $.  By the
  above~$ S_1 $ generates~$ \g $.  By the minimality of~$ S $ we
  have~$ S=S_1 $, concluding the proof.
\end{proof}

Let~$ S=\{X_1,\dots, X_j\} $ be a minimal generating set for the
$k$-step nilpotent Lie algebra~$ \g $.  For every~$ (i_1,\dots,i_k)\in \{1,\dots,j\}^k
$ define the Lie~$ k $-fold product
\[
  S_{i_1,i_2,\dots,i_k}:=[X_{i_1},[X_{i_2}, \dots, [X_{i_{k-1}},X_{i_k}]\dots]]
\]
and let~$ V_k(S)\subset\g^{(k)} $ be the linear span of the set of~$ k
$-fold products.

\begin{Lemma}
  \label{lem:aop_nil:3} Let~$ \g $ be a~$ k $-step nilpotent Lie
  algebra and~$ S $ a minimal generating set for~$ \g $.  Then~$ \g^{(k)}=
  V_k(S) $.
\end{Lemma}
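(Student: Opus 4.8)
The plan is to prove, by induction on $i$, the sharper statement
\[
\g^{(i)} = V_i(S) + \g^{(i+1)} \qquad\text{for every } i\ge 1,
\]
where, extending the notation of the lemma, $V_i(S)\subset\g^{(i)}$ denotes the linear span of the left-normed $i$-fold Lie products $S_{i_1,\dots,i_i}=[X_{i_1},[\dots,[X_{i_{i-1}},X_{i_i}]\dots]]$ of elements of $S$, with $V_1(S)=\langle S\rangle$ (the linear span of $S$). Once this is established, taking $i=k$ and using $\g^{(k+1)}=\{0\}$ gives $\g^{(k)}=V_k(S)$, which is the assertion.

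For the base case $i=1$: since $S$ is a minimal generating set of $\g$, Lemma~\ref{lem:aop_nil:2} says that $S[\g,\g]$ is a basis of $\g/[\g,\g]$; in particular $S$ spans $\g$ modulo $[\g,\g]$, i.e.\ $\g = \langle S\rangle + [\g,\g] = V_1(S)+\g^{(2)}$. For the inductive step, assume $\g^{(i)}=V_i(S)+\g^{(i+1)}$. Using $\g^{(i+1)}=[\g,\g^{(i)}]$ together with $\g=\langle S\rangle+\g^{(2)}$ and the induction hypothesis, I would expand
\[
\g^{(i+1)} = \big[\langle S\rangle+\g^{(2)},\; V_i(S)+\g^{(i+1)}\big].
\]
By bilinearity of the bracket this is the sum of four terms. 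Three of them, namely $[\langle S\rangle,\g^{(i+1)}]$, $[\g^{(2)},V_i(S)]\subset[\g^{(2)},\g^{(i)}]$, and $[\g^{(2)},\g^{(i+1)}]$, are contained in $\g^{(i+2)}$ by the inclusion $[\g^{(a)},\g^{(b)}]\subset\g^{(a+b)}$ recalled just before the lemma. The remaining term $[\langle S\rangle,V_i(S)]$ is, again by bilinearity, precisely the span of the brackets $[X,\,[X_{i_1},[\dots,[X_{i_{i-1}},X_{i_i}]\dots]]]$ with $X\in S$, that is $V_{i+1}(S)$. Hence $\g^{(i+1)}=V_{i+1}(S)+\g^{(i+2)}$, closing the induction.

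I expect no genuine obstacle here: the argument is short, and a pleasant feature is that it never needs the Jacobi identity to re-bracket arbitrary $i$-fold products, because building the $(i+1)$-fold products from the left-normed $i$-fold ones automatically yields left-normed brackets of the form $S_{i_1,\dots,i_{i+1}}$. The only points requiring minor care are the correct bookkeeping of the central-series indices in the three "error" terms (all lie in $\g^{(i+2)}$) and the appeal to Lemma~\ref{lem:aop_nil:2} to get $\g=\langle S\rangle+[\g,\g]$ in the base case; minimality of $S$ itself is used only through that lemma.
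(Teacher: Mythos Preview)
Your proof is correct and is essentially the paper's argument, just reorganized: the paper inducts on the nilpotency class $k$ by passing to the quotient $\g/\g^{(k)}$ to obtain $\g^{(k-1)}=V_{k-1}(S)+\g^{(k)}$, whereas you stay in the fixed algebra and induct on the filtration index $i$ to prove the same intermediate identity $\g^{(i)}=V_i(S)+\g^{(i+1)}$ directly. The key computation---expanding $[\langle S\rangle+\g^{(2)},\,V_i(S)+\g^{(i+1)}]$ and observing that three of the four terms land in the next step of the central series---is identical in both versions.
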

\begin{proof}
  The proof is by induction on~$ k $.  When~$ \g $ is abelian the
  statement is obvious.  Suppose that the Lemma is true for all
  nilpotent Lie algebras of class~$ \ell <k $ and let~$ \g $ be of
  class~$ k $ and~$ S $ be a minimal generating set for~$ \g $. By the
  previous Lemma, the projection~$ \bar S=\{\bar X_1,\dots, \bar X_j\}
  $ of~$ S $ into~$ \g/\g^{(k)} $ is a set of generators for~$ \g/\g^{%
  (k)} $.  Then, by the induction hypothesis, the set of~$ (k-1) $-fold
  products of the elements~$ \bar S $ span~$ \g^{(k-1)}/\g^{(k)} $.
  It follows that~$ \g^{(k-1)}= \g^{(k)} + V_{k-1}(S) $.  Since~$ V_
  {k-1}(S)\subset\g^{(k-1)} $, this gives
  \[
    \g^{(k)}=[\g,\g^{(k-1)}]= [\<S\>+\g^{(2)}, \g^{(k)} + V_{k-1}(S) ]=
    [\<S\>, V_{k-1}(S) ]
  \]
  The proof is concluded by the observations that~$ [\<S\>, V_{k-1}(S)
  ] \subset V_{k}(S) $ and that, as~$ \g $ is of class~$ k $, the
  opposite inclusion is true by definition of~$ \g^{(k)} $.
\end{proof}

\begin{Lemma}
  \label{lem:aop_nil:4} Let~$ S=\{X_1,\dots, X_j\} $ and~$ S'=\{X'_1,\dots,
  X'_j\} $ be subsets of a nilpotent~$ k $-step Lie algebra~$ \g $
  such that~$ X_i=X'_i\mod [\g,\g] $ for all~$ i=1,\dots,j $. Then for
  every~$ (i_1,\dots,i_k)\in \{1,\dots,j\}^k $ we have
  \[
    S_{i_1,i_2,\dots,i_k}= S'_{i_1,i_2,\dots,i_k}
  \]
\end{Lemma}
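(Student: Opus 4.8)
The plan is to write $X'_i = X_i + Z_i$, where $Z_i := X'_i - X_i \in [\g,\g] = \g^{(2)}$ for all $i=1,\dots,j$, and then expand the left-normed iterated bracket $S'_{i_1,i_2,\dots,i_k} = [X'_{i_1},[X'_{i_2},\dots,[X'_{i_{k-1}},X'_{i_k}]\dots]]$ by multilinearity of the Lie bracket in each of its $k$ slots. This produces $2^k$ summands, each of which is obtained from $[X_{i_1},[X_{i_2},\dots,[X_{i_{k-1}},X_{i_k}]\dots]]$ by replacing, for the indices $\ell$ in some subset $I\subset\{1,\dots,k\}$, the entry $X_{i_\ell}$ by $Z_{i_\ell}$. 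The summand corresponding to $I=\emptyset$ is precisely $S_{i_1,i_2,\dots,i_k}$, so it suffices to show that every summand with $I\neq\emptyset$ vanishes.

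For this I would invoke the filtration estimate $[\g^{(a)},\g^{(b)}]\subset\g^{(a+b)}$ (Lemma~1.1.1 in \cite{Co-Gr}, already recalled above). Applying it repeatedly, starting from the innermost bracket and working outwards, one sees that a left-normed $k$-fold iterated bracket whose $\ell$-th entry lies in $\g^{(a_\ell)}$ lies in $\g^{(a_1+a_2+\dots+a_k)}$. In each of our summands the entries indexed by $\ell\notin I$ lie in $\g=\g^{(1)}$ and the entries indexed by $\ell\in I$ lie in $\g^{(2)}$; hence the summand lies in $\g^{(k+|I|)}$. If $I\neq\emptyset$ then $k+|I|\geq k+1$, and since $\g$ is $k$-step nilpotent we have $\g^{(k+1)}=\{0\}$, so that summand is $0$. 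Consequently $S'_{i_1,i_2,\dots,i_k}=S_{i_1,i_2,\dots,i_k}$, as claimed. (Alternatively, one could run an induction on $k$ in the spirit of Lemmas~\ref{lem:aop_nil:2} and~\ref{lem:aop_nil:3}, with trivial base case $k=1$ since then $[\g,\g]=0$ forces $X_i=X'_i$; but the multilinear expansion is more direct.)

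I expect no real obstacle here: the argument is a routine combination of bilinearity of the bracket with the central-series inclusions. The only point requiring a little care is the bookkeeping in the multilinear expansion and the correct iterated application of $[\g^{(a)},\g^{(b)}]\subset\g^{(a+b)}$ to a left-normed bracket, which together give the degree count $k+|I|$. The write-up should be only a few lines.
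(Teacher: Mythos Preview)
Your proof is correct. The paper takes a slightly different route: it argues by induction on the nilpotency class~$k$, applying the induction hypothesis to $\g/\g^{(k)}$ to conclude that the inner $(k-1)$-fold brackets $S_{i_2,\dots,i_k}$ and $S'_{i_2,\dots,i_k}$ agree modulo $\g^{(k)}$, and then expands only the outermost bracket $[X_{i_1}+Y_{i_1},\,\cdot\,]$ to kill the two cross terms using $[\g^{(2)},\g^{(k-1)}]\subset\g^{(k+1)}=0$ and $[\g,\g^{(k)}]\subset\g^{(k+1)}=0$. Your direct multilinear expansion into $2^k$ summands, combined with the degree count via $[\g^{(a)},\g^{(b)}]\subset\g^{(a+b)}$, is a one-shot version of the same idea and avoids the inductive scaffolding; the paper's inductive argument has the mild advantage of only ever expanding a single bracket at a time, but both proofs rest on exactly the same filtration inclusion and are equally short.
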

\begin{proof}

  Again the proof is by induction on the class of nilpotency.  The
  statement being trivially true in the abelian case, we assume that
  the Lemma is true for all nilpotent Lie algebras of class~$ \ell <k $.
  Let~$ \g $ be of class~$ k $.  Let~$ X_i'=X_i+Y_i $ with~$ Y_i\in [\g,\g]
  $, for all~$ i=1,\dots,j $.  By the induction hypothesis, applied to
  the algebra~$ \g/\g^{(k)} $ for all~$ (i_2,\dots,i_k)\in \{1,\dots,j\}^
  {k-1} $ , the elements~$ S_{i_2,\dots,i_k} $ and~$ S'_{i_2,\dots,i_k}
  $ coincide modulo~$ \g^{(k)} $, i.e.~differ by an element~$ \alpha_{i_2,\dots,i_k}\in
  \g^{(k)} $.  Thus we have
  \begin{align*}
    S'_{i_1, i_2,\dots,i_k}&= [X'_{i_1}, S'_{i_2,\dots,i_k}] = [X_{i_1}+
    Y_{i_1}, S_{i_2,\dots,i_k} + \alpha_{i_2,\dots,i_k}]\\
    & = [X_{i_1}, S_{i_2,\dots,i_k} ]=S_{i_1, i_2,\dots,i_k}
  \end{align*}
  concluding the proof.
\end{proof}

\footnotesize

\bibliography{aop_biblio}
\bibliographystyle{amsalpha}

\end{document}